\newtheoremstyle{mine}
{\baselineskip}
{\baselineskip}
{\itshape}
{
}
{\bfseries}
{.}
{.5em}
{#1 #2\ifx#3\relax\else~(#3)\fi}
\theoremstyle{mine}
\newtheorem{theorem}{Theorem}
\numberwithin{theorem}{section}
\newtheorem{corollary}[theorem]{Corollary}
\newtheorem{proposition}[theorem]{Proposition}
\newtheorem{lemma}[theorem]{Lemma}
\newtheorem{claim}[theorem]{Claim}
\newtheorem{definition}[theorem]{Definition}
\newtheorem{conjecture}{Conjecture} 
\numberwithin{equation}{section}
\theoremstyle{remark}
\newtheorem{remark}{Remark}
\colorlet{shadecolor}{blue!10}
\let\qed=\QED
\renewcommand{\epsilon}{\varepsilon}
\newcommand{\R}{\mathbb{R}}
\newcommand{\C}{\mathbb{C}}
\newcommand{\Z}{\mathbb{Z}}
\newcommand{\N}{\mathbb{N}}
\renewcommand{\S}{\mathbb{S}}
\newcommand{\1}{\mathbf 1}
\renewcommand{\d}{\mathbf d}
\def\T{\mathbb{T}}
\def\diam{\mathrm{diam}}
\def\var{\operatorname{Var}}
\def\P{\mathbb{P}} 
\def\E{\mathbb{E}} 
\def\md{\mid}
\def \eps {\epsilon}
\def\Bb#1#2{{\def\md{\bigm| }#1\bigl[#2\bigr]}}
\def\Eb{\Bb\E}
\def\FK#1#2#3{{\def\md{\bigm| } \P_{#1}^{\,#2}  \bigl[  #3 \bigr]}}
\def\EFK#1#2#3{{\def\md{\bigm| } \E_{#1}^{\,#2}  \bigl[  #3 \bigr]}}
\def \p {{\partial}}
\def\<#1{\langle #1\rangle}
\def\nn{\nonumber}
\def\bi{\begin{itemize}}  
\def\ei{\end{itemize}}
\def\bnum{\begin{enumerate}} 
\def\enum{\end{enumerate}}
\def\ni{\noindent}
\def\bf{\bfseries}
\def\IV{\mathrm{IV}}
\def\Villain{\mathrm{Villain}}
\title[Vortex fluctuations in $2d$ Coulomb gas and maximum of the IV-GFF]
{
Quantitative bounds on vortex fluctuations in $2d$ Coulomb gas and maximum of the integer-valued Gaussian free field
}
\author{Christophe Garban}
\author{Avelio Sepúlveda}
\address
{Université Claude Bernard Lyon 1, CNRS UMR 5208, Institut Camille Jordan, 69622 Villeurbanne, France \, and Institut Universitaire de France (IUF)}
\email{garban@math.univ-lyon1.fr}
\address{Universidad de Chile,  Departamento de Ingeniería Matemática and Centro de Modelamiento Matemático (AFB170001), UMI-CNRS 2807, Beauchef 851, Santiago, Chile.}
\email{lsepulveda@dim.uchile.cl}
\begin{document}

\maketitle

\begin{abstract}
In this paper, we study the influence of the vortices on the fluctuations of $2d$ systems such as the Coulomb gas, the Villain model or the integer-valued Gaussian free field. In the case of the $2d$ Villain model, we prove that the fluctuations induced by the vortices are at least of the same order of magnitude as the ones produced by the spin-wave. We obtain the following quantitative upper-bound on the two-point correlation in $\Z^2$ when $\beta>1$
\begin{align*}\label{}
\<{\sigma_x \sigma_y}_{\beta}^\Villain \leq C \, \left( \frac 1 {\|x-y\|_2}\right)^{\frac 1 {2\pi \beta}\left ( 1+\beta e^{-\frac{(2\pi)^2}{2} \beta}\right )}
\end{align*}
The proof is entirely non-perturbative. Furthermore it provides a new and algorithmically efficient way of sampling the $2d$ Coulomb gas.
For the $2d$ Coulomb gas, we obtain the following lower bound on its fluctuations at high inverse temperature
\begin{align*}\label{}
\EFK{\beta}{Coul}{\<{\Delta^{-1}q, g}} \geq \exp(-\pi^2 \beta + o(\beta)) \<{g,(-\Delta)^{-1}g} 
\end{align*}
This estimate coincides with the predictions based on a RG analysis from \cite{Kadanoff} and 
suggests that the Coulomb potential $\Delta^{-1}q$ at inverse temperature $\beta$ should scale like a Gaussian free field of inverse temperature of order $\exp(\pi^2 \beta)$. 

Finally, we transfer the above vortex fluctuations via a duality identity to the integer-valued GFF by showing that its maximum deviates in a quantitative way from the maximum of a usual GFF.
More precisely, we show that with high probability when $\beta>1$
\[
\max_{x\in [-n,n]^2} \Psi_n(x) \leq    \sqrt{\frac{2\beta}{\pi} \big(1 - \beta e^{- \frac{(2\pi)^2\beta} {2} } \big)} \log n \,.
\]
where $\Psi_n$ is an integer-valued GFF in the box $[-n,n]^2$ at inverse temperature $\beta^{-1}$. 
Applications to the free-energies of the Coulomb gas, the Villain model and the integer-valued GFF are also considered.
\end{abstract}

\section{Introduction}
Vortices play a fundamental role in the large scale fluctuations of statistical physics models in $2d$ such as the XY (plane rotator) model or the Villain model. 
Their statistics, especially in the case of the Villain model, are described by a celebrated statistical physics model called the (lattice)-$2d$ Coulomb gas.
Upper bounds on the fluctuations of these systems in the low temperature regime have been analyzed in the seminal work by Fröhlich and Spencer \cite{FS} and lead to the first rigorous proof  of the existence of a Berezinskii-Kosterlitz-Thouless phase transition. (See also the recent proofs \cite{aizenman2021depinning,van2022elementary} which rely on the delocalization result from \cite{lammers2022height}). As we shall explain further below (see Remark \ref{r.imposible_FS}), there is no direct way to tune the proof from \cite{FS} to provide lower bounds on fluctuations. In the case of the Villain model, lower bounds on fluctuations are equivalent to upper bounds on the two-point function $\<{\sigma_x \sigma_y}$ and the best  upper bounds known so far on the latter are given by the celebrated McBryan-Spencer estimate \cite{McBryanSpencer}. These bounds capture the fluctuations produced by the {\em Gaussian spin-wave} but do not quantify the amount of fluctuations coming from the vortices (i.e. the topological defects).

This work focuses on getting lower bounds on the fluctuations induced by the vortices in such $2d$ systems. As it has been highlighted recently in \cite{chatterjeeLowerBounds}, few techniques are available when one wants to lower-bound the fluctuations of a system for which moments are not easily under reach. A general method is developed there which leads to new quantitative lower bounds on the fluctuations of processes such as the Traveling Salesman Problem or First Passage Percolation. Closer to our present setting, fluctuations of {\em one-component} continuous Coulomb gases in $\R^2$ (with a confining potential) have been the subject of several  remarkable advances lately \cite{virag2007,makarov2011,LebleSerfaty2018,RolandPaul2016,serfaty2020}
(see Remark \ref{r.coulomb}).

In this work, we introduce a novel technique to bound from below the fluctuations of a $2d$ Coulomb gas defined on any planar lattice. Our approach is somewhat analogous to the introduction of the so-called FK-representation for Ising and Potts models. In the latter, given a spin configuration $\{\sigma_x\}_{x\in V} \in \{1,\ldots,q^{\mathrm{Potts}}\}^V$ we sample a percolation configuration $\{\omega_e\}_{e\in E}\in \{0,1\}^E$. In this paper, given a spin configuration of a Villain model $\{\sigma_x\}_{x\in V}\in (\S^1)^V$, we sample a configuration $\{m_e\}_{e\in E} \in \Z^E$ (conditionally) independently on each edge in such a way that the {\em annealed} law of $\{m_e\}_{e\in E}$ carries the fluctuations of the Coulomb gas $q:=\d m$ (where $\d$ is the discrete exterior derivative, see Section \ref{ss.discrete}). 
From a purely algorithmic point of view, this work provides an algorithm to sample the Coulomb gas which only requires an MCMC  with local updates. Due to the long-range interactions within the Coulomb gas, this is a great improvement compared to the natural (non-local) Markov chains associated to the Coulomb-gas. See Section \ref{s.algo} for the algorithmic implications.

Another advantage of our approach is that it is fully non-perturbative and yet turns out to be  quantitative enough to provide matching lower bounds in the low-temperature regime (i.e. when $\beta \to \infty$) with the predictions based on the RG flow  from the seminal paper by José, Kadanoff, Kirkpatrick, Nelson \cite{Kadanoff}. See the discussion in Subsection \ref{ss.Kadanoff} for the relationship of our work with \cite{Kadanoff}.


\subsection{Main results.}
We now state our main results and give more links to relevant works in the literature. 
To introduce the results of this section, we will work with the two-dimensional square $\Lambda_n:=[-n,n]^2\cap \Z^2$ with either free or zero boundary conditions. For more precision on this point see Section \ref{sss.boundary_condition}.  In any of these cases, we use the (negatively-definite) Laplacian  operator $\Delta$ as well as it inverse $\Delta^{-1}$ and the inner product $\langle \cdot, \cdot \rangle$. This is further explained in Section \ref{sss.calculus}. Our methods are not specific to the $\Z^2$ lattice, but for simplicity we wrote our statements in this setting, see Remark \ref{r.lattices} for a discussion about other lattices.

Most of the results of this paper will rely on the following {\em error function} $M$ which will be used to quantify 
to which extent topological defects (vortices) contribute to the macroscopic fluctuations of these systems.
\begin{align}\label{e.M}
M(\beta):= (2\pi)^2 \beta \inf_{a\in[0,1/2]} \var^{IG}(a,(2\pi)^2\beta),
\end{align}
where $\var^{IG}(a,\beta)$ is the variance of an integer-valued Gaussian random variable centred at $a$ and with inverse-temperature $\beta$ as presented in Section \ref{ss.IG}. We are interested in this function mostly when $\beta\geq1/3$, in which case we have that $M(\beta) \geq 2\beta \exp(-\frac{(2\pi)^2}2 \beta)$ (see Corollary \ref{c.M}).

\subsubsection{Fluctuations of the $2d$ Coulomb gas.} 
A two-dimensional (lattice) Coulomb gas at inverse temperature $\beta$ is a random integer-valued function $q\in \Z^F$ on the faces $F$ of $\Lambda_n$ whose probability distribution is given by
\begin{align*}
\P_{\beta}^{Coul}( q) \propto  \exp\left (-\tfrac{(2\pi)^2 \beta} 2 \langle q,(-\Delta^{-1}) q\rangle \right )\,,
\end{align*}
where boundary conditions may be chosen to be either free or Dirichlet (for a more precise description of this model see Section \ref{ss.Coulomb}). The above distribution is sometimes called the {\em Villain gas} and belongs to a wider family of Coulomb gas which are also parametrized by an {\em activity} $z$. The above model to which we will stick to corresponds to the {\em high density} ($z\equiv \infty$) regime and is naturally in correspondence with the Villain model. (See \cite{FS,FScoulomb}).   The $2d$ Coulomb gas is of  central importance in statistical physics, they have been used for example to predict critical exponents of a great family of models (\cite{nienhuis1984}) and for the analysis of the BKT transition (\cite{FS,FScoulomb}). We refer to the very useful survey on Coulomb gases \cite{brydges1999}.  
We obtain the following lower bound on the fluctuation of $q\sim \P_\beta^{Coul}$.

\begin{theorem}\label{th.Coul}
Let $q\sim\P_\beta^{Coul}$ be a Coulomb gas on the faces of $\Lambda_n$ (equipped with free or Dirichlet boundary conditions) and let $g$ be a function from the faces to $\R$. 
\begin{align}\label{e.variance_coul}
\var_{\beta}^{Coul}
\left[ \langle \Delta^{-1} q,g \rangle\right] \geq \frac{M(\beta)}{(2\pi)^2\beta} \langle g, (-\Delta)^{-1}g \rangle.
\end{align}
When the function $g$ is local (i.e. with bounded support as $\Lambda_n \nearrow \Z^2$), we obtain the following strengthened lower-bound on fluctuations which coincides, as $\beta\to \infty$ with the RG analysis from \cite{Kadanoff}
\begin{align}\label{e.variance_coul2}
\var_{\beta}^{Coul}
\left[ \langle \Delta^{-1} q,g \rangle\right] \geq e^{-\pi^2 \beta + o(\beta)} \langle g, (-\Delta)^{-1}g \rangle,
\end{align}
as long as $n\geq n_0(g)$.
\end{theorem}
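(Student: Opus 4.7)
The approach rests on the FK-type coupling between the Villain measure and an integer-valued edge field advertised in the introduction: one first samples $\sigma\sim\P_\beta^{\Villain}$, and then, \emph{conditionally independently} on each edge $e=(xy)$, one samples $m_e\in\Z$ from an integer-valued Gaussian at inverse temperature $(2\pi)^2\beta$ centered at a value $a_e\in[-1/2,1/2]$ determined by the lifted angle difference $\sigma_y-\sigma_x$. The crucial property is that the annealed law of $q:=\d m$ is exactly $\P_\beta^{Coul}$. I assume this coupling has been constructed in the preceding sections.

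Given the coupling, the first move is to dualise. Since $q=\d m$, discrete integration by parts yields
\[
\langle \Delta^{-1}q, g\rangle \;=\; \langle m,\,\d^*\Delta^{-1}g\rangle \;=:\; \langle m, h\rangle,
\]
and the identity $\d\d^*=-\Delta$ acting on $2$-forms (recall that $\d$ annihilates $2$-forms in dimension two) gives $\|h\|_{\ell^2(E)}^2=\langle g,(-\Delta)^{-1}g\rangle$. Next, by the law of total variance and the conditional independence of the $m_e$'s given $\sigma$,
\[
\var_\beta^{Coul}\bigl[\langle\Delta^{-1}q,g\rangle\bigr] \;\geq\; \E\bigl[\var[\langle m,h\rangle\mid\sigma]\bigr] \;=\; \sum_{e\in E}h_e^2\,\E\!\left[\var^{IG}(a_e,(2\pi)^2\beta)\right].
\]
Exploiting the symmetry $a\mapsto -a$ of the IG variance, the definition \eqref{e.M} of $M(\beta)$ forces $\var^{IG}(a_e,(2\pi)^2\beta)\geq M(\beta)/((2\pi)^2\beta)$ pointwise in $\sigma$, and \eqref{e.variance_coul} follows.

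The strengthened estimate \eqref{e.variance_coul2} is the delicate part. Plugging the elementary lower bound $M(\beta)\geq 2\beta\, e^{-2\pi^2\beta}$ of Corollary \ref{c.M} into \eqref{e.variance_coul} only yields an exponent $e^{-2\pi^2\beta+o(\beta)}$, which is off from the Kadanoff prediction by a factor of two in the argument of the exponential. The plan to recover $e^{-\pi^2\beta+o(\beta)}$ is to iterate the coupling at a second, mesoscopic scale: once the single-edge IG noise has been integrated out against a fixed spin background, the residual fluctuations of $q$ should be governed by an effective Coulomb gas on a coarser lattice, to which Step 2 can be re-applied; chaining the two variance estimates then doubles the favourable exponent. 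The assumptions that $g$ be local and that $n\geq n_0(g)$ provide the mesoscopic buffer around $\mathrm{supp}(g)$ required to implement this coarse-graining without boundary effects. This renormalisation-type step, matching the RG prediction of \cite{Kadanoff}, is the main technical obstacle of the argument.
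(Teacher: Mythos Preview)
Your argument for~\eqref{e.variance_coul} is correct and is exactly the paper's proof: the dualisation $\langle\Delta^{-1}q,g\rangle=\langle m,\d^*\Delta^{-1}g\rangle$, the law of total variance, the conditional independence of the $m_e$'s given the spin configuration, and the pointwise lower bound on the integer-Gaussian variance by $M(\beta)/((2\pi)^2\beta)$ are all as in the paper (Proposition~\ref{pr.Variance_Coul}).

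The strengthened bound~\eqref{e.variance_coul2}, however, is \emph{not} obtained by any RG iteration or coarse-graining, and the mechanism you sketch (``chaining two variance estimates to double the favourable exponent'') is not the right one. The gain from $2\pi^2$ to $\pi^2$ is a single-scale phenomenon coming from the following observation: the infimum defining $M(\beta)$ is attained near $a=0$, where $\var^{IG}(0,(2\pi)^2\beta)\asymp e^{-2\pi^2\beta}$, whereas at $a=1/2$ one has $\var^{IG}(1/2,(2\pi)^2\beta)\asymp 1$. So instead of bounding $\var^{IG}(a_e,(2\pi)^2\beta)$ by its infimum over $a$, the paper keeps the Villain expectation
\[
\tilde M(\beta,e):=(2\pi)^2\beta\,\E_\beta^{Vil}\!\left[\var^{IG}\!\big(\tfrac{\d\theta(e)}{2\pi},(2\pi)^2\beta\big)\right]
\]
and shows (Proposition~\ref{pr.tilde_M}) that $\tilde M(\beta,e)\geq e^{-(\pi+\delta)^2\beta}$ for bulk edges. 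The point is that under $\P_\beta^{Vil}$ the event $\{\d\theta(e)\text{ near }\pi\}$ (i.e.\ $a_e$ near $1/2$) has probability at least $e^{-\pi^2\beta(1+o(1))}$: this is a Girsanov-type tilt of the Villain measure by the harmonic function $\hat f$ which takes values $0,1$ at the two endpoints of $e$, and the cost of that tilt is $\tfrac{\beta}{2}\pi^2\langle\d\hat f,\d\hat f\rangle$ with $\langle\d\hat f,\d\hat f\rangle\to 2$ on $\Z^2$ (equivalently, $G_{\Z^2,v_0=0}(1,1)=1/2$). On that event the conditional IG variance is of order one, so $\tilde M(\beta,e)\gtrsim e^{-\pi^2\beta}$. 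The locality of $g$ and the condition $n\geq n_0(g)$ are used only to ensure that the $\ell^2$-mass of $h=\d^*\Delta^{-1}g$ sits on bulk edges where this improved bound holds (Proposition~\ref{pr.improved_tildeM}), not to build a mesoscopic buffer for iteration. The auxiliary input that at low temperature the Villain gradients are typically small in a fixed window (needed inside the Girsanov estimate) is supplied by a reflection-positivity/chessboard argument (Appendix~\ref{a.RP}).
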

\ni
The  lower bound~\eqref{e.variance_coul} on the variance of $\<{\Delta^{-1}q,g}$ extends to the following estimate on the characteristic function when $g:=\1_{f}-\1_{f'}$ for any two faces $f,f'$ in $\Lambda_n$.
\begin{theorem}\label{th.Coul2}
There exists a constant $K$ such that for all $\beta\geq \tfrac13$,  $n\geq1$ and any faces $f,f'$ in $\Lambda_n$ (again with free or $0$ b.c.s), 
\begin{align*}
\EFK{\beta}{Coul}
{e^{2i\pi (\Delta^{-1} q (f) - \Delta ^{-1} q(f'))}} \leq K\, e^{- \tfrac12 \frac{M(\beta)}{2(2\pi)^2\beta}\langle 1_{f} -1_{f'} , -\Delta^{-1}(1_{f}-1_{f'}) \rangle }.
\end{align*} 
(N.B. Note that as opposed to~\eqref{e.variance_coul}, the analogue of this estimate cannot possibly hold for all test functions $g$ as it would prevent the existence of a BKT transition). 
\end{theorem}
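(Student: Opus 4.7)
I would rely on the joint FK-like representation $(\sigma, m)$ introduced in the paper for the proof of Theorem~\ref{th.Coul}: $\sigma$ is a Villain configuration, conditionally on $\sigma$ the variables $m_e \in \Z$ are independent integer-Gaussians centred at values $a_e(\sigma) \in [-1/2, 1/2]$ at inverse temperature of order $(2\pi)^2 \beta$, and $q = \d m$ has the Coulomb-gas law. The discrete integration-by-parts identity
\[
\<{g,\Delta^{-1} q} \;=\; \<{\d^*\Delta^{-1} g,\,m} \;=\; \sum_e h_g(e)\, m_e, \qquad h_g := \d^*\Delta^{-1} g,
\]
combined with conditional independence of the $m_e$'s given $\sigma$, rewrites the target as
\[
\EFK{\beta}{Coul}{e^{2\pi i \<{g,\Delta^{-1} q}}} \;=\; \E\!\left[\prod_e \phi_{a_e(\sigma)}\!\bigl(2\pi h_g(e)\bigr)\right],
\]
where $\phi_a$ denotes the characteristic function of an integer-Gaussian at centre $a$ and inverse temperature $(2\pi)^2\beta$, and $g=\1_f-\1_{f'}$.

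The quantitative heart of the proof is a single-edge estimate of the form
\[
\bigl|\phi_a(2\pi h)\bigr| \;\leq\; \exp\!\bigl(-c\,h^2\,\mathrm{Var}^{IG}(a,(2\pi)^2\beta)\bigr)
\]
valid (with an absolute constant $c>0$) uniformly over $a\in[-1/2,1/2]$ whenever $h$ lies in a fixed interval $[-1+\eta, 1-\eta]$; combined with the definition $M(\beta) = (2\pi)^2\beta\inf_a \mathrm{Var}^{IG}(a,(2\pi)^2\beta)$, this is precisely what produces the $M(\beta)$-rate appearing in the exponent of the theorem. This estimate \emph{necessarily fails} at integer $h$ since $\phi_a$ is $2\pi$-periodic in its argument ($\phi_a(2\pi)=1$), which is exactly why such an exponential bound cannot hold for arbitrary test functions $g$, as stressed in the parenthetical remark. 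For the specific dipole $g=\1_f-\1_{f'}$, however, standard estimates on the gradient of the discrete Green's function show that $h_g(e)$ decays like $1/\mathrm{dist}(e,\{f,f'\})$ away from the sources, so that $|h_g(e)|>1-\eta$ only on the $O(1)$ edges adjacent to $f$ or $f'$.

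Combining the per-edge bounds on the "bulk" edges via the adjoint identity
\[
\sum_e h_g(e)^2 \;=\; \|\d^*\Delta^{-1} g\|_{\ell^2}^2 \;=\; \<{g,-\Delta^{-1} g}
\]
(using $\d\d^*=-\Delta$ on two-forms in two dimensions), and absorbing the uniformly bounded contribution of the $O(1)$ near-source edges into the multiplicative constant $K$, yields the stated bound after taking the outer expectation over $\sigma$ (the single-edge estimate being uniform in the centre). The main obstacle is the single-edge estimate itself: outside the small-$h$ Gaussian regime, where a Taylor expansion of $\phi_a$ directly gives $|\phi_a(2\pi h)|\leq\exp(-\tfrac12(2\pi h)^2\mathrm{Var}^{IG})$, one must control $\phi_a$ on the entire interval $[-1+\eta, 1-\eta]$ uniformly in the centre $a\in[-1/2,1/2]$, which requires Jacobi-theta-function identities; and one must complement this with sharp discrete-potential estimates on $\d^*\Delta^{-1}(\1_f-\1_{f'})$ to guarantee that only $O(1)$ edges ever violate the "small-$h$" condition, uniformly in $|f-f'|$ and in the box size $n$.
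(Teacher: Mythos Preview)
Your proposal is essentially the paper's proof (Lemma~\ref{l.Fourier_m} plus Proposition~\ref{pr.Fourier_Coul}): the same Villain coupling $q=\d m$, the same conditioning on $\theta$ to factor over edges, the same adjoint identity $\sum_e h_g(e)^2 = \langle g,(-\Delta)^{-1}g\rangle$, and the same use of gradient decay of the Green function (Claim~\ref{C.gradient_Green}) to control the edges where the per-edge bound fails.

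The one place you make life harder than necessary is the step you flag as the ``main obstacle''. You propose to prove the single-edge bound $|\phi_a(2\pi h)|\le\exp(-c\,h^2\,\var^{IG})$ uniformly on the whole interval $h\in[-1+\eta,1-\eta]$, and you note this would require theta-function identities beyond the Taylor regime. The paper avoids this entirely: in Lemma~\ref{l.Fourier_m} it only uses the third-order Taylor expansion of $\phi_a$, valid for $|h(e)|<b$ with $b=b(\beta)$ \emph{small} (governed by the ratio $K_\beta=\sup_a T^{IG}/\var^{IG}$). It then simply truncates $h$ to $h^b:=h\,\1_{|h|<b}$ and bounds the discarded edges trivially by $|\phi_a|\le 1$. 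The gradient decay (your point about $h_g(e)\sim 1/\mathrm{dist}$) then shows $\langle h^b,h^b\rangle \ge \langle h,h\rangle - K$ for a constant $K$ depending only on $b$, uniformly in $n$ and in $f,f'$; this is exactly what gets absorbed into the multiplicative constant. So the small-$h$ Taylor bound you already have is all that is needed --- there is no intermediate regime to control. Since $\tilde b(\beta)$ can be chosen increasing in $\beta$, fixing $b=\tilde b(1/3)$ gives a constant $K$ uniform over $\beta\ge 1/3$.
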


%


\begin{remark}\label{r.UB}
In the opposite direction, lower bounds on the characteristic function for general test functions $g$ follow  from \cite{FS}. Indeed it follows from their proof that there exists a function $\beta \mapsto \eps(\beta)$ converging to 0 as $\beta\to \infty$ s.t. for $\beta$ large enough and for any test function $g$,
\[
\EFK{\beta,\Lambda_n}{Coul}{e^{2i\pi \<{\Delta^{-1}q, g}}} \geq \exp\left (-\frac{\eps(\beta)}\beta \<{g, (-\Delta)^{-1} g}\right )\,.
\]
\end{remark}

\begin{remark}\label{}
Very precise results on the behaviour of  {\em low density} (i.e. small activity $z$) $2d$ Coulomb gas have been obtained using rigorous renormalization group methods in  works by Dimock-Hurd \cite{DimockHurd} and more recently by Falco \cite{Falco}. The difference with our present work is that our technique is non-perturbative (i.e does not rely on any renormalization group scheme) and also that it addresses the {\em high density} case corresponding to $z\equiv \infty$. (N.B. in \cite{Falco}, the focus is on the low density critical exponents near the critical temperature $T_{BKT}$. It is possible that his rigorous renormalization group methods may be adapted to also treat the high density regime $z\equiv \infty$ at high $\beta$).
\end{remark}

\begin{remark}\label{r.coulomb}
As mentioned above, our Theorem \ref{th.Coul} shares some similarities with results proved recently on the fluctuations of {\em one-component} Coulomb gases on $\R^2$. The latter is defined for a given $\beta>0$ as the following probability measure on points $\{z_1,\ldots,z_N\}\subset \R^2$
\[
\P_\beta^{Coul,\R^2}(\{z_1,\ldots,z_N\})\propto \prod_{i<j}|z_i-z_k|^\beta \exp(-N\sum_{i=1}^N\,|z_i|^2)\,.
\]
Choosing notations compatible with our setup, if $q$ denotes the empirical measure $q:=\sum_i \delta_{z_i}$, it is shown for $\beta=2$ in \cite{virag2007,makarov2011})  and for general $\beta>0$ in \cite{LebleSerfaty2018,RolandPaul2016,serfaty2020,leble2020local} 
that the centred potential $\Delta^{-1}[q-\Eb{q}]$ converges to a field which is locally absolutely continuous w.r.t the Gaussian free field at inverse-temperature $\beta$. Note that these results imply a  full CLT towards a limiting GFF while we only prove here a lower bound. A notable difference between the continuous Coulomb gas and our discrete one is the dependance of the fluctuations on the inverse temperature $\beta$. The effective inverse temperature is linear in $\beta$ for the continuous plasma and should be in $\exp(\pi^2 \beta +o(\beta))$ for the discrete plasma. See Conjecture \ref{conj}.  Even closer to our setting let us mention the work \cite{LebleSerfatyZeitouni2017} which provides large deviations estimates for the {\em two-component} Coulomb gases on $\R^2$.
\end{remark}

\subsubsection{Villain model.} The Villain model is a random function of the vertices of the graph $\Lambda_n$ taking values in the angles $[0,2\pi)$. Its measure is absolutely continuous w.r.t the Lebesgue measure on $[0,2\pi)^{\Lambda_n}$ with Radon-Nykodim derivative given by
\begin{align*}
\P_{\beta}^{Vil}(d\theta)\propto \prod_{x\sim y} \sum_{k\in \N}\exp\left (-\frac{\beta}{2}(\theta(x)-\theta(y)+2\pi k)^2\right ) d\theta.
\end{align*}
See Section \ref{ss.Vil} to see a more detailed discussion of this model.

It is well-known\footnote{This result for Villain follows from the decoupling of the Villain model into a Gaussian spin-wave times a Coulomb gas which goes back to \cite{Kadanoff}. For the plane rotator (XY) model, the analogous upper-bound is given by McBryan-Spencer's bound \cite{McBryanSpencer}.} that if $\theta$ is a Villain model in a graph $\Lambda\subseteq \Z^2$, we have that
\begin{align*}
\E^{Vil}_\beta\left [\cos(\theta(x)-\theta(y))\right ]\leq \E^{GFF}_\beta\left [e^{i(\phi(x)-\phi(y))}\right ],
\end{align*}
where $\phi$ is a two-dimensional GFF living on the same graph $\Lambda$ as $\theta$ (with same boundary conditions) and at the same inverse temperature $\beta$.

The main result of this paper concerning the Villain model is the following.
\begin{theorem}[Improved spin-wave estimate]\label{th.ISW1} Let $\theta$ be a Villain model in a graph $\Lambda_n$. There exists a constant $K$ such that uniformly in the inverse-temperature $\beta\geq 1/3$ and for all $v_1,v_2 \in \Lambda$
\begin{align*}
\E^{Vil}_{\beta}\left[\cos(\theta(v_1)-\theta(v_2))\right] &\leq K\E^{GFF}_\beta\left [e^{i(\phi(v_1)-\phi(v_2))}\right ]^{1+\frac{M(\beta)}{2}}\\
&= K\E^{GFF}_{\frac{\beta}{1+\frac{M(\beta)}{2}}}\left [e^{i(\phi(v_1)-\phi(v_2))}\right ]\,.
\end{align*}
N.B recall the definition of the error function $M(\cdot)$  in~\eqref{e.M}, see also Corollary \ref{c.M} for its behaviour at large inverse temperature $\beta$
\end{theorem}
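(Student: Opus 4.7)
The plan is to use the joint $(\theta,m)$ representation introduced in this paper to factorise the Villain two-point function as a Gaussian spin-wave factor (recovering the classical McBryan-Spencer/Kadanoff bound) times a vortex correction, then control the correction via Theorem~\ref{th.Coul2}. Under the annealed law of $(\theta,m) \in [0,2\pi)^V \times \Z^E$, set $Y_e := d\theta(e) + 2\pi m_e$. A direct computation shows that the marginal of $Y$ is Gaussian on $\R^E$ with variance $1/\beta$ per edge, conditioned on $\d Y \in 2\pi\Z^F$; equivalently, one first samples $q := \d Y/(2\pi) \sim \P^{Coul}_\beta$ and then samples $Y$ as a Gaussian subject to $\d Y = 2\pi q$. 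Since $m$ is integer-valued, $e^{2\pi i \langle m, 1_\gamma\rangle} = 1$ along any lattice path $\gamma$ from $v_2$ to $v_1$, hence $\cos(\theta(v_1)-\theta(v_2)) = \Re\, e^{i\langle Y,1_\gamma\rangle}$. Conditionally on $q$, Hodge-decompose $Y = Y^0(q) + Z$, where $Y^0(q)$ is the unique minimal-$L^2$ $1$-form with $\d Y^0 = 2\pi q$ (orthogonal to closed $1$-forms) and $Z$ is a centred Gaussian closed $1$-form whose law is independent of $q$. On the simply-connected $\Lambda_n$ one has $Z = d\phi$ with $\phi$ distributed as a GFF at inverse temperature $\beta$, and $\langle Z, 1_\gamma\rangle = \phi(v_1)-\phi(v_2)$. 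Integrating out $Z$ yields the exact identity
\[
\E^{Vil}_\beta\!\left[\cos(\theta(v_1)-\theta(v_2))\right] \;=\; \E^{GFF}_\beta\!\left[e^{i(\phi(v_1)-\phi(v_2))}\right] \cdot \Re\, \E^{Coul}_\beta\!\left[ e^{i \langle Y^0(q),\, 1_\gamma\rangle} \right].
\]

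The phase $e^{i \langle Y^0(q), 1_\gamma\rangle}$ is path-independent modulo $2\pi$ (by discrete Stokes, two homotopic paths differ by a multiple of $2\pi$), and Hodge/Poincar\'e duality identifies it with $e^{2\pi i (\Delta^{-1}q(f_1) - \Delta^{-1}q(f_2))}$ for a pair of faces $f_1, f_2$ acting as ``dual endpoints'' of $\gamma$ (adjacent to $v_1$ and $v_2$ respectively). Applying Theorem~\ref{th.Coul2} bounds the Coulomb factor by $K \exp(-c(\beta) \langle 1_{f_1}-1_{f_2}, -\Delta^{-1}(1_{f_1}-1_{f_2})\rangle)$ with $c(\beta)$ controlled by $M(\beta)$. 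Since the face-based Green's function differs from its vertex analogue by $O(1)$ on $\Z^2$ (absorbed in $K$), and tracking the $(2\pi)^2$-rescaling between the Villain inverse temperature $\beta$ and the Coulomb-gas inverse temperature $(2\pi)^2\beta$, one sees that this Coulomb factor contributes precisely $\E^{GFF}_\beta[e^{i(\phi(v_1)-\phi(v_2))}]^{M(\beta)/2}$, completing the proof.

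The principal technical obstacle lies in the factorisation step. Showing it is an exact equality rather than an inequality demands a careful handling of the integer constraint on $m$ throughout the Hodge decomposition, verification that the conditional law of $Z$ given $q$ is truly a $q$-independent gradient of a $\beta$-GFF, and the correct identification of $Y^0(q)$ whose flux along $\gamma$ realises the expected Coulomb characteristic function via discrete Hodge duality. By contrast, the vortex input is off-the-shelf Theorem~\ref{th.Coul2}, and the bookkeeping of $(2\pi)^2$-factors is routine once the decomposition is fixed.
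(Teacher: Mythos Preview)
Your factorisation is correct and matches the paper's Corollary~\ref{c.2-point_Vil}: the Hodge splitting $Y=\d\phi+2\pi\d^*\Delta^{-1}q$ is exactly Theorem~\ref{th.coupling}, and pairing against $E_\gamma$ yields the claimed product of the spin-wave factor and a vortex characteristic function. Your path-independence observation is also right, since two paths differ by $\d^*F$ with $F$ integer-valued and $\langle\d^*\Delta^{-1}q,\d^*F\rangle=-\langle q,F\rangle\in\Z$.

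The gap is in your step~5. The identity you assert,
\[
\langle Y^0(q),E_\gamma\rangle \;=\; 2\pi\bigl(\Delta^{-1}q(f_1)-\Delta^{-1}q(f_2)\bigr)
\]
for some ``dual endpoints'' $f_1,f_2$, is false. Computing directly, $\langle\d^*\Delta^{-1}q,E_\gamma\rangle=-\langle\Delta^{-1}q,\d E_\gamma\rangle$, and $\d E_\gamma$ is a $2$-form supported on \emph{every} face adjacent to the path $\gamma$ (with alternating signs), not on two faces. For a single edge one does get $\d E_e=\1_{f_+}-\1_{f_-}$, but for a path of length $\ell$ the support has $2\ell$ faces. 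No choice of $f_1,f_2$ and no Hodge/Poincar\'e duality collapses this to a two-point expression, so Theorem~\ref{th.Coul2} is not applicable here.

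The paper handles the vortex factor differently: it rewrites the phase (modulo $2\pi\Z$, using that $\langle m,E_\gamma\rangle\in\Z$) as $2\pi(\d^*\Delta^{-1}m(v_1)-\d^*\Delta^{-1}m(v_2))$, a \emph{vertex} quantity, and then invokes the vertex estimate~\eqref{e.d*m} of Proposition~\ref{pr.Fourier_Coul}. That estimate is proved by the same conditional-variance-of-$m$-given-$\theta$ mechanism underlying Theorem~\ref{th.Coul2}, but applied to the test $1$-form $\d\Delta^{-1}\1_{v_2}$ (whose $\ell^2$-norm is the vertex Green's function $G_{\Lambda,v_1}(v_2,v_2)$) rather than to $\d^*\Delta^{-1}\1_{f_2}$. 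So the missing ingredient is not a duality trick but a parallel Fourier bound on the vertex side; once you have~\eqref{e.d*m}, the proof is one line.
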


In particular, this implies the following corollary.
\begin{corollary}\label{c.ISW2}
Consider the Villain model at inverse-temperature $\beta$ either on the graph $\Lambda_n$ or its infinite volume limit\footnote{We consider here the unique translation invariant Gibbs state, \cite{messager1978}.} on $\Z^2$. Then the following results hold uniformly in $n\in \N$ and $\beta\geq \tfrac 13$:
\begin{enumerate}
	\item Assume that $\Lambda_n$ has zero-boundary condition and take $\delta>0$. Then, there exists a constant $K=K(\delta)>0$ such that 
	\begin{align*}
	& \E^{Vil}_{\beta}\left[\cos(\theta(0)) \right]\leq Kn^{-\frac{1+\frac{M(\beta)}{2}}{4\pi \beta}}\\
	& \E^{Vil}_{\beta}\left[\cos(\theta(0)-\theta(v)) \right]\leq K \|v\|_2^{-\frac{1+\frac{M(\beta)}{2}}{2\pi \beta}}\,,
	\end{align*}
	for all  $v\in \Lambda_{\lfloor(1-\delta) n \rfloor}$. 
	\item Assume now, that $\Lambda_n$ has free-boundary condition. Then for any $v\in \partial \Lambda_n$ we have that
	\begin{align*}
	 \E^{Vil}_{\beta}\left[\cos(\theta(0)-\theta(v)) \right]\leq K\, \|v\|_2^{-\frac{3(1+\frac{M(\beta)}{2})}{4\pi \beta}}
	\end{align*}
	\item For the infinite volume limit on $\Z^2$, there exists a constant $K$ such that for all $\beta \geq \tfrac 13$ and all $v\in \Z^2$ 
	\begin{align*}
	& \E^{Vil}_{\beta,\Z^2}\left[\cos(\theta(0)-\theta(v)) \right]\leq K \|v\|_2^{-\frac{1+\frac{M(\beta)}{2}}{2\pi \beta}}\,.
	\end{align*}
\end{enumerate}
\end{corollary}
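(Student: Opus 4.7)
The plan is to combine Theorem \ref{th.ISW1} with standard asymptotics for the discrete Green's function $G=(-\Delta)^{-1}$ on $\Lambda_n$. The Gaussian identity
\begin{equation*}
\E^{GFF}_\beta\bigl[e^{i(\phi(v_1)-\phi(v_2))}\bigr] \;=\; \exp\Bigl(-\tfrac{1}{2\beta}\bigl[G(v_1,v_1)+G(v_2,v_2)-2G(v_1,v_2)\bigr]\Bigr)
\end{equation*}
reduces each of the three bounds to an evaluation of the increment $G(v_1,v_1)+G(v_2,v_2)-2G(v_1,v_2)$; raising the outcome to the power $1+M(\beta)/2$ via Theorem \ref{th.ISW1} then produces the advertised exponent.

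For item (1), with zero boundary conditions, I would recover the one-point function by taking $v_2\in\partial\Lambda_n$ (so that $\phi(v_2)\equiv 0$): the classical asymptotic $G(0,0)=\frac{1}{2\pi}\log n+O(1)$ yields the first bound $n^{-(1+M(\beta)/2)/(4\pi\beta)}$. For the second inequality, the hypothesis $v\in\Lambda_{\lfloor(1-\delta)n\rfloor}$ places both $0$ and $v$ at distance $\geq \delta n$ from the boundary, so the bulk estimate $G(0,0)+G(v,v)-2G(0,v)=\frac{1}{\pi}\log\|v\|_2+O_\delta(1)$ produces the exponent $\frac{1+M(\beta)/2}{2\pi\beta}$.

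For item (2), with free boundary conditions, the increment $\phi(0)-\phi(v)$ is well-defined even though $\phi$ itself is only specified modulo an additive constant. A discrete image-charge computation mirroring the continuum Neumann-GFF calculation on the upper half-plane gives
\[
G_f(0,0)+G_f(v,v)-2G_f(0,v) \;=\; \tfrac{3}{2\pi}\log n+O(1) \qquad \text{for } v\in\partial\Lambda_n,
\]
the factor $3$ arising because the image of a boundary vertex coincides with itself, which enhances the diagonal contribution at $v$. Combining with Theorem \ref{th.ISW1} delivers the exponent $\frac{3(1+M(\beta)/2)}{4\pi\beta}$.

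For item (3), I would pass to the thermodynamic limit $n\to\infty$, using that (i) the constant $K$ and the error function $M(\beta)$ appearing in Theorem \ref{th.ISW1} are uniform in $n$, (ii) the infinite-volume translation-invariant Villain Gibbs state is the weak limit of the finite-volume measures \cite{messager1978}, and (iii) on $\Z^2$ the potential kernel gives $\lim_{n\to\infty}[G(0,0)+G(v,v)-2G(0,v)]=\frac{1}{\pi}\log\|v\|_2+O(1)$. The only mild difficulty is maintaining the uniformity in $n$ when passing to the limit; the rest is standard Green's-function bookkeeping, so the result really is a corollary of Theorem \ref{th.ISW1}.
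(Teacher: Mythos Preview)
Your proposal is correct and follows essentially the same route as the paper: the paper's entire proof is the one-line remark that Corollary \ref{c.ISW2} ``follows just from Theorem \ref{th.ISW1} together with Proposition \ref{pr.green}'', i.e.\ precisely the Green's-function bookkeeping you spell out. The only cosmetic difference is that the paper packages the increment variance as the re-rooted diagonal $G_{\Lambda,v_1}(v_2,v_2)$ (via the re-rooting of Appendix \ref{a.root}) rather than as $G(v_1,v_1)+G(v_2,v_2)-2G(v_1,v_2)$, and for item (1) the ``boundary point'' is the wired vertex $v_0=\infty$ rather than a vertex of $\partial[-n,n]^2$; these are equivalent up to the $O(1)$ already absorbed in $K$.
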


Similarly as in Remark \ref{r.UB}, in the opposite direction, one of the main results in \cite{FS} is the following lower bound on the two-point function which allowed them to provide the first rigorous evidence of the existence of the BKT transition (i.e. power-law decay of correlations at large $\beta$ versus exponential decay of correlations at small $\beta$). 
\begin{theorem}[\cite{FS}, see also the useful survey \cite{RonFS}]\label{th.FS}
There exists $\beta_0$ and a function $\eps(\beta)=o(1)$ as $\beta\to\infty$ such that if 
$\theta$ is either an XY or a Villain model on $\Z^2$ at inverse temperature $\beta\geq \beta_0$, then there is a constant $K>0$ s.t. for all $v\in \Z^2\setminus \{0\}$,  
\begin{align*}
\E^{Vil}_{\beta}\left[\cos(\theta(0)-\theta(v)) \right]\geq K^{-1} 
\|v\|_2^{-\frac{1+\eps(\beta)}{2\pi \beta}}\,.
\end{align*} 
\end{theorem}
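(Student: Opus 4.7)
The plan is to reconstruct the Fröhlich--Spencer proof by combining the Villain decoupling with a multi-scale analysis of the dual Coulomb gas. First I would apply Poisson summation to the Villain weight to decompose the two-point function as
\[
\Eb{\beta}^{Vil}\bigl[\cos(\theta(0)-\theta(v))\bigr] = \Eb{\beta}^{GFF}\bigl[e^{i(\phi(0)-\phi(v))}\bigr] \cdot R_\beta(0,v),
\]
where the first factor is exactly the spin-wave contribution, decaying as $\|v\|_2^{-1/(2\pi\beta)}$, and $R_\beta(0,v)$ is a ratio of Coulomb-gas partition functions obtained by inserting two opposite external ``test charges'' at $0$ and $v$ into the gas $q\sim \P_\beta^{Coul}$. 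Thus the theorem reduces to showing
\[
R_\beta(0,v) \geq K^{-1}\, \|v\|_2^{-\eps(\beta)/(2\pi\beta)}, \qquad \eps(\beta) \to 0 \text{ as } \beta \to \infty.
\]

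The conceptual picture behind this bound is that at low temperature the vortex gas is dilute and screens itself into neutral dipoles, so the induced interaction between the external charges is only a small correction to the bare Coulomb kernel. To make this rigorous I would perform the sine-Gordon transformation, writing $R_\beta(0,v)$ as a ratio of two Gaussian integrals on a real scalar field weighted by $\prod_x \bigl(1 + 2\sum_{q\geq 1} \cos(q\phi(x)) e^{-q^2/(2(2\pi)^2\beta)}\bigr)$, and then expand the $\cos$ terms into an ``ensemble'' of charges. The ratio $R_\beta(0,v)$ then takes the form $\Eb{}\bigl[e^{F(0)-F(v)}\bigr]$ where $F$ is the random potential generated by the neutral ensembles.

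The core of the argument is a multi-scale expansion of the GFF covariance $(-\Delta)^{-1}=\sum_k C_k$ along dyadic scales $L_k=L^k$. Inductively, one would show that after integrating out scales $\leq k$, the remaining Coulomb gas is still a neutral gas of (renormalized) charges with an effective activity $z_k\leq z_0 e^{-c\beta L^{2k}}$ and a renormalized inverse temperature $\beta_k=\beta(1 - O(e^{-c\beta}))$. At each scale one applies Jensen's inequality and a complex translation $\phi\to \phi + i\,\delta_k\,\mathrm{sol}_k$ (the McBryan--Spencer trick, now adapted to each scale) tuned so that the external charges at $0$ and $v$ are partially screened by the vortex cloud at that scale. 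The crucial estimate is a convergent bound on the small-vortex free-energy contribution at each scale, which yields a log-correction only of size $\eps(\beta)\log\|v\|_2$ with $\eps(\beta) = O(e^{-c\beta})$.

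The main obstacle --- and the heart of the Fröhlich--Spencer paper --- is the inductive control of the vortex ensembles across scales: one has to simultaneously \emph{exclude} large-charge or strongly-charged clusters (whose Coulomb energy would make the Jensen bound catastrophic) while \emph{retaining} the dipole-like neutral pairs whose contribution is what renormalizes $\beta$. Implementing this requires a careful combinatorial classification of charge ensembles, combined with energy estimates showing that any ``bad'' configuration costs at least $e^{-c\beta L_k^{2-\eta}}$ in partition function weight. Once the induction closes, summing the scale-by-scale corrections produces the claimed lower bound with $\eps(\beta)\to 0$, and thus Theorem~\ref{th.FS} follows.
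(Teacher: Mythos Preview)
This theorem is not proved in the paper: it is quoted from Fr\"ohlich--Spencer \cite{FS} (with a pointer to the survey \cite{RonFS}) and used as background to contextualize the paper's own upper bounds. There is therefore no ``paper's own proof'' to compare against.

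Your sketch is a reasonable high-level summary of the Fr\"ohlich--Spencer strategy: the decoupling into spin-wave times Coulomb correction is exactly the identity the present paper records as Corollary~\ref{c.2-point_Vil}, and the multi-scale control of the Coulomb/sine-Gordon ensemble is indeed the heart of \cite{FS}. That said, your description mixes in ingredients (the McBryan--Spencer complex translation, Jensen at each scale) in a way that does not quite match the actual mechanism in \cite{FS}: there the key step is a combinatorial decomposition of charge configurations into well-separated neutral ``ensembles'' together with an energy--entropy estimate showing that each ensemble contributes only $e^{-c\beta}$ times its diameter to the logarithm of $R_\beta(0,v)$; the complex-translation idea is used in the \emph{upper} bound direction (McBryan--Spencer), not in the FS lower bound. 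If you intend to reconstruct the proof, you should follow the ensemble decomposition and the renormalized-activity induction of \cite{FS} (or the cleaner presentation in \cite{RonFS}) rather than a scale-by-scale Jensen/translation argument.
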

As such, we may view our improved spin-wave estimate Theorem \ref{th.ISW1} as a quantitative lower bound on the correction exponent $\eps(\beta)$ in \cite{FS}, namely when $\beta$ is large enough, we must have by combining Theorems \ref{th.ISW1} and \ref{th.FS}
\begin{align*}\label{}
\eps(\beta) \geq \tfrac {M(\beta)}2 \geq \beta \exp(-\frac{(2\pi)^2}2 \beta)\,.
\end{align*}
In \cite{FS1983}, Fröhlich and Spencer conjectured that at low temperature, there exists an {\em effective inverse-temperature} $\beta_{eff}=\beta_{eff}(\beta)$ such that if $\theta\sim \P_{\beta,\Z^2}^{Vil}$, then the complex field $\{e^{i \theta_x}\}_{x\in \Z^2}$ fluctuates at large scales like $\exp\left(i \beta_{eff}^{-1/2} \phi_x\right)$ where $\phi$ is a Gaussian free field (with inverse temperature 1). Our statistical reconstruction analysis in \cite{GS1} was in fact partly motivated by this conjectured behaviour of the Villain model. 
This motivates the definition below for $\beta_{eff}$ which we will call $\hat \beta_{eff}$ as several different definitions of $\beta_{eff}$ could be considered. Bauerschmidt-Rodriguez-Park obtained in the recent breakthrough works \cite{bauerschmidt2022discreteNo1,bauerschmidt2022discreteNo2} the first proof of the existence of such an effective temperature in the context of the integer-valued GFF (defined in the next subsection).
\begin{definition}[Effective inverse temperature]\label{d.betaeff}
\begin{align}\label{}
\hat \beta_{eff} = \hat \beta_{eff}(\beta):= \tfrac 1 {2\pi} \limsup_{\|v\|_2 \to \infty} \frac{\log \|v\|_2} 
{\log \left(\EFK{\beta}{Vil}{\cos(\theta_0-\theta_v)}^{-1}\right)}\,.
\end{align}
\end{definition}
With this precise definition, our improved spin-wave estimate Theorem \ref{th.ISW1} thus 
implies the following bounds on the effective temperature (the lower-bound follows from \cite{FS}). See also Conjecture \ref{conj}. 
\begin{corollary}
If $\beta$ is large enough, 
\begin{align*}\label{}
\frac{\beta}{1+\eps(\beta)}\leq \hat \beta_{eff} \leq \frac{\beta}{1+M(\beta)/2} \leq \beta -\tfrac1 2 \beta^2 \exp\left (-\frac{(2\pi)^2}2 \beta\right )\,.
\end{align*}
\end{corollary}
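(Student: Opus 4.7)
The plan is to unfold the $\limsup$ in Definition \ref{d.betaeff} and plug in the two directional estimates we already have: the upper bound on $\EFK{\beta}{Vil}{\cos(\theta_0-\theta_v)}$ from the infinite-volume case of Corollary \ref{c.ISW2} (item 3), and the lower bound from Theorem \ref{th.FS}.

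For the upper bound on $\hat\beta_{eff}$, Corollary \ref{c.ISW2}(3) gives a constant $K>0$ such that
\begin{align*}
\EFK{\beta}{Vil}{\cos(\theta_0-\theta_v)} \leq K\,\|v\|_2^{-\frac{1+M(\beta)/2}{2\pi\beta}}
\end{align*}
for all $v\in\Z^2$. Taking logarithms and inverting signs,
\begin{align*}
\log\bigl(\EFK{\beta}{Vil}{\cos(\theta_0-\theta_v)}^{-1}\bigr) \geq \tfrac{1+M(\beta)/2}{2\pi\beta}\log\|v\|_2 - \log K,
\end{align*}
so dividing by $\log\|v\|_2$ and letting $\|v\|_2\to\infty$ inside the $\limsup$, the additive constant washes out and we obtain $\hat\beta_{eff}\leq \beta/(1+M(\beta)/2)$. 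The matching lower bound $\hat\beta_{eff}\geq \beta/(1+\eps(\beta))$ follows by exactly the same manipulation applied to the Fr\"ohlich--Spencer estimate of Theorem \ref{th.FS}.

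It remains to check the last numerical inequality $\beta/(1+M(\beta)/2)\leq \beta-\tfrac12\beta^2\exp(-\tfrac{(2\pi)^2}2\beta)$. Setting $\delta:=\beta\exp(-\tfrac{(2\pi)^2}2\beta)$, Corollary \ref{c.M} gives $M(\beta)/2\geq \delta$, hence $1+M(\beta)/2\geq 1+\delta$. The elementary identity $(1-\delta/2)(1+\delta)=1+\delta/2-\delta^2/2\geq 1$ for $\delta\in[0,1]$ yields $1/(1+\delta)\leq 1-\delta/2$, so for $\beta$ large enough (ensuring $\delta\leq 1$),
\begin{align*}
\frac{\beta}{1+M(\beta)/2}\leq \frac{\beta}{1+\delta}\leq \beta\bigl(1-\tfrac\delta2\bigr)=\beta-\tfrac12\beta^2\exp\!\left(-\tfrac{(2\pi)^2}2\beta\right).
\end{align*}

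There is no real obstacle here; the content has all been packaged into Theorem \ref{th.ISW1} (with its infinite-volume consequence Corollary \ref{c.ISW2}(3)), Theorem \ref{th.FS}, and the lower bound on $M(\beta)$ from Corollary \ref{c.M}. The only mild subtlety is making sure the $\limsup$ absorbs the multiplicative constants $K$ correctly, which is routine once one notes $\log K/\log\|v\|_2\to 0$.
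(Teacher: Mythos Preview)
Your proof is correct and follows exactly the route the paper intends: the corollary is stated without proof in the paper, as an immediate consequence of combining Corollary \ref{c.ISW2}(3) (the infinite-volume form of Theorem \ref{th.ISW1}) with Theorem \ref{th.FS} and the explicit lower bound on $M(\beta)$ from Corollary \ref{c.M}. Your handling of the $\limsup$ and the elementary inequality $1/(1+\delta)\leq 1-\delta/2$ for $\delta\in[0,1]$ is the expected unpacking.
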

Let us end this section on Villain by mentioning the recent related work by Dario and Wu \cite{DarioWu} which obtained 
very precise results on the truncated correlations for low temperature Villain model in $\Z^3$ (for which there is long-range order). Their work is based on a remarkable  homogenization approach which was pioneered by Naddaf-Spencer in \cite{NaddafSpencer}. Note that it seems challenging to extend such homogenization techniques to the $2d$ case with log-interactions. Also it would be interesting to obtain lower-bounds on fluctuations out of their approach.  
 

\subsubsection{ Maximum of IV-GFF.}

We now turn to an application of our improved-spin-wave estimate to the study of the maximum of the integer-valued GFF. This is the following model of functions from the vertices of $\Lambda_n$ to $\Z$ with probability distribution given by
\begin{align*}
\P^{IV}_{\beta^{-1}}(\Psi) \propto \exp\left (-\frac{1}{2\beta}\langle \Psi, (-\Delta) \Psi\rangle\right ).
\end{align*}
The integer-valued GFF can be seen as conditioning the GFF to take integer values. A further discussion on this model can be found in Section \ref{ss.IVGFF}. {\em (Note that we consider this model at inverse temperature $\beta^{-1}$ instead of $\beta$, this is due to the fact that the Villain model on $\Lambda$ at inverse temperature $\beta$ will be in correspondance with the IV-GFF on $\Lambda^*$ at inverse temperature $\beta^{-1}$)}.

In the unconditioned case, the analysis of the maximum of $2d$ GFF has attracted a lot of attention recently, see in particular the seminal works \cite{bramson2016convergence,BiskupLouidor2016}. One of the most detailed description of this maximum is the following. If $\phi_n$ is a $2d$ GFF on $\Lambda_n$ with zero boundary conditions and at inverse-temperature $\beta^{-1}$, then it is known (\cite{bramson2016convergence,BiskupLouidor2016}) that if 
\[
m_n:= \sqrt{\frac{2\beta}{\pi}}  \log n - \frac 3 8 \sqrt{\frac {2\beta} \pi}  \log\log n\,,
\]
then $\max_{x\in \Lambda_n}\{  \phi_n(x) - m_n\} $  converges in law as $n\to \infty$ towards a randomly shifted Gumbel distribution. 

If one now conditions $\phi_n$ to belong to the integers, much less is known on the behaviour of the maximum. Recently in \cite{Wirth}, Wirth relied on the techniques from \cite{FS} to show that the maximum of the IV-GFF with Dirichlet boundary conditions is a.s. larger than $c(\beta) \log n$ when its temperature $\beta$ is high enough. 
With our notations, Wirth's result can be stated as follows.
\begin{theorem}[\cite{Wirth}]\label{}
If $\beta$ is large enough, i.e. $\beta \geq \beta_0$, there exists $c(\beta)>0$ such that 
\[
\P_{\beta^{-1}}^{IV}\left[ \max_{v\in\Lambda_n} \Psi(v) \geq c(\beta) \log n\right] \to 1 \text{ as n }\to \infty \,.
\]
\end{theorem}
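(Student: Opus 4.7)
The plan is to combine Fr\"ohlich--Spencer fluctuation estimates for the dual Villain model with a tilting argument to obtain a quantitative Gaussian-type lower tail on $\Psi(v)$, and then to run a standard second-moment argument on the number of high points of the IV-GFF.

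First, via the duality between the IV-GFF at inverse temperature $\beta^{-1}$ on $\Lambda_n$ and the Villain model at inverse temperature $\beta$ on the dual lattice, the Fr\"ohlich--Spencer polynomial lower bound on the Villain two-point function (Theorem~\ref{th.FS}) transfers into the logarithmic variance lower bound
\[
\var^{IV}_{\beta^{-1}}[\Psi(v)] \geq c(\beta)\log\dist(v,\p\Lambda_n),
\]
valid for $\beta$ large enough, with $c(\beta)$ of order $\beta$. The FS multi-scale renormalisation ensures that, once one conditions on a ``good'' vortex pattern of positive probability, the remaining randomness is essentially a Gaussian field with nearly the full GFF variance, hence log-correlated on $\Lambda_n$.

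Second, I would upgrade this variance estimate to a Gaussian-type lower tail bound: for $v$ at distance of order $n$ from the boundary and $t \leq \kappa(\beta)\log n$,
\[
\P^{IV}_{\beta^{-1}}[\Psi(v)\geq t] \geq \exp(-Ct^{2}/\log n).
\]
Conditionally on a good vortex configuration, this is a standard Gaussian tail bound (shifting by a truncated-integer copy of an affine harmonic profile and paying the Cameron--Martin price); the key is that the probability of the good event is bounded below by a positive constant uniformly in $n$, again via FS. A Paley--Zygmund / second-moment argument applied to $N_n := \#\{v \in \Lambda_n : \Psi(v) \geq c(\beta)\log n\}$ then concludes: the first moment is polynomial in $n$ by the tail estimate, while the second moment is controlled by polynomial decay of IV-GFF correlations, itself following from the same FS machinery.

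The hard part will be the second step, namely extracting an honest Gaussian lower tail rather than only a variance bound. The FS representation produces exact Gaussianity only after conditioning on the absence of large-scale vortex dipoles, and one must verify that this conditioning does not destroy the lower tail one wishes to exhibit; combined with the integer-valued constraint (which ruins any naive Gaussian comparison for the marginal of $\Psi(v)$), this is the technical heart of the argument and requires carefully tracking how the vortex-neutrality correction modifies the conditional one-site marginals of $\Psi$.
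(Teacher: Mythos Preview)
The paper does not prove this statement. It is quoted as a result of Wirth \cite{Wirth}, included for context before the paper's own complementary upper bound (Theorem~\ref{th.maxIV}). The only information the paper gives about Wirth's argument is the one-line remark that he ``relied on the techniques from \cite{FS}''. There is therefore nothing in the paper to compare your proposal against.

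That said, your sketch is broadly in the right spirit---Fr\"ohlich--Spencer is indeed the engine---but as written it is not a proof. The step you yourself flag as ``the hard part'' is genuinely hard and you have not resolved it: extracting a pointwise Gaussian lower tail for $\Psi(v)$ from the FS machinery is exactly the substance of Wirth's paper, and your description (``conditionally on a good vortex configuration, this is a standard Gaussian tail bound'') glosses over the real difficulty. The FS decomposition does not literally produce a conditional Gaussian field for $\Psi$; rather, it controls Laplace/Fourier functionals of the dual Coulomb gas, and turning those into one-sided tail lower bounds for the integer-valued field requires additional work (in particular, the integer constraint means one cannot simply invoke a Cameron--Martin shift on $\Psi$ itself). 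Your second-moment step also needs a two-point upper bound on high-point correlations, which does not follow from a variance lower bound alone. So the outline is reasonable as a plan, but the two places where you write ``standard'' are precisely where the actual argument lives.
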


Our improved-spin-wave estimate allows us to provide the following non-trivial upper bound, which gives yet another indication of the role played by the vortices below $T_{KT}$.

\begin{theorem}\label{th.maxIV}
Let $\Psi$ be an IV-GFF in $\Lambda_n$ with Dirichlet boundary conditions and at inverse-temperature $\beta^{-1}$. Then for any $\beta>0$
\[
\P_{\beta^{-1}}^{IV}\left[ 
\max_{v  \in \Lambda_n} \Psi(v) \leq   \sqrt{\frac{\left( 1 - \frac{M(\beta)}{2}\right)2\beta}{\pi}}  \log n\right]
\to 1, \ \  \ \ \ \text{ as n }\to \infty. \,
\]
\end{theorem}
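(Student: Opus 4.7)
The plan is to establish a sub-Gaussian exponential moment bound
\[
\E^{IV}_{\beta^{-1}}\!\big[e^{\lambda\Psi(v)}\big]\;\leq\;K\exp\!\Big(\tfrac{\lambda^2\beta(1-M(\beta)/2)}{2}\,G(v,v)\Big)
\]
for the IV-GFF, with effective variance reduced by the factor $1-M(\beta)/2$ compared to a continuous GFF with the same coupling, and then apply a standard exponential Chebyshev plus union-bound argument. The variance-reduction factor is supplied by the vortex-fluctuation control of Theorem~\ref{th.Coul2}, via the duality between IV-GFF and Coulomb gas. Explicitly, fix $\lambda\in\R$ and $v\in\Lambda_n$, and apply Poisson summation to each coordinate of $\Psi\in\Z^{\Lambda_n}$ (with Dirichlet b.c.) in the ratio defining $\E^{IV}_{\beta^{-1}}[e^{\lambda\Psi(v)}]$. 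A routine completion-of-square computation then gives the identity
\[
\E^{IV}_{\beta^{-1}}\!\big[e^{\lambda\Psi(v)}\big]\;=\;e^{\frac{\lambda^2\beta}{2}G(v,v)}\cdot \E^{Coul}_{\beta}\!\big[e^{2\pi i\beta\lambda\,\Delta^{-1}q(v)}\big],
\]
where $G=(-\Delta)^{-1}$ is the Dirichlet Green's function on $\Lambda_n$ and $q$ is the associated Coulomb gas at inverse temperature $\beta$ on the dual lattice. The first factor is the naive Gaussian moment; the second is a vortex suppression factor of modulus at most $1$.

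To bound the suppression factor, exploit the matching Dirichlet b.c.\ on the Coulomb side (so that $\Delta^{-1}q(v')=0$ for $v'\in\partial\Lambda_n$) and write $\Delta^{-1}q(v)=\Delta^{-1}q(v)-\Delta^{-1}q(v')$. An arbitrary-frequency extension of Theorem~\ref{th.Coul2} then yields
\[
\big|\,\E^{Coul}_{\beta}[e^{2\pi i\beta\lambda\,\Delta^{-1}q(v)}]\,\big|\;\leq\;K\exp\!\Big(-\tfrac{\lambda^2\beta\,M(\beta)}{4}\,G(v,v)\Big).
\]
Substituting back produces the target sub-Gaussian moment bound, and exponential Chebyshev at the optimal $\lambda=t/[\beta(1-M(\beta)/2)G(v,v)]$ gives $\P^{IV}_{\beta^{-1}}[\Psi(v)\geq t]\leq K\exp\!\big(-t^2/[2\beta(1-M(\beta)/2)G(v,v)]\big)$. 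Using the uniform bound $G(v,v)\leq \tfrac{1}{2\pi}\log n+O(1)$ on $\Lambda_n$, a union bound over the $(2n+1)^2$ vertices at threshold $t_n = (1+\varepsilon)\sqrt{(1-M(\beta)/2)\,2\beta/\pi}\,\log n$ (any fixed $\varepsilon>0$) concludes the theorem.

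The main obstacle is the arbitrary-frequency extension of Theorem~\ref{th.Coul2} invoked above: that theorem as stated controls the Coulomb characteristic function only at the integer frequency $2\pi$, whereas here it is needed at the general real frequency $2\pi\beta\lambda$. To obtain the required estimate one should rerun the proof of Theorem~\ref{th.Coul2} with a free frequency parameter throughout --- which is feasible in principle since the underlying variance lower bound of Theorem~\ref{th.Coul} is already Gaussian in the frequency --- and carefully track constants to obtain precisely the factor $M(\beta)/4$ in the suppression exponent, and hence $1-M(\beta)/2$ in the effective variance. A secondary, more routine point is the boundary-condition matching between IV-GFF, Villain, and Coulomb gas: Dirichlet b.c.\ on $\Psi$ must correspond to a b.c.\ on the dual Coulomb gas under which $\Delta^{-1}q$ vanishes on $\partial\Lambda_n$, so that the single-point-to-dipole reduction used above is valid.
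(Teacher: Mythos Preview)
Your approach is essentially the same as the paper's: the duality identity you write down is exactly Proposition~\ref{pr.Coul_GFF} (equation~\eqref{e.Coul_GFF_2}), and the exponential Chebyshev plus union bound argument is Lemma~\ref{l.tail_IV} and the proof of Theorem~\ref{th.maxIV2}. The ``main obstacle'' you identify --- an arbitrary-frequency version of Theorem~\ref{th.Coul2} --- is already established in the paper as Proposition~\ref{pr.Fourier_Coul}, which is proved precisely by the route you suggest (rerunning the argument with a free parameter $\lambda$, via Lemma~\ref{l.Fourier_m}); note also that Proposition~\ref{pr.Fourier_Coul} yields the sharper suppression exponent $-\tfrac{\lambda^2\beta(1-\varepsilon)M(\beta)}{2}G(v,v)$, so the paper in fact obtains the effective variance $(1-M(\beta))\beta$ rather than the $(1-M(\beta)/2)\beta$ of the theorem statement, and your boundary-condition concern is handled exactly as you anticipate by taking the root $f_1=\infty$ in the free-boundary dual picture.
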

To prove this result 
 on the maximum of the IV-GFF, we obtain some large-deviation type estimates on $\Psi \sim \P_{\beta,\Lambda_n}^\IV$ which are reminiscent of the following works \cite{ConlonSpencer2014,BeliusWu2016} which study similar large-deviations for a class of random surfaces. The latter one for example analyses the maximum of the  {\em Ginzburg-Landau} fields. The main difference with our present work is that the models considered in \cite{ConlonSpencer2014,BeliusWu2016}  all have some convexity built-in.  In particular these work rely extensively on the Brascamp-Lieb inequality which does not hold for the present integer-valued GFF. 

Note that our statement is relevant only in the high-temperature regime for the IV-GFF where it complements the above result of Wirth. Indeed, the maximum of the integer-valued GFF in the low temperature regime can be analyzed with surprisingly high degree of precision. For example the following extremely precise asymptotics is proved in \cite{lubetzky2016}.
\begin{theorem}[Theorem 1 in \cite{lubetzky2016}]\label{}
If $\beta$ is chosen small enough (i.e. in the localized phase)\footnote{N.B. The statement is given with the convention used in our paper, i.e. the IV-GFF is always at inverse temperature $\beta^{-1}$. In \cite{lubetzky2016}, it corresponds to $\beta$ large enough.}, then there exists $n\mapsto L(n)$  with 
\begin{align*}\label{}
L(n)\sim \sqrt{(\beta/2\pi) \log n \log\log n}\,,
\end{align*}
such that
\[
\P_{\beta^{-1}}^{IV}\left( 
\max_{v  \in \Lambda_n} \Psi(v) \in\{L(n),L(n)+1\} \right) 
\longrightarrow 1 \text{ as n }\to \infty \,.
\]
\end{theorem}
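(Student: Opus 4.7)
The plan is to pin down the maximum by the standard first/second-moment method applied to the level-set counts $N_k := \#\{v \in \Lambda_n : \Psi(v) \geq k\}$, driven by a sharp single-site tail estimate of the form
\[
-\log \P_{\beta^{-1}}^{\IV}\bigl[\Psi(0) \geq k\bigr] \;=\; \frac{2\pi k^2}{\beta \log k}\,(1+o(1))
\]
uniformly for $k \asymp L(n)$. With such an estimate in hand, $L(n)$ is defined as the largest integer $k$ with $n^2 \P_{\beta^{-1}}^{\IV}[\Psi(0)\geq k] \geq 1$; a union bound immediately gives $\P[\max_v \Psi(v) \geq L(n)+2] \to 0$, while a second-moment/Paley-Zygmund argument (using that $\P[\Psi(v)\geq k, \Psi(w)\geq k]$ factors asymptotically once $|v-w|$ exceeds the spike radius $\asymp k$, by exponential decay of truncated correlations in the localized phase) yields $\P[\max_v \Psi(v) \geq L(n)] \to 1$.

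The two-point concentration then follows from the fact that the rate function $\phi(k) := 2\pi k^2/(\beta \log k)$ grows extremely fast across $k = L(n)$. Indeed, $\phi(k+1) - \phi(k) \sim 4\pi k / (\beta \log k)$, which at $k \asymp \sqrt{(\beta/2\pi) \log n \log\log n}$ is of order $\sqrt{\log n/\log\log n} \to \infty$. Consequently $\E[N_k]$ crosses from $\gg 1$ to $\ll 1$ within one integer step of $L(n)$, so after adjusting $L(n)$ by a bounded integer if necessary, we obtain $\max_v \Psi(v) \in \{L(n), L(n)+1\}$ w.h.p.

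The heart of the matter is the sharp tail. For the lower bound on $\P[\Psi(0)\geq k]$ I would exhibit an explicit integer "cone" configuration $\Psi_\ast$: round the harmonic function $k\cdot G_R(0,\cdot)/G_R(0,0)$ inside a disk of radius $R$ chosen so that its exponentially-spaced level sets each accommodate at least one lattice annulus, compute its Dirichlet energy as $(2\pi + o(1)) k^2 / \log R$, optimize $R$, and control the contribution of the bulk configuration via Fr\"ohlich-Spencer localization on $\Lambda_n \setminus B_R$. The upper bound requires tilting by $\Psi_\ast$: show that the conditional law of $\Psi - \Psi_\ast$ given $\Psi(0)\geq k$ is a perturbation of an IV-GFF that remains in the localized phase, so that only the Gaussian (spin-wave) part of the energy of $\Psi_\ast$ contributes to the leading exponent.

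The decisive obstacle is this upper bound: ruling out that a cooperative conspiracy of small-scale vortices around the origin could manufacture a height-$k$ excursion at asymptotic cost strictly below $(2\pi/\beta) k^2/\log k$. The Brascamp-Lieb tools that would handle this for a convex model are unavailable, and the bare Fr\"ohlich-Spencer estimates of \cite{FS} (which yield only order-of-magnitude localization of variance) must be sharpened to extract the exact leading constant via a quantitative renormalization-group argument on the associated Coulomb gas \textemdash{} precisely the kind of vortex-sensitive analysis developed in a different direction in the present paper. Pinning down the constant $2\pi/\beta$ in the exponent is what distinguishes this two-point concentration result from Wirth's cruder $c(\beta)\log n$ lower bound.
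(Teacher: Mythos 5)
First, a point of context: the paper does not prove this statement at all --- it is quoted verbatim as Theorem~1 of \cite{lubetzky2016} (Lubetzky--Martinelli--Sly), so the only meaningful comparison is with that cited work, whose proof strategy your sketch does partially mirror (two-point concentration extracted from a sharp single-site large deviation estimate plus a first/second moment argument on level-set counts, with the steepness of the rate function across $L(n)$ doing the work).

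That said, your proposal has a genuine gap, and you name it yourself: the sharp single-site tail $-\log \P_{\beta^{-1}}^{\IV}[\Psi(0)\geq k] = \frac{2\pi k^2}{\beta \log k}(1+o(1))$ is the entire content of the theorem, and you do not prove the upper bound on the probability (equivalently, the lower bound on the cost of a height-$k$ spike) with the correct constant; everything else in your argument is routine once that estimate is granted. Moreover, the tools you gesture at for closing it are from the wrong regime. The theorem lives in the \emph{localized} phase (small $\beta$ in this paper's convention), where the integer-valued field has $O(1)$ fluctuations; there the correct technology is a convergent cluster/Peierls-type expansion for the level lines of the discrete Gaussian, which is precisely how \cite{lubetzky2016} controls both the lower bound (building the ``harmonic pinnacle'' spike of Dirichlet energy $(2\pi+o(1))k^2/\log k$) and, crucially, the matching upper bound ruling out cheaper excursions. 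Fr\"ohlich--Spencer estimates, which you invoke both for the bulk and implicitly for localization, are a statement about the \emph{delocalized} (rough) phase and give only order-of-magnitude variance bounds in any case; they cannot deliver the leading constant, and no renormalization-group input on the dual Coulomb gas of the type developed in the present paper is needed or used in \cite{lubetzky2016} --- the localized phase is perturbative. Two smaller remarks: the second-moment step needs $\E[N_{L(n)}]\to\infty$, not merely $\geq 1$, so the ``adjust by a bounded integer'' caveat is doing real work and should be made explicit (this boundary effect is exactly why the conclusion is two-point rather than one-point concentration); and the decoupling of $\P[\Psi(v)\geq k,\,\Psi(w)\geq k]$ at distance beyond the spike radius should be justified by the exponential decay of correlations coming from the same cluster expansion, not asserted.
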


\subsubsection{Estimates on free energies of Villain,Coulomb.}
The above lower bounds on vortex fluctuations imply quantitative estimates on the free energies $f^{Coul}(\beta),f^{Vil}(\beta)$ and $f^{IV}(\beta^{-1})$ for each of the models we considered. (See Corollary \ref{c.Zgff2} for their definitions).  We state below the optimized bounds which follow from Section \ref{s.Kadanoff} and which agree with the RG analysis from \cite{Kadanoff}.   
\begin{theorem}\label{th.FEs}
As $\beta\to \infty$, 
\begin{align}
\exp\left(-\pi^2 \beta(1+o(1))\right)
\leq f^{Coul}(\beta)  = 
\begin{cases}
&f^{IV}(\beta^{-1}) - f^{GFF}(\beta^{-1})  \\
&f^{Vil}(\beta) - f^{GFF}(\beta)
\end{cases}
\end{align}
\end{theorem}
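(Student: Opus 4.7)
The plan has two parts: establishing the chain of identities between the free energies, and then extracting the quantitative lower bound on $f^{Coul}(\beta)$ from the variance estimate \eqref{e.variance_coul2} of Theorem \ref{th.Coul}.

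For the identities, both equalities come from Poisson summation. For $f^{Vil}(\beta) - f^{GFF}(\beta) = f^{Coul}(\beta)$, I would use the Kadanoff decomposition of the Villain model: Poisson-summing each edge weight $\sum_{k\in \Z}e^{-\beta(\theta_e + 2\pi k)^2/2}$ and then integrating out the continuous angles (the Gaussian spin-wave) leaves a sum over $\Z$-valued currents $m$ modulo exact ones, which parametrizes the Coulomb charges $q = \d m$ on the dual faces. This produces the factorization
\[
Z^{Vil}_{\beta,\Lambda_n} = C_n^{Vil}\cdot Z^{GFF}_{\beta,\Lambda_n}\cdot Z^{Coul}_{\beta,\Lambda_n^*},
\]
with $\log C_n^{Vil} = o(|V_n|)$, yielding the first identity. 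For $f^{IV}(\beta^{-1}) - f^{GFF}(\beta^{-1}) = f^{Coul}(\beta)$, one applies Poisson summation directly to $Z^{IV}_{\beta^{-1},\Lambda_n^*} = \sum_{\Psi \in \Z^{V_n^*}} \exp(-\tfrac{1}{2\beta}\<{\Psi,-\Delta\Psi})$, which yields
\[
Z^{IV}_{\beta^{-1},\Lambda_n^*} = C_n^{IV}\cdot Z^{GFF}_{\beta^{-1},\Lambda_n^*}\cdot Z^{Coul}_{\beta,\Lambda_n^*},
\]
again with $\log C_n^{IV} = o(|V_n|)$. Since the Coulomb gas in both decompositions lives on the same dual lattice and its free energy per site is well defined in the thermodynamic limit, the two identities are consistent.

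For the lower bound, set $f^{Coul}(\beta) := \lim_n |F_n|^{-1}\log Z^{Coul}_{\beta,\Lambda_n}$ and note that $\partial_t\log Z^{Coul}_{t,\Lambda_n} = -\tfrac{(2\pi)^2}{2}\E^{Coul}_t\bigl[\<{q,-\Delta^{-1}q}\bigr]$ while $\log Z^{Coul}_{t,\Lambda_n}\to 0$ as $t\to \infty$ (only $q\equiv 0$ survives). Integration then gives
\[
f^{Coul}(\beta) = \lim_n \frac{(2\pi)^2}{2|F_n|}\int_\beta^\infty \E^{Coul}_{t,\Lambda_n}\bigl[\<{q,-\Delta^{-1}q}\bigr]\,dt.
\]
To lower bound the integrand, I would use the discrete Dirichlet form decomposition $\<{q,-\Delta^{-1}q} = \sum_{f\sim f'}(\Delta^{-1}q(f) - \Delta^{-1}q(f'))^2$, and apply \eqref{e.variance_coul2} to each local test function $g_{f,f'} = \1_f - \1_{f'}$, observing that $\E^{Coul}[q] = 0$ by the $q\mapsto -q$ symmetry. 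For $f,f'$ adjacent, $\<{g_{f,f'},(-\Delta)^{-1}g_{f,f'}}$ is the effective resistance between neighbouring faces, a universal positive constant, so \eqref{e.variance_coul2} gives $\E[(\Delta^{-1}q(f)-\Delta^{-1}q(f'))^2] \geq c\,e^{-\pi^2 t(1+o(1))}$. Summing over $\Theta(|F_n|)$ bulk edges and inserting into the integral yields $f^{Coul}(\beta) \geq e^{-\pi^2 \beta(1+o(1))}$.

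The main obstacle is to reconcile the \emph{local} nature of \eqref{e.variance_coul2} (valid only for $n\geq n_0(g)$) with summing over order $|F_n|$ many edges. I would handle this by restricting the sum to a bulk subwindow (say $\Lambda_{n/2}$), where translation invariance holds up to $o(1)$ corrections and the constant in \eqref{e.variance_coul2} is uniform in the location of the edge; the number of such edges remains $\Theta(|F_n|)$, preserving the normalization. A secondary technical point is ensuring the $o(1)$ correction in $e^{-\pi^2 t(1+o(1))}$ is uniform as $t\to\infty$ so that the integral retains its leading-order behaviour, which amounts to tracking the explicit $\beta$-dependence of $M(\beta)$ through \eqref{e.M}.
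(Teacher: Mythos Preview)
Your proposal is correct and follows the same architecture as the paper's proof, but with a dual choice of which partition function to differentiate. The paper works with $u\mapsto\log(Z^{IV}_u/Z^{GFF}_u)$: differentiating in $u=\beta^{-1}$ yields $\tfrac12\bigl(\E^{GFF}_u[\langle\d\phi,\d\phi\rangle]-\E^{IV}_u[\langle\d\Psi,\d\Psi\rangle]\bigr)$, which is then bounded edge by edge using the improved variance bound of Proposition~\ref{pr.improved_tildeM} (itself resting on Proposition~\ref{pr.tilde_M}) for bulk edges, with Ginibre~\eqref{e.Ginibre_IV} handling the boundary layer. You instead differentiate $\log Z^{Coul}_t$ directly in $t$, decompose $\langle q,(-\Delta)^{-1}q\rangle$ as a Dirichlet sum, and feed each bulk edge into~\eqref{e.variance_coul2}. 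The two routes are equivalent via the variance identity~\eqref{e.variance_equality}: each edge term in your Coulomb decomposition is, up to a factor $(2\pi)^2\beta^2$, exactly one term in the paper's IV-GFF decomposition. Your argument is arguably more direct since it bypasses the IV-GFF and Claim~\ref{c.ratio_to_1} (the boundary term $\log Z^{Coul}_{T,\Lambda_n}\geq 0$ is immediate), while the paper's choice makes the connection to $f^{IV}-f^{GFF}$ more transparent. The technical care you flag---restricting to bulk edges because $n_0(g)$ in~\eqref{e.variance_coul2} depends on the location of $g$, and controlling the $t$-dependence of that threshold when integrating---is exactly what the paper handles via the explicit $L_0(\beta)$ in Proposition~\ref{pr.tilde_M}; one truncates the integral at a finite $T$, takes $n\to\infty$ first, then $T\to\infty$.
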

\ni
We conjecture that the exponent  $\pi^2 \beta$ cannot be improved. See also Conjecture \ref{conj}.

\subsection{Idea of the proof.} We first start by coupling the Villain model to a random integer field $m$ living on the edges of the graph. The introduction of $m$ is made such that the couple $(\theta,m)$ follows a law given by a Gibbs measure, meaning that $\P^{Vil}_\beta((\theta,m))\propto \exp(-\beta \mathcal H (\theta,m))$. We call this coupling $(\theta,m)$ the {\em Villain coupling}. This random variable $m$ has many interesting properties. In particular, we show that its variance is lower bounded by that of a white noise times $(2\pi)^{-2}\beta^{-1}M(\beta)$.

The second step is to bijectively map a Villain coupling $(\theta,m)$ to a pair $(\phi,q)$, where $\phi$ is a GFF on the vertices and $q$ is an independent Coulomb gas on the faces. All of these models have the same temperature $\beta$. The mapping is so that $q$ is equal to $\d m$ the curl of $m$ (see Section \ref{ss.discrete}).
 This, together with the lower bound on the variance of $m$  implies \eqref{e.variance_coul}. The result concerning the Fourier transform of the two-point function is trickier. The proof follows the same idea as the proof of the classical central limit theorem, but at a certain point we need to see that the $\ell^2$ energy of the gradient of the Green's function is well-distributed throughout the domain.

Concerning the Villain model, we  study now the inverse of the mapping from $(\theta,m)$ to $(\phi,q)$, in particular, we concentrate in how to obtain $\theta$. We see that $\theta$ will have two independent contributions, one coming from the GFF $\phi$, which is called the {\em spin-wave} contribution. The second one comes from $q$, or to be more precise from $m$. This one is obtained from the inverse Laplacian of the divergence of $m$, i.e. $\Delta^{-1} \d^*m$. To prove that this term gives a contribution of the order of $M(\beta)$ times that of $\phi$ we need to do the same treatment as for the Fourier transform of the Coulomb gas, but this time we need to work on the vertices rather than on the faces.

Finally, the study of the integer-valued GFF is done by relating the Laplace transform of the IV-GFF living on the faces and at inverse-temperature $\beta^{-1}$ with the Fourier transform of the Villain model living on the vertices and at inverse temperature $\beta$ \footnote{This inversion of temperature from Villain/Coulomb to IV-GFF is of central importance and following the tradition of many earlier works we will always use $\beta$ for the inverse-temperature of Villain or Coulomb and $\beta^{-1}$ for the inverse temperature of IV-GFF.}. This induces a relationship between the Fourier transform of the Coulomb gas and that of the IV-GFF reminiscent of the modular invariance of the Riemann theta function. This relationship allows us to transfer the result from one model to the other. Finally, the upper bound on the maximum is obtained thanks to the fact that we control the Laplace transform of the $1$-point function of the IV-GFF.

\begin{remark}\label{r.YM}
The above idea of proof may adapt to more general settings. For example in \cite{GS3},  we apply this strategy to obtain improved spin-wave estimates for Wilson loops in $U(1)$ lattice gauge theory.
\end{remark}

\begin{remark}\label{r.lattices}
Let us point out that our techniques are based on discrete differential calculus and are as such not specific to the lattice $\Z^2$. All our statements extend to other regular lattices. For example if one would consider the triangular lattice, then statements such as Theorems \ref{th.ISW1} or \ref{th.Coul2} would look just the same except the Green function for Villain would be the Green function on the triangular grid, while the Green function $(-\Delta)^{-1}$ for the corresponding Coulomb model would be the Green function on its dual hexagonal lattice. The only place which would require some care would be the results from Section \ref{s.Kadanoff} (for example Theorem \ref{th.FEs}), where the harmonic extension of $\tfrac \pi 2\1_{x}-\tfrac \pi 2\1_{y}$ for some neighboring sites $x\sim y$ will in general give a different leading order than $\exp(-\pi^2 \beta)$. 
\end{remark}

\subsection{Organization of the paper.}
We start with a preliminaries section which gives some background on discrete differential calculus and introduces the different models studied in this paper. In Section \ref{s.coupling}, we introduce a joint coupling between a Villain model and a Coulomb gas.
We use this joint coupling in Section \ref{s.algo} to provide an efficient local sampling dynamics for the Coulomb gas. Then in Section \ref{s.transfer}, we study how to relate the IV-GFF to the Villain model and thus to the Coulomb gas. In Section \ref{s.Var}, we compare the variance of the integer-valued GFF and the unconditioned GFF at the level of variances, the bounds are then optimized to those of \cite{Kadanoff} in Section \ref{s.Kadanoff}. Section \ref{s.Fourier_Laplace} follows the line of Section \ref{s.Var} but instead of studying the variance we study the Laplace and Fourier transform of our models. Finally, in Section \ref{s.proof} we tie the final knots and we prove Theorem \ref{th.ISW1} and \ref{th.maxIV}.

The paper closes with three appendixes: Appendix \ref{a.root} analyses how the different models in this paper behave under re-rooting, Appendix \ref{a.M} gives explicit bounds on the error function $M$ while Appendix \ref{a.RP} proves the intuitive statement that the Villain field is locally nearly flat when $\beta\to\infty$.
\subsection{Acknowledgements.}
We wish to thank Roland Bauerschmidt, David Brydges and Tom Spencer for very useful discussions. We thank the referees for their very useful comments on the manuscript.
The research of C.G. is supported by the ERC grant LiKo 676999 and the research of A.S was  supported by the ERC grant LiKo 676999 and is now supported by Grant ANID AFB170001 and FONDECYT iniciación de investigación N° 11200085.

\section{Preliminaries}
\subsection{Integer-valued Gaussian random variable.}\label{ss.IG}
Integer-valued Gaussian (IG) random variables, sometimes called discrete Gaussian variables, are normal random variables conditioned to take values in $\Z$. More precisely, $X\sim \mathcal N^{IG}(a,\beta)$ if a.s. $X\in \Z$ and for any $k\in \Z$
\begin{equation}\label{e.def_IVG}
\P_{\beta,a}^{IG}\left[X=k \right]\propto e^{-\frac{\beta}{2}(k-a)^2}.
\end{equation}
In this work, we prefer to make reference to $(a,\beta)$ instead as $(\mu,\sigma^2)$ to remark that for most cases $a$ is not the mean of $X$, nor $\beta^{-1}$ is the variance of $X$. For $X$ an IG random variable of parameters $a$ and $\beta$ we define
\begin{align}
&\mu^{IG}(a,\beta):= \E^{IG}_{\beta,a}\left[X\right],\\
&\var^{IG}(a,\beta):= \var_{\beta,a}^{IG}\left[X \right]\\
&T^{IG}(a,\beta):=\E^{IG}_{\beta,a}\left[|X-\mu^{IG}(a,\beta)|^3 \right].\label{e.TIG}
\end{align}
The following proposition tells us about the smallest variance one can get for the low temperature regime.
\begin{proposition}\label{pr.Monotonicity_IG}
	Let $X$ be an integer-valued Gaussian random variable of parameter $(a,\beta)$. Then, for all $\beta>10$ and $a\in \R$
	\begin{align*}
	\var^{IG}(a,\beta)\geq \frac{1}{16}e^{-\frac{\beta (1-2a)}{2}}
	\end{align*}
\end{proposition}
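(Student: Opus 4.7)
The plan is to reduce the claim to the fundamental domain $a\in[0,1/2]$ by symmetry, and then to lower-bound the variance of $X\sim\mathcal N^{IG}(a,\beta)$ by restricting the distribution to its two dominant atoms $\{0,1\}$.

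First, directly from~\eqref{e.def_IVG}, the law of $\mathcal N^{IG}(a+1,\beta)$ is that of $\mathcal N^{IG}(a,\beta)+1$, while $\mathcal N^{IG}(-a,\beta)$ has the same law as $-\mathcal N^{IG}(a,\beta)$. Hence $a\mapsto\var^{IG}(a,\beta)$ is $1$-periodic and even, so I may assume $a\in[0,1/2]$; in that range $r:=e^{\beta(1-2a)/2}\geq 1$ and the target right-hand side is $\tfrac{1}{16r}$.

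Next, applying the law of total variance to the event $E:=\{X\in\{0,1\}\}$ yields $\var[X]\geq\P[E]\,\var[X\mid E]$. Since $\P[X=1\mid E]=1/(1+r)$, the conditional variance equals $r/(r+1)^2\geq\tfrac{1}{4r}$, using $(r+1)^2\leq 4r^2$ for $r\geq 1$. It therefore suffices to show $\P[E]\geq\tfrac12$, which immediately gives $\var^{IG}(a,\beta)\geq\tfrac{1}{8r}\geq\tfrac{1}{16r}$.

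For the last point I control the ratio $T/Z_0$ where $Z_0:=e^{-\beta a^2/2}+e^{-\beta(1-a)^2/2}$ and $T:=\sum_{k\notin\{0,1\}}e^{-\beta(k-a)^2/2}$. Splitting $T$ according to $k\leq-1$ and $k\geq 2$ and comparing each term to the closer of the two dominant atoms, an elementary computation valid for $a\in[0,1/2]$ gives
\begin{align*}
\frac{T}{Z_0}\leq\sum_{j\geq 1}e^{-\beta j(j+2a)/2}+\sum_{j\geq 2}e^{-\beta(j-1)(j+1-2a)/2}\leq \sum_{j\geq 1}e^{-\beta j^2/2}+\sum_{j\geq 2}e^{-\beta j(j-1)/2}.
\end{align*}
Under the hypothesis $\beta>10$ both geometric tails are bounded by $2e^{-5}$, so $\P[E]=Z_0/(Z_0+T)\geq(1+4e^{-5})^{-1}\geq\tfrac12$, closing the argument. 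The only mildly technical step is this final tail estimate; the threshold $\beta>10$ leaves comfortable slack, and a worse explicit constant in place of $\tfrac{1}{16}$ would allow extending the same reasoning to any $\beta$ bounded away from zero.
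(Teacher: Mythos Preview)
Your proof is correct and takes a genuinely different route from the paper's.

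The paper first establishes that $\mu^{IG}(a,\beta)\in[0,\tfrac12]$ for $a\in[0,\tfrac12]$ via a monotonicity argument (Harris inequality), and then lower-bounds the variance by keeping only the $n=0,1$ terms in the numerator of $\sum_n (n-\mu)^2 e^{-\beta(n-a)^2/2}\big/\sum_n e^{-\beta(n-a)^2/2}$, replacing $\mu$ by the upper bound $\tfrac12$ in the $n=1$ term. Your argument sidesteps the mean estimation entirely: the law of total variance inequality $\var[X]\geq\P[E]\,\var[X\mid E]$ with $E=\{X\in\{0,1\}\}$ reduces the problem to a Bernoulli variance $r/(1+r)^2$, which you bound below by $1/(4r)$. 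Both approaches then need essentially the same tail estimate to control the mass outside $\{0,1\}$ (your $\P[E]\geq\tfrac12$ versus the paper's factor $2$ in the denominator). Your route is more elementary in that it avoids locating the mean; the paper's route retains the $\mu^-$ term, which they remark could be used to sharpen the bound near $a=0$, though for the stated constant $\tfrac{1}{16}$ it is not needed.

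One small remark: as you implicitly recognise by reducing to $a\in[0,\tfrac12]$, the inequality as literally stated for all $a\in\R$ cannot hold (the right-hand side is unbounded in $a$ while the left is $1$-periodic). The paper's proof also works only on $[0,\tfrac12]$, which is all that is used downstream in bounding $M(\beta)$.
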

The proof of this Proposition is given in Appendix \ref{a.M}. We expect that a stronger monotonicity argument should hold but our analytical argument in the appendix is not sufficient for this. In other words, we \textit{expect} that for all $a\in \R$
\begin{align}\label{e.monotonicity_variance}
&\var^{IG}(0,\beta)\leq \var^{IG}(a,\beta)\leq \var^{IG}(0.5, \beta)
\end{align}
Let us recall the definition of  the error function $\beta\mapsto M(\beta)$ which will be used throughout in this text.
\begin{align}
M(\beta):= (2\pi)^2\beta\inf_{a\in[0,1/2]}\var^{IG}(a,(2\pi)^2\beta).
\end{align}
An immediate consequence of the above proposition, which will also be shown in Appendix \ref{a.M}, is the following result on the behaviour of the error function $M(\beta)$ as $\beta \to \infty$.
\begin{corollary}\label{c.M}
We have $M(\beta) \asymp \beta \exp(-\frac{(2\pi)^2}{2} \beta)$ as $\beta\to \infty$ and we will use extensively  the following explicit lower bound: for any $\beta\geq\tfrac13$, 
\begin{align}\label{}
M(\beta) \geq 2\beta \exp\left (-\frac{(2\pi)^2}{2} \beta\right )\,.
\end{align}
{\em N.B.Assuming the above monotonicity on
$\var^{IG}(a,\beta)$ one should have
that as $\beta\to \infty$, $M(\beta) \sim 2(2\pi)^2 \beta e^{-\frac{(2\pi)^2} 2 \beta}$.}
\end{corollary}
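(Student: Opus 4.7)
The plan is to read both parts of the corollary directly off the definition
\[
M(\beta) = (2\pi)^2 \beta \,\inf_{a\in[0,1/2]} \var^{IG}\!\bigl(a,(2\pi)^2\beta\bigr),
\]
using Proposition \ref{pr.Monotonicity_IG} for the lower bound and a one-line explicit computation at $a=0$ for the matching upper bound.

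For the explicit lower bound valid for $\beta \geq 1/3$, I would apply Proposition \ref{pr.Monotonicity_IG} with the parameter $(2\pi)^2\beta$ in place of $\beta$ in the proposition. Since $(2\pi)^2 \cdot \tfrac 13 > 13 > 10$, the hypothesis is satisfied throughout the range. The proposition yields
\[
\var^{IG}\!\bigl(a,(2\pi)^2\beta\bigr) \geq \tfrac 1{16} \exp\!\Bigl( -\tfrac{(2\pi)^2\beta(1-2a)}{2} \Bigr)
\]
uniformly in $a\in[0,1/2]$; the right-hand side is minimized at $a=0$ (where the exponent is largest in absolute value), so the infimum is bounded below by $\tfrac 1{16} e^{-(2\pi)^2\beta/2}$. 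Multiplying by $(2\pi)^2\beta$ gives
\[
M(\beta) \geq \frac{(2\pi)^2}{16}\,\beta\,e^{-(2\pi)^2\beta/2} \geq 2\beta \, e^{-(2\pi)^2\beta/2},
\]
the last inequality because $(2\pi)^2/16 \approx 2.47 > 2$. This is the quantitative bound quoted in the corollary.

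For the asymptotic equivalence $M(\beta) \asymp \beta \, e^{-(2\pi)^2\beta/2}$, the $\gtrsim$ direction is immediate from the inequality just derived. For the $\lesssim$ direction I would simply pick the test point $a=0$ in the infimum: there the law \eqref{e.def_IVG} is symmetric, hence $\mu^{IG}(0,\beta')=0$ and
\[
\var^{IG}(0,\beta') \;=\; \frac{\sum_{k\in\Z} k^2 e^{-\beta' k^2/2}}{\sum_{k\in\Z} e^{-\beta' k^2/2}} \;=\; \frac{2 e^{-\beta'/2} + O(e^{-2\beta'})}{1 + 2 e^{-\beta'/2} + O(e^{-2\beta'})}  \;\sim\; 2 e^{-\beta'/2}
\]
as $\beta'\to\infty$, the $k=\pm 1$ terms dominating the numerator and $k=0$ the denominator. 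Taking $\beta' = (2\pi)^2\beta$ and multiplying by $(2\pi)^2\beta$ gives $M(\beta) \leq 2(2\pi)^2\beta\, e^{-(2\pi)^2\beta/2}(1+o(1))$, which matches the lower bound up to a constant. The note in the corollary about the conjectural $\sim 2(2\pi)^2\beta\, e^{-(2\pi)^2\beta/2}$ would follow at once if monotonicity \eqref{e.monotonicity_variance} were available, since the infimum would then be attained at $a=0$.

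There is no real obstacle: the entire statement is a direct corollary of Proposition \ref{pr.Monotonicity_IG} together with the elementary theta-series estimate at $a=0$. The only verification worth spelling out is that $(2\pi)^2\beta$ lies in the regime where the proposition applies throughout $\beta\geq 1/3$, and the arithmetic check $(2\pi)^2/16 \geq 2$ that produces the clean constant $2$ in the displayed bound.
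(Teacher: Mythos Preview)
Your proof is correct and essentially identical to the paper's own argument: both apply Proposition \ref{pr.Monotonicity_IG} with $(2\pi)^2\beta$ in place of $\beta$ (checking $(2\pi)^2/3>10$) to get the lower bound with constant $(2\pi)^2/16>2$, and both obtain the matching upper bound by evaluating at the test point $a=0$. Your write-up is in fact slightly more explicit about the theta-series computation at $a=0$ than the paper's one-line invocation of $M_0(\beta)$.
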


Finally, we will need the following control (also proved in Appendix \ref{a.M}) on the ratio between the second and third moment of an IV-Gaussian.
\begin{lemma}\label{l.ratioK}
	We have that
	\begin{align}\label{e.K_beta}
	K_\beta:=
	\sup_{\hat \beta \geq \beta} \sup_{a\in \R} \frac{T^{IG}(a,\hat \beta)}{\var^{IG}(a,\hat \beta)}\in(0,\infty)
	\end{align}
\end{lemma}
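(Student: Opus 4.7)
The plan is to use the symmetries of the integer-valued Gaussian law to reduce the parameter $a$ to a compact interval, treat the regime of bounded $\hat\beta$ by continuity and compactness, and handle the asymptotics $\hat\beta \to \infty$ by truncating to a three-point distribution.

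First, since shifting $a$ by an integer shifts $X \sim \mathcal{N}^{IG}(a,\hat\beta)$ by the same integer, and since $\mathcal{N}^{IG}(-a,\hat\beta)$ is the law of $-X$ when $X \sim \mathcal{N}^{IG}(a,\hat\beta)$, both $\var^{IG}(a,\hat\beta)$ and $T^{IG}(a,\hat\beta)$ are $1$-periodic and even in $a$. It therefore suffices to bound $T^{IG}/\var^{IG}$ uniformly on $(a,\hat\beta) \in [0, 1/2] \times [\beta, \infty)$. The defining series are absolutely convergent and jointly continuous on $\R \times (0,\infty)$, and $\var^{IG}$ never vanishes since the IG distribution is never a point mass. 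Thus $T^{IG}/\var^{IG}$ is a continuous, positive function on $[0, 1/2] \times [\beta, \infty)$, and by compactness it is bounded on each rectangle $[0, 1/2] \times [\beta, B]$ with $B < \infty$.

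The main task is then to show the ratio stays bounded as $\hat\beta \to \infty$, uniformly in $a \in [0, 1/2]$. For such $a$ the weights $e^{-\hat\beta(k-a)^2/2}$ concentrate on $\{-1, 0, 1\}$: since $(k-a)^2 - (1-a)^2 \geq 3/2$ for every $|k| \geq 2$ and every $a \in [0, 1/2]$, the probability mass outside $\{-1, 0, 1\}$ is at most $O(e^{-3\hat\beta/4})$ times the mass inside. Truncating $X$ to this set introduces only exponentially small errors in $\mu^{IG}$, $\var^{IG}$ and $T^{IG}$. For the truncated three-point law an elementary computation in the parameters $(p_{-1}, p_0, p_1)$ shows that its ratio $\tilde T/\widetilde{\var}$ is bounded by a universal constant (indeed by $\leq 1$): the extreme cases $a = 0$ (symmetric law on $\{-1, 1\}$ with ratio $1$) and $a = 1/2$ (Bernoulli on $\{0, 1\}$ with ratio $p^2 + (1-p)^2 \leq 1$) give the two endpoints, and the interpolation in between is continuous and remains bounded.

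To control the exponentially small tail corrections uniformly in $a$, I would combine the above with the lower bound $\var^{IG}(a, \hat\beta) \geq \tfrac{1}{16} e^{-\hat\beta(1-2a)/2}$ from Proposition~\ref{pr.Monotonicity_IG}: since $(1-2a)/2 \leq 1/2$ while the tail contributions to $T^{IG}$ decay at rate at least $e^{-\hat\beta}$, the ratio of the tail-of-$T^{IG}$ to $\var^{IG}$ is at most $O(e^{-\hat\beta/2})$, uniformly in $a \in [0, 1/2]$. This yields $\limsup_{\hat\beta\to\infty}\sup_{a \in [0, 1/2]} T^{IG}/\var^{IG} < \infty$, which combined with the earlier compactness step gives $K_\beta < \infty$. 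The main obstacle is precisely this uniformity in $a$ in the large-$\hat\beta$ regime: the three-point distribution degenerates in different ways at the two endpoints of $[0, 1/2]$, so one must match the decay rate of the tail of $T^{IG}$ with the (also $a$-dependent) lower bound on $\var^{IG}$, which is what Proposition~\ref{pr.Monotonicity_IG} delivers.
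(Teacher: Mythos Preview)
Your proposal is correct and follows essentially the same approach as the paper: reduce to $a\in[0,1/2]$ by periodicity and symmetry, use continuity and compactness on $[0,1/2]\times[\beta,B]$, and for large $\hat\beta$ truncate to $\{-1,0,1\}$ and control the tail against the variance lower bound.

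The paper's treatment of the three--point part is a little more direct than yours. Rather than passing to a truncated law with its own mean $\tilde\mu$ and then arguing that the truncation errors in $\mu^{IG}$, $\var^{IG}$, $T^{IG}$ are negligible compared to the (possibly exponentially small) variance, the paper simply observes that $\mu^{IG}(a,\hat\beta)\in[0,1/2]$, so $|k-\mu^{IG}|<2$ for $k\in\{-1,0,1\}$, whence $|k-\mu^{IG}|^3\le 2(k-\mu^{IG})^2$ pointwise and the inner contribution to $T^{IG}/\var^{IG}$ is at once $\le 2$. This bypasses the need to compare truncated and true moments. Two small inaccuracies in your write-up: the three-point ratio is not in general $\le 1$ (at $a=0$ the truncated law is on $\{-1,0,1\}$, not on $\{-1,1\}$; the uniform bound one gets is $\le 2$, as above), and the sentence ``the interpolation in between is continuous and remains bounded'' does not by itself give uniform boundedness, since the $\hat\beta$-range is not compact --- but the pointwise inequality $|k-\mu^{IG}|^3\le 2(k-\mu^{IG})^2$ is exactly what fills that gap.
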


\subsection{Discrete differential calculus.}\label{ss.discrete}
In this subsection, we give a short presentation of discrete differential calculus based on \cite{Roland}. For further results, see \cite{Roland,Sourav}.
  
\subsubsection{Boundary conditions.} \label{sss.boundary_condition}
Most of the result of this papers are not concern with a specific graph $G$ but only with the fact that it is two-dimensional. However, to fix ideas and to obtain results more aligned with the literature it will be simpler to only work with certain specific graphs.

\begin{definition}[Boundary conditions]We define the two types of graph we are interested in
	\begin{enumerate}
		\item \textbf{Free boundary condition:} We say that we are working with free boundary condition, if the underlying graph is $\Lambda_n^{free}:=[-n,n]^2\cap \Z^2$ embedded in the complex sphere. When we need a marked point $v_0$, unless stated otherwise, we will take $v_0=(0,0)$. When we need a marked face we typically take $f_0$ to be the one containing the point $\infty$, i.e., the only face with more than $4$ neighbours.
		\begin{figure}[h!]
			\includegraphics[width=0.3\textwidth]{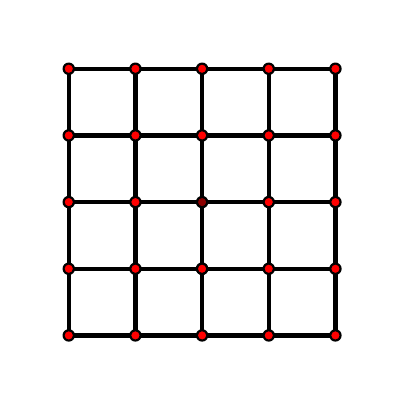}
			\includegraphics[width=0.3\textwidth]{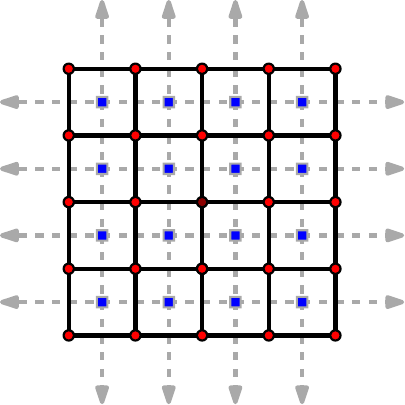}
			\caption{To the left the graph associated with free boundary condition, normally we take $v_0=(0,0)$ the dark red point. To the right one can also see the dual of the graph, in this graph $f_0=\infty$ the face connected to all the outgoing arrows.}
		\end{figure}
		\item \textbf{Zero boundary condition:} We say that we are working with zero boundary condition, if the underlying graph is the dual of a free-boundary graph\footnote{Note here that the dual of $\Lambda_n^{free}$ defined in (1) cannot be of this form for parity reasons. If one wants to see $\Lambda_n^0$ as the dual of a free-boundary graph, we would need to consider the set of vertices $([0,2n+1]^2 \cap \Z^2)- (n+\tfrac 1 2, n+\tfrac 1 2)$ instead of $\Lambda_{n+1}^{free}$. For simplicitly, we will only work with domains centred at 0.}. In other words, the graph is $\Lambda_n^{0}:=([-n,n]^2\cap \Z) \cup \{\infty \}$ where we connect all vertices in $\partial [-n,n]^2$ to the vertex $\infty$. This graph is naturally embedded in the sphere. We choose, unless specified otherwise, the marked vertex $v_0=\infty$ and the marked face $f_0$ as a face in the middle of the graph.
		\begin{figure}[h!]
			\includegraphics[width=0.3\textwidth]{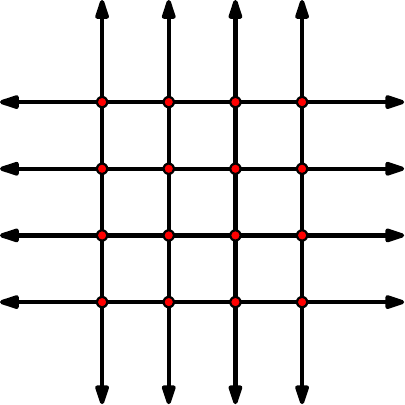}
			\includegraphics[width=0.3\textwidth]{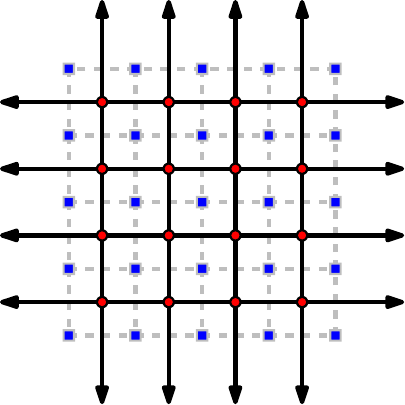}
			\caption{To the left, the graph associated with zero boundary condition. To the right one can also see the dual of the graph.}
		\end{figure} 
	\end{enumerate}
\end{definition}

\subsubsection{Basics of discrete differential calculus.}\label{sss.calculus}
Let $\Lambda_n$ be a graph defined as in the subsection before. We denote the set of vertices by $V$, the set of edges by $E$ and the sets of its faces by $F$. We call $\overrightarrow{E}$ the set of oriented edges and $\overrightarrow{F}$ the set of (cyclically-)oriented faces. Similarly to differential geometry we will denote
\begin{itemize}
	\item 0-form: functions $w:V\mapsto \R$ with $w(v_0)=0$.
	\item 1-form: functions $h:\overrightarrow E\mapsto \R $ such that for all $\overrightarrow{e}\in \overrightarrow E$, $h(\overrightarrow e)=-h(\overleftarrow{e})$. 
	\item 2-form: functions $g:\overrightarrow F\mapsto \R$ such that for all $\overrightarrow{f}\in \overrightarrow F$, $g(\overrightarrow f)=-g(\overleftarrow{f})$ and $g(f_0)=0$. 
\end{itemize}

\begin{remark}\label{}
Notice here that $0$-forms and $2$-forms are rooted (resp. in $v_0$ and $f_0$), but $1$-forms are not. These choices will make the Laplacian operator $\Delta$ (defined below) invertible on each of these  $k$-forms. 
\end{remark}

Let us, now, define an inner product.
 To do that, it is useful to fix an orientation for $E$. That is to say to consider $E$ as a subset of $\overrightarrow{E}$	where for each $e\in E$ either $\overrightarrow{e}\in \overrightarrow{E}$ or (exclusively)  $\overleftarrow{e}\in \overrightarrow{E}$.
\begin{align*}
&\langle w_1, w_2 \rangle = \sum_{v\in V} w_1(v)w_2(v),\\
&\langle h_1, h_2 \rangle = \frac{1}{2}\sum_{\overrightarrow e \in \overrightarrow{E}} h_1(\overrightarrow e)h_2(\overrightarrow{e})= \sum_{e\in E} h_1(e)h_2(e),\\
&\langle g_1,g_2\rangle = \frac{1}{2}\sum_{\overrightarrow f \in \overrightarrow{f}} g_1(\overrightarrow f)g_2(\overrightarrow{f})= \sum_{f\in f} g_1(f)g_2(f).
\end{align*}

We, now, define an operator $\d$, the discrete exterior derivative, that transforms an $n$-form to an $n+1$-form, in the following way
\begin{align*}
&\d w ((v_1,v_2))= w(v_2)-w(v_1),\\
&\d h(\overrightarrow{f})=\1_{f\neq f_0}\left( \sum_{\overrightarrow{e}\in \overrightarrow{f}} h(\overrightarrow{e})\right),\\
& \d g \equiv 0.
\end{align*}
Note that $\d$ is a linear function, and thus we define $\d^*$ as $-\d^t$. That is to say \footnote{The choice of the minus sign is because we want our Laplacian to be negative definite, as in the continuum case. We follow analyst's convention here.}
\begin{align*}
&\d^*h(v)= \1_{v\neq v_0}\left( \sum_{v'\sim v} h(\overrightarrow{vv'})\right), \\
&\d^*g(\overrightarrow{e})=\sum_{\overrightarrow{f} \owns \overrightarrow{e}} h(\overleftarrow{f}). \\
&\d^* w \equiv 0
\end{align*}

Finally, let us define the Laplacian operator
\begin{align*}
\Delta= \d \d^*+ \d^*\d,
\end{align*}
and note that it commutes with both $\d$ and $\d^*$.

The main usefulness of the operator $\d$ is given in the following well-known proposition. (We refer the reader for example to \cite{Sourav} where a slightly different setup is used).
\begin{proposition}\label{pr.basic_calculus}The following are true
	\begin{enumerate}
		\item $\d \d=0$, and thus for all  0-forms $w$ and all 2-forms $g$ we have that
		\begin{equation}
		\langle \d w, \d^*g\rangle =0.
		\end{equation}
		\item(Discrete Poincaré lemma for $\d$) For all $1,2$-forms $f$  such that $\d f=0$, we have that ther exists a $k-1$-form $n_f$ such that
		\begin{align*}
		\d n_f = f.
		\end{align*}
		Furthermore, if $f$ takes values in the integers, one can choose $n_f$ also taking values in the integers.
		\item(Discrete Poincaré lemma for $\d^*$) For all $0,1$-forms $f$  such that $\d^* f=0$, we have that there exists a $k+1$-form $n^*_f$ such that
		\begin{align*}
		\d^* n^*_f = f.
		\end{align*}
		Furthermore, if $f$ takes values in the integers, one can choose $n^*_f$ also taking values in the integers.
		
		\item The Laplacian operator is strictly negative definite (in all domains, i.e., in the vertices, faces, edges), in particular it is an invertible operator. 		
	\end{enumerate}
	
\end{proposition}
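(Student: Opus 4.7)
My strategy is to prove the four items in the order (1), (4) partially (for $0$- and $2$-forms), (2), (3), and finally (4) for $1$-forms, the idea being that the Poincar\'e lemmas, proved by explicit constructions on the planar graph, will feed back into the analysis of the Laplacian on $1$-forms. The main (mild) obstacle I anticipate is producing \emph{integer}-valued primitives in (2) and (3); this will require a constructive (spanning-tree) argument rather than the purely linear-algebraic Hodge decomposition which suffices over $\R$.

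\textbf{Part (1).} For a $0$-form $w$ and an oriented face $\overrightarrow{f}$, expanding
$\d\d w(\overrightarrow{f}) = \sum_{\overrightarrow{e}\in \overrightarrow{f}} \bigl(w(\mathrm{head}(\overrightarrow{e}))-w(\mathrm{tail}(\overrightarrow{e}))\bigr)$
gives a telescoping sum along the closed boundary of $f$, hence equals $0$ (the indicator $\1_{f\neq f_0}$ is harmless). The orthogonality $\langle \d w, \d^* g\rangle=0$ then follows from the definition $\d^*=-\d^t$: namely $\langle \d w, \d^* g\rangle = -\langle \d w, \d^t g\rangle = -\langle \d\d w, g\rangle = 0$.

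\textbf{Parts (2) and (3).} For the $1$-form case of (2), given $h$ with $\d h=0$, I will define $w(v_0):=0$ and, for each other vertex $v$, set $w(v):=\sum_{e\in \gamma_v}h(\overrightarrow{e})$ where $\gamma_v$ is any path from $v_0$ to $v$. Since $\Lambda_n^{\mathrm{free}}$ (resp.\ $\Lambda_n^0$) embeds in the sphere as a simply connected CW-complex, any two such paths differ by a $\Z$-linear combination of oriented face boundaries; the sum of $h$ around any such boundary equals $\d h$ on that face, which is $0$. Hence $w$ is well defined, integer whenever $h$ is, and satisfies $\d w=h$. For the $2$-form case of (2), given any $g$ (with $g(f_0)=0$), fix a spanning tree $T^*$ of the dual graph $(F,E^*)$ rooted at $f_0$; set $h(e):=0$ on every primal edge whose dual lies outside $T^*$, and determine $h$ on the remaining edges by processing faces $f\neq f_0$ in order of decreasing $T^*$-distance from $f_0$. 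At each step, the constraint $\d h(f)=g(f)$ is a single linear equation in the unique unknown attached to the parent edge of $f$ in $T^*$, so it can be solved explicitly; integrality of $g$ propagates to integrality of $h$. Item (3) follows by applying (2) to the dual planar graph, since in dimension two $\d^*$ on primal $k$-forms corresponds (up to sign) to $\d$ on dual $(2-k)$-forms, and the two boundary-condition setups $\Lambda_n^{\mathrm{free}}$ and $\Lambda_n^0$ are dual to each other.

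\textbf{Part (4).} For $0$-forms, $\d^* w\equiv 0$ gives $\Delta w=\d^*\d w$, hence $\langle \Delta w,w\rangle = -\|\d w\|^2\leq 0$, using $\d^*=-\d^t$. Equality would force $\d w\equiv 0$, i.e.\ $w$ constant on the (connected) graph, so $w\equiv 0$ by $w(v_0)=0$. The case of $2$-forms is entirely analogous using $\d g\equiv 0$, the connectivity of the dual graph, and the rooting $g(f_0)=0$. For $1$-forms, I will combine both terms of $\Delta = \d\d^*+\d^*\d$ to get
\begin{equation*}
\langle \Delta h, h\rangle \;=\; -\|\d^* h\|^2 - \|\d h\|^2 \;\leq\; 0,
\end{equation*}
with equality forcing $\d h=0$ and $\d^* h=0$ simultaneously. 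By the Poincar\'e lemma (2) already proved, $\d h=0$ gives $h=\d w$ for some $0$-form $w$; then $\d^* h=\d^*\d w=\Delta w=0$ forces $w=0$ by the strict negative definiteness on $0$-forms, hence $h=0$. This completes the proof of (4).
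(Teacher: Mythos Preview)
Your proof is correct. The paper does not actually prove this proposition: it simply states it as ``well-known'' and refers the reader to \cite{Sourav}, so there is no paper proof to compare against. Your argument supplies the standard details in a clean way, and the strategy of using the Poincar\'e lemma (2) to close the $1$-form case of (4) is the right one.

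One small remark on Part (2), $1$-form case: your well-definedness argument relies on the fact that the circulation of $h$ around \emph{every} face vanishes, including $f_0$. Strictly speaking, the hypothesis $\d h=0$ only controls faces $f\neq f_0$, because of the indicator $\1_{f\neq f_0}$ built into the definition of $\d$ on $1$-forms. The missing face $f_0$ is recovered by the global identity $\sum_{f}\sum_{\overrightarrow{e}\in\overrightarrow{f}} h(\overrightarrow{e})=0$ on the sphere (each edge contributes twice with opposite signs), so the circulation around $f_0$ is forced to vanish as well. You implicitly use this when invoking simple connectivity, but it is worth making explicit. The rest of the argument---the spanning-tree construction for the $2$-form case, the planar duality for (3), and the quadratic-form computation for (4)---is accurate as written.
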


\begin{remark}\label{r.no pinned}
Note that we made a choice here to fix the values of $0$-forms, resp. $2$-forms, to $0$ in $v_0$, resp. $f_0$.
	It is also possible to study $0$ and $2$-forms that are not pinned in the marked point $v_0$ and $f_0$. In this case, one needs to properly adjust the definition of $\d$ and $\d^*$ so that it is not always $0$ in $f_0$ and $v_0$ respectively.
In this framework, one has that the statement (2) and (3) of Proposition \ref{pr.basic_calculus} are not always true. One needs to ask that $w$, resp. $f$ are such that $\sum w(v)=0$ and  $\sum g(f)=0$\footnote{For this sum to make sense, one needs a special orientation on the faces. This orientation is given on each face by the outside normal vector on the sphere.}.
	\end{remark}

\subsection{Some statistical physics models.} In this section, we will rigorously introduce the models we are going to work with, as well as proving some basic results that will be useful later.
\subsubsection{Gaussian free field.} \label{ss.GFF}

We say that a $0$-form $\phi$ is a GFF if 
\begin{align*}
\P_{\beta}^{GFF}(d\phi)\propto e^{-\frac{\beta}{2}\langle \d\phi,\d\phi \rangle} d\phi.
\end{align*}
We will sometimes also need to work with GFF that are $2$-forms. In this case, $\phi$ is a GFF if
\begin{align*}
\P_{\beta}^{GFF}(d\phi)\propto e^{-\frac{\beta}{2}\langle \d^*\phi,\d^*\phi \rangle} d\phi.
\end{align*}
In general, a GFF is a $k$-form such that
\begin{align*}
\P_{\beta}^{GFF}(d\phi)\propto e^{-\frac{\beta}{2}\langle \phi,(-\Delta)\phi \rangle} d\phi.
\end{align*}
We will call $Z^{GFF}_\beta$ its corresponding partition function.

An important property of the GFF is that it has the law of a Gaussian vector. More precisely, it can be characterised as the centred Gaussian vector with covariance
\begin{align*}
\E^{GFF}_{\beta}\left[\phi(v_1)\phi(v_2) \right]=  \frac{1}{\beta } G(v_1,v_2).
\end{align*}
Here, $G(\cdot,\cdot)$ is the Green's function of the graph-Laplacian for the associated graph with boundary condition $0$ in the marked vertex or face. Whenever we need to specify the graph $\Lambda$, we will write it on subscript, i.e., $G_\Lambda(\cdot,\cdot)$. We shall need the following asymptotics of the Green's function in $\Lambda_n$ as $n\to \infty$.
\begin{proposition}\label{pr.green}
We have the following well-known asymptotics\footnote{
Statement (1) can be found for example in Proposition 6.3.2 in \cite{LawlerLimic} or can by obtained direct computation from Stirling's formula. Statement (2) follows by the same analysis as for Theorem 4.4.4 in \cite{LawlerLimic} or by translating this estimate in terms of simple RWs. Finally, the third estimate follows by using the reflection principle (before applying the reflection principle, it is convenient to add self-loops on boundary vertices and two self-loops on corners vortices: these self-loops do not affect the operator $\Delta$ but make the correspondence with the reflected walk easier).}
for the Green's function:
	\begin{enumerate}
		\item Let us work with $0$-boundary condition, i.e., the graph $\Lambda_n^0$. Then, 
		\begin{align*}
		& G_{\Lambda_n^0}(0,0) = \frac{1}{2\pi} \log(n) + O(1),
		\end{align*}
		 here, $O(1)$ is with respect to $n$.
		\item  Let us work with free-boundary condition, i.e., the graph $\Lambda_n^{free}$. Then, for all $0<\epsilon<1$ and all $v\in \Lambda_{\lfloor(1-\epsilon) n \rfloor}$
		\begin{align*}
		&G_{\Lambda_n^{free},v_0=0}(v,v) = \frac{1}{\pi}\log(|v|+1)+O(1).
		\end{align*}
		Additionally, for all $v\in \partial \Lambda_n$ we have that 
		\begin{align*}
		G_{\Lambda_n^{free},v_0=0}(v,v) \geq \frac{3}{2\pi} \log(|v|) + O(1).
		\end{align*}
(With an equality $+O(1)$ rather than a lower bound if $v$ is at macroscopic distance from the corners of $\p\Lambda_n$).
	\end{enumerate}
\end{proposition}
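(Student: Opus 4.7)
The proof reduces each of the three estimates to classical random-walk computations via the probabilistic representation of the Green's function, $G_\Lambda(v,w)$ being (up to a factor depending on the graph degree) the expected number of visits of $w$ by a simple random walk started at $v$ before absorption---either at the pinned vertex or, in the Dirichlet case, on $\partial\Lambda_n$. The strategy is to treat the three items in turn, indicating where existing asymptotics apply directly and where the reflection principle with added self-loops is required.

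For item~(1), the quantity $G_{\Lambda_n^0}(0,0)$ is the Dirichlet Green's function of a $2$-dimensional square evaluated at its centre. The asymptotic $\frac{1}{2\pi}\log n+O(1)$ follows from the standard potential kernel estimate $a(x)=\frac{2}{\pi}\log|x|+\gamma+O(|x|^{-2})$ combined with an optional-stopping argument for $a$ stopped at the exit time, as carried out in Proposition~6.3.2 of \cite{LawlerLimic}. This is already explicit in the literature and does not require any new work.

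For the bulk part of item~(2), with free boundary conditions and pinning at $0$, the variance $G(v,v)$ equals $\Var{\phi(v)-\phi(0)}$ for the unpinned free-b.c.\ GFF. For $v$ at macroscopic distance from $\partial\Lambda_n$, the harmonic measure seen from $v$ charges $\partial\Lambda_n$ rather uniformly, and a direct comparison (using e.g.\ the domain Markov decomposition) with the infinite-volume situation yields $G(v,v)=\frac{1}{\pi}\log|v|+O(1)$. This is Theorem~4.4.4 of \cite{LawlerLimic} transcribed to Green's functions; the factor $\frac{1}{\pi}$ is twice the Dirichlet coefficient $\frac{1}{2\pi}$, which heuristically reflects that pinning at a single point is much weaker than pinning on the whole boundary.

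The boundary estimate in item~(2) is the one requiring the most care, and is where we expect the main technical burden. The plan is first to add self-loops to the boundary vertices of $\Lambda_n^{free}$---one on each non-corner edge-vertex and two on each corner vertex---since this preserves the Laplacian $\Delta$ (hence $G$) while making every vertex $4$-regular, so that the associated random walk becomes a standard SRW reflecting off $\partial\Lambda_n$. For a non-corner boundary point $v$, reflecting $\Lambda_n$ across the boundary edge through $v$ couples the reflected-then-killed-at-$0$ walk on $\Lambda_n$ to a walk on the doubled domain killed at $\{0,0^*\}$, where $0^*$ is the image of $0$. The identity $G^{\text{refl}}(v,v)=G^{\text{doubled}}(v,v)+G^{\text{doubled}}(v,v^*)$ with $v^*=v$ on the reflection axis, together with a method-of-images computation for the doubled-domain Green's function pinned at $\{0,0^*\}$, produces the coefficient $\frac{3}{2\pi}$; heuristically, this decomposes as $\frac{2}{2\pi}$ coming from the doubled UV behaviour at $v$ (because $v$ sits on the reflection axis) plus an extra $\frac{1}{2\pi}$ from the pinning contribution at the interior point $0$. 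At a corner vertex, a second perpendicular reflection produces four images of $0$ and the analogous computation yields a strictly larger coefficient ($\frac{5}{2\pi}$), so the lower bound $\frac{3}{2\pi}\log|v|+O(1)$ holds uniformly in $v\in\partial\Lambda_n$. The $O(1)$ errors coming from replacing the finite-volume free-b.c.\ situation by an infinite half-plane (or quadrant) are controlled exactly as in part~(1).
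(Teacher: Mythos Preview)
Your proposal is correct and follows essentially the same approach as the paper, which in fact provides no formal proof beyond the footnote: item~(1) via Proposition~6.3.2 of \cite{LawlerLimic}, the bulk part of item~(2) via Theorem~4.4.4 of \cite{LawlerLimic}, and the boundary estimate via the reflection principle after adding self-loops to regularise the boundary walk. Your write-up is somewhat more detailed than the paper's sketch (in particular your method-of-images computation for the doubled domain and the corner analysis), but the underlying ideas coincide.
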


The GFF is deeply related to the white noise. Here, by a white noise $W$ we understand a $1$-form with probability measure associated with
\begin{align*}
\P_{\beta}^{WN}\left(dW \right)\propto e^{-\frac{\beta}{2}\langle W,W\rangle } dW.
\end{align*}
In other words, $(W(e))_{e\in E}$ is given by independent centred Gaussian random variables with variance $\beta^{-1}$. The main relation between GFF is given in the following proposition. A closely related result can be found in \cite{Aru} and the continuum case is done in \cite{JC}.
\begin{proposition}\label{pr.GFF_WN}
	Let $W$ be a white noise at inverse-temperature $\beta$ and define
	\begin{align}
	&\phi:= \Delta^{-1}\d^* W,\\
	&\widetilde \phi:= \Delta^{-1}\d W.
	\end{align}
	Then, $\phi$ and $\widetilde \phi$ are independent, $\phi$ has the law of a GFF on the  vertices at inverse-temperature $\beta$ and $\widetilde \phi$ has the law of a GFF on the faces at inverse temperature $\beta$. 
\end{proposition}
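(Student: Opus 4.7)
The plan is a Hodge-type decomposition of the white noise $W$ into an exact piece $\d\phi$ and a co-exact piece $\d^*\widetilde\phi$, followed by a change of variables that turns the Gaussian density of $W$ into the product of the two GFF densities.

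First I would establish the identity $W = \d\phi + \d^*\widetilde\phi$. Since $\Delta$ is invertible on $1$-forms by Proposition~\ref{pr.basic_calculus}~(4),
\begin{align*}
W \;=\; \Delta\Delta^{-1}W \;=\; \d\d^*\Delta^{-1}W + \d^*\d\Delta^{-1}W,
\end{align*}
and the commutations $\d^*\Delta=\Delta\d^*$ and $\d\Delta=\Delta\d$ (both immediate from $\d\d=0$ and its dual $\d^*\d^*=0$) give $\d^*\Delta^{-1}W=\Delta^{-1}\d^*W=\phi$ and $\d\Delta^{-1}W=\Delta^{-1}\d W=\widetilde\phi$. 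Next I would check that the linear map $\Theta:W\mapsto(\phi,\widetilde\phi)$ is a bijection from $1$-forms onto (rooted $0$-forms)$\,\oplus\,$(rooted $2$-forms): injectivity is clear since $\phi=0=\widetilde\phi$ forces $\d^*W=\d W=0$, whence $\Delta W=0$ and $W=0$, and the matching of dimensions then follows from Euler's formula $|V|-|E|+|F|=2$ on the sphere once the pinnings at $v_0,f_0$ are accounted for.

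The final step is to split the energy orthogonally. Using $\d\d\phi=0$ (Proposition~\ref{pr.basic_calculus}~(1)),
\begin{align*}
\langle W,W\rangle \;=\; \langle \d\phi,\d\phi\rangle + 2\langle\d\phi,\d^*\widetilde\phi\rangle + \langle \d^*\widetilde\phi,\d^*\widetilde\phi\rangle \;=\; \langle \d\phi,\d\phi\rangle + \langle \d^*\widetilde\phi,\d^*\widetilde\phi\rangle,
\end{align*}
since the cross term is $\langle\d\phi,\d^*\widetilde\phi\rangle = -\langle\d\d\phi,\widetilde\phi\rangle = 0$. Because $\Theta$ is linear with constant Jacobian, pushing the white noise density $\propto\exp(-\tfrac\beta2\langle W,W\rangle)$ through $\Theta$ yields a factorised density in $(\phi,\widetilde\phi)$ equal to the product of the GFF densities on vertices and on faces at inverse temperature $\beta$, which gives both independence and the correct marginals.

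The mild obstacle I anticipate is correctly bookkeeping the constant Jacobian of $\Theta$ under the pinning conventions for $0$- and $2$-forms, together with making sure surjectivity is stated in the form adapted to the chosen boundary conditions (free or zero). If this becomes cumbersome, a fully equivalent and very short alternative is to argue with covariances: $(\phi,\widetilde\phi)$ is jointly Gaussian as a linear image of $W$, the cross-covariance equals $\beta^{-1}\langle \d\Delta^{-1}e_v,\d^*\Delta^{-1}e_f\rangle = -\beta^{-1}\langle \d\d\Delta^{-1}e_v,\Delta^{-1}e_f\rangle = 0$ so the two fields are independent, and each marginal variance collapses via $\Delta=\d^*\d$ on $0$-forms (respectively $\Delta=\d\d^*$ on $2$-forms) to the Green's function $\beta^{-1}G(\cdot,\cdot)$, which is exactly the GFF covariance.
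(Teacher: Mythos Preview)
Your argument is correct. The Hodge decomposition $W=\d\phi+\d^*\widetilde\phi$, the bijectivity of $\Theta$ via the dimension count $|E|=(|V|-1)+(|F|-1)$, and the orthogonal energy splitting all go through cleanly in this finite rooted setting, and the constant Jacobian is harmless since it is absorbed by the normalising constant.

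The paper, however, takes the route you list as your fallback. It computes directly the joint moment generating function: for any $0$-form $w$ and $2$-form $g$ it writes $\langle\phi,w\rangle+\langle\widetilde\phi,g\rangle=-\langle W,\d\Delta^{-1}w+\d^*\Delta^{-1}g\rangle$, evaluates the Gaussian expectation, uses $\langle\d\Delta^{-1}w,\d^*\Delta^{-1}g\rangle=0$ to split, and reduces each piece to $\tfrac1{2\beta}\langle\cdot,(-\Delta)^{-1}\cdot\rangle$. This is three lines and bypasses entirely the bijectivity check, the Euler-formula dimension match, and any Jacobian bookkeeping. Your change-of-variables route is more structural --- it makes the Hodge decomposition of the $1$-form white noise explicit and shows exactly why the two fields live on complementary subspaces --- but the Laplace-transform route is the cleaner proof, which is presumably why you flagged it yourself as the ``very short alternative''.
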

\begin{proof}
	Let us take $w$ a $0$-form and $g$ a 2-form and note that
	\begin{align*}
	\langle\phi,w \rangle+\langle\widetilde \phi, g\rangle = -\langle W,\d \Delta^{-1}w + \d^* \Delta^{-1} g\rangle.
	\end{align*}
	Thus,
	\begin{align*}
	\E\left[e^{\langle \phi,w\rangle+\langle \widetilde \phi, g\rangle } \right] &= e^{\frac{1}{2\beta}\langle\d \Delta^{-1}w + \d^* \Delta^{-1} g,\d \Delta^{-1}w + \d^* \Delta^{-1} g\rangle }  \\
	&= e^{\frac{1}{2\beta}\langle\d \Delta^{-1}w, \d \Delta^{-1}w\rangle + \frac{1}{2\beta }\langle\d^* \Delta^{-1} g,\d^* \Delta^{-1} g\rangle }\\
	&=  e^{\frac{1}{2\beta}\langle w, (-\Delta)^{-1}w\rangle + \frac{1}{2\beta }\langle g,(-\Delta)^{-1} g\rangle },
	\end{align*}
	which is what we wanted to show.
\end{proof}
\subsubsection{Integer-valued Gaussian free field.}\label{ss.IVGFF}
The IV-GFF, $\Psi$, can be described as the GFF conditioned to take values on the integers. That is to say, $\Psi$ is either a $0$ or $2$ form taking values in the integers such that
\begin{align*}
\P_{\beta^{-1}}^{IV}(\Psi=f)\propto e^{-\frac{1}{2\beta}\langle  f,(-\Delta)f\rangle}\,,
\end{align*}
and where the boundary conditions may be chosen to be either free or Dirichlet. 

In this work, we will also study the partition function of the IV-GFF, i.e.
\begin{align*}
Z^{IV}_{\beta^{-1}}= \sum_{f} e^{-\frac{1}{2\beta}\langle\d f, \d f \rangle }\,.
\end{align*}

\subsubsection{Discrete Coulomb gas.}\label{ss.Coulomb}
A discrete Coulomb gas, $q$ is either a $0$ or a $2$ form taking values in the integers such that
\begin{align*}
\P_{\beta}^{Coul}(q=Q)\propto e^{-\frac{\beta(2\pi)^2}{2}\langle  Q,(-\Delta)^{-1}Q \rangle}\,,
\end{align*}
and where the boundary conditions may be chosen to be either free or Dirichlet. Note that the choice of boundary conditions impacts the inverse Laplacian $(-\Delta)^{-1}$. 
{\em N.B. If we consider the Coulomb gas on $0$-forms, resp. $2$-forms, then it must satisfy the constraint $q_{v_0}=0$, resp. $q_{f_0}=0$.}

The reason to add the $(2\pi)^2$ will be evident in Theorem \ref{th.coupling}, where we relate the Coulomb gas to the Villain model\footnote{If one does not want the $(2\pi)^2$ to appear, one could work with the convention that the Coulomb gas takes values in $2\pi \Z$ instead of $\Z$. We remark that we will not use this convention in our paper.}.

\begin{remark}\label{r.Coulomb_Laplacian}
	A useful characterisation of a Coulomb gas, $q$, is that it can be thought as conditioning
	\[q:=\frac{1}{2\pi}\Delta\phi,\]
	to have values on the integers. Here $\phi$ represents a (unconditioned) GFF with inverse-temperature $\beta$.
	
	An important part of the work in this paper is to compare the fluctuations of $\Delta^{-1} q$ with those of a GFF.
\end{remark}
\subsubsection{Villain model.}\label{ss.Vil}
Traditionally, a Villain model $\theta$ as introduced in \cite{Vil} is a $0$-form taking values in $[0,2\pi)$ such that its law is given by
\begin{equation}\label{e.Villain_original}
\P_{\beta}^{Vil}(d\theta)\propto \prod_{e \in E} \sum_{m\in \Z} \exp\left( -\frac{\beta}{2}  (\d\theta(e) +2\pi m)^2\right) d\theta.
\end{equation}

In this work, we change the definition of a Villain model. The main motivation is that the measure associated to $\theta$ is not given by a Gibbs measure\footnote{By this we mean that the measure cannot be written as $\propto \exp(-\beta \mathcal H(\theta))$, where the Hamiltonian $\mathcal H$ does not depend on $\beta$.}. 
Thus, we will couple a Villain model $\theta$ to a new random variable $m$ such that the pair $(\theta, m)$ has a distribution given by a Gibbs measure. We call the pair $(\theta,m)$ a \textbf{Villain coupling}. 

\begin{definition}[Villain coupling]\label{d.Villain}
We say that a random variable $(\theta,m)$ is distributed as a Villain coupling, if  $\theta$ is a $0$-form taking values in $[0,2\pi)$, $m$  is a $1$-form taking values in $\Z$ and their joint law is given by
	\begin{align}\label{e.Villain_ours}
	\P_{\beta}^{Vil}(d\theta, dm)\propto \exp\left(-\frac{\beta}{2}\langle \d\theta +2\pi m, \d\theta + 2\pi m \rangle \right) \delta_{\Z^{E}}(dm)d\theta. 
	\end{align}
\end{definition}

Some easy consequences of Definition \ref{d.Villain} are given in the following proposition.
\begin{proposition}\label{pr.Villain}Let $(\theta, m)$ be a Villain coupling, we have that
	\begin{enumerate}
		\item The marginal law of $\theta$ is that of a Villain model, i.e., \eqref{e.Villain_original}.
		\item Conditionally on $\theta$ the collection of random variables $(m(e))_{e\in E}$ are independent. Furthermore, the law of $m(e)$ conditioned on $\theta$ is that of an IV-Gaussian random variable at inverse-temperature $(2\pi)^2\beta$ and centred at $-(2\pi)^{-1}d\theta(e)$. (Recall the definition of an IV-Gaussian in \eqref{e.def_IVG}.)
	\end{enumerate}
\end{proposition}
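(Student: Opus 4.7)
The proof reduces to the observation that the inner product on $1$-forms decouples the exponent across edges. First I would expand the quadratic form using the definition of the inner product on $1$-forms given in Section \ref{sss.calculus}, writing
\[
\langle \d\theta + 2\pi m, \d\theta + 2\pi m\rangle = \sum_{e\in E} \big(\d\theta(e) + 2\pi m(e)\big)^2,
\]
so that the joint density in \eqref{e.Villain_ours} becomes a product over edges
\[
\P_{\beta}^{Vil}(d\theta,dm) \propto \prod_{e\in E} \exp\!\left(-\tfrac{\beta}{2}\big(\d\theta(e)+2\pi m(e)\big)^2\right) d\theta\, \delta_{\Z^E}(dm).
\]

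For statement (1), I would integrate out $m$ against the counting measure $\delta_{\Z^E}$. Since the integrand factorizes over edges, the sum over $m\in\Z^E$ commutes with the product, giving
\[
\int \P_{\beta}^{Vil}(d\theta,dm) \propto \prod_{e\in E}\sum_{m(e)\in \Z} \exp\!\left(-\tfrac{\beta}{2}\big(\d\theta(e)+2\pi m(e)\big)^2\right)\, d\theta,
\]
which matches \eqref{e.Villain_original} after the harmless change of dummy index $m(e)\mapsto -m(e)$ (the sum is over all of $\Z$).

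For statement (2), the conditional density of $m$ given $\theta$ is the product density above, normalized edge by edge. The factorization immediately yields conditional independence of $\{m(e)\}_{e\in E}$. For a fixed edge $e$, I would complete the square by pulling out $(2\pi)^2$:
\[
-\tfrac{\beta}{2}\big(\d\theta(e)+2\pi m(e)\big)^2 = -\tfrac{(2\pi)^2\beta}{2}\Big(m(e)-\big(-(2\pi)^{-1}\d\theta(e)\big)\Big)^2,
\]
and compare with the defining relation \eqref{e.def_IVG} of an integer-valued Gaussian. This identifies the conditional law of $m(e)$ as $\mathcal{N}^{IG}\big(-(2\pi)^{-1}\d\theta(e),\, (2\pi)^2\beta\big)$, as claimed.

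There is no real obstacle here: the whole proposition is bookkeeping once one notices that the Gibbs weight in Definition \ref{d.Villain} splits as a product over edges. The only mild care needed is the sign convention in the dummy variable for (1) and the rescaling by $(2\pi)^2$ for (2), both of which are immediate from the algebraic identity above.
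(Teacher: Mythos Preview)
Your proof is correct and follows essentially the same approach as the paper's own proof: expand the Gibbs weight as a product over edges, sum out $m$ for the marginal, and factor out $(2\pi)^2$ to identify the conditional law as an integer-valued Gaussian. The remark about the sign change $m(e)\mapsto -m(e)$ is unnecessary since \eqref{e.Villain_original} already uses $+2\pi m$, but this is harmless.
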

\begin{proof}
	\begin{enumerate}
		\item It is enough to see that
		\begin{align*}
		\P^{\beta}_{Vil}(d\theta) &\propto \sum_{m\in \Z^E} \prod_{e\in E} \exp\left(-\frac{\beta}{2}( \d\theta(e) + 2\pi m(e))^2 \right)\\
		&=\prod_{e\in E}\sum_{m\in \Z} \exp\left(-\frac{\beta}{2}(\d \theta(e)+2\pi m(e))^2 \right).  
		\end{align*}
		\item This result comes directly from
		\begin{align*}
		\P^{\beta}_{Vil}(dm \mid \theta )& \propto \prod_{e \in E} \exp\left(-\frac{\beta}{2}(\d \theta(e) + 2\pi m(e))^2 \right) \delta_\Z(dm) \\
		& \propto \prod_{e \in E} \exp\left(-\frac{(2\pi)^2\beta}{2}\Big(\frac{\d \theta(e)}{2\pi} +  m(e)\Big)^2 \right) \delta_\Z(dm) . 
		\end{align*}
	\end{enumerate}
\end{proof}

\section{Coupling between Villain model and the Coulomb gas}\label{s.coupling}

In this section, we introduce a very useful coupling between $(\theta,m)$ and so ultimately between $(\theta, q)$.  Our analysis builds on a way of expressing the partition function which relies on discrete differential calculus and which goes back to the very influential paper \cite{Kadanoff}  (see also the inspiring lecture notes \cite{Roland} on this). This approach has its roots in Berezinskii's early ideas on the BKT transition (\cite{berezinskii}).

In this section, we need the following notation. Take $q$ a $2$-form, Poincaré Lemma (Proposition \ref{pr.basic_calculus} (2)) implies that there exists $n_q$ a $1$-form taking values in $\Z$ such that 
\begin{equation}\label{e.nq}
\d n_q=q.
\end{equation} From now on, for each 2-form $q$, we fix such  a (deterministic) 1-form such that $\d n_q =q$.

Now, take  $m$ a $1$-form, and define $q:=\d m$. Note  $m-n_q$ is also a $1$-form and that it satisfies $\d m-n_q =0$. As such, there exists a unique $0$-form $\psi$ such that
\begin{equation}\label{e.Psi}
\d \psi = m-n_q. 
\end{equation}

We can now formulate the following theorem.
\begin{theorem}\label{th.coupling}
	Let $(\theta,m)$ be a Villain coupling in $\Lambda_n$ (with either free or Dirichlet boundary conditions) at inverse temperature $\beta$. Let $T(\theta,m)=(\phi,q)$ be the transformation 
	\begin{align*}
	T(\theta,m):=\begin{cases}
	&q=\d m,\\
	&\phi= \theta + 2\pi \psi + 2\pi \d^*\Delta^{-1}n_q.
	\end{cases}	\end{align*}
	Then, $\phi$ is independent of $q$. Furthermore, $\phi$ is a GFF in $\Lambda_n$ and $q$ is a Coulomb gas in $\Lambda_n^*$.
	
	Furthermore, the transformation defined is invertible. In particular, take $(\phi,q)$ an independent couple where $\phi$ is a GFF in $\Lambda_n$ and $q$ is a Coulomb gas in $\Lambda_n^*$. Then,
	\begin{align*}
	&\theta = \phi-2\pi  \d^* \Delta^{-1}n_q \mod 2\pi,\\
	&m = n_q+\d \left \lfloor\frac{\phi-2\pi\d^*\Delta^{-1}n_q}{2\pi} \right \rfloor = n_q + \frac{1}{2\pi}\d(\phi-2\pi \d^*\Delta^{-1}n_q -\theta)
	\end{align*}
	is a Villain coupling in $\Lambda_n$.
\end{theorem}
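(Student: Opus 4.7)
The proof proceeds by change of variables. The first move is a discrete Hodge–type decomposition of the 1-form $m$: since $q=\d m$ satisfies $\d(m-n_q)=0$ by construction, the discrete Poincaré lemma (Proposition~\ref{pr.basic_calculus}(3)) yields a unique integer 0-form $\psi$ (pinned at $v_0$) with $\d\psi = m - n_q$. The map $m \leftrightarrow (q,\psi)$ is a bijection between $\Z^E$ and pairs $(q,\psi)\in\Z^{F\setminus\{f_0\}}\times\Z^{V\setminus\{v_0\}}$; the dimensions match by Euler's formula $|E|=|V|+|F|-2$. Then, using this decomposition of $m$, I plan to view the transformation $T$ as a composition of (a) the shift $\theta\mapsto \theta+2\pi\psi$, which glues the compact variable $\theta\in[0,2\pi)$ with the integer $\psi$ into a real $0$-form, and (b) a further deterministic translation by $2\pi\,\d^*\Delta^{-1}n_q$ (a quantity depending only on $q$). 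The product measure $d\theta\otimes\delta_{\Z^E}(dm)$ becomes the Lebesgue measure $d\phi$ tensored with counting measure on $\Z^{F\setminus\{f_0\}}$ for $q$ under this bijection.

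\textbf{The key algebraic identity.} The heart of the argument is showing that in the new variables the Hamiltonian splits. First I would check the identity
\begin{align*}
\d\theta + 2\pi m \;=\; \d\phi + 2\pi\,\d^*\Delta^{-1}q.
\end{align*}
Using $m=n_q+\d\psi$ and the definition of $\phi$, the left side equals $\d(\theta+2\pi\psi)+2\pi n_q$ while $\d\phi=\d(\theta+2\pi\psi)+2\pi\,\d\d^*\Delta^{-1}n_q$. From $\Delta=\d\d^*+\d^*\d$ and the commutation $\d\Delta^{-1}=\Delta^{-1}\d$ (both recorded in Section~\ref{sss.calculus}) one gets $\d\d^*\Delta^{-1}n_q = n_q - \d^*\Delta^{-1}\d n_q = n_q-\d^*\Delta^{-1}q$, which closes the identity. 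Next, expanding $\|\d\phi+2\pi\d^*\Delta^{-1}q\|^2$: the cross term $\langle \d\phi,\d^*\Delta^{-1}q\rangle$ vanishes since $\d^*\d^*=0$; and for the quadratic term, writing $\|\d^*\Delta^{-1}q\|^2 = -\langle \d\d^*\Delta^{-1}q,\Delta^{-1}q\rangle = \langle q,(-\Delta)^{-1}q\rangle$ where I use $\d q\equiv 0$ on $2$-forms in dimension two. Consequently,
\begin{align*}
\tfrac{\beta}{2}\|\d\theta+2\pi m\|^2 \;=\; \tfrac{\beta}{2}\langle \d\phi,\d\phi\rangle + \tfrac{(2\pi)^2\beta}{2}\langle q,(-\Delta)^{-1}q\rangle,
\end{align*}
which is exactly the sum of the GFF Hamiltonian and the Coulomb Hamiltonian at inverse temperature $\beta$.

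\textbf{Conclusion and invertibility.} Combining the change-of-variables identity with the splitting of the Hamiltonian, the pushforward of $\P^{Vil}_\beta$ under $T$ factorises as $\P^{GFF}_\beta(d\phi)\otimes \P^{Coul}_\beta(dq)$, which simultaneously yields the independence of $\phi$ and $q$ and identifies their marginal laws. For invertibility, given $(\phi,q)$ I recover $n_q$ deterministically from $q$; then the pinning $\theta\in[0,2\pi)$ forces $\theta=(\phi-2\pi\d^*\Delta^{-1}n_q)\bmod 2\pi$ and $\psi=\lfloor(\phi-2\pi\d^*\Delta^{-1}n_q)/(2\pi)\rfloor$, and finally $m=n_q+\d\psi$, which matches the stated formulas after rewriting $\d\psi$ as $(2\pi)^{-1}\d(\phi-2\pi\d^*\Delta^{-1}n_q-\theta)$. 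The step likely to require the most care is the bookkeeping of boundary/pinning conventions for the different form spaces (which 0-form vanishes where, the role of $n_q$ being fixed deterministically, and the fact that $\psi$ remains integer-valued when $m$ is), since these govern why the change of variables is a measure-preserving bijection with trivial Jacobian; once those conventions are nailed down, the algebra above goes through cleanly.
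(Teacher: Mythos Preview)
Your proposal is correct and follows essentially the same route as the paper's proof: both establish the identity $\d\theta+2\pi m=\d\phi+2\pi\,\d^*\Delta^{-1}q$ via the Hodge-type decomposition $m=n_q+\d\psi$ and $n_q=(\d\d^*+\d^*\d)\Delta^{-1}n_q$, then split the energy using the orthogonality $\langle \d\phi,\d^*\Delta^{-1}q\rangle=0$ and the fact that $\d q=0$ on $2$-forms. One small slip: the Poincar\'e lemma you invoke for $\d(m-n_q)=0\Rightarrow m-n_q=\d\psi$ is Proposition~\ref{pr.basic_calculus}(2), not (3).
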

\begin{remark}
The proof of this (de)coupling identity is based on the well-known duality transformation from Villain to Coulomb which goes back to \cite{Kadanoff} (see also the lecture notes \cite{Roland}).
Yet, the introduction of the coupled variable $m$ is we believe a conceptual improvement to the classical decoupling of the partition function. Indeed this allows us to provide a new construction of the Coulomb gas via $q=\d m$ which was not known before and which is in a way at the heart of our work. 
The fact that a Coulomb gas $q$ can be sampled as $\d m$, can be thought of a discrete version of the Proposition \ref{pr.GFF_WN}, where $m$ plays the role of the white noise. Note that $m$ is not a white noise, however Proposition \ref{pr.Villain} (2), shows that $m$ is close to a discrete white noise when one conditions on $\theta$. This observation will be crucial to obtain all of our estimates.
\end{remark}


\begin{proof}
	Let us first note that $T$ is a bijective transformation taking as input
	\begin{itemize}
		\item $\theta:V\mapsto [0,2\pi)$ a $0$-form;
		\item $m:E\to \Z$ a $1$-form,
	\end{itemize}
	and obtaining as output
	\begin{itemize}
		\item $\phi:V\mapsto \R$ a $0$-form.
		\item $q:F\to \R$ a $2$-form.
	\end{itemize}
	This result just follows from direct inspection of the transformations. Furthermore, define the measure
	\[\mu(d\theta,dm):=\1_{\theta\in[0,2\pi)}d\theta \delta_{\Z^E}(dm),\]
	where $d\theta$ denotes the Lebesgue measure in $\R^{V\backslash \{v_0\}}$ and $\mu$ only measures pairs $(\theta,m)$ where $\theta_{v_0}:=0$ (this follows from our definition of $0$-forms). 
	A careful look at the bijection, allows us to conclude that $T_*\mu$, the pushforward of $\mu$ by $T$, is given by the measure
	\[\nu(d\phi,dq ):= d\phi \delta_{\Z^{F\backslash\{f_0\}}}(dq),\]
	where $d\phi$ is the Lebesgue measure\footnote{Note that $d \phi$ the Lebesgue measure should not be confused below with $\d \phi$ the 1-form.} in $\R^{V\backslash \{v_0\}}$ and $\nu$ only measures pairs $(\phi,q)$ where $\phi$ is a $0$-form and $q$ a $2$-form.

	Now, let us start with $(\theta,m)$  a Villain coupling and let us compute
	\begin{align}
	\langle \d\theta +2\pi m, \d\theta +2\pi m \rangle&= \langle \d\theta + 2\pi( m-n_q + n_q), \d\theta + 2\pi( m -n_q + n_q)\rangle.\label{e.energy}	\end{align}
	Note that $m-n_q=\d\psi$ and that $n_q= (\d \d^*+\d^*\d) \Delta^{-1} n_q$, thus \eqref{e.energy} is equal to
	\begin{align*}
	&\langle \d(\theta + 2\pi(\psi + \d^* \Delta^{-1}n_q)) +2\pi\d^*\Delta^{-1}q\ , \  \d(\theta + 2\pi(\psi + \d^* \Delta^{-1}n_q)) +2\pi\d^*\Delta^{-1}q\rangle\\
	&\hspace{0.53\textwidth}=\langle \d \phi + 2\pi \d^* \Delta^{-1}q, \d \phi + 2\pi \d^* \Delta^{-1}q \rangle\\
	&\hspace{0.53\textwidth}=\langle \d \phi, \d\phi\rangle + (2\pi)^2\langle \d^* \Delta^{-1}q,  \d^* \Delta^{-1}q \rangle\\
	&\hspace{0.53\textwidth}=\langle \d \phi, \d\phi\rangle + (2\pi)^2\langle q, (-\Delta)^{-1} q \rangle.
	\end{align*}
	This implies that if $(\theta, m)$ is distributed as a Villain coupling, then $\P(d\phi, dq)$ is equal to
	\begin{align*}
	&\frac{1}{Z^{Vil}_{\beta}}\exp\left (-\frac{\beta}{2}\left( \langle \d \phi, \d\phi\rangle )+ (2\pi)^2\langle q, (-\Delta)^{-1} q \rangle \right)  \right )dT_{*}\mu \\
	&=\frac{1}{Z^{Vil}_{\beta}} \left( \exp\left (-\frac{\beta}{2}\left( \langle \d \phi, \d\phi\rangle \right) \right ) d\phi\right)\left(  \exp\left (-\frac{\beta(2\pi)^2}{2}\langle q, (-\Delta)^{-1} q \rangle \right )\delta_{\Z^{F\backslash\{f_0\}}}(dq)\right),
	\end{align*}
	which allows us to conclude.
\end{proof}

\begin{remark}\label{r.decoupling} 
	A careful look at the proof of this result gives us that
	\begin{align}
	Z^{Vil}_\beta= Z^{GFF}_\beta Z^{Coul}_\beta, \label{e.GFFxCoul}
	\end{align}
	where the GFF and the Villain model live on the vertices while the Coulomb model lives on the faces. This is the classical decoupling result for example given in Section 5.3.3 of \cite{Roland}.
\end{remark}

Theorem \ref{th.coupling} implies the following formulae.
\begin{corollary}\label{c.2-point_Vil}
	Let $(\theta,m)$ be a Villain coupling in $\Lambda_n$ with inverse-temperature $\beta$. We have that for any $v_1,v_2\in V$
	\[\E_{\beta}^{Vil}\left[\cos(\theta(v_1)-\theta(v_2) ) \right]=\E_{\beta}^{GFF}\left[e^{i (\phi(v_1)-\phi(v_2))} \right]\E_{\beta}^{Vil}\left[e^{i2\pi(\d^*\Delta^{-1}m(v_1)-\d^*\Delta^{-1}m(v_2) )}  \right].  \] 
\end{corollary}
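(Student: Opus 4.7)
The plan is to push the identity through the bijection $T$ of Theorem \ref{th.coupling} and then to use the independence of $\phi$ and $q$. The first step is to read off the inverse transformation in Theorem \ref{th.coupling}, which gives
\begin{align*}
\theta(v_1)-\theta(v_2) \equiv (\phi(v_1)-\phi(v_2)) - 2\pi\bigl(\d^*\Delta^{-1}n_q(v_1)-\d^*\Delta^{-1}n_q(v_2)\bigr) \pmod{2\pi}.
\end{align*}
Exponentiating and taking the real part already expresses $\cos(\theta(v_1)-\theta(v_2))$ in terms of $\phi$ and of $n_q$, which is a deterministic function of $q$.

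The next (key) step is to replace $n_q$ by $m$ inside the complex exponential. For this I would exploit that by \eqref{e.Psi} we have $m-n_q = \d\psi$ with $\psi$ an \emph{integer-valued} $0$-form. Using the commutation $\Delta^{-1}\d=\d\Delta^{-1}$, the identity $\d^*\d=\Delta$ on $0$-forms (because $\d^*$ vanishes on $0$-forms, see Section \ref{sss.calculus}), one gets
\begin{align*}
\d^*\Delta^{-1}m-\d^*\Delta^{-1}n_q \;=\;\d^*\Delta^{-1}\d\psi\;=\;\d^*\d\,\Delta^{-1}\psi \;=\;\psi\in\Z^{V}.
\end{align*}
Hence $e^{-i 2\pi \d^*\Delta^{-1}m(v)} = e^{-i 2\pi \d^*\Delta^{-1}n_q(v)}$ for every vertex $v$, so this quantity is in fact measurable with respect to $q$ alone. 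I expect this little integer-shift trick to be the main (conceptual) obstacle: without it, the random variable $\d^*\Delta^{-1}m$ is not a function of $q$, which would break the factorization.

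With those two ingredients in place, the proof collapses quickly. Combining steps one and two,
\begin{align*}
e^{i(\theta(v_1)-\theta(v_2))} = e^{i(\phi(v_1)-\phi(v_2))}\,e^{-i2\pi(\d^*\Delta^{-1}m(v_1)-\d^*\Delta^{-1}m(v_2))},
\end{align*}
and by Theorem \ref{th.coupling} the first factor is independent of the second, so taking expectations factorizes the right-hand side into $\E^{GFF}_\beta$ and $\E^{Vil}_\beta$ pieces. Finally I would observe that both factors are real: the GFF factor is the characteristic function of a centred Gaussian, and the Villain factor is real by the obvious $(\theta,m)\mapsto(-\theta,-m)$ symmetry of \eqref{e.Villain_ours}. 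Hence the sign in the exponent of the $m$-factor can be flipped for free and one may replace $e^{i(\theta(v_1)-\theta(v_2))}$ by $\cos(\theta(v_1)-\theta(v_2))$ on the left, yielding exactly the claimed formula.
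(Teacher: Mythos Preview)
Your proof is correct and follows the same overall strategy as the paper: push $\theta$ through the bijection $T$, factorize via the independence of $\phi$ and $q$, and then replace $n_q$ by $m$ inside the complex exponential. The one notable difference is in how you handle the replacement $n_q\to m$. The paper fixes an edge-path $\gamma$ from $v_1$ to $v_2$, introduces $E_\gamma$ with $\d^*E_\gamma=\1_{v_1}-\1_{v_2}$, and then computes via inner products that
\[
\d^*\Delta^{-1}n_q(v_1)-\d^*\Delta^{-1}n_q(v_2)
=\d^*\Delta^{-1}m(v_1)-\d^*\Delta^{-1}m(v_2)-\langle n_q,E_\gamma\rangle+\langle m,E_\gamma\rangle,
\]
concluding because the last two terms are integers. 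Your route is shorter and pointwise: from $m-n_q=\d\psi$ you get $\d^*\Delta^{-1}(m-n_q)=\d^*\d\,\Delta^{-1}\psi=\psi\in\Z^V$, so the complex exponential agrees at every vertex, not just for differences. This buys a slightly stronger statement (the $q$-measurability of $e^{i2\pi\d^*\Delta^{-1}m(v)}$ at each $v$) with less bookkeeping, while the paper's path argument is more hands-on and avoids invoking the commutation $\Delta^{-1}\d=\d\Delta^{-1}$ across form degrees.
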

\begin{proof}
	Note that using symmetry and Theorem \ref{th.coupling} we have that \begin{align*}
\E\left[\cos(\theta(v_1)-\theta(v_2) \right] &= \E\left[e^{i(\theta(v_1)-\theta(v_2))} \right]\\
&=
\E_{\beta}^{GFF}\left[e^{i (\phi(v_1)-\phi(v_2))} \right]\E_{\beta}^{Coul}\left[e^{i2\pi(\d^*\Delta^{-1}n_q(v_1)-\d^*\Delta^{-1}n_q(v_2) )}  \right].
	\end{align*}
	Now, we just need to check that 
	\begin{equation}
	e^{i2\pi(\d^*\Delta^{-1}n_q(v_1)-\d^*\Delta^{-1}n_q(v_2)} )=e^{i2\pi(\d^*\Delta^{-1}m(v_1)-\d^*\Delta^{-1}m(v_2) )}.
	\end{equation}
	To show this, we start by defining $\gamma$ an (oriented) edge path from $v_1$ to $v_2$ and define
	\begin{equation}\label{e.E_gamma}
	E_{\gamma}(\overrightarrow{e})=\1_{\overrightarrow{e}\in \gamma}-\1_{\overleftarrow{e}\in \gamma},
	\end{equation}
	it is then clear that $\d^*E_{\gamma}=\1_{v_1}-\1_{v_2}$. Thus, using $\d n_q=\d m$ we have that
\begin{align*}
\d^*\Delta^{-1}n_q(v_1)-\d^*\Delta^{-1}n_q(v_2)&= \langle \d^* \Delta^{-1} n_q, \d^* E_{\gamma}\rangle\\
&=-\langle \d\d^* \Delta^{-1} n_q, E_{\gamma}\rangle\\
&=\langle \d^*\d \Delta^{-1}m, E_{\gamma}\rangle - \langle n_q,E_{\gamma}\rangle\\
&=-\langle \d \d^*\Delta^{-1}m, E_{\gamma}\rangle-\langle n_q,E_{\gamma}\rangle+\langle m,E_{\gamma}\rangle\\
&=\d^*\Delta^{-1}m(v_1)-\d^*\Delta^{-1}m(v_2)-\langle n_q,E_{\gamma}\rangle+\langle m,E_{\gamma}\rangle.
	\end{align*}
	We conclude by noting that $\langle n_q,E_{\gamma}\rangle$ and $\langle m,E_{\gamma}\rangle$ are integers.
\end{proof}

\section{A local algorithm to sample the Coulomb gas}\label{s.algo}

In this section, we  consider the Coulomb gas defined on the faces of the finite box $\Lambda=\Lambda_n=[-n,n]^2 \subset \Z^2$ with free or Dirichlet Boundary conditions as defined in Subsection \ref{ss.Coulomb}.

{\em (Recall that if we impose Dirichlet boundary conditions on the $0$-forms, it will translate into the dual free boundary conditions on the $2$ forms and vice-versa. In particular the associated Villain model on $0$-form comes with the dual boundary conditions imposed on the Coulomb}).

The most natural way of sampling a {\em high density} ($z \equiv \infty$) Coulomb gas in such a  box $\Lambda\subset \Z^d$  is to run a Markov chain which at each step chooses a site $x$ uniformly at random and resamples the charge $q_x$ at site $x$\footnote{One may be worried of the lack of neutrality under such a dynamics, but recall that in our present setup the neutrality is taken care of at the root of the $2$-forms.} according to the detailed balanced conditions given by the Gibbs measure of the Coulomb gas. This if for example the Monte Carlo algorithm for sampling particles in One-Component Plasma used in \cite{brush1966monte}.

The difficulty of this approach is that the interaction $\<{q,(-\Delta)^{-1} q}$ is non-local and as such computing detailed-balanced conditions are highly time-consumming. 

Instead, Theorem \ref{th.coupling} can be used to run  a much faster local Markov chain.  Let us first explain how it works in the $2d$ case and we shall explain in \cite{GS3} how it can be extended to a sampling procedure for the Coulomb gas in dimensions $d\geq 3$. 
 
\medskip 
\ni
\textbf{A local dynamics in $2d$ to sample a Coulomb gas.}
The sampling algorithm which follows from Theorem \ref{th.coupling} consists in the following steps. 
\bnum
\item Fix $\Lambda\subset \Z^2$ a finite box,  choose free or Dirichlet boundary conditions for the Coulomb gas, and fix an inverse-temperature $\beta>0$.

\item The next step is to sample the Villain model $\theta=(\theta_i)_{i \in \Lambda}$ on the vertices of $\Lambda$ equipped with dual boundary conditions and at same inverse temperature $\beta$.
 This can be done using a local MCMC, for example by a standard Glauber dynamics or by relying on faster cluster algorithms such as in \cite{michel2015event}.
At low temperature, for the standard Gibbs dynamics, one should expect that $O(|\Lambda|^{\eta(\beta)})$ steps will be sufficient where the dynamic exponent $\eta(\beta)$ will depend on the temperature. Such a polynomial decay is not proved for the Villain or XY model but has been proved in the case of critical $2d$  Ising model in \cite{lubetzky2012critical}. 

\item Once the Villain configuration has reached equilibrium, sample the random $1$-form $m$ conditionally independently on each edge of $\Lambda$ as in Section \ref{s.coupling}:
\begin{align*}
		\P^{\beta}_{Vil}(dm \mid \theta )
		& \propto \prod_{e \in E} \exp\left(-\frac{(2\pi)^2\beta}{2}\Big(\frac{\d \theta(e)}{2\pi} +  m(e)\Big)^2 \right) \delta_\Z(dm)\,. 
		\end{align*}
		This requires $O(|\Lambda|)$ steps. 
\item Finally, one obtains the desired Coulomb gas using $q=\d m$ which also requires $O(|\Lambda|)$ final steps. 
\enum 

\begin{remark}\label{}
It turns out that the same local algorithm can be used in order to sample the Coulomb gas in any dimensions $d\geq 2$ but it requires two words of caution.
\bi
\item First, if one considers the Coulomb gas on the $d$-cells of a subgraph $\Lambda$ of $\Z^d$, one should then consider the Villain model on the $d-2$ cells. 
\item Second, the decoupling result (Theorem \ref{th.coupling}) should be generalized to a decoupling result on the $d-2$ forms and this turns out to be more subtle than when $d=2$. In fact this decoupling is a crucial part in \cite{GS3} where $U(1)$ lattice gauge theory on $\Z^4$ is analyzed. 
We will therefore discuss the generalization of this sampling algorithm for the Coulomb gas to dimensions $d\geq 3$ only in \cite{GS3} and we will see that the decoupling naturally arises at the level of $d-1$ forms instead of $d-2$ forms. We refer to \cite[Section 7]{GS3}. 
\ei
\end{remark}

\section{Transfer formula between Villain model and the IV-GFF}\label{s.transfer}
This section builds on transfer formulas listed in \cite[Theorem 3.6]{FScoulomb} (see also the review paper \cite{RonFS}). In this section, we follow the basic ideas of \cite{FScoulomb} to state a generalisation of their transfer formula but in a form which is, we believe, more transparent. 

\begin{theorem}\label{th.Fourier}
	Let $(\theta,m)$ be a Villain coupling in $\Lambda_n$ at inverse temperature $\beta$ and let $\Psi$ be an integer-valued GFF in $\Lambda_n^*$ at inverse temperature $\beta ^{-1}$. Then, for any complex $1$-form $h:E\mapsto\C$, we have that
	\begin{equation}\label{e.Fourier}
	\E_{\beta}^{Vil}\left[e^{i\langle \d \theta+2\pi m, h \rangle} \right]= \E_{\beta^{-1}}^{IV}\left[ e^{\frac{1}{\beta} \langle\d^*\Psi, h\rangle} \right] e^{-\frac{1}{2\beta}\langle h,h\rangle}.
	\end{equation} 
\end{theorem}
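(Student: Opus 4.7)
The plan is to prove \eqref{e.Fourier} by completing the square in the exponent, then applying Poisson summation edgewise to the integer-valued $1$-form $m$, then integrating $\theta$ out, and finally recognizing the resulting sum as the IV-GFF partition function with a linear insertion.

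\emph{Step 1: completing the square.} Setting $u := \d\theta + 2\pi m$, one has
\begin{equation*}
-\tfrac{\beta}{2}\langle u,u\rangle + i\langle u,h\rangle
= -\tfrac{\beta}{2}\bigl\langle u - \tfrac{i}{\beta}h,\, u - \tfrac{i}{\beta}h\bigr\rangle - \tfrac{1}{2\beta}\langle h,h\rangle,
\end{equation*}
which already extracts the factor $e^{-\frac{1}{2\beta}\langle h,h\rangle}$ appearing in \eqref{e.Fourier} and reduces the claim to identifying the shifted partition function (divided by $Z^{Vil}_\beta$) with $\E^{IV}_{\beta^{-1}}[e^{\frac{1}{\beta}\langle \d^*\Psi, h\rangle}]$.

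\emph{Step 2: Poisson summation edgewise.} I will apply the one-dimensional Poisson summation formula on each edge. For real $y$, the classical identity reads
\begin{equation*}
\sum_{m\in\Z} e^{-\frac{\beta}{2}(y+2\pi m)^2}=\frac{1}{\sqrt{2\pi\beta}}\sum_{k\in\Z}e^{iky-\frac{k^2}{2\beta}},
\end{equation*}
and by analytic continuation (both sides are entire in $y$) it persists with the complex substitution $y=\d\theta(e)-\frac{i}{\beta}h(e)$. Taking the product over $e\in E$ converts the shifted Villain sum into
\begin{equation*}
\frac{1}{(2\pi\beta)^{|E|/2}}\sum_{k\in\Z^E} e^{-\frac{1}{2\beta}\langle k,k\rangle+\frac{1}{\beta}\langle k,h\rangle+i\langle k,\d\theta\rangle}.
\end{equation*}

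\emph{Step 3: integrating $\theta$ and invoking Poincar\'e.} Integration over $\theta\in[0,2\pi)^{V\setminus\{v_0\}}$ using $\langle k,\d\theta\rangle=-\langle\d^*k,\theta\rangle$ together with $\int_0^{2\pi} e^{in\theta}d\theta=2\pi\delta_{n,0}$ forces the constraint $\d^*k\equiv 0$ and contributes a factor $(2\pi)^{|V|-1}$ that cancels identically between numerator and denominator. By the discrete Poincar\'e lemma (Proposition~\ref{pr.basic_calculus}(3)), the integer-valued $1$-forms $k$ with $\d^*k=0$ are precisely those of the form $k=\d^*\Psi$ for a unique integer-valued $2$-form $\Psi$ rooted at $f_0$. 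The surviving sum becomes
\begin{equation*}
\sum_{\Psi\in\Z^{F\setminus\{f_0\}}} e^{-\frac{1}{2\beta}\langle \d^*\Psi,\d^*\Psi\rangle+\frac{1}{\beta}\langle \d^*\Psi,h\rangle}.
\end{equation*}
Since $\d\Psi=0$ automatically for $2$-forms in dimension two, $\langle \d^*\Psi,\d^*\Psi\rangle=\langle\Psi,-\Delta\Psi\rangle$, so this is exactly the IV-GFF weight at inverse temperature $\beta^{-1}$ on $\Lambda_n^*$ twisted by $e^{\frac{1}{\beta}\langle \d^*\Psi,h\rangle}$. Dividing by the $h=0$ version (which equals $Z^{IV}_{\beta^{-1}}$ up to the same multiplicative prefactors) yields $\E^{IV}_{\beta^{-1}}[e^{\frac{1}{\beta}\langle \d^*\Psi,h\rangle}]$, completing the identification.

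The only mild subtlety is the use of Poisson summation with a complex shift in Step~2, but this is routine by analytic continuation. The choice of root $v_0$ for $\theta$ and root $f_0$ for $\Psi$ is precisely what matches free (respectively Dirichlet) boundary conditions on the Villain side with Dirichlet (respectively free) boundary conditions for the IV-GFF on $\Lambda_n^*$.
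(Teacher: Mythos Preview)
Your proof is correct and follows essentially the same route as the paper: edgewise Poisson summation (with a complex parameter, justified by analytic continuation), integration over $\theta$ to impose $\d^*k=0$, and the discrete Poincar\'e lemma to parametrize the surviving sum by an integer-valued $2$-form. The only cosmetic difference is that you first complete the square to extract $e^{-\frac{1}{2\beta}\langle h,h\rangle}$ and then apply Poisson summation with a complex shift in the argument, whereas the paper applies Poisson summation directly with complex Fourier parameter $\alpha=h(e)$ and obtains $e^{-\frac{1}{2\beta}\langle n-h,n-h\rangle}$ in one step; expanding the square shows these are the same computation.
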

Let us note here that even though we are working with function taking values in the complex numbers, the inner product appearing in \eqref{e.Fourier} is not Hermitian.

Let us remark that the proof below of this transfer formula is based on the specific cases where $h\in \{-1,0,1\}$ in \cite[Theorem 3.6]{FScoulomb} and that the extension to non-integer valued functions $h$ requires the introduction of the random 1-form $m$ instead of the Coulomb gas $q$. (We also point here that this extension to non-integer $h$ will be essential in Section \ref{ss.max} where we need the Laplace transform of the IV-GFF to be defined on the full $\R^{\Lambda^*}$ in order to study its maximum).
\begin{proof}
	We start by noting that for any $\vartheta\in \R$, and any constant $\alpha\in \C$, $\beta>0$
	\begin{equation}\label{e.Fourier_series}
	\sum_{m \in \Z} e^{i\alpha(\vartheta +2\pi m )}e^{-\frac{\beta}{2}(\vartheta + 2\pi m)^2}= \frac{1}{\sqrt{2\pi \beta}}\sum_{n\in \Z} e^{-\frac{1}{2\beta}(n-\alpha)^2}e^{i n \vartheta}.
	\end{equation}
	This follows readily from the Poisson summation formula applied to  $f(t)=e^{i \alpha t -\frac {\beta} 2 t^2} \in L_1(\R,\C)$ (which has sufficient decay as $|t|\to \infty$),  or by checking directly that for any $n\in \Z$, 
the $n$-th Fourier coefficient of the function on the left is equal to 
	\begin{align*}
	c_n&=\frac{1}{2\pi}\int_{0}^{2\pi}\sum_{m \in \Z} e^{i\alpha(\vartheta +2\pi m )}e^{-\frac{\beta}{2}(\vartheta + 2\pi m)^2} e^{-in \vartheta} d\vartheta\\
	&=\frac{1}{2\pi}\int_{-\infty}^{\infty} e^{-\frac{\beta}{2}\vartheta^2} e^{-i(n-\alpha) \vartheta} d\vartheta\\
	&=\frac{1}{\sqrt{2\pi \beta}}e^{-\frac{1}{2\beta}(n-\alpha)^2}\,.
	\end{align*}	
We now use \eqref{e.Fourier_series} to see that
	\begin{align*}
\E_\beta^{Vil}\left[e^{i\langle \d \theta+ 2\pi m, h \rangle} \right]&= \frac{1}{Z^{Vil}_{\beta}}\int_{[0,2\pi]^{\Lambda_n\backslash \{v_0\}}}\sum_{m\in \Z^E}e^{i\langle \d \theta+ 2\pi m, h \rangle} e^{-\frac{\beta}{2}\langle \d\theta +2\pi m, \d \theta + 2\pi m\rangle} d\theta\\
&=\frac{1}{Z^{Vil}_{\beta}\sqrt{2\pi \beta}^{|E|} } \sum_{n \in \Z^E} e^{-\frac{1}{2\beta} \langle n-h,n-h \rangle} \int_{[0,2\pi]^{\Lambda_n\backslash \{v_0\}}}e^{i\langle n, \d \theta \rangle} d \theta.
	\end{align*}
	We now note that the integral is not $0$ only when $\d^* n$ equals $0$. When this is the case there exists a unique 2-form $\psi$ from $F$ to $\Z$ such that $\d^*\psi = n$ (thanks to Proposition \ref{pr.basic_calculus} (3)). This discussion implies that
		\begin{align*}
		\E_\beta^{Vil}\left[e^{i\langle \d \theta+2\pi m , h \rangle} \right]&=\frac{\sqrt{2\pi }^{2|\Lambda_n\backslash \{v_0\}|-|E|} }{Z^{Vil}_{\beta}\sqrt{\beta}^{|E|} } \sum_{\substack{\psi \in \Z^F\\ \psi(x_0)=0}}  e^{-\frac{1}{2\beta} \langle \d^*\psi-h,\d^*\psi-h \rangle}\\
		&= \E^{IV}_{\beta^{-1}}\left[e^{\frac{1}{\beta}\langle \d^*	\psi, h\rangle  } \right] e^{-\frac{1}{2\beta}\langle h, h \rangle }.
		\end{align*}
\end{proof}
\begin{remark}
	Note that in the last equation of the proof we identified the partition function of the integer valued GFF by studying the equality with $h=0$. This tells us that
		\begin{equation}\label{e.ZIV-ZVil}
	Z^{IV}_{\beta^{-1}}= Z^{Vil}_{\beta}\frac{\sqrt{\beta}^{|E|}} {\sqrt{2\pi }^{2|V\backslash \{v_0\}|-|E|} }= Z^{Vil}_{\beta} \frac{\sqrt{\beta}^{|V\backslash\{v_0\}|+|F\backslash \{F_0\}|} }{\sqrt{2\pi }^{|V\backslash \{v_0\}|-|F\backslash \{F_0\}|} },
	\end{equation}
	where in the last identity we used Euler's formula. Furthermore, here the integer-valued GFF is living on the faces, while the Villain model is living on the vertices.
\end{remark}

\ni
The above identity has the following two  consequences (Corollaries \ref{c.Zgff1} and \ref{c.Zgff2}). 
\begin{corollary}\label{c.Zgff1} We have that
	\begin{align}\label{e.GFFXIV}
	Z^{IV}_{\beta^{-1}}= Z^{GFF}_{\beta^{-1}} \cdot Z^{Coul}_\beta.
	\end{align}
	Here all models are living on the faces (or otherwise they are all on the vertices).
\end{corollary}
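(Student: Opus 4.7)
The plan is to combine the two partition-function identities already established in the paper and then reduce the remaining claim to a determinant identity between primal and dual Laplacians.

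First, I would substitute the decoupling identity~\eqref{e.GFFxCoul} from Remark \ref{r.decoupling}, namely $Z^{Vil}_\beta = Z^{GFF}_{\beta,V}\,Z^{Coul}_{\beta,F}$ (with Villain/GFF on vertices and Coulomb on faces), into the transfer identity~\eqref{e.ZIV-ZVil}. This yields
\begin{align*}
Z^{IV}_{\beta^{-1}} = Z^{GFF}_{\beta,V}\,Z^{Coul}_{\beta,F}\cdot \frac{\sqrt{\beta}^{\,|V\setminus\{v_0\}|+|F\setminus\{f_0\}|}}{\sqrt{2\pi}^{\,|V\setminus\{v_0\}|-|F\setminus\{f_0\}|}}.
\end{align*}
Since the Coulomb factor already appears on the right-hand side of the desired identity, the whole corollary reduces to showing the Gaussian identity
\begin{align*}
Z^{GFF}_{\beta,V}\cdot \frac{\sqrt{\beta}^{\,|V\setminus\{v_0\}|+|F\setminus\{f_0\}|}}{\sqrt{2\pi}^{\,|V\setminus\{v_0\}|-|F\setminus\{f_0\}|}} = Z^{GFF}_{\beta^{-1},F},
\end{align*}
where the right-hand side is the partition function of the GFF on $2$-forms pinned at $f_0$ at inverse-temperature $\beta^{-1}$.

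Next, I would compute both GFF partition functions as explicit Gaussian integrals. Since a vertex GFF at inverse-temperature $\beta$ lives on $|V|-1$ coordinates (after pinning at $v_0$), one has
\begin{align*}
Z^{GFF}_{\beta,V} = (2\pi)^{(|V|-1)/2}\,\beta^{-(|V|-1)/2}\,\det(-\Delta_V)^{-1/2},
\end{align*}
and analogously
\begin{align*}
Z^{GFF}_{\beta^{-1},F} = (2\pi)^{(|F|-1)/2}\,\beta^{(|F|-1)/2}\,\det(-\Delta_F)^{-1/2}.
\end{align*}
Plugging these into the identity above and collecting powers of $2\pi$ and $\beta$, everything cancels except for the ratio $\det(-\Delta_V)^{1/2}/\det(-\Delta_F)^{1/2}$, so the corollary is equivalent to
\begin{align*}
\det(-\Delta_V) = \det(-\Delta_F).
\end{align*}

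The main (and only nontrivial) step is to establish this determinant identity. Here I would invoke the matrix-tree theorem: $\det(-\Delta_V)$ with a row/column removed at $v_0$ counts spanning trees of $\Lambda_n$, while $\det(-\Delta_F)$ with a row/column removed at $f_0$ counts spanning trees of the planar dual $\Lambda_n^*$. Planar duality sets up a bijection between spanning trees of a planar graph and spanning trees of its dual (the edges not in the tree form the dual tree), so the two determinants agree. This is where the $2d$ planar structure of the problem enters in an essential way; the statement would require a different combinatorial identity in higher dimensions. Combining the three displayed equations then yields the claimed $Z^{IV}_{\beta^{-1}} = Z^{GFF}_{\beta^{-1}}\cdot Z^{Coul}_\beta$.
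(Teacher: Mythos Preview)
Your proof is correct and follows essentially the same route as the paper: combine the decoupling identity~\eqref{e.GFFxCoul} with the transfer identity~\eqref{e.ZIV-ZVil}, reduce to a Gaussian partition-function identity, and close with $\det(-\Delta_V)=\det(-\Delta_F)$ via the matrix-tree theorem and planar duality of spanning trees. The paper's argument is identical in structure and in the key ingredients.
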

This result is  part of the {\em folklore}. It complements the classical decoupling $Z^{Vil}_\beta=Z^{GFF}_\beta\, Z^{Coul}_\beta$ we have seen earlier in Remark \ref{r.decoupling} (and for which models are {\em not} all defined on vertices). This corollary can be obtained directly using the modular invariance of the Riemann-theta function with quadratic form given by $-\Delta$. Equivalently, it can be obtained by rewriting the constraint $2\pi \sum_{n\in \Z}\delta_{2\pi n}(d\psi)$ in Fourier series as $\left(\sum_{k\in\Z} e^{i k \psi}\right) d\psi$. 
We choose to present here a proof which is specific to case of the Laplacian $\Delta$ as it fits the spirit of this paper and since it also serves as a useful consistency check.

\begin{proof}
	To be careful in this proof we will add a $*$ as a superscript to the models that are living on the faces. We combine \eqref{e.GFFxCoul} with \eqref{e.ZIV-ZVil} to obtain that
	\begin{align*}
	Z^{IV,*}_{\beta^{-1}}= Z^{Coul,*}_\beta \left( Z^{GFF}_{\beta} \frac{\sqrt{\beta}^{|\Lambda_n\backslash\{v_0\}|+|F\backslash \{F_0\}|} }{\sqrt{2\pi }^{|\Lambda_n\backslash \{v_0\}|-|F\backslash \{F_0\}|} }\right).
	\end{align*} 
	Thus, it suffices to show that the term inside the parenthesis equals $Z_{\beta^{-1}}^{GFF,*}$. Recall that
	\begin{align*}
	Z_\beta^{GFF}= \sqrt{\frac{2\pi}{\beta} }^{|\Lambda_n\backslash \{v_0\}|} \sqrt{\det(-\Delta^{-1})}.
	\end{align*}
	We recall that the determinant of the Laplacian is taken in the space of $0$ forms (which in our setup are rooted in $v_0$ and as such $\Delta$  is invertible). Thus, we have that
	\begin{align*}
	Z^{GFF}_{\beta} \frac{\sqrt{\beta}^{|\Lambda_n\backslash\{v_0\}|+|F\backslash \{F_0\}|} }{\sqrt{2\pi }^{|\Lambda_n\backslash \{v_0\}|-|F\backslash \{F_0\}|} }= \sqrt{2\pi\beta}^{|F\backslash \{f_0\}|} \sqrt{\det(-\Delta^{-1})}.
	\end{align*}
	We conclude using the fact that $\det(-\Delta^0)=\det(-\Delta^*)$. This follows from the matrix-tree theorem that tells us that this number is equal to the number of spanning trees in a given graph. And thanks to duality, we know that the number of spanning trees on the vertices is equal to the number of spanning trees on the faces. 
\end{proof}

\begin{corollary}\label{c.Zgff2}
The (infinite volume) free energy $f^{IV}(\beta)$ and $f^{Coul}(\beta)$ of the Integer-Valued GFF and the Coulomb gas are well-defined and do not depend on free versus 0 boundary conditions nor on whether the models are living on the vertices or on the faces. They are related as follows: for any $\beta>0$, 
\begin{align*}\label{}
f^{Coul}(\beta) = 
\begin{cases} 
f^{IV}(\beta^{-1}) - f^{GFF}(\beta^{-1})  \\
f^{Vil}(\beta) - f^{GFF}(\beta)
\end{cases}
\end{align*}
\end{corollary}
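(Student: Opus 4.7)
The plan is to obtain both identities as the $|\Lambda_n|\to\infty$ limit of the two finite-volume partition-function factorizations already in hand: from Remark \ref{r.decoupling} we have $Z^{Vil}_\beta(\Lambda_n) = Z^{GFF}_\beta(\Lambda_n)\cdot Z^{Coul}_\beta(\Lambda_n^*)$ (Villain and GFF on vertices, Coulomb on faces), and from Corollary \ref{c.Zgff1} we have $Z^{IV}_{\beta^{-1}}(\Lambda_n^*) = Z^{GFF}_{\beta^{-1}}(\Lambda_n^*)\cdot Z^{Coul}_\beta(\Lambda_n^*)$ (all three on the faces). Writing $f^X(\beta) := \lim_n |\Lambda_n|^{-1}\log Z^X_\beta(\Lambda_n)$ and using $|\Lambda_n^*|/|\Lambda_n|\to 1$, taking $\log/|\Lambda_n|$ of the two identities will yield directly $f^{Coul}(\beta) = f^{Vil}(\beta)-f^{GFF}(\beta)$ and $f^{Coul}(\beta) = f^{IV}(\beta^{-1})-f^{GFF}(\beta^{-1})$, once the right-hand sides are shown to converge and to be insensitive to the choice of boundary condition and of vertex/face version.

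The first input I would record is the existence and boundary/duality independence of $f^{GFF}(\beta)$. This is classical: since $Z^{GFF}_\beta(\Lambda_n) = (2\pi/\beta)^{(|V|-1)/2}\sqrt{\det(-\Delta_{\Lambda_n})^{-1}}$, it reduces to convergence of $|\Lambda_n|^{-1}\log\det(-\Delta_{\Lambda_n})$. By the matrix-tree theorem this is the log of the number of spanning trees of $\Lambda_n$, which admits an explicit thermodynamic limit (e.g.\ via Fourier diagonalization on the torus), and planar duality of spanning trees (the very fact invoked at the end of the proof of Corollary \ref{c.Zgff1}) identifies the vertex and the face versions. Boundary-condition independence is an immediate $O(|\partial\Lambda_n|) = o(|\Lambda_n|)$ observation comparing free and Dirichlet partition functions.

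The second input is existence of $f^{Vil}(\beta)$, which I would obtain by a standard tiling/subadditivity argument: cover $\Lambda_{kn}$ by $k^2$ disjoint translates of $\Lambda_n$ separated by thin decoupling corridors, and observe that decoupling costs at most $\exp(O(kn))$ in the partition function; this yields $|\Lambda_n|^{-1}\log Z^{Vil}_\beta(\Lambda_n) = f^{Vil}(\beta) + O(1/n)$ and the same argument delivers boundary-condition independence. The argument is genuinely standard because $[0,2\pi)$ is compact and the Hamiltonian $\tfrac\beta 2 \langle \d\theta+2\pi m,\d\theta+2\pi m\rangle$ is non-negative with finite-range interaction. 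Combining with Step 1, the Villain factorization $Z^{Vil}_\beta(\Lambda_n) = Z^{GFF}_\beta(\Lambda_n)\cdot Z^{Coul}_\beta(\Lambda_n^*)$ then simultaneously forces existence of $f^{Coul}(\beta)$ and yields the identity $f^{Coul}(\beta) = f^{Vil}(\beta)-f^{GFF}(\beta)$; the factorization from Corollary \ref{c.Zgff1} in turn gives $f^{IV}(\beta^{-1})$ together with $f^{Coul}(\beta) = f^{IV}(\beta^{-1})-f^{GFF}(\beta^{-1})$. Boundary-condition and vertex/face independence of $f^{Coul}$ and $f^{IV}$ are inherited from the right-hand sides.

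The main (and essentially only) obstacle is the subadditivity step for $f^{Vil}$, and it is mild: one just needs to verify that the seams between tiles contribute $O(kn)$ rather than $O((kn)^2)$ to $\log Z^{Vil}$, which is routine for a nearest-neighbor model with bounded single-site space. An equivalent and perhaps slightly cleaner alternative would be to prove existence of $f^{IV}(\beta^{-1})$ directly by the same tiling argument, with the minor additional care of a truncation to control the unbounded integer spins using the strict convexity of the quadratic energy; either route lands on the same identities.
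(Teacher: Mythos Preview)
Your proposal is correct and follows essentially the same route as the paper: use the two finite-volume factorizations $Z^{Vil}_\beta=Z^{GFF}_\beta\,Z^{Coul}_\beta$ and $Z^{IV}_{\beta^{-1}}=Z^{GFF}_{\beta^{-1}}\,Z^{Coul}_\beta$, establish $f^{Vil}$ by standard subadditivity (exploiting compactness of $\S^1$) and $f^{GFF}$ explicitly, then deduce existence and the stated identities for $f^{Coul}$ and $f^{IV}$ along with boundary/duality independence. The paper's remark even emphasizes, as you do, that this circumvents the more delicate direct subadditivity arguments for the non-compact IV-GFF and the long-range Coulomb gas.
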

\begin{remark}\label{}
This corollary is also well-known: see for example \cite{lieb1972} where the classical sub-additivity arguments are adapted to the case of the long-range interactions of the Coulomb gas. Similarly for the integer-valued GFF, note that the classical trick of applying sub-additivity arguments to $Z^{IV}_{\Lambda_{2^n}}$ works well with 0 boundary conditions but already for free-boundary conditions, it is much less clear than, say, for the classical sub-additivity argument for the Ising model with {\em free} boundary conditions: this is due to the non-compactness of $\Z$. As such the above identities give us precise links between
\[
Z^{Vil}_{\beta,\Lambda} \longleftrightarrow Z^{Coul}_{\frac 1 \beta, \Lambda^*}
\longleftrightarrow Z^{IV}_{\frac 1 \beta, \Lambda^*}\,,
\]
and allow us to avoid relying on potentially tedious sub-additivity arguments. We are only left with checking the convergence of the free energy for the {\em Villain model} which follows by standard arguments as spins are compactly supported in this case.
 The same story holds for Coulomb which is both non-compact and non-local.  The new input of this paper on free energies is certainly not this corollary but rather Theorem \ref{th.FEs} below which will provide quantitative bounds on these free energies which are consistent with the low-temperature expansion found in \cite{Kadanoff}. 
\end{remark}

\begin{proof}
Let us first focus on the existence of $f^{Coul}(\beta)$. The identity~\eqref{e.GFFxCoul} tells that that for any $n\geq 1$, 
\begin{align*}
Z^{Vil}_{\beta,\Lambda_n}&= Z^{GFF}_{\beta,\Lambda_n} Z^{Coul}_{\beta,\Lambda_n^*}, 
\end{align*}
which implies 
\begin{align*}
\frac{1}{|\Lambda_n^*|} \log Z^{Coul}_{\beta,\Lambda_n^*}
&=(1+o(1)) \frac{1}{|\Lambda_n|} \left(\log  Z^{Vil}_{\beta,\Lambda_n}  - \log Z^{GFF}_{\beta,\Lambda_n}\right)\,.
\end{align*}
By the classical sub-additivity argument, $\frac{1}{|\Lambda_n|} \log  Z^{Vil}_{\beta,\Lambda_n}$ converges as $n\to \infty$ to the well-defined free-energy $f^{Vil}(\beta)$. The second term  $\frac{1}{|\Lambda_n|} \log  Z^{GFF}_{\beta,\Lambda_n} \sim \tfrac12 \log(2\pi / \beta) + \frac{1}{2|\Lambda_n|}\log(-\Delta_{\Lambda_n})$ also converges. Furthermore, these limits do not depend on the prescribed boundary conditions around $\Lambda_n$. 

Finally, for the existence of $f^{IV}(\beta)$, we proceed in the same manner, but by using instead the second identity~\eqref{e.GFFXIV}.
\end{proof}

We now state on its own a particular case of Theorem \ref{th.Fourier} because its form is simplified (no need of the coupled 1-form $m$) and because it will be often used in the rest of this text.
\begin{corollary}
	In the context of Theorem \ref{th.Fourier}, if we take an integer-valued $1$-form $h:E\mapsto\Z$ we have that
	\begin{equation}\label{e.Vil_IV_int}
	\E^{Vil}_\beta\left[e^{i\langle \d \theta, h\rangle} \right]= \E^{IV}_{\beta^{-1}}\left[ e^{\frac{1}{\beta}\langle\d^*\Psi, h\rangle } \right] e^{-\frac{1}{2\beta}\langle h,h\rangle}.
	\end{equation}	
\end{corollary}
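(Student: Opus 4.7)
The plan is to derive the corollary as a direct consequence of Theorem~\ref{th.Fourier} by exploiting the integrality of $h$ to eliminate the contribution of the coupled $1$-form $m$. Concretely, I would proceed as follows.

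First, I apply Theorem~\ref{th.Fourier} to the given integer-valued $h:E\to\Z$. The theorem asserts
\[
\E_{\beta}^{Vil}\!\left[e^{i\langle \d \theta + 2\pi m,\, h \rangle} \right] \;=\; \E_{\beta^{-1}}^{IV}\!\left[ e^{\frac{1}{\beta}\langle\d^*\Psi, h\rangle} \right] e^{-\frac{1}{2\beta}\langle h,h\rangle},
\]
where $(\theta,m)$ is a Villain coupling. To match this with the claimed formula, it suffices to show that, $\P_\beta^{Vil}$-almost surely, the phase contribution from $m$ trivializes, i.e.\ $e^{i\langle 2\pi m, h\rangle}=1$.

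Second, I observe that by definition of the Villain coupling, $m$ is a $1$-form taking values in $\Z^E$. By hypothesis, $h$ is also integer-valued on the edges. Using the definition of the inner product on $1$-forms from Section~\ref{sss.calculus},
\[
\langle m,h\rangle \;=\; \sum_{e\in E} m(e)\,h(e) \;\in\; \Z,
\]
so $2\pi \langle m,h\rangle \in 2\pi\Z$ and hence $e^{i 2\pi \langle m,h\rangle}=1$ deterministically. Pulling this factor out of the expectation on the left-hand side of Theorem~\ref{th.Fourier} turns $\E_{\beta}^{Vil}[e^{i\langle \d\theta+2\pi m,h\rangle}]$ into $\E_{\beta}^{Vil}[e^{i\langle \d\theta,h\rangle}]$, which is precisely the left-hand side of~\eqref{e.Vil_IV_int}.

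Since both steps are essentially bookkeeping, there is no genuine obstacle; the only substantive check is the integrality of $\langle m,h\rangle$, which is immediate from the discrete inner product. The value of stating the corollary separately is that its left-hand side no longer involves the auxiliary coupled variable $m$ and is thus expressed purely in terms of the Villain angles $\theta$, which is the form needed in later sections.
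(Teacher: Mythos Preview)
Your proof is correct and is exactly the argument implicit in the paper, which does not spell out a separate proof but simply introduces the corollary as ``a particular case of Theorem~\ref{th.Fourier}'' where ``its form is simplified (no need of the coupled 1-form $m$)''. The only point worth double-checking is the one you already handled: the inner product on $1$-forms is $\langle m,h\rangle=\sum_{e\in E}m(e)h(e)$ (the $\tfrac12$ in the oriented-edge sum cancels by antisymmetry), so integrality is genuinely automatic.
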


The most important consequence of Theorem \ref{th.Fourier} is the connection it has with Theorem \ref{th.coupling}. This will be a key step to estimate the Laplace transform of the IV-GFF in Section \ref{s.proof}. 
\begin{proposition}\label{pr.Coul_GFF}Let us work in the context of Theorem \ref{th.Fourier}, then for any complex $1$-form $h:E\mapsto \C$
	\begin{equation}\label{e.Coul_GFF}
\E^{IV}_{\beta^{-1}}\left[ e^{\frac{1}{\beta}\langle\d^*\Psi, h\rangle } \right]= e^{\frac{1}{2\beta}\langle \d h,(-\Delta)^{-1}\d h\rangle}\E^{Coul}_{\beta}\left [  e^{i2\pi \langle\d^* \Delta^{-1}q,h \rangle } \right].
	\end{equation}
	In other words, for any complex 2-form $g$	\begin{equation}\label{e.Coul_GFF_2}
	\E^{IV}_{\beta^{-1}}\left[ e^{\frac{1}{\beta}\langle\Psi, g\rangle } \right]= e^{\frac{1}{2\beta}\langle  g,(-\Delta)^{-1}g\rangle}\E^{Coul}_{\beta}\left [  e^{i2\pi \langle \Delta^{-1}q,g \rangle } \right].
	\end{equation}
\end{proposition}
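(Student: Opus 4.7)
The strategy is to chain together the transfer formula of Theorem~\ref{th.Fourier} with the decoupling of Theorem~\ref{th.coupling}. My first move is to use Theorem~\ref{th.Fourier} to rewrite the left-hand side as
\[
\E^{IV}_{\beta^{-1}}\Bigl[ e^{\frac{1}{\beta}\langle\d^*\Psi, h\rangle}\Bigr] \;=\; e^{\frac{1}{2\beta}\langle h,h\rangle}\,\E^{Vil}_\beta\Bigl[e^{i\langle \d\theta + 2\pi m, h\rangle}\Bigr],
\]
so that it remains to compute the Villain characteristic function on the right through the coupling $T(\theta,m)=(\phi,q)$ of Theorem~\ref{th.coupling}.

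The key algebraic identity I need is
\[
\d\theta + 2\pi m \;=\; \d\phi + 2\pi\,\d^*\Delta^{-1} q.
\]
To obtain it, I differentiate $\phi = \theta + 2\pi\psi + 2\pi\d^*\Delta^{-1} n_q$ (using $\d\psi = m - n_q$ and $\d n_q = q$) and rewrite $\d\d^*\Delta^{-1} n_q = n_q - \d^*\Delta^{-1} q$ via $\Delta = \d\d^* + \d^*\d$ together with the commutation $\d\Delta^{-1} = \Delta^{-1}\d$. Since $\phi$ and $q$ are independent under the pushforward law (Theorem~\ref{th.coupling}), the Villain expectation factorizes as
\[
\E^{GFF}_\beta\bigl[e^{i\langle\d\phi,h\rangle}\bigr]\,\cdot\,\E^{Coul}_\beta\bigl[e^{i 2\pi \langle \d^*\Delta^{-1} q,h\rangle}\bigr],
\]
and the Gaussian characteristic function of $\phi$, combined with $\langle \d\phi, h\rangle = -\langle\phi,\d^* h\rangle$, yields $\E^{GFF}_\beta[e^{i\langle \d\phi,h\rangle}] = \exp\bigl(-\tfrac{1}{2\beta}\langle \d^* h,(-\Delta)^{-1}\d^* h\rangle\bigr)$. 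Assembling these ingredients, the first identity~\eqref{e.Coul_GFF} reduces to the Hodge-type relation
\[
\langle h,h\rangle \;=\; \langle \d h,(-\Delta)^{-1}\d h\rangle + \langle \d^* h,(-\Delta)^{-1}\d^* h\rangle,
\]
which is itself obtained by decomposing $h = -\d\d^*(-\Delta)^{-1} h - \d^*\d(-\Delta)^{-1} h$, pairing with $h$, and integrating by parts using $\d\Delta^{-1}=\Delta^{-1}\d$ and $\d^*\Delta^{-1}=\Delta^{-1}\d^*$.

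For the reformulation in terms of a 2-form $g$, the identity~\eqref{e.Coul_GFF_2} is obtained from~\eqref{e.Coul_GFF} by the substitution $g=\d h$: every properly rooted 2-form $g$ arises this way by Proposition~\ref{pr.basic_calculus}(2) (note that $\d g=0$ is automatic on 2-forms), and under this substitution $\langle \d^*\Psi, h\rangle = -\langle\Psi, g\rangle$, $\langle \d^*\Delta^{-1} q, h\rangle = -\langle \Delta^{-1} q, g\rangle$, and $\langle \d h,(-\Delta)^{-1}\d h\rangle = \langle g,(-\Delta)^{-1} g\rangle$; flipping $g\mapsto -g$ then absorbs the minus signs in the exponentials. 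The only substantive step in the whole argument is the algebraic identity $\d\theta + 2\pi m = \d\phi + 2\pi \d^*\Delta^{-1} q$, which exposes the role of the coupled 1-form $m$ as the natural bridge between the Villain and Coulomb sides of the coupling; once it is in hand, the remainder reduces to the Gaussian calculus for the GFF and the Hodge decomposition on $\Z^2$.
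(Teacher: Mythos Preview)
Your proof is correct and follows essentially the same route as the paper: invoke Theorem~\ref{th.Fourier}, use the coupling of Theorem~\ref{th.coupling} to rewrite $\d\theta+2\pi m=\d\phi+2\pi\d^*\Delta^{-1}q$, factorize by independence, and compute the GFF characteristic function. The only cosmetic difference is that you make the Hodge identity $\langle h,h\rangle=\langle \d h,(-\Delta)^{-1}\d h\rangle+\langle \d^* h,(-\Delta)^{-1}\d^* h\rangle$ explicit, whereas the paper collapses that step into a single line.
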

\begin{remark}
	Let us note that the fact that the IV-GFF lives on the faces of $V$ is only made here in order to keep the spirit of Theorem \ref{th.Fourier}, as a graph on faces in $d=2$ is isomorphic to another planar graph on vertices.
\end{remark}
\begin{remark}\label{}
Let us also stress as we mentioned below Corollary \ref{c.Zgff1} that one may have obtained the above identities without relying on the Villain model but by relying instead on the modular invariance of the Riemann theta function. See \cite{GS1} where we exploited this link between IV-GFF and the Riemann-theta function. 
\end{remark}

\begin{proof} We start by noting that Theorem \ref{th.Fourier} implies that
	\begin{align*}
 \E^{IV}_{\beta^{-1}}\left[ e^{\frac{1}{\beta}\langle\d^*\Psi, h\rangle} \right] &=e^{\frac{1}{2\beta}\langle h,h\rangle}\E^{Vil}_{\beta}\left[e^{i\langle \d \theta+2\pi m, h \rangle} \right]\\
 &=e^{\frac{1}{2\beta}\langle h,h\rangle}\E^{Vil}_{\beta}\left[e^{i\langle \d \theta+2\pi ((m-n_q)+  \d \d^* \Delta^{-1}n_q), h \rangle} e^{i2\pi\langle\d^* \Delta^{-1}q,h \rangle } \right].
	\end{align*}
We can, now, use Theorem \ref{th.coupling} to see that the last equation is equal to
\begin{align*}
&e^{\frac{1}{2\beta}\langle h,h\rangle}\E^{GFF}_{\beta}\left[e^{i\langle \d \phi, h \rangle}\right] \E^{Coul}_{\beta}\left [  e^{i2\pi\langle\d^* \Delta^{-1}q,h \rangle } \right]\\
&\hspace{0.4\textwidth}=e^{\frac{1}{2\beta}\langle \d h,\d(-\Delta^{-1})h\rangle}\E^{Coul}_{\beta}\left [  e^{i2\pi\langle\d^* \Delta^{-1}q,h \rangle } \right].
\end{align*}

The proof of \eqref{e.Coul_GFF_2} comes from Proposition \ref{pr.basic_calculus}, i.e., the fact that all 2-forms $g$ are equal to $\d h$ for some $1$-form $h$. 
\end{proof}

Equation \eqref{e.Coul_GFF} gives an important insight on the law of the integer value GFF. To do that, let us note that if $\phi$ is a GFF on the faces of $\Lambda_n$ at inverse-temperature $\beta^{-1}$, we have that for any complex $1$-form $h:E\mapsto \C$
\begin{equation}
\E^{GFF}_{\beta^{-1}}\left[e^{\frac{1}{\beta}\langle \d^* \phi, H \rangle } \right]= e^{\frac{1}{2\beta}\langle \d h, \d (-\Delta^{-1}) h\rangle}. 
\end{equation}
As a consequence of this discussion, we obtain the following classical consequence of Ginibre's inequality \cite{Ginibre}. 
\begin{proposition}\label{pr.less_fluctuation}
	Let $\phi$, resp. $\Psi$, be a GFF, resp. an IV-GFF,  in $\Lambda_n$ at inverse-temperature $\beta^{-1}$ and with any possible boundary condition. Then, for any 0-form $w:V\mapsto \R $
	\begin{equation*}
	\E^{IV}_{\beta^{-1}}\left[e^{\langle \Psi, w\rangle}\right]\leq \E_{\beta^{-1}}^{GFF}\left[e^{\langle \phi, w\rangle}   \right]. 
	\end{equation*}
	In particular,
	\begin{equation}\label{e.Ginibre_IV}
	\E^{IV}_{\beta^{-1}}[\langle \Psi, w\rangle^2 ]\leq \E^{GFF}_{\beta^{-1}}[\langle \phi, w\rangle^2 ].
	\end{equation}
\end{proposition}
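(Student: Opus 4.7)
The plan is to deduce the Laplace-transform inequality directly from Proposition \ref{pr.Coul_GFF} together with the trivial bound $|\E[e^{iX}]|\leq 1$, and then obtain the second-moment inequality by differentiating twice at the origin. Applying the vertex-based analogue of identity \eqref{e.Coul_GFF_2} (valid here, as pointed out in the remark following Proposition \ref{pr.Coul_GFF}) with $g$ replaced by $\beta w$ gives
\[
\E^{IV}_{\beta^{-1}}\bigl[e^{\langle \Psi, w\rangle}\bigr] \;=\; e^{\tfrac{\beta}{2}\langle w,(-\Delta)^{-1}w\rangle}\,\E^{Coul}_{\beta}\bigl[e^{i 2\pi \beta \langle \Delta^{-1} q, w\rangle}\bigr].
\]
Since $\E^{GFF}_{\beta^{-1}}[e^{\langle\phi,w\rangle}]$ equals exactly the prefactor on the right-hand side (the GFF at inverse temperature $\beta^{-1}$ has covariance $\beta(-\Delta)^{-1}$), proving the first inequality of the proposition reduces to the claim that the Coulomb-gas expectation is at most $1$.

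For this, I would invoke the $q\mapsto -q$ symmetry of the Coulomb gas, whose density $\propto \exp(-\tfrac{(2\pi)^2\beta}{2}\langle q,(-\Delta)^{-1}q\rangle)$ depends only on the quadratic form $\langle q,(-\Delta)^{-1}q\rangle$. Hence the expectation above is real and equals $\E^{Coul}_{\beta}[\cos(2\pi\beta\langle \Delta^{-1}q,w\rangle)]$, which is at most $1$. This yields the claimed Laplace bound.

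For the second-moment inequality \eqref{e.Ginibre_IV}, I would apply the Laplace bound with $tw$ in place of $w$ to get, for every real $t$,
\[
\E^{IV}_{\beta^{-1}}\bigl[e^{t\langle \Psi, w\rangle}\bigr] \;\leq\; \E^{GFF}_{\beta^{-1}}\bigl[e^{t\langle \phi, w\rangle}\bigr].
\]
Both $\Psi$ and $\phi$ are symmetric under sign reversal of the field (their Gibbs weights depend only on $\langle \Psi,-\Delta \Psi\rangle$, respectively $\langle\d\phi,\d\phi\rangle$), so both sides are even functions of $t$ and the linear coefficients in their Taylor expansions at $0$ vanish. Gaussian-type tails of the IV-GFF justify an expansion to order $t^2$; subtracting $1$, dividing by $t^2$, and letting $t\to 0$ then yields the desired comparison of second moments. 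The whole argument is essentially a two-line consequence of Proposition \ref{pr.Coul_GFF}, and the only conceptual step is recognising the ``defect'' factor in the Coulomb--GFF identity as a characteristic function, which is trivially bounded by $1$; there is no real obstacle.
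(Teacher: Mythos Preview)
Your proof is correct and is exactly the argument the paper intends: the proposition is stated as ``a consequence of this discussion'', namely the identity \eqref{e.Coul_GFF_2} combined with the observation that the prefactor equals the GFF Laplace transform, so that the inequality follows from bounding the Coulomb characteristic function by $1$. Your derivation of the second-moment inequality by expanding to order $t^2$ is likewise the standard route (and is precisely what the paper does in the closely related Proposition \ref{pr.var_formula}).
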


In fact, we can think of \eqref{e.Coul_GFF} as a quantitative improvement of Proposition \ref{pr.less_fluctuation}, which we call a {\em spin-wave improvement}. In fact, Proposition \ref{pr.less_fluctuation} is generally proven via a derivative argument which gives no exact formula for how much fluctuation is lost by going from the GFF to the IV-GFF.

Let us, now, make another remark to help us understanding \eqref{e.Coul_GFF}. Consider the term
\begin{equation}\label{e.Coulomb_side}
\E^{Coul}_{\beta}\left [  e^{i2\pi\langle\d^* \Delta^{-1}q,h \rangle } \right].
\end{equation}
Recall that the energy term coming from Coulomb is the same one as the one for $(2\pi)^{-1} \Delta \phi$, where $\phi$ is a GFF at inverse-temperature $\beta$ (see Remark \ref{r.Coulomb_Laplacian}). Thus, if $q$ were not constrained to be in $\Z$, we would have that \eqref{e.Coulomb_side} would be equal to $
\exp(-(2\beta)^{-1}\langle \d h, \d (-\Delta^{-1})h\rangle)$, which in turn would imply that \eqref{e.Coul_GFF} needs to be equal to $1$. In particular, Theorem \ref{th.Fourier} implies that to understand the fluctuation of the IV-GFF, we need to understand how far away is the law of a Coulomb gas from that of $2\pi \Delta \phi$.

To finish this section, let us discuss what Proposition \ref{pr.Coul_GFF} tells us about the relationship between the variances of the Integer-valued GFF, the GFF and the Coulomb gas.

\begin{proposition}\label{pr.var_formula}Let us work in the context of Proposition \ref{pr.Coul_GFF} and let $g$ be any real-valued 2-form, then
	\begin{align}\label{e.variance_equality}
	\var_{\beta^{-1}}^{GFF}[\langle \phi, g\rangle ] -\var_{\beta^{-1}}^{IV}\left[\langle \Psi, g \rangle \right]=(2\pi)^2\beta^2 \var_{\beta}^{Coul}\left[\langle \Delta^{-1} q, g \rangle \right].
	\end{align}
	In particular, for any $v, v' \in V$
	\begin{align}
\label{e.var_1-point}&\var_{\beta^{-1}}^{GFF}[\phi(v) ] -\var_{\beta^{-1}}^{IV}\left[\Psi(v) \right]= (2\pi)^2\beta^2\var_{\beta}^{Coul}\left[ \Delta^{-1} q (v)\right],\\
\label{e.covar}&\E_{\beta^{-1}}^{GFF}[\phi(v)\phi(v') ] -\E_{\beta^{-1}}^{IV}\left[\Psi(v)\Psi(v') \right]=(2\pi)^2\beta^2\E_{\beta}^{Coul}\left[ (\Delta^{-1} q) (v) (\Delta^{-1} q)(v') \right].
	\end{align}
\end{proposition}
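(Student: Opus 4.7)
The plan is to differentiate the identity \eqref{e.Coul_GFF_2} twice at the origin. More precisely, replace $g$ by $tg$ for a real parameter $t\in\R$ in \eqref{e.Coul_GFF_2}: since we are on a finite graph and $\Psi, q$ take integer values with Gaussian-type tails, both sides are analytic functions of $t$ in a neighbourhood of $0$, so one may legitimately take $\log$ and expand to order $t^2$.

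The left-hand side becomes
\begin{align*}
\log \E^{IV}_{\beta^{-1}}\Bigl[e^{(t/\beta)\langle \Psi,g\rangle}\Bigr] = \frac{t}{\beta}\E^{IV}[\langle \Psi,g\rangle] + \frac{t^2}{2\beta^2}\var^{IV}[\langle \Psi,g\rangle] + O(t^3),
\end{align*}
while the right-hand side reads
\begin{align*}
\frac{t^2}{2\beta}\langle g,(-\Delta)^{-1}g\rangle + \log \E^{Coul}_\beta\Bigl[e^{i 2\pi t\langle \Delta^{-1}q,g\rangle}\Bigr],
\end{align*}
and the Coulomb log-expectation expands as $i2\pi t\,\E^{Coul}[\langle \Delta^{-1}q,g\rangle] - \tfrac{(2\pi t)^2}{2}\var^{Coul}[\langle \Delta^{-1}q,g\rangle] + O(t^3)$. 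Since the IV-GFF and the Coulomb gas are both defined on rooted $k$-forms, their laws are invariant under $\Psi \mapsto -\Psi$ and $q \mapsto -q$ respectively, so the first-order terms vanish. Matching the coefficients of $t^2$ gives
\begin{align*}
\tfrac{1}{2\beta^2}\var^{IV}[\langle \Psi,g\rangle] = \tfrac{1}{2\beta}\langle g,(-\Delta)^{-1}g\rangle - \tfrac{(2\pi)^2}{2}\var^{Coul}[\langle \Delta^{-1}q,g\rangle].
\end{align*}
Multiplying by $2\beta^2$ and recalling that the Gaussian covariance structure from Section~\ref{ss.GFF} yields $\var^{GFF}_{\beta^{-1}}[\langle\phi,g\rangle] = \beta\langle g,(-\Delta)^{-1}g\rangle$ produces exactly \eqref{e.variance_equality}.

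For the particular cases: \eqref{e.var_1-point} is obtained by taking $g=\1_v$, and \eqref{e.covar} follows by polarisation, applying \eqref{e.variance_equality} to $g = \1_v + \1_{v'}$ and $g = \1_v - \1_{v'}$ and subtracting, then using that all three fields have zero mean (again by the involution $\Psi\mapsto-\Psi$, $q\mapsto-q$ and the Gaussian centring). There is no real obstacle in this proof beyond keeping track of the symmetry arguments which kill the linear terms; everything else is a finite-dimensional Taylor expansion of an analytic identity.
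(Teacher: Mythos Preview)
Your proof is correct and follows essentially the same route as the paper: both insert a real parameter in front of $g$ in \eqref{e.Coul_GFF_2}, invoke the symmetry $\Psi\mapsto-\Psi$, $q\mapsto-q$ to kill the linear terms, and read off \eqref{e.variance_equality} from the order-two Taylor coefficients. The only cosmetic difference is in deriving the particular cases: the paper takes $g=\1_v-\1_{v_0}$ (which is equivalent to your $g=\1_v$ since the fields vanish at the root) and obtains \eqref{e.covar} by combining \eqref{e.variance_equality} for a suitable $g$ with the already-established \eqref{e.var_1-point}, rather than by your explicit polarisation; the two are of course the same computation.
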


\begin{remark}
	Let us note that if our graph is just made of two points, \eqref{e.variance_equality} implies that
	\begin{equation}\label{e.variance_IG}
	1-(2\pi)^2\beta \var^{IG}(0,(2\pi)^2\beta)=\beta^{-1}\var^{IG}(0,\beta^{-1})\,.
	\end{equation}
If we had the monotonicity of the variance as described in \eqref{e.monotonicity_variance}, we would have that this term would be equal to $1-M(\beta)$. Let us also recall that this last equality is a particular instance of Jacobi theta's identity which played a key role in \cite{GS1}.	
\end{remark}
\begin{proof}
	We start by noting that since $g$ is a $2$-form, we have that there exists an $h:E\mapsto \R$ such that $\d h=g$, thus Proposition \ref{pr.Coul_GFF} implies that for any $\lambda>0$
	\begin{align*}
	\E^{IV}_{\beta^{-1}}\left[e^{\frac{\lambda}{\beta} \langle\Psi,g\rangle} \right]= \E^{GFF}_{\beta^{-1}}\left[e^{\frac{\lambda}{\beta}\langle \phi,g \rangle } \right]\E^{Coul}_{\beta}\left[e^{2 i \pi \lambda \langle \Delta^{-1}q, g \rangle} \right].
	\end{align*}
	Note that as $\langle\Psi,g\rangle$, $\langle\phi,g\rangle$ and $\langle\Delta^{-1}q,g\rangle$ all have zero average, we obtain \eqref{e.variance_equality} by a simple Taylor of order two in $\lambda$. One can obtain \eqref{e.var_1-point} by taking $g=\1_{v}-\1_{v_0}$, where $v_0$ is any point in $\partial V$. Finally, \eqref{e.covar} is obtained by taking $g=\1_{v}-\1_{v_0}$ together with \eqref{e.var_1-point}
	\end{proof}
\section{Bounds on the variance and free-energy of the IV-GFF}\label{s.Var}
In this section, we will obtain the first bounds for the variance of both the Coulomb gas and the IV-GFF. This will also imply bounds on the free-energy of the IV-GFF. 

\subsection{Variance of Coulomb Gas and the IV-GFF.}
We will now concentrate in bounding the variance of a Coulomb gas. In particular, we will prove \eqref{e.variance_coul} of Theorem \ref{th.Coul}. This can be rephrased in the following proposition. 
\begin{proposition}\label{pr.Variance_Coul}
	Let $q$ be a Coulomb gas (with either boundary condition) in $F$ at inverse-temperature $\beta$. Then, for any $2$-form $g:F\mapsto \R$
	\begin{equation}\label{e.variance_bound_Coulomb}
	\var^{Coul}_{\beta}(\langle \Delta^{-1}q,g \rangle )\geq (\inf_{a\in \R} \var^{IG}(a,(2\pi)^2\beta))\langle g, (-\Delta)^{-1}g \rangle=\frac{M(\beta)}{(2\pi)^2\beta}\langle g, (-\Delta)^{-1}g \rangle.
	\end{equation}
\end{proposition}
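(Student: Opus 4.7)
The plan is to exploit the Villain coupling from Theorem \ref{th.coupling}, which lets us replace the non-local Coulomb gas $q$ by its ``primitive'' 1-form $m$ whose conditional law (given $\theta$) is a product of independent integer-valued Gaussians. This trades the non-local quadratic interaction $\langle q, (-\Delta)^{-1} q\rangle$ for the local interaction defining the Villain coupling, and the conditional independence of $m$ is exactly what will produce a clean lower bound on the variance.

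Concretely, I would start with $(\theta,m)$ sampled from the Villain coupling at inverse temperature $\beta$. By Theorem~\ref{th.coupling}, the 2-form $q:=\d m$ has the law $\P^{Coul}_\beta$, so
\begin{align*}
\var^{Coul}_\beta\bigl[\langle \Delta^{-1} q, g \rangle\bigr]
= \var^{Vil}_\beta\bigl[\langle \Delta^{-1}\d m,\, g\rangle\bigr]
= \var^{Vil}_\beta\bigl[\langle m,\, h\rangle\bigr],
\end{align*}
where $h:=\d^*\Delta^{-1} g$ is a 1-form, obtained by moving $\d$ across the inner product as $\d^*=-\d^t$ and using that $\Delta^{-1}$ commutes with $\d^*$. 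Applying the law of total variance with the conditioning on $\theta$ and dropping the (nonnegative) variance-of-the-conditional-mean term yields
\begin{align*}
\var^{Vil}_\beta\bigl[\langle m,h\rangle\bigr]
\;\geq\; \E^{Vil}_\beta\!\Bigl[\var^{Vil}_\beta\bigl[\langle m,h\rangle \,\big|\, \theta\bigr]\Bigr].
\end{align*}

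Here is the step where the coupling pays off: by Proposition~\ref{pr.Villain}(2), conditionally on $\theta$ the collection $\{m(e)\}_{e\in E}$ is independent, with $m(e) \sim \mathcal N^{IG}\!\bigl(-\tfrac{1}{2\pi}\d\theta(e),\,(2\pi)^2\beta\bigr)$. Therefore
\begin{align*}
\var^{Vil}_\beta\bigl[\langle m,h\rangle \,\big|\, \theta\bigr]
=\sum_{e\in E} h(e)^2\,\var^{IG}\!\bigl(-\tfrac{1}{2\pi}\d\theta(e),\,(2\pi)^2\beta\bigr)
\;\geq\; \Bigl(\inf_{a\in\R}\var^{IG}(a,(2\pi)^2\beta)\Bigr)\,\langle h,h\rangle.
\end{align*}
The infimum over $\R$ equals the infimum over $[0,1/2]$ by periodicity ($\var^{IG}(\cdot,\beta)$ is 1-periodic) and the symmetry $a\leftrightarrow -a$, so it equals $M(\beta)/((2\pi)^2\beta)$ by the definition of $M$.

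It remains to identify $\langle h, h\rangle$ with $\langle g,(-\Delta)^{-1}g\rangle$. Since $g$ is a 2-form, $\d g\equiv 0$, hence on 2-forms $\Delta=\d\d^*$. Using the adjoint relation once more,
\begin{align*}
\langle h,h\rangle
= \langle \d^*\Delta^{-1}g,\,\d^*\Delta^{-1}g\rangle
= -\langle \Delta^{-1}g,\,\d\d^*\Delta^{-1}g\rangle
= -\langle \Delta^{-1}g,\,g\rangle
= \langle g,\,(-\Delta)^{-1}g\rangle,
\end{align*}
which completes the bound. I do not expect a real obstacle here: all the content is in setting up the Villain coupling correctly so that the non-local object $\langle\Delta^{-1} q, g\rangle$ becomes a local linear functional $\langle m, h\rangle$ of a conditionally independent integer-valued Gaussian vector; once that reformulation is in place, the remaining manipulations are the law of total variance, a pointwise lower bound on the IG variance, and one integration by parts with $\d^*$.
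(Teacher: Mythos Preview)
Your proof is correct and follows essentially the same route as the paper: realize $q=\d m$ via the Villain coupling, apply the law of total variance conditioning on $\theta$, use the conditional independence of the $m(e)$'s from Proposition~\ref{pr.Villain}(2) to reduce to a sum of edge variances, and finish with the identity $\langle \d^*\Delta^{-1}g,\d^*\Delta^{-1}g\rangle=\langle g,(-\Delta)^{-1}g\rangle$. Your write-up is slightly more detailed (the periodicity/symmetry remark reducing $\inf_{a\in\R}$ to $\inf_{a\in[0,1/2]}$, and the explicit use of $\d g=0$ in the final integration by parts), but the argument is the same.
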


\begin{proof}[Proof of Proposition \ref{pr.Variance_Coul}]
	We start by using Theorem \ref{th.coupling}, to note that if $(\theta,m)$ is a Villain coupling at temperature $\beta$, then $q=\d m$ has the law of a Coulomb gas. Then,
	\begin{align}
	\var^{Coul}_{\beta}\left[ \langle \Delta^{-1}q,g \rangle\right]&= \var^{Vil}_\beta\left[ \E\left[ \langle \Delta^{-1}\d m,g\rangle\mid\theta \right]\right] + \E^{Vil}_\beta\left[\var\left[\langle \Delta^{-1}\d m,g\rangle\mid \theta \right]  \right]\\
	\nonumber&\geq \E^{Vil}_\beta\left[\var\left[\langle m,\d^*\Delta^{-1}g\rangle\mid \theta \right]  \right].
	\end{align}
	Recall from Proposition \ref{pr.Villain} that $m(e)$ are independent conditionally on $\theta$, and that the law of $m(e)$ conditionally on $\theta$ is equal to a discrete Gaussian variable centred in $-(2\pi)^{-1}\d\theta(e)$ and at inverse-temperature $(2\pi)^2\beta$. This gives us 
	\begin{align}
	\label{e.Var_Coul}\var^{Coul}_{\beta}\left[ \langle \Delta^{-1}q,g \rangle\right]&\geq \sum_{e\in E}\E^{Vil}_\beta\left[ \var^{IG}\left (-\frac{\d\theta(e)}{2\pi}, (2\pi)^2\beta\right )  (\d^*\Delta^{-1}g (e))^2  \right]\\
	\nonumber &\geq \inf_{a\in \R} \var^{IG}(a,(2\pi)^2\beta)\langle \d^* \Delta^{-1}g,\d^*\Delta^{-1} g \rangle \\
	\nonumber&= \inf_{a\in \R} \var^{IG}(a,(2\pi)^2\beta)\langle g,(-\Delta)^{-1} g \rangle.
	\end{align}
	This concludes the proof.	
\end{proof}

This proposition together with \eqref{e.variance_equality} and \eqref{e.variance_IG} allows us to prove the following corollary.
\begin{corollary}\label{c.var_IV}Let $\phi$, $\Psi$ be a GFF and IV-GFF respectively in $\Lambda_n$, then for any function $g$ and any $\beta>0$,
	\begin{align*}
	\var^{IV}_{\beta^{-1}}(\langle \Psi, g\rangle)&\leq \left (1-M(\beta)\right ) \var^{GFF}_{\beta^{-1}}(\langle\phi, g\rangle ).
	\end{align*}
\end{corollary}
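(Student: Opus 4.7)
The plan is to combine the two main inputs already established in this section: the variance decoupling identity~\eqref{e.variance_equality} from Proposition~\ref{pr.var_formula} and the lower bound on the Coulomb fluctuations from Proposition~\ref{pr.Variance_Coul}. The identity tells us exactly by how much the IV-GFF fluctuations differ from those of the Gaussian free field (namely by a factor of $(2\pi)^2\beta^2$ times the Coulomb fluctuations of $\langle \Delta^{-1}q,g\rangle$), so any quantitative lower bound on the Coulomb variance translates into a quantitative upper bound on the IV-GFF variance.

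First I would apply Proposition~\ref{pr.var_formula} (equation~\eqref{e.variance_equality}) to rewrite
\begin{equation*}
\var^{IV}_{\beta^{-1}}\bigl[\langle \Psi, g\rangle\bigr] \;=\; \var^{GFF}_{\beta^{-1}}\bigl[\langle \phi, g\rangle\bigr] \;-\; (2\pi)^2\beta^2\,\var^{Coul}_\beta\bigl[\langle \Delta^{-1}q,g\rangle\bigr].
\end{equation*}
Then I would invoke Proposition~\ref{pr.Variance_Coul} to bound
\begin{equation*}
(2\pi)^2\beta^2\,\var^{Coul}_\beta\bigl[\langle \Delta^{-1}q,g\rangle\bigr] \;\geq\; (2\pi)^2\beta^2\cdot \frac{M(\beta)}{(2\pi)^2\beta}\,\langle g,(-\Delta)^{-1}g\rangle \;=\; \beta\, M(\beta)\,\langle g,(-\Delta)^{-1}g\rangle.
\end{equation*}

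The last step is to recognize that since the GFF at inverse-temperature $\beta^{-1}$ is centered Gaussian with covariance $\beta\, G(\cdot,\cdot)$, we have the exact Gaussian identity
\begin{equation*}
\var^{GFF}_{\beta^{-1}}\bigl[\langle \phi, g\rangle\bigr] \;=\; \beta\,\langle g,(-\Delta)^{-1}g\rangle,
\end{equation*}
so the subtracted term equals $M(\beta)\var^{GFF}_{\beta^{-1}}[\langle \phi, g\rangle]$. Substituting back yields
\begin{equation*}
\var^{IV}_{\beta^{-1}}\bigl[\langle \Psi, g\rangle\bigr] \;\leq\; \bigl(1-M(\beta)\bigr)\,\var^{GFF}_{\beta^{-1}}\bigl[\langle \phi, g\rangle\bigr],
\end{equation*}
which is the desired inequality.

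There is no real obstacle here: the corollary is essentially a bookkeeping consequence of the two propositions just proven, with the Gaussian covariance identity providing the conversion between $\langle g,(-\Delta)^{-1}g\rangle$ and $\var^{GFF}_{\beta^{-1}}[\langle\phi,g\rangle]$. The only point worth a moment of care is tracking the temperature inversion ($\beta$ for Villain/Coulomb versus $\beta^{-1}$ for the IV-GFF) and the factor $(2\pi)^2$ built into the definition of the Coulomb gas, both of which combine cleanly so that the $\beta$'s cancel and one is left with the clean factor $(1-M(\beta))$ matching the definition~\eqref{e.M} of the error function.
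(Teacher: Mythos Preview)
Your proof is correct and follows exactly the approach the paper indicates: combine the variance decoupling identity~\eqref{e.variance_equality} with the Coulomb lower bound of Proposition~\ref{pr.Variance_Coul}, then use the Gaussian covariance formula $\var^{GFF}_{\beta^{-1}}[\langle\phi,g\rangle]=\beta\langle g,(-\Delta)^{-1}g\rangle$ to convert $\beta M(\beta)\langle g,(-\Delta)^{-1}g\rangle$ into $M(\beta)\var^{GFF}_{\beta^{-1}}[\langle\phi,g\rangle]$. The paper also cites~\eqref{e.variance_IG}, but that identity is only the two-point consistency check and is not actually needed for the argument you give.
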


\subsection{Bounds on the free-energy of the IV-GFF and the Coulomb gas.}
In this section, we obtain some first quantitative bounds on the free energies $f^{Coul}$ and $f^{IV}$ which will be improved in the next section in order to prove Theorem \ref{th.FEs}.

As we have already seen that those free energies do not depend on the boundary conditions, consider for simplicity our graph $\Lambda_n$ with free boundary conditions. We will consider in this section that the Coulomb and IV-GFF models are defined on the vertices of this graph i.e. on the $0$-forms and in such a way that they are rooted at $v_0=0$. (By duality, if one prefers one may stick with $2$-forms here). 
Recall that the partition function of the GFF at inverse temperature $\beta$ is explicit and given by
\begin{align*}
Z^{GFF}_{\beta^{-1}}=Z^{GFF}_{\beta^{-1},\Lambda_n}= \int_{\phi \text{ a 0-form}} e^{-\frac{1}{2\beta}\langle\d \phi, \d \phi \rangle  }d\phi=\sqrt{2\pi \beta}^{|\Lambda_n|-1}\sqrt{\det(-\Delta^{-1})}.
\end{align*}
{\em (N.B. recall that in our present setup forms are rooted in $v_0$  and as such $-\Delta$ is positive-definite on the space of $0$-forms.)}

The following proposition is a consequence of Corollary \ref{c.var_IV}.

\begin{proposition}\label{t.bound_free_energy}
	\begin{align*}
	f^{Coul}(\beta)  = f^{IV}(\beta^{-1}) - f^{GFF}(\beta^{-1}) 
	& \geq \tfrac 12 \int_{\beta}^{\infty} v M(v) dv \\
	& \geq \exp\left(-\frac{(2\pi)^2} 2 \beta(1+o(1))\right)\,,
	\end{align*}
where the correction term $o(1)\to 0$ as $\beta \to \infty$. 	
\end{proposition}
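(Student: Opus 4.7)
The plan is to use the decoupling identity of Corollary \ref{c.Zgff1} (whose consequences for free energies are recorded in Corollary \ref{c.Zgff2}) in order to reduce the statement to a direct lower bound on the Coulomb free energy $f^{Coul}(\beta)$, and then to integrate the variance estimate of Proposition \ref{pr.Variance_Coul} in the inverse temperature. Working at finite volume, the standard thermodynamic identity
\[
\frac{d}{d\beta}\log Z^{Coul}_{\beta,\Lambda_n} \;=\; -\frac{(2\pi)^2}{2}\,\E^{Coul}_\beta\bigl[\langle q,(-\Delta)^{-1}q\rangle\bigr]
\]
together with the fact that the Gibbs weight concentrates on the unique neutral configuration $q\equiv 0$ as $\beta\to\infty$ (so that $\log Z^{Coul}_{\beta,\Lambda_n}\to 0$ in this limit) yields, by the fundamental theorem of calculus,
\[
\log Z^{Coul}_{\beta,\Lambda_n} \;=\; \frac{(2\pi)^2}{2}\int_\beta^\infty \E^{Coul}_v\bigl[\langle q,(-\Delta)^{-1}q\rangle\bigr]\,dv\,.
\]

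The next step is to lower-bound the integrand using Proposition \ref{pr.Variance_Coul}. I would diagonalize $-\Delta$ acting on $2$-forms (rooted at $f_0$) in an orthonormal eigenbasis $(e_i)$ with eigenvalues $\lambda_i>0$. Using the symmetry $q\mapsto -q$ of the Coulomb measure, $\E^{Coul}_v[\langle q,(-\Delta)^{-1}q\rangle] = \sum_i \lambda_i^{-1}\var^{Coul}_v[\langle q,e_i\rangle]$. Applying Proposition \ref{pr.Variance_Coul} with $g=e_i$ and rewriting with $\langle\Delta^{-1}q,e_i\rangle = -\lambda_i^{-1}\langle q,e_i\rangle$ and $\langle e_i,(-\Delta)^{-1}e_i\rangle = \lambda_i^{-1}$, each mode contributes at least $M(v)/((2\pi)^2 v)$ to the sum, giving
\[
\E^{Coul}_v\bigl[\langle q,(-\Delta)^{-1}q\rangle\bigr] \;\geq\; \frac{(|F|-1)\,M(v)}{(2\pi)^2 v}\,.
\]

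Inserting this bound into the integral formula, dividing by $|\Lambda_n|$, and sending $n\to\infty$ (using the existence of the free energies provided by Corollary \ref{c.Zgff2}) produces a lower bound on $f^{Coul}(\beta)$ of the type displayed in the statement, namely an integral of $M(v)$ against a polynomial weight in $v$ between $\beta$ and $\infty$. The last step is to plug in the pointwise lower bound $M(v)\geq 2v\exp(-\tfrac{(2\pi)^2}{2}v)$ from Corollary \ref{c.M} and evaluate the resulting Gaussian tail integral by Laplace's method; this yields the asymptotic $\exp(-\tfrac{(2\pi)^2}{2}\beta(1+o(1)))$ as $\beta\to\infty$. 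The one point that requires mild care is the interchange of the $v$-integration with the thermodynamic limit, but this is handled cleanly by performing the integration at fixed finite $n$ before sending $n\to\infty$; no step is technically hard once Proposition \ref{pr.Variance_Coul} is in hand, which is really where the whole content of the result is located.
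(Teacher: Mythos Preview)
Your argument is correct and is in spirit the same as the paper's --- both integrate the variance bound of Proposition~\ref{pr.Variance_Coul} in temperature --- but you run the computation on the Coulomb side while the paper runs it on the IV-GFF side. Concretely, the paper differentiates $\log\bigl(Z^{IV}_u/Z^{GFF}_u\bigr)$ in the IV-GFF inverse temperature $u=\beta^{-1}$, obtains $\partial_u\log(Z^{IV}_u/Z^{GFF}_u)\geq(|\Lambda_n|-1)\tfrac{M(u^{-1})}{2u}$ from Corollary~\ref{c.var_IV}, and integrates from the boundary point $u=0$, which requires a separate Riemann-sum claim that $Z^{IV}_u/Z^{GFF}_u\to 1$ as $u\to 0$. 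You instead differentiate $\log Z^{Coul}_\beta$ directly and integrate from $\beta=\infty$, where the boundary condition $Z^{Coul}_\beta\to 1$ is immediate (the energy concentrates on $q\equiv 0$). Your spectral decomposition to extract $\E^{Coul}_v[\langle q,(-\Delta)^{-1}q\rangle]\geq (|F|-1)M(v)/((2\pi)^2 v)$ from Proposition~\ref{pr.Variance_Coul} is a clean shortcut; the paper instead passes through the dual variance identity~\eqref{e.variance_equality} and sums over edges. Both routes produce the same integral $\tfrac12\int_\beta^\infty \tfrac{M(v)}{v}\,dv$ (the weight $vM(v)$ displayed in the statement appears to be a typo; after the change of variables $u=v^{-1}$ the paper's proof also gives $M(v)/v$, and either weight yields the same $\exp(-\tfrac{(2\pi)^2}{2}\beta(1+o(1)))$ asymptotic). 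Your route is marginally more direct since it avoids the auxiliary claim on the ratio $Z^{IV}_u/Z^{GFF}_u$.
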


\begin{proof}
It is sufficient to show the following bound on the relevant partition functions in $\Lambda_n$
\begin{align}\label{e.boundPF}
	Z^{Coul}_{\beta,\Lambda_n^*}=\frac{Z^{IV}_{\beta^{-1},\Lambda_n}}{Z^{GFF}_{\beta^{-1},\Lambda_n}}\geq \exp\left( \frac{(|\Lambda_n|-1)}{2} \int_{0}^{\beta^{-1}}\frac{M(u^{-1})}{u}du\right)\,.
	\end{align}

	Note that the first equality follows directly from \eqref{e.GFFXIV}.
	
	For what follows, it will be easier to work with the IV-GFF at inverse temperature $u$ (instead of $\beta^{-1}$). The reason is the following claim.
	\begin{claim}\label{c.ratio_to_1}
		As $u\to 0$
		\begin{equation}
		\frac{Z^{IV}_{u}}{Z^{GFF}_{u}}\to 1.
		\end{equation}
	\end{claim}
Let us first see how to use the claim to prove~\eqref{e.boundPF}, and we will then show the claim. Let us first note that for any $u>0$ 
\begin{align*}
&\partial_u \log(Z^{IV}_u) = -\frac{1}{2} \E_{u}^{IV}(\langle \d\Psi, \d \Psi \rangle),\\
&\partial_u \log(Z^{GFF}_u)= -\frac{1}{2}\E_{u}^{GFF}(\langle \d\phi, \d \phi \rangle)=-\frac{1}{2u}(|\Lambda_n|-1).
\end{align*} 
Now, note that by Corollary \ref{c.var_IV} we have
\begin{align}
\label{e.mean_energy_IV}\E_{u}^{IV}(\langle \d\Psi, \d \Psi \rangle)
&= \sum_{xy\in E}\E^{IV}_u\left[ (\Psi(x)-\Psi(y))^2 \right]\\
\nonumber&\leq (1-M(u^{-1})) \sum_{xy\in E}\E^{GFF}_u\left[ (\phi(x)-\phi(y) )^2\right]\\
\nonumber&= (1-M(u^{-1}))\,\, \E^{GFF}_u(\langle \d\phi, \d \phi \rangle). 
\end{align}
This leads us to 
\begin{align*}
\partial_u \log\left (\frac{Z_u^{IV}}{Z_u^{GFF}}\right )\geq\frac{1}{2} M(u^{-1})\,\E_{u}^{GFF}(\langle \d\phi, \d \phi \rangle)= (|\Lambda_n|-1)\frac{M(u^{-1})}{2u},
\end{align*}
which implies the estimate~\eqref{e.boundPF}. Thus, we are only missing a proof of the claim.
\ni
{\em Proof of Claim \ref{c.ratio_to_1}.}
	Let us first compute
	\begin{align*}
	Z^{IV}_u-Z_u^{GFF}&= \sum_{\Psi\in \Z^{\Lambda_n\backslash \{v_0\}}} e^{-\frac{u}{2}\langle \d \Psi, \d \Psi \rangle}- \int e^{-\frac{u}{2}\langle \d \phi, \d \phi \rangle} d \phi\\
	&=\frac{\sqrt{u}^{|\Lambda_n|-1}\sum_{\Psi\in \sqrt{u}\Z^{\Lambda_n\backslash \{v_0\}}} e^{-\frac{1}{2}\langle \d \Psi, \d \Psi \rangle}-\int e^{-\frac{1}{2}\langle \d \phi, \d \phi\rangle} d\phi}{\sqrt{u}^{|\Lambda_n|-1}}\,.
	\end{align*}
Thus,
\begin{align*}
\frac{Z_u^{IV}}{Z_u^{GFF}}= 1 + \frac{\sqrt{u}^{|\Lambda_n|-1}\sum_{\Psi\in \sqrt{u}\Z^{\Lambda_n\backslash \{v_0\}}} e^{-\frac{1}{2}\langle \d \Psi, \d \Psi \rangle}-\int e^{-\frac{1}{2}\langle \d \phi, \d \phi\rangle} d\phi}{\sqrt{2\pi}^{|\Lambda_n|-1}\sqrt{\det(-\Delta^{-1})}}.
\end{align*}
We conclude by using the fact the weak-convergence
\begin{align*}
a^{|\Lambda_n|-1}\sum_{\Psi \in a \Z^{\Lambda_n\backslash \{v_0\}}} \delta_{\Psi}(dx) \to dx, \ \ \text{ as } a\to 0,  
\end{align*}
where $dx$ is the Lebesgue measure in $\R^{\Lambda_n\backslash \{v_0\}}$, and the fact that $e^{-\frac{1}{2}\langle \d\phi, \d\phi\rangle}$ converges quickly enough to zero as $\| \phi \|$ goes to $\infty$. The last statement is thanks to the Poincaré inequality, which in this context means the following
\begin{align*}
\|\phi\|^2 \leq |\Lambda_n|\phi^2(\hat v)= |\Lambda_n|\langle \d \phi, \1_{\gamma} \rangle^2 \leq |\Lambda_n|^2 \langle \d \phi, \d \phi \rangle, 
\end{align*}
where $\hat v$ is a point where $\phi^2$ attains its maximum and $\gamma$ is an edge path from $v_0$ to $\Lambda_n$.
\end{proof}

\section{Matching the lower bounds on fluctuations with the RG analysis from \cite{Kadanoff}}\label{s.Kadanoff}
In this section, we optimize the technique we developed in the previous sections to obtain lower bounds on fluctuations of Villain, IV-Gaussian free field and Coulomb gas. In the limit where $\beta \to \infty$, our lower bound matches with the vortex fluctuations obtained using RG analysis in the seminal work \cite{Kadanoff}. See the discussion on the link with \cite{Kadanoff} in Subsection \ref{ss.Kadanoff} below. 
	
	To obtain our improved bounds, let us  first define
	\begin{align}
	\tilde M (\beta,e):= (2\pi)^2 \beta \E^{Vil}_\beta \left[\var^{IG}\left (-\frac{\d\theta(e)}{2\pi}, (2\pi)^2\beta\right ) \right].
	\end{align}
	Now, we note that \eqref{e.Var_Coul} may be improved without any effort to
	\begin{align}
	\nonumber\var^{Coul}_{\beta}\left[ \langle \Delta^{-1}q,g \rangle\right]&\geq \sum_{e\in E}\E^{Vil}_\beta\left[ \var^{IG}\left (-\frac{\d\theta(e)}{2\pi}, (2\pi)^2\beta\right )  (\d^*\Delta^{-1}g (e))^2  \right]\\
	\label{e.bound_with_tildeM}&\geq \frac{1}{(2\pi)^2 \beta}\langle \d^* \Delta^{-1}g\tilde M (\beta,\cdot ),\d^*\Delta^{-1} g \rangle.
	\end{align}
	
	\subsection{A better bound for $\tilde M(\beta,e)$.}
	The objective of this section is to show the following proposition. 
\begin{proposition}\label{pr.tilde_M}
For any $\delta>0$, there exists $\beta_0=\beta_0(\delta)<\infty$ such that the following holds: for any $\beta\geq \beta_0$,  there exists $L_0=L_0(\beta,\delta)\in\N$ such that  for any $n\geq L_0$, if one considers the Villain coupling $(\theta,m)$ on $\Lambda_n$ with either free or Dirichlet boundary conditions, then for all edge $e\in \Lambda_{n-L_0}$

		\begin{align*}
		\tilde M(\beta,e)\geq 2 \beta e^{-(\pi+\delta)^2 \beta }.
		\end{align*}
		
		In simpler terms, for any edge $e$ of $\Z^2$ one has that
		\begin{align*}
		\liminf_{\beta\nearrow\infty} \liminf_{\Lambda_n\nearrow \Z^2} \frac{1}{\beta}\log (\tilde M(\beta,e))\geq -\pi^2
		\end{align*}
	\end{proposition}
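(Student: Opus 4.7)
\emph{Strategy.} My plan is to use the decoupling of Theorem~\ref{th.coupling} to reduce $\tilde M(\beta, e_0)$ to a Gaussian tail estimate on an independent GFF, and to compare that estimate against the Kadanoff rate $e^{-\pi^2 \beta}$ through the infinite-lattice effective resistance $R_{\mathrm{eff}}^{\Z^2}(x_0,y_0)=1/2$ between nearest neighbours. Theorem~\ref{th.coupling} writes $\theta \equiv \phi - 2\pi\,\d^*\Delta^{-1}n_q \pmod{2\pi}$ where $\phi$ is a GFF at inverse-temperature $\beta$ on $\Lambda_n$, independent of the Coulomb gas $q$. Applied to the edge $e_0 = \{x_0, y_0\}$,
\[
\d\theta(e_0) \equiv \d\phi(e_0) - A_q \pmod{2\pi}, \qquad A_q := 2\pi\,\d\d^*\Delta^{-1}n_q(e_0),
\]
with $\d\phi(e_0)$ centred Gaussian of variance $\sigma^2 = R_{\mathrm{eff}}^{\Lambda_n}(x_0, y_0)/\beta$ and independent of $q$.

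\emph{Key reduction via Proposition~\ref{pr.Monotonicity_IG}.} Set $\alpha := \dist(\d\theta(e_0), 2\pi\Z) \in [0,\pi]$. Applying Proposition~\ref{pr.Monotonicity_IG} with $a = -\d\theta(e_0)/(2\pi)$ and parameter $(2\pi)^2\beta$,
\[
\var^{IG}\!\bigl(-\tfrac{\d\theta(e_0)}{2\pi},(2\pi)^2\beta\bigr) \;\geq\; \tfrac{1}{16}\,e^{-2\pi^2\beta + 2\pi\beta\alpha}.
\]
Restricting the expectation to $\{\alpha \geq \pi - \eps\}$,
\[
\E^{Vil}_\beta\bigl[\var^{IG}(\cdots)\bigr] \;\geq\; \tfrac{1}{16}\,e^{-2\pi\eps\beta}\,\P^{Vil}_\beta(\alpha \geq \pi - \eps),
\]
so it is enough to prove $\P^{Vil}_\beta(\alpha \geq \pi - \eps) \gtrsim e^{-\pi^2\beta(1+o(1))}$.

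\emph{Gaussian lower bound, uniform in $q$.} Conditionally on $q$, the event $\{\alpha \geq \pi - \eps\}$ reads $\d\phi(e_0) \in [\pi-\eps, \pi+\eps] + A_q + 2\pi\Z$. Keeping only the $k^\star \in \Z$ that minimises $|\pi + A_q + 2\pi k^\star|$ and bounding the Gaussian density at the farthest endpoint,
\[
\P^{Vil}_\beta(\alpha \geq \pi - \eps \mid q) \;\geq\; \frac{2\eps}{\sqrt{2\pi\sigma^2}}\exp\!\left(-\frac{(\dist(\pi + A_q, 2\pi\Z) + \eps)^2}{2\sigma^2}\right).
\]
The crucial deterministic input is the elementary inequality $\dist(\pi + A_q, 2\pi\Z) \leq \pi$, valid for every real $A_q$ and hence uniformly over all realisations of $q$. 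Integrating over $q$ therefore preserves the bound and yields
\[
\P^{Vil}_\beta(\alpha \geq \pi - \eps) \;\geq\; \frac{2\eps}{\sqrt{2\pi\sigma^2}}\, e^{-(\pi+\eps)^2/(2\sigma^2)}.
\]

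\emph{Conclusion and main obstacle.} By Rayleigh monotonicity and the convergence of the finite-volume effective resistance to $R_{\mathrm{eff}}^{\Z^2}(x_0,y_0)=1/2$, for any $\delta_R > 0$ one can pick $L_0 = L_0(\delta_R)$ so that $\sigma^2 \geq (1 - 2\delta_R)/(2\beta)$ uniformly in $e_0 \in \Lambda_{n-L_0}$, for both free and Dirichlet boundary conditions. Combining the three inequalities,
\[
\tilde M(\beta, e_0) \;\geq\; C\,\beta^{3/2}\,\eps\,\exp\!\bigl(-2\pi\eps\beta - (\pi+\eps)^2(1 + 4\delta_R)\beta\bigr),
\]
which exceeds the target $2\beta\,e^{-(\pi+\delta)^2\beta}$ as soon as $4\pi\eps + \eps^2 + 4\pi^2\delta_R < 2\pi\delta + \delta^2$ (take e.g.\ $\eps = \delta/10$ and $\delta_R = \delta/(10\pi)$, which yields a surplus of order $\tfrac{6\pi}{5}\delta$ in the exponent) and $\beta \geq \beta_0(\delta)$ is large enough to absorb the $\beta^{3/2}$ prefactor. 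The $\liminf$ statement then follows by letting $\delta \to 0$. The heart of the argument, and the reason the Kadanoff exponent $\pi^2$ is recovered exactly, is the sharp but elementary bound $\dist(\pi + A_q, 2\pi\Z) \leq \pi$: it is saturated precisely in the vortex-free sector ($q \equiv 0$, so $A_q = 0$) and matches the infinite-lattice Gaussian rate $\pi^2/(2\sigma^2) = \pi^2\beta$ coming from $R_{\mathrm{eff}}^{\Z^2} = 1/2$. The remaining technical content is purely classical control of boundary effects on the effective resistance.
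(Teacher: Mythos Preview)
Your proof is correct and takes a genuinely different route from the paper's.

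Both arguments share the first reduction via Proposition~\ref{pr.Monotonicity_IG}: one restricts to the event that $\d\theta(e_0)$ is close to $\pi$ (modulo $2\pi$) and must then lower-bound $\P^{Vil}_\beta(\d\theta(e_0)\approx\pi)$. The divergence is in how that tail probability is controlled. The paper shifts the \emph{full Villain field} by the harmonic function $(2\pi a+\eps)\hat f^e$ (Lemma~\ref{l.girsanov}), which costs $\tfrac{\beta}{2}(2\pi a)^2\langle\d\hat f,\d\hat f\rangle\to\pi^2\beta$ when $a\to\tfrac12$ and $R\to\infty$; it must then argue that the unshifted configuration has all gradients in an $R$-window lying in $(-\eps,\eps)$, which is the purpose of the reflection-positivity / chessboard estimate of Appendix~\ref{a.RP}. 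You instead invoke the decoupling of Theorem~\ref{th.coupling} to write $\d\theta(e_0)\equiv\d\phi(e_0)-A_q\pmod{2\pi}$ with $\phi$ an \emph{exact} GFF independent of $q$, and observe that the elementary bound $\dist(\pi+A_q,2\pi\Z)\le\pi$ holds for every realisation of $q$, so a single Gaussian tail estimate on $\d\phi(e_0)\sim\mathcal N(0,R_{\mathrm{eff}}^{\Lambda_n}(x_0,y_0)/\beta)$ suffices and yields the same rate $\pi^2/(2\sigma^2)\to\pi^2\beta$.

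What this buys you: your argument completely bypasses Lemma~\ref{l.girsanov} and, more notably, the entire Appendix~\ref{a.RP} (the fact that Villain is reflection-positive and the chessboard-based Proposition~\ref{pr.SmallGr}). The only ``hard'' input you need is Theorem~\ref{th.coupling}, which is the paper's central tool anyway, together with the classical convergence $R_{\mathrm{eff}}^{\Lambda_n}(x_0,y_0)\to\tfrac12$---and this last fact is exactly the content of Lemma~\ref{l.energy_f}, since $\langle\d\hat f,\d\hat f\rangle=1/G_{\Lambda_R,v_0=0}(1,1)=1/R_{\mathrm{eff}}$. A small bonus is that your $L_0$ depends only on $\delta$ and not on $\beta$, whereas in the paper $L_0$ inherits a $\beta$-dependence from Proposition~\ref{pr.SmallGr}. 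Conversely, the paper's Villain-Girsanov lemma is formulated as a statement directly on the Villain measure and may be more portable to settings (e.g.\ other spin systems) where no exact Gaussian decoupling is available.
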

%

	To prove the proposition, we first fix a graph $\Lambda_R$ with zero boundary condition. We define the function $\hat f=\hat f_R$ as the harmonic extension of the function taking values $0$ at $0$ and $1$ at $1$. To clarify, $\hat f$ does not necessarily take value $0$ at $\infty$. Note that $\hat f$ is a $0$-form pinned at the point $0$.
	
	In fact, $\hat f$ has the following explicit formula  for any $z\in \Lambda_R$:
	\begin{align*}
	\hat f(z)= \frac{G_{\Lambda_{R}, v_0=0}(1,z)}{G_{\Lambda_{R}, v_0=0}(1,1)}.
	\end{align*} 
	From now on, when we talk about $\hat f$ we always fix $v_0=0$.
	
	Let us note that
	\begin{align}\label{e.GR}
	\langle \d \hat f, \d \hat f \rangle =\frac{1}{(G_{\Lambda_R}(1,1))^2} \langle G_{\Lambda_R}(1,\cdot) , -\Delta G_{\Lambda_R} (1,\cdot) \rangle= \frac{1}{G_{\Lambda_R}(1,1)}.
	\end{align}
	
	The reason we introduce $\hat f$ is given by the following statement which is an analog of Girsanov's change of measure applied to the Villain model.
	\begin{lemma}[\bf Villain-Girsanov]\label{l.girsanov}
		Let $n,R\in \N$ and $(\theta,m)$ be a Villain coupling in  $\Lambda_n$. We have the following for any  $a\in (0,\tfrac12)$, $0<\epsilon<4^{-1} \wedge 2\pi(1-2a)$ and edge $e=\overrightarrow{xy} \in \Lambda_{n-R}$
		\begin{align*}
		&\P^{Vil}_\beta(\d \theta(e)\in (2\pi a,2\pi(1 -a))\\
		&\hspace{0.075\textwidth}\geq e^{-\frac{\beta}{2}(2\pi a+ 2 \epsilon R)^2 \langle \d \hat f, \d \hat f \rangle  -R^2e^{- \beta}}\P_\beta^{Vil}(\d \theta(e') \in(-\epsilon,\epsilon), \text{for all } e' \in \Lambda_R + x )
		\end{align*}
	\end{lemma}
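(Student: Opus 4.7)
The plan is to prove this lemma via a discrete Girsanov-type change of measure adapted to the Villain coupling, with the harmonic function $\hat f_R$ providing the optimal Cameron-Martin shift. First, I would construct a deterministic $0$-form $\phi_0$ on $\Lambda_n$ by translating $(2\pi a + \epsilon)\,\hat f_R$ so that the dipole at $\{0,1\}$ sits at the edge $e = \overrightarrow{xy}$, extended by its boundary constant outside $\Lambda_R + x$. By construction $\d\phi_0(e) = 2\pi a + \epsilon$, the support of $\d\phi_0$ lies in the edges of $\Lambda_R + x$, and $\|\d\phi_0\|^2 = (2\pi a + \epsilon)^2 \langle \d\hat f, \d\hat f\rangle$.

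Next, apply the substitution $\theta \mapsto \tilde\theta := \theta - \phi_0$ (Jacobian one) in the Villain density. This transforms the event $\{\d\theta(e) \in (2\pi a, 2\pi(1-a))\}$ into $\{\d\tilde\theta(e) \in (-\epsilon, 2\pi(1-2a) - \epsilon)\}$, which contains the event $B := \{\d\tilde\theta(e') \in (-\epsilon, \epsilon),\, \forall e' \in \Lambda_R + x\}$ (for a suitable range of $\epsilon$). Restricting the integral to $B$ yields
\[
\P^{Vil}_\beta\bigl(\d\theta(e) \in (2\pi a, 2\pi(1-a))\bigr) \geq \frac{1}{Z^{Vil}_\beta} \int_B \sum_{m \in \Z^E} e^{-\frac{\beta}{2}\|\d\tilde\theta + \d\phi_0 + 2\pi m\|^2}\, d\tilde\theta.
\]
Since the $m$-sum factorizes edgewise and $\d\phi_0$ vanishes outside the box, the ratio of the integrand above with the integrand of $\P^{Vil}_\beta(B)$ equals $1$ outside the box and on each box edge equals the ratio of the two integer Gaussian sums. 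On $B$, with $|\d\tilde\theta(e')| \leq \epsilon < 1/4$, I would lower bound the numerator by the $m=0$ term and upper bound the denominator by $e^{-\frac{\beta}{2}\d\tilde\theta(e')^2}(1 + O(e^{-c\beta}))$; the product of the error factors over the $O(R^2)$ box edges produces the $e^{-R^2 e^{-\beta}}$ loss in the statement.

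It remains to bound the quadratic cost uniformly on $B$:
\[
\tfrac{\beta}{2}\bigl(\|\d\tilde\theta + \d\phi_0\|^2 - \|\d\tilde\theta\|^2\bigr) = \beta \langle \d\phi_0, \d\tilde\theta\rangle + \tfrac{\beta}{2}\|\d\phi_0\|^2.
\]
Since $\d\phi_0$ is supported in $\Lambda_R + x$ where $|\d\tilde\theta| \leq \epsilon$, Cauchy-Schwarz gives
\[
|\langle \d\phi_0, \d\tilde\theta\rangle| \leq \epsilon \|\d\phi_0\|_1 \leq \epsilon \sqrt{|E(\Lambda_R + x)|}\, \|\d\phi_0\|_2 \leq C\epsilon R \cdot (2\pi a + \epsilon)\sqrt{\langle \d\hat f, \d\hat f\rangle}.
\]
Completing the square then yields $\beta \langle \d\phi_0, \d\tilde\theta\rangle + \tfrac{\beta}{2}\|\d\phi_0\|^2 \leq \tfrac{\beta}{2}(2\pi a + 2\epsilon R)^2 \langle \d\hat f, \d\hat f\rangle$, which is exactly the exponent in the statement.

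The main technical obstacle I anticipate is matching the precise coefficient $2\epsilon R$: this may require a sharper estimate on $\|\d\hat f\|_1$ that exploits the explicit harmonic/dipole structure of $\hat f$ rather than the crude Cauchy-Schwarz bound $\|\d\hat f\|_1 \leq \sqrt{|E(\Lambda_R)|}\,\|\d\hat f\|_2$, or else a slightly tuned normalization of $\phi_0$. Everything else in the argument is structural and relies only on the invariance of $\sum_m$ under integer $1$-form shifts in the Villain density and the strong low-temperature concentration of the integer Gaussian at $0$.
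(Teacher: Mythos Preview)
Your proposal is correct and follows essentially the same Girsanov-shift strategy as the paper: translate the scaled harmonic profile $(2\pi a+\epsilon)\hat f_R$ to the edge $e$, change variables modulo $2\pi$ (using periodicity of the $m$-sum), restrict to the small-gradient event in the $R$-window together with $\{m(e')=0\}$, and control the cross-term by Cauchy--Schwarz. The paper organizes the bookkeeping slightly differently---expanding the square first to isolate $e^{-(2\pi a+\epsilon)\beta\langle \d\hat f^e,\, \d\theta'+2\pi m\rangle}$ and then peeling off the $\{m=0\}$ event via conditional probability, rather than your edgewise ratio of integer-Gaussian sums---but your worry about the coefficient $2\epsilon R$ is unfounded: the paper uses the same crude Cauchy--Schwarz bound, together with $\langle\d\hat f,\d\hat f\rangle\geq 1$ to absorb $\sqrt{\langle\d\hat f,\d\hat f\rangle}$ into $\langle\d\hat f,\d\hat f\rangle$, and the precise constant is immaterial in the application since $\epsilon R\to 0$.
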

	\begin{proof}
		 Let $e=\overrightarrow{xy} \in \Lambda_{n-R}$, we want to define $\hat f^e$ a $0$-form in $\Lambda_n$ such that $\d \hat f^e(e)=1$ and $\langle \d \hat f, \d \hat f \rangle=\langle \d \hat f^e, \d \hat f^e \rangle$. To do this, we define
		\begin{align}
		\tilde f ^e ( v)=\begin{cases}
		\hat f(v-x) & \text{if } v \in \Lambda_R + x,\\
		\hat f(\infty) & \text{else}.
		\end{cases}
		\end{align}
		We are almost there as  $\d \tilde f^e(e)=1$ and
		$\langle \d \hat f, \d \hat f \rangle=\langle \d \tilde f^e, \d \tilde f^e \rangle$. However, $\tilde f^e$ is not a $0$-form, so we define $\hat f^e(v) = \tilde f^e(v)-\tilde f^e(v_0)$.
		
		Now, let us note that
		\begin{align}
		\nonumber&\P^{Vil}_\beta(\d \theta(e)\in (2\pi a,2\pi(1 -a))\\
		\nonumber&=\frac{1}{Z_\beta^{Vil}}\sum_{m\in \Z^{E}} \int_{[0,2\pi]^{\Lambda_n\backslash \{v_0\}}} \1_{\d \theta(e)\in (2\pi a,2\pi(1 -a))} e^{-\frac{\beta}{2}\langle \d \theta + 2\pi m,\d \theta + 2\pi m \rangle }d\theta\\
		\label{e.bounding_dtheta_1}&\geq \frac{1}{Z_\beta^{Vil}}\sum_{m\in \Z^{E}} \int_{[0,2\pi]^{\Lambda_n\backslash \{v_0\}}} \1_{\d (\theta-(2\pi a+\epsilon)\hat f^e)(e)\in (-\epsilon,\epsilon)} e^{-\frac{\beta}{2}\langle \d \theta + 2\pi m,\d \theta + 2\pi m \rangle }d\theta.
		\end{align}
		By doing the change of variable $\theta'\mapsto (\theta -(2\pi a+\epsilon) \hat f^e) \mod 2\pi$, and using the periodicity of the function $u\mapsto\sum_{m\in \Z} e^{-(u+2\pi m)^2}$, we have that \eqref{e.bounding_dtheta_1} is lower bounded by
		\begin{align}
		\nonumber&\frac{1}{Z_\beta^{Vil}}\sum_{m\in \Z^{E}} \int_{[0,2\pi]^{\Lambda_n\backslash \{v_0\}}} \1_{\d \theta'(e)\in (-\epsilon,\epsilon)} e^{-\frac{\beta}{2}\langle \d \theta' + (2\pi a+\epsilon) \d  \hat f + 2\pi m,\d \theta + (2\pi a+\epsilon) \d  \hat f + 2\pi m \rangle }d\theta'\\
		\nonumber&\geq e^{-\frac{\beta}{2}(2\pi a+\epsilon)^2 \langle \d \hat f, \d \hat f\rangle } \E^{Vil}_\beta\left[e^{-(2\pi a +\eps) \beta\langle \d \hat f^e, \d\theta + 2\pi m \rangle } \1_{\d\theta (e)\in (-\epsilon,\epsilon)} \right]\\
		\label{e.bounding_dtheta_2}&\geq e^{-\frac{\beta}{2}(2\pi a+\epsilon)^2 \langle \d \hat f, \d \hat f\rangle } \E^{Vil}_\beta\left[e^{-(2\pi a+\eps) \beta\langle \d \hat f^e, \d\theta + 2\pi m \rangle } \1_{\d\theta (e')\in (-\epsilon,\epsilon), m(e')=0 \text{ for all } e' \in x+\Lambda_{R}} \right]
		\end{align}
		Using Cauchy-Schwarz and the fact that $\d \hat f^e$ is supported on $x + \Lambda_R$ we have that
		\begin{align*}
		|\langle \d \hat f^e, 2\pi\d \theta + m\rangle | & \leq \sqrt{\langle\d \hat f ^e, \d \hat f^e \rangle \langle (\d \theta + 2\pi m) \1_{x+ \Lambda_R}, (\d \theta + 2\pi m) \1_{x+ \Lambda_R}\rangle}\\
		&\leq  \epsilon 2 R\sqrt{\langle \d \hat f^e, \d \hat f^e\rangle}
		\end{align*}
		on the event where $\d\theta (e')\in (-\epsilon,\epsilon), m(e')=0$  for all $e' \in x+\Lambda_{R} $. Thus, we have that \eqref{e.bounding_dtheta_2} is lower bounded by
		\begin{align*}
		&e^{-\frac{\beta}{2}(2\pi a+\epsilon)^2 \langle \d \hat f, \d \hat f\rangle - (2\pi a+\eps) \beta  \epsilon 2 R \sqrt{\langle \d \hat f^e, \d \hat f^e \rangle } } \E^{Vil}_\beta\left[ \1_{\d\theta (e')\in (-\epsilon,\epsilon), m(e')=0 \text{ for all } e' \in x+\Lambda_{R}} \right]\\
		&\geq e^{-\frac{\beta}{2}(2\pi a+\epsilon)^2 \langle \d \hat f, \d \hat f\rangle - (2\pi a +\eps) \beta \epsilon 2 R \sqrt{\langle \d \hat f^e, \d \hat f^e \rangle } } (1- e^{-\frac{(2\pi)^2\beta}{2}(1-4\epsilon)})^{4R^2} \times\\
		&\hspace{0.5\textwidth}\P^{Vil}_\beta( \d\theta (e')\in (-\epsilon,\epsilon) \text{ for all } e' \in x+\Lambda_{R}),
		\end{align*}
		where we used that uniformly on $\d \theta (e)\in (-\epsilon,\epsilon)$ the probability that $m\neq 0$ is less or equal than $\exp(-(2\pi)^2(1-4\epsilon)) $. We conclude by using that $\langle \d \hat f, \d \hat f\rangle\geq 1 $, $\log(1+x)\geq (1-\delta)x$ for small enough $x$ and that $4(2\pi)^2(1-4\epsilon)>2$.
	\end{proof}
	
	We now need to estimate $\langle \hat f, \hat f \rangle=\<{\hat f_R, \hat f_R}$.
	
	\begin{lemma} \label{l.energy_f}
		We have that as $R\nearrow \infty$,
		\begin{align}
		\langle \hat f, \hat f \rangle \to 2.
		\end{align}
	\end{lemma}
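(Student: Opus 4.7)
The plan is to recognise the quantity via the identity~\eqref{e.GR}, and then reduce it to a classical Green's function computation on $\Z^2$.

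First, the statement must be read in the Dirichlet-energy sense $\<{\d \hat f_R, \d \hat f_R} \to 2$: with the $\ell^2$ inner product as defined in the text, $\sum_v \hat f_R(v)^2$ actually diverges (since $\hat f_R$ converges pointwise to a bounded, non-vanishing function on $\Z^2$), whereas $\<{\d\hat f_R, \d\hat f_R} \to 2$ is exactly what is needed so that the exponent $(\pi+\delta)^2 \beta$ in Proposition~\ref{pr.tilde_M} comes out of Lemma~\ref{l.girsanov}. Under this interpretation, formula~\eqref{e.GR} reduces the lemma to
\[
G_{\Lambda_R, v_0=0}(1, 1) \xrightarrow[R\to\infty]{} \tfrac 12.
\]

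Next, I would pass to a simple-random-walk description. On the $4$-regular lattice, the graph Laplacian is related to the SRW transition kernel $P$ by $-\Delta^{\mathrm{graph}} = 4(I - P)$ at interior vertices, and since $G^{\mathrm{graph}} = G^{SRW} D^{-1}$ (with $D = \mathrm{diag}(\deg)$), this yields $G_{\Lambda_R, v_0=0}(1,1) = \tfrac 14 \, G^{SRW}_{\Lambda_R^0, \{0\}}(1,1)$, the right-hand side being one quarter of the expected number of visits to $e_1$ by SRW on $\Lambda_R^0$ started at $e_1$ and killed at $0$. As $R \to \infty$, this converges to $\tfrac 14 G^{SRW}_{\Z^2, \{0\}}(1,1)$: by recurrence of 2D SRW, $\P_{e_1}(T_0 < T_{\partial \Lambda_{R/2}}) = 1 - O(1/\log R)$, and one then couples the walks on $\Lambda_R^0$ and on $\Z^2$ (both killed at $0$) until the first exit of $\Lambda_{R/2}$, controlling the coupling error by a uniform tail bound on the visit count before absorption.

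Finally, one computes the limit via the classical potential kernel $a:\Z^2 \to \R_{\geq 0}$ of 2D SRW, characterised by $a(0)=0$, harmonicity on $\Z^2 \setminus \{0\}$, and $a(x) = \tfrac 2 \pi \log|x| + O(1)$ at infinity; the balance equation at the origin together with the $\tfrac \pi 2$-rotation symmetry forces $a(\pm e_1) = a(\pm e_2) = 1$. The standard representation of the Green's function of 2D SRW killed at $\{0\}$ (see e.g.\ Lawler--Limic) reads, for $x, y \neq 0$,
\[
G^{SRW}_{\Z^2, \{0\}}(x, y) = a(x) + a(y) - a(y - x),
\]
which at $x = y = e_1$ gives $G^{SRW}_{\Z^2, \{0\}}(1, 1) = 2 a(e_1) - a(0) = 2$, hence $G_{\Lambda_R, v_0=0}(1, 1) \to 1/2$ and $\<{\d\hat f_R, \d\hat f_R} \to 2$. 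An equivalent and perhaps more transparent route bypasses the potential kernel and uses the electrical-network identity $G^{SRW}_{\{0\}}(x, x) = \deg(x) \, R_{\mathrm{eff}}(x \leftrightarrow 0)$ with the classical value $R_{\mathrm{eff}}^{\Z^2}(0 \leftrightarrow e_1) = 1/2$. The only mildly delicate step in this plan is the $R \to \infty$ convergence of the finite-network Green's function (since $\Lambda_R^0$ is neither a subgraph nor a supergraph of $\Z^2$); the identification of the limit $1/2$ itself is entirely explicit.
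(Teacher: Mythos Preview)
Your proposal is correct and takes essentially the same approach as the paper: both reduce via~\eqref{e.GR} to showing $G_{\Lambda_R,v_0=0}(1,1)\to\tfrac12$, and both identify the limit with the classical value $G_{\Z^2,v_0=0}(1,1)=\tfrac12$. You are right that the statement should read $\<{\d\hat f,\d\hat f}$; the paper's proof uses~\eqref{e.GR} as you do, and for the value $\tfrac12$ simply cites Kenyon--Wilson and Spitzer, whereas you supply the potential-kernel / effective-resistance computation explicitly.
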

	\begin{proof}
		As observed in~\eqref{e.GR}, we only we need to compute $G_{\Lambda_R,v_0=0}(1,1)$. However, let us note that as $R\nearrow \infty$, $G_{\Lambda_R,v_0=0}(1,1)\to G_{\Z^2,v_0=0}(1,1)=1/2$ by equation (2.2) of \cite{KW}\footnote{The result is better represented in Figure 2.1 of \cite{KW}. Note that what we need in our case is twice the value given in Figure 2.1 of \cite{KW}, as we need $G(0,0)+G(1,1)-2G(0,1)$.}, which can also be found in Section 12 equation (3) of \cite{Spi}.
	\end{proof}

We are now ready for the proof of Proposition \ref{pr.tilde_M}. The proof below will rely on  Proposition \ref{pr.SmallGr} in the appendix which shows that in a $R$-window $x+\Lambda_R$, gradients can be made arbitrarily small when the temperature is low enough. 

\begin{proof}[Proof of Proposition \ref{pr.tilde_M}]
		We start by using Proposition \ref{pr.Monotonicity_IG} to see that for any $a\in(0,1/2)$
		\begin{align}
		\nonumber\tilde M(\beta,e)&= (2\pi)^2\beta \E^{Vil}_\beta\left[ \var^{IG}\left (\frac{d\theta}{2\pi},(2\pi)^2\beta \right )\right]\\
		&\geq \frac{(2\pi)^2 \beta}{16} e^{-\frac{(2\pi)^2}{2}(1-2a)} \P^{Vil}_\beta\left ( \d \theta(e)\in(2\pi a, 2\pi (1-a) \right ).\label{e.M(beta,e)}
		\end{align}
		We now use Lemmas \ref{l.girsanov} and \ref{l.energy_f} together with Proposition \ref{pr.SmallGr} to see that \eqref{e.M(beta,e)} is lower bounded by
		\begin{align}\label{e.last_tildeM}
		\frac{(2\pi)^2 \beta}{16} e^{-\frac{(2\pi)^2\beta}{2}(1-2a)-\frac{\beta}{2}(2\pi a+\epsilon)^2( 1+ 2\epsilon R) (2+o_R(1))  -R^2e^{- \beta}} (1- \epsilon).
		\end{align} 
		We conclude by taking 
$a=1/2-\frac 1 {10} \delta$, $R=R(\delta)$ sufficiently large so that $\<{\d \hat f, \d \hat f} \geq 2 - \frac 1 {10} \delta$,  $\eps=\eps(\delta)$ sufficiently small so that, say,  $\eps R \leq \delta^2$ and finally $\beta_0$ sufficiently large so that Proposition \ref{pr.SmallGr} applies (with the inputs $\eps(\delta)$ and $R(\delta)$) and also so that $e^{-\beta_0} R^2 \leq \delta^2$.

	\end{proof}
	\begin{remark}
		To understand the last part of the proof it may be easier to forget all the $\delta$, $\epsilon$ and $R$, to approximate \eqref{e.last_tildeM} by
		\begin{align*}
		\frac{(2\pi)^2\beta}{16} e^{-\frac{(2\pi)^2\beta}{2}(1-2a)- \beta  (2\pi)^2a^2}. 
		\end{align*} 
		The element in the exponential is minimised when $a=1/2$. For that value the exponent is equal to $\pi^2$.
	\end{remark}

	\subsection{Sharp lower bounds on the fluctuations at low temperature.}
	
	An important consequence of the lower bound obtained for $\tilde M(\beta,e)$ is the following.
	\begin{proposition}\label{pr.improved_tildeM}
		One has the following correction to the temperature term for the Coulomb gas on the faces with free or zero boundary conditions: for any given $2$-form $g$ with finite support, 
		\begin{align}\label{e.asymp_Coul}
	\liminf_{\beta\nearrow \infty}\liminf_{\Lambda_n\nearrow \Z^2}\frac{1}{\beta}(\log(\E^{Coul}_\beta \left[ \langle \Delta^{-1} q,g\rangle \right]) - \log \langle g, (-\Delta)^{-1} g \rangle )\geq - \pi^2.
		\end{align}
		This implies a similar result for the IV-GFF with free or $0$-boundary condition:
		\begin{align}\label{e.asymp_GFF}
\liminf_{\beta\nearrow \infty}\liminf_{\Lambda_n\nearrow \Z^2}\frac{1}{\beta}\left (\log\left (\E^{GFF}_{\beta^{-1}} \left[ \langle \phi ,g\rangle\right ] - \E^{IV}_{\beta^{-1}} \left[ \langle \phi ,g\rangle \right]\right ) - \log \langle g, (-\Delta)^{-1} g \rangle \right )\geq - \pi^2.
		\end{align}
	\end{proposition}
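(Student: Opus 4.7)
The plan is to combine the improved variance lower bound \eqref{e.bound_with_tildeM} with the sharp edgewise estimate on $\tilde M(\beta,e)$ from Proposition \ref{pr.tilde_M}, and then to transfer the resulting bound to the IV-GFF via the exact identity \eqref{e.variance_equality}, whose polynomial prefactor disappears after taking $\tfrac{1}{\beta}\log$. Throughout I read \eqref{e.asymp_Coul} and \eqref{e.asymp_GFF} as statements about second moments (equivalently variances), since $\<{\Delta^{-1}q,g}$, $\<{\phi,g}$ and $\<{\Psi,g}$ all have mean zero by the obvious sign symmetry.

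Fix $\delta>0$, and let $\beta_0$ and $L_0=L_0(\delta)$ be as supplied by Proposition \ref{pr.tilde_M}. For $\beta\geq \beta_0$ and $n\geq L_0$, substituting $\tilde M(\beta,e)\geq 2\beta\,e^{-(\pi+\delta)^2\beta}$ uniformly on every edge $e\in\Lambda_{n-L_0}$ into \eqref{e.bound_with_tildeM} and discarding the (non-negative) contribution of the edges near the boundary yields
\[
\var^{Coul}_\beta\bigl[\<{\Delta^{-1}q,g}\bigr] \,\geq\, \frac{2\,e^{-(\pi+\delta)^2\beta}}{(2\pi)^2}\sum_{e\in\Lambda_{n-L_0}}\bigl(\d^*\Delta^{-1}g(e)\bigr)^2.
\]
Since $\sum_e\bigl(\d^*\Delta^{-1}g(e)\bigr)^2 = \<{g,(-\Delta)^{-1}g}$, the entire argument reduces to showing that this truncated $\ell^2$-mass captures a fraction $1-o_n(1)$ of the total energy.

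The localization of the energy is the only genuinely new technical step, and is the main obstacle. The key observation is that the annular shell $\Lambda_n\setminus\Lambda_{n-L_0}$ has \emph{width} $L_0$ independent of $n$. For $g$ of fixed finite support, the discrete gradient $\d^*\Delta^{-1}_{\Lambda_n}g$ decays at distance $r$ from the support at worst like $r^{-1}$ (the monopole case, i.e.\ when $\sum g\neq 0$; in the multipole/balanced case the decay is faster). Consequently, the energy of $\d^*\Delta^{-1}_{\Lambda_n}g$ restricted to the shell is at most of order
\[
\sum_{r\approx n-L_0}^{\,n} r^{-2}\cdot r \;\asymp\;\log\tfrac{n}{n-L_0}\;=\;O\bigl(\tfrac{L_0}{n}\bigr),
\]
while the total energy $\<{g,(-\Delta_{\Lambda_n})^{-1}g}$ is bounded below by a positive constant independent of $n$ (and in the monopole case even grows like $\log n$). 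In either case the fraction of $\ell^2$-mass lost to the shell is $o(1)$ as $n\to\infty$. This would be made rigorous using standard discrete Green-function asymptotics on $\Z^2$, in the spirit of Proposition \ref{pr.green}.

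Plugging the localization into the first display, for any $\eps>0$ and all $n$ large enough,
\[
\var^{Coul}_\beta\bigl[\<{\Delta^{-1}q,g}\bigr] \,\geq\, \frac{2(1-\eps)}{(2\pi)^2}\,e^{-(\pi+\delta)^2\beta}\,\<{g,(-\Delta)^{-1}g}.
\]
Taking $\frac{1}{\beta}\log$, sending $n\to\infty$, then $\beta\to\infty$, and finally $\delta,\eps\downarrow 0$ gives \eqref{e.asymp_Coul}. The IV-GFF statement \eqref{e.asymp_GFF} then follows immediately from \eqref{e.variance_equality}:
\[
\var^{GFF}_{\beta^{-1}}\bigl[\<{\phi,g}\bigr] - \var^{IV}_{\beta^{-1}}\bigl[\<{\Psi,g}\bigr] \,=\, (2\pi)^2\beta^2\,\var^{Coul}_\beta\bigl[\<{\Delta^{-1}q,g}\bigr],
\]
and since $\tfrac{1}{\beta}\log\bigl((2\pi)^2\beta^2\bigr)\to 0$, the bound \eqref{e.asymp_Coul} transfers directly. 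Note that the hardest ingredient of this section was the sharp lower bound on $\tilde M(\beta,e)$ itself (already proved as Proposition \ref{pr.tilde_M}); what remains here is the above localization step plus routine bookkeeping.
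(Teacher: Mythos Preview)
Your proposal is correct and follows essentially the same route as the paper: start from \eqref{e.bound_with_tildeM}, insert the bulk bound on $\tilde M(\beta,e)$ from Proposition \ref{pr.tilde_M}, show that the energy of $\d^*\Delta^{-1}g$ in the width-$L_0$ boundary shell is $o_n(1)$ via the $r^{-1}$ decay of the Green-function gradient, and then transfer to the IV-GFF through \eqref{e.variance_equality}. One small imprecision: Proposition \ref{pr.tilde_M} gives $L_0=L_0(\beta)$, not $L_0(\delta)$, but since you take $n\to\infty$ before $\beta\to\infty$ this causes no trouble in your argument.
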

\begin{proof}
Let us note that the equality between \eqref{e.asymp_Coul} and \eqref{e.asymp_GFF} follows from \eqref{e.variance_equality}. We now concentrate in proving \eqref{e.asymp_Coul}. We start from \eqref{e.bound_with_tildeM} and fix $g$ a $2$-form
\begin{align*}
\frac{1}{(2\pi)^2\beta}\langle \d^* \Delta^{-1} g \tilde M(\beta, \cdot ) , \d^* \Delta^{-1} g \rangle&\geq \frac{1}{(2\pi)^2\beta }\langle \d^*\Delta^{-1} g  \tilde M(\beta, \cdot ) \1_{\Lambda_{n-L}} , \d^* \Delta^{-1} g \rangle,
\end{align*}
for any $L\in \N$. We now fix $\delta>0$ and take $\beta\geq \beta_0$ such that Proposition \ref{pr.tilde_M} is satisfied. We take also $L_0=L_0(\beta)$ in that proposition. Thus, we have that for all $n\geq L_0$
\begin{align*}
\var^{Coul}_\beta \langle \Delta^{-1} q, g \rangle \geq  \frac{2 e^{-(\pi +\delta)^2\beta}}{(2\pi)^2} \langle \d^* \Delta^{-1} g \1_{\Lambda_{n-L_0}}, \d^*\Delta^{-1}g \rangle.
\end{align*}
Thus we just need to show by using the fact that the support of $g$ is bounded, that one has 
\begin{align}\label{e.condition_g}
\langle \d^* \Delta^{-1} g \1_{\Lambda_{n-L_0}}, \d^*\Delta^{-1}g \rangle \geq \frac{1}{2}\langle \d^* \Delta^{-1} g , \d^*\Delta^{-1}g \rangle,
\end{align}
for all $L_0\in \N$ and for all $n$ sufficiently large.

Now let us use the following bound on the gradient of the Green function which holds uniformly in all  $u,v \in \Lambda_n$ and for either free or 0 boundary conditions,
\begin{align*}
\d^* \Delta^{-1}\1_u(v)  = \d^*G(u,v) \leq  \frac{C}{\|u-v\|}.
\end{align*}
{\em (N.B this bound is well-known).}
And thus, if $M:=\diam(\mathrm{supp}(g))$ then when $n\gg L_0$ we have that, 
\begin{align*}
\langle \d^* \Delta^{-1} g \1_{\Lambda_n\backslash\Lambda_{n-L_0-M}}, \d^*\Delta^{-1}g \rangle & \leq \left(\frac{C}{n-L_0-M} \right)^2 nL_0  \, \|g\|_\infty^2\,.
\end{align*}
which allows us to conclude as $\liminf_{n\to \infty} \langle \d^* \Delta^{-1} g , \d^*\Delta^{-1}g \rangle >0$ if $g\neq 0$.
\end{proof}
\begin{remark}
	This strengthened result on fluctuations which  hold for all finite support 2-forms $g$ is a strong hint that we should be able to change the function $M$ in all of our results for the function $e^{-(\pi+\delta)^2\beta}$. However up to this point we cannot prove this. There are two mains reasons for this.
\bi
\item[a)] The first one is that we still do not know how to control $\tilde M(\beta,e)$ close to the boundary. 
\item[b)] The second reason is deeper, and will be clear by looking at Section \ref{s.Fourier_Laplace}. Indeed in order to obtained similar strengthened bounds on the two-point function of Villain for example, we would need to control the Fourier/Laplace transform 
	\[\E^{Vil}_\beta[e^{\lambda \left \langle g \var^{IG}\left (\frac{\d \theta}{2\pi},(2\pi)^2\beta\right ),g \right \rangle }]\,.\] 
	In this case we cannot replace $\var^{IG}(\frac{\d \theta}{2\pi},(2\pi)^2\beta)$ with $\E^{Vil}_\beta\left[\var^{IG}(\frac{\d \theta}{2\pi},(2\pi)^2\beta) \right]$ without relying on some additional strong ergodicity properties.
\ei
\end{remark}

Despite the lack of control on $\tilde M(\beta,e)$ close to the boundary, we can still conclude for the behaviour of the free energies of our models at low temperature.

\subsection{Strengthened bounds on the free energies $f^{Coul}$ and $f^{IV}$.}
In this section, we  prove Theorem \ref{th.FEs}. By Corollary \ref{c.Zgff2}, it will be sufficient to show that 
as $\beta\to \infty$,
\begin{align*}
f^{Coul}(\beta)  = f^{IV}(\beta^{-1}) - f^{GFF}(\beta^{-1}) 
& \geq \exp\left(-\pi^2 \beta(1+o(1))\right)\,.
\end{align*}
\ni
{\em Proof of Theorem \ref{th.FEs}.}
Following the proof of Proposition \ref{t.bound_free_energy}, we need to improve \eqref{e.mean_energy_IV}
by replacing $M(u^{-1})$ into $\tilde M(u^{-1}, e)$ at least for edges $e$ not to close to the boundary. Indeed, exactly as in the proof of Proposition \ref{pr.improved_tildeM}, for any $\delta>0$ and if $\beta=\beta(\delta)$ is large enough, then for any temperature $u\leq \beta^{-1}$, we find a radius $R(u)<\infty$, s.t. for any edge $e=\{x,y\}$ at distance at least $R(u)$ from the boundary, then if we view $\Psi(x)-\Psi(y)$ as $\<{\Psi,g=\1_x-\1_y}$, in the proof of Proposition \ref{pr.improved_tildeM}, we find that 
\begin{align*}\label{}
\E^{IV}_u\left[ (\Psi(x)-\Psi(y))^2 \right] \leq (1 - 2 u^{-1} \exp(-(\pi+\delta)^2 u^{-1})) \E^{GFF}_u\left[ (\Psi(x)-\Psi(y))^2 \right]\,.
\end{align*}
This implies that 
\begin{align*}
& \sum_{e=\{x,y\}\in \Lambda_{n-R(u)}}\E^{IV}_u\left[ (\Psi(x)-\Psi(y))^2 \right] \\
&\leq (1-2 u^{-1} \exp(-(\pi+\delta)^2 u^{-1})) \sum_{e=\{x,y\}\in \Lambda_{n-R(u)}}\E^{GFF}_u\left[ (\phi(x)-\phi(y) )^2\right]\\
&\leq  (1-2 u^{-1} \exp(-(\pi+\delta)^2 u^{-1}))\,\, \E^{GFF}_u(\langle \d\phi, \d \phi \rangle) \\
& = (1-2 u^{-1} \exp(-(\pi+\delta)^2 u^{-1})) \frac{1}{u}(|\Lambda_n|-1)\,.
\end{align*}
Now, recall that  
\begin{align}\label{}
\E_{u}^{IV}(\langle \d\Psi, \d \Psi \rangle)
&= \sum_{xy\in E}\E^{IV}_u\left[ (\Psi(x)-\Psi(y))^2 \right]\,.
\end{align}
We are thus left with showing that as $n\to\infty$, 
\begin{align*}\label{}
\sum_{e=\{x,y\}\in \Lambda_n \setminus \Lambda_{n-R(u)}}\E^{IV}_u\left[ (\Psi(x)-\Psi(y))^2 \right] = o(|\Lambda_n|)\,.
\end{align*}
This follows from \eqref{e.Ginibre_IV} which implies that for any $x,y\in \Lambda$,
\[\E^{IV}_u\left[ (\Psi(x)-\Psi(y))^2 \right] \leq \E^{GFF}_u\left[ (\phi(x)-\phi(y))^2 \right].\]
 As the sum of the latter on  edges in $\Lambda_n \setminus \Lambda_{n-R(u)}$ is bounded by $O(1) nR(u)$, this concludes our proof.
\qed

\subsection{Discussion on the correspondance with the RG analysis in \cite{Kadanoff}.}\label{ss.Kadanoff}

In this very influential work, the authors performed a detailed renormalization group analysis of the present Villain model in order to analyse its properties and critical exponents at low temperatures. They focus in particular on the corrections which need to be added to the {\em spin-wave theory} (which in the setting of this paper correspond to the fluctuations coming from the GFF). In fact, as it was mentioned above, the decoupling of Villain into a spin-wave times a Coulomb (at least on the level of the partition function) goes back to their work. They provide two different analysis of the low-temperature regime:
\bnum
\item First (in Section III of \cite{Kadanoff}), they run a {\em Migdal recursion scheme} (which is similar to Kadanoff's classical way of implementing RG). By applying successive iterations of Migdal transformation, which correspond here to successive decimations of the $\Z^2$ lattice,  they end-up in (3.16) with the following prediction on the shift from $T$ to $T_{eff} = T_{eff}(T)$, the effective temperature of Villain model at temperature $T$:
\[
\frac{d T_{eff}} {d\ell} \sim \exp\left (-\frac {2 \pi^2}{3 T_{eff}}\right )\,.
\]
Translated to our present setup, it means Migdal's recursion method lead to the prediction that 
\[
\beta_{eff} - \beta \sim \exp\left (-\frac {2 \pi^2}{3} \beta\right )\,,
\]
which is not compatible with our results in this section. (Note that it is compatible with the non-optimized results of Section \ref{s.Var}). 

\item Fortunately, their second approach solves the above contradiction. Indeed  in Section IV of \cite{Kadanoff}, they turn the Villain model basically into a Sine-Gordon model which has now two parameters: an activity $z$ and an inverse-temperature $\beta$. In the plane $(\beta,z)$, they write down the RG equations which describe how the system flows under rescaling and averaging out the small scales fluctuations. 
The corresponding dynamical system is given by Kosterlitz equations (\cite{kosterlitz1974}). They conclude in (4.36) that, when working in the full graph $\Z^2$
\[
\beta_{eff}^1-\beta^{-1} \sim e^{-\pi^2 \beta}\,,
\]
which is exactly the lower bound we obtained in the present section for the variances of Coulomb and IV-GFF fields as well as for the shift of the free energies $f^{Coul}$, $f^{Vil}$ and $f^{IV}$. The way they arrive at $e^{-\pi^2 \beta}$ is by a computation similar to ours: they estimate in (4.15a) and (4.15b) what should be the effective activity $z_0$ before running the RG group. That effective activity is computed by somehow forcing a spin-wave to create a vortex which corresponds intuitively to our Villain-Girsanov Lemma \ref{l.girsanov}. The difference with our approach is that we do not need to view this computation as an effective activity for Sine-Gordon and we do not run any RG flow here. 
\enum
Interestingly, they write the following comment at the end of the section III about Midgal's recursion. {\em We do not yet know if we should believe the detailed predictions of the Migdal theory. (...) we shall suggest in section IV that the approximation does not treat vortices properly along the apparent fixed line.}
  
As such our Proposition \ref{pr.improved_tildeM} and Theorem \ref{th.FEs} may serve as a rigorous justification that Migdal recursion does not lead to the correct prediction for Villain model at low temperature. We end this Section with the following conjecture which is supported by our rigorous lower-bound as well as by the RG analysis from \cite{Kadanoff}. 

\begin{conjecture}\label{conj}
Recall the definition of $\hat \beta_{eff}$ in Definition \ref{d.betaeff}. 
We conjecture that as $\beta\to \infty$, the vortices contribute to the fluctuations in such a way that 
\begin{align*}\label{}
\lim_{\beta\to \infty}\frac{\log(\beta -\hat \beta_{eff})}{\beta} = -\pi^2\,.
\end{align*}
Another related way of detecting the impact of vortices is through the free energies. The same reasoning leads us to conjecture that our lower bound in Theorem \ref{th.FEs} on $f^{Coul}(\beta)$ is asymptotically sharp, i.e. \begin{align*}
\lim_{\beta\to \infty} \frac{\log f^{Coul}(\beta)}{\beta} = -\pi^2 =
\begin{cases}
\lim_{\beta\to\infty} \frac{\log\left(f^{IV}(\beta^{-1}) - f^{GFF}(\beta^{-1})\right)}{\beta}  \\
\lim_{\beta\to\infty} \frac{\log\left(f^{Vil}(\beta) - f^{GFF}(\beta)\right)}{\beta}\,.
\end{cases}
\end{align*}
\end{conjecture}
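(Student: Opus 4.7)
\textbf{Proof proposal for Conjecture \ref{conj}.} The conjecture asks for matching upper bounds to the lower bounds in Theorem \ref{th.FEs} and Proposition \ref{pr.improved_tildeM}. The physical picture is that at low temperature the Coulomb gas on the faces of $\Z^2$ is a dilute gas of unit vortex-antivortex pairs at density $\rho(\beta) \asymp e^{-\pi^2 \beta}$: indeed a single unit dipole on neighbouring faces has Coulomb energy
\[
\tfrac{(2\pi)^2 \beta}{2}\,\<{1_f - 1_{f'},(-\Delta)^{-1}(1_f-1_{f'})} = \pi^2 \beta,
\]
since the effective resistance between neighbouring sites in $\Z^2$ equals $1/2$ (the same fact that lies behind Lemma \ref{l.energy_f}). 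If this dilute-gas picture can be made rigorous then both $f^{Coul}(\beta)$ and $\var^{Coul}_\beta[\<{\Delta^{-1}q,g}]$ should equal $e^{-\pi^2\beta(1+o(1))}$ times the corresponding Gaussian quantities, settling the conjecture.

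My plan would be to chase the matching upper bound through the tools already developed for the lower bound. Since
\[
\tilde M(\beta, e) = (2\pi)^2 \beta\,\E^{Vil}_\beta\bigl[\var^{IG}(-\d\theta(e)/(2\pi),(2\pi)^2\beta)\bigr]
\]
and $\var^{IG}(a,(2\pi)^2\beta)\asymp \exp(-(2\pi)^2\beta(1-2|a|)/2)$ as $\beta\to\infty$, it suffices to prove
\[
\P^{Vil}_\beta\bigl[\d\theta(e)\in (\pi-\eps,\pi+\eps)\bigr]\leq \exp(-\pi^2\beta(1+o(1))),
\]
which immediately yields $\tilde M(\beta,e)\leq e^{-\pi^2 \beta(1+o(1))}$. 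Feeding this into the conditional variance decomposition of Section \ref{s.Var} produces the desired upper bound on $\var^{Coul}_\beta[\<{\Delta^{-1}q,g}]$, provided one also controls the $\var^{Vil}_\beta[\E[\cdot\mid\theta]]$ piece via analogous joint bounds $\P^{Vil}_\beta[\d\theta(e)\approx \pi,\,\d\theta(e')\approx \pi]\leq e^{-2\pi^2\beta(1+o(1))}$ whenever $\|e-e'\|$ is large (which is exactly what independent dipoles would give). Reverse-integrating the pointwise upper bound on $\tilde M(\beta,e)$ in $\beta$, as in Proposition \ref{t.bound_free_energy}, would then produce the sharp upper bound on $f^{Coul}(\beta)$, and hence via Corollary \ref{c.Zgff2} on $f^{IV}(\beta^{-1})-f^{GFF}(\beta^{-1})$ and $f^{Vil}(\beta)-f^{GFF}(\beta)$.

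The main obstacle is the large-deviation estimate $\P^{Vil}_\beta[\d\theta(e)\approx \pi]\leq e^{-\pi^2 \beta(1+o(1))}$. The lower bound in Lemma \ref{l.girsanov} followed from a one-sided change of measure, but the matching upper bound asks for sharp concentration in a non-convex, non-compact, long-range effective interaction; no standard tool (Brascamp-Lieb, Efron-Stein, entropy method) readily applies, which is why the sharp low-temperature behaviour has been out of reach since \cite{Kadanoff}. Two natural strategies that I would explore: (a) a reflection-positivity / chessboard argument in the spirit of Appendix \ref{a.RP}, reducing a global large-deviation statement for $\d\theta$ on a torus to a single-edge probability, then forcing $\d\theta\equiv \pi$ globally by a periodic shift of harmonic profile at cost exactly $\pi^2\beta$; and (b) adapting the rigorous renormalization-group machinery of Dimock-Hurd \cite{DimockHurd} and Falco \cite{Falco}, which so far only handles the low-activity regime $z\to 0$, to the high-density regime $z\equiv \infty$, the point being that after one spin-wave decimation the effective activity generated is exactly $e^{-\pi^2\beta}$, as foreshadowed by the Kosterlitz-equation analysis in Subsection \ref{ss.Kadanoff}. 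Either approach would simultaneously yield the sharp asymptotics for the effective inverse temperature $\hat\beta_{eff}$.
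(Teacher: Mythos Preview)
This statement is a \emph{conjecture}, not a theorem: the paper offers no proof and explicitly labels it as open, supported only by the rigorous lower bounds of Theorem \ref{th.FEs} together with the (non-rigorous) RG picture of \cite{Kadanoff}. There is therefore no proof in the paper to compare your proposal against; what you have written is an outline of a possible research programme, and you yourself correctly flag the main obstacle as unresolved.

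Your identification of the key missing ingredient --- the upper bound $\P^{Vil}_\beta[\d\theta(e)\approx \pi] \leq e^{-\pi^2\beta(1+o(1))}$ --- is the right one for the route through $\tilde M(\beta,e)$. Two caveats are worth flagging. First, the chessboard estimate as implemented in Appendix \ref{a.RP} (Proposition \ref{pr.SmallGrTorus}) gives only $\P[\d\theta(e)\notin(-\delta,\delta)] \leq \exp(-\beta\delta^2/4 + O(\log(1/\delta)))$, so with $\delta\approx\pi$ the exponent is off by a factor of four from the target $\pi^2\beta$; reflecting the indicator $1_{\d\theta\approx\pi}$ forces a globally frustrated configuration whose per-edge cost does not match the harmonic-shift energy $\<{\d\hat f,\d\hat f}\to 2$ of Lemmas \ref{l.girsanov}--\ref{l.energy_f}, so extracting the sharp constant via reflection positivity would require a genuinely new block construction. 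Second, even a sharp upper bound on $\tilde M(\beta,e)$ controls only the $\E[\var[\cdot\mid\theta]]$ piece of the conditional variance decomposition; the $\var[\E[\cdot\mid\theta]]$ piece that you mention requires the two-edge decorrelation bound you state, which is a harder estimate (large-deviation-level decay of correlations for the Villain gradient field). For the free-energy part of the conjecture specifically, a more direct route may be a low-temperature cluster expansion for $Z^{Coul}_\beta$ around the vacuum, where the leading non-trivial term is a single nearest-neighbour dipole of cost exactly $\pi^2\beta$; this would bypass the Villain large-deviation estimate altogether, though convergence of such an expansion for the high-density gas ($z\equiv\infty$) is, to my knowledge, not established.
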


\section{Bounds on the Fourier and Laplace transform of Coulomb gases}\label{s.Fourier_Laplace}

\subsection{Fourier transform.}
We now want to upgrade the bounds on the fluctuations from Section \ref{s.Var}. We want not only to obtain bounds on the variance for the Coulomb gas and the IV-GFF, but also on their Fourier and Laplace transform. There are many reasons for this: it will allow us to control the $2$-point function of the Villain model and to control the large deviation of the value on a point of the IV-GFF.

\begin{lemma}\label{l.Fourier_m}
	Take $(\theta,m)$ a Villain coupling at inverse temperature $\beta$ and a 1-form $h:E\mapsto \R$. For $b>0$ define
	\begin{equation}
	h^{b}= h \1_{|h|<b}\,.
	\end{equation}
For all $\beta>0$ there exists $0<\tilde b(\beta)<1$ such that for all $b< \tilde b(\beta)$,
	
\begin{equation*}
	\E^{Vil}_{\beta}\left[e^{i\langle m,h \rangle} \right]\leq e^{-\frac{(1-bK_\beta)}{2} \inf_{a\in[0,1/2)}\var^{IG}(a,(2\pi)^2\beta) \langle h^b, h^b \rangle},
	\end{equation*}
	where $K_\beta$ is defined in \eqref{e.K_beta}. Furthermore, we can choose $ \beta \mapsto \tilde b(\beta)$ to be increasing.
\end{lemma}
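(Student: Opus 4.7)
\medskip

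The plan is to condition on the Villain spins $\theta$ and exploit the conditional independence of the integer edge variables $m(e)$ from Proposition \ref{pr.Villain}(2). Once $\theta$ is fixed, the $(m(e))_{e\in E}$ are independent with $m(e)\sim\mathcal N^{IG}(-\d\theta(e)/(2\pi),(2\pi)^2\beta)$, so after pulling out the conditional-mean phase at each edge
\begin{align*}
\Bigl|\E^{Vil}_\beta\bigl[e^{i\langle m,h\rangle}\bigm|\theta\bigr]\Bigr|=\prod_{e\in E}\bigl|\phi_e(h(e))\bigr|,
\end{align*}
where $\phi_e(t):=\E[e^{it(m(e)-\mu_e)}\mid\theta]$ is the characteristic function of the centred IV-Gaussian at edge $e$. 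Since the bound I will obtain on this product will be $\theta$-independent, integrating over $\theta$ at the very end is harmless.

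The key step is a third-order Taylor expansion of each $\phi_e$. Using $|e^{iy}-1-iy+y^2/2|\leq |y|^3/6$ together with the vanishing of the first moment of $m(e)-\mu_e$ yields
\begin{align*}
|\phi_e(t)|\leq 1-\tfrac{t^2}{2}\var^{IG}(a_e,(2\pi)^2\beta)+\tfrac{|t|^3}{6}\,T^{IG}(a_e,(2\pi)^2\beta),
\end{align*}
with $a_e:=-\d\theta(e)/(2\pi)$. Substituting the definition \eqref{e.K_beta} of $K_\beta$ to control the cubic term by the quadratic one, and restricting to edges where $|h(e)|<b$, the bound becomes
\begin{align*}
|\phi_e(h(e))|\leq 1-\tfrac{h(e)^2}{2}\var^{IG}(a_e,(2\pi)^2\beta)\bigl(1-bK_\beta/3\bigr)\leq \exp\Bigl(-\tfrac{h(e)^2}{2}\var^{IG}(a_e,(2\pi)^2\beta)\bigl(1-bK_\beta\bigr)\Bigr),
\end{align*}
using $1-x\leq e^{-x}$ and $1-bK_\beta/3\geq 1-bK_\beta$, provided $b$ is small enough that the right-hand side of the first inequality is positive. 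For edges where $|h(e)|\geq b$ I simply use $|\phi_e(h(e))|\leq 1$, which is precisely the reason the cutoff function $h^b$ appears in the statement.

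Multiplying these edge-wise bounds and replacing $\var^{IG}(a_e,(2\pi)^2\beta)$ by its infimum over $a\in[0,1/2]$ (which is an infimum over all of $\R$, by the periodicity and symmetry of the IV-Gaussian variance in the parameter $a$) produces a $\theta$-independent upper bound which, after averaging over $\theta$, yields exactly the claimed inequality. The threshold $\tilde b(\beta)$ is dictated by two constraints: first $1-bK_\beta/3>0$ so that the cubic correction is dominated, and second $b^2\sup_{a}\var^{IG}(a,(2\pi)^2\beta)<2$ so that the Taylor estimate on $|\phi_e|$ is nontrivial before exponentiation. Since $K_\beta$ is nonincreasing in $\beta$ by its definition as a supremum over $\hat\beta\geq\beta$, and $\sup_a\var^{IG}(a,(2\pi)^2\beta)$ is also nonincreasing for large $\beta$ (bounded by $1/4+o(1)$), both constraints become easier to satisfy as $\beta$ grows, so one can arrange for $\tilde b(\beta)$ to be an explicit increasing function of $\beta$. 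There is no serious conceptual obstacle; the only care required is tracking the cubic Taylor remainder through the product and carefully recording the $\beta$-dependence of the admissible range for $b$.
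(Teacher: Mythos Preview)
Your proposal is correct and follows essentially the same approach as the paper's proof: condition on $\theta$, use the conditional independence and IV-Gaussian law of the $m(e)$'s, bound each centred characteristic function by a third-order Taylor expansion, absorb the cubic remainder via $K_\beta$, discard edges with $|h(e)|\geq b$ using the trivial bound $|\phi_e|\leq 1$, exponentiate with $1-x\leq e^{-x}$, and take the infimum over $a$. The only cosmetic differences are that your Taylor constant is $1/6$ rather than the paper's $1/3$ (you use the complex remainder directly, the paper splits real and imaginary parts), and for the monotonicity of $\tilde b(\beta)$ the paper replaces $\sup_a\var^{IG}(a,(2\pi)^2\beta)$ by the manifestly nonincreasing $\sup_{\hat\beta\geq\beta}\sup_a\var^{IG}(a,(2\pi)^2\hat\beta)$ rather than arguing separately about the $\beta$-dependence of the variance.
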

\begin{remark}\label{}
In fact, this proposition cannot be extended to not take in to account $b$. We discuss why in Remark \ref{r.imposible_FS}.
\end{remark}
\begin{proof}
	We start as in the proof of Proposition \ref{pr.Variance_Coul}, by separating the characteristic function by conditioning on $\theta$
	\begin{align}
	\E^{Vil}_{\beta}\left[ e^{i\langle m,h\rangle }\right]&=\E^{Vil}_{\beta} \left[ e^{i\langle \E\left[ m\mid \theta \right],h\rangle }\E\left[e^{i\langle m-\E\left[m\mid \theta \right] ,h\rangle } \mid \theta \right] \right]\nn\\
	&=\E^{Vil}_{\beta} \left[ e^{i\langle \E\left[ m\mid \theta \right],h\rangle }\prod_{e\in E}\E\left[e^{i (m(e)-\E\left[m(e)\mid \theta \right]) h(e)} \mid \theta \right] \right]\,.\label{e.m(e)theta}
	\end{align}

We now use the following straightforward claim
\begin{claim}\label{}
If  $X$ is a centred random variable in $\R$ with finite third moment, then for any $h\in \R$, 
\begin{align*}
|\Eb{e^{i h X}}| & \leq  |1-\frac {h^2} 2 \Eb{X^2}| + \frac{|h|^3} 3 \Eb{|X|^3}\,.
\end{align*}
\end{claim}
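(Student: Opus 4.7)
The plan is to apply Taylor's theorem to $e^{ihX}$ to second order and then exploit the centred hypothesis $\Eb{X}=0$ to kill the linear term. I would first invoke the standard pointwise bound
\[
\Bigl| e^{iy} - 1 - iy + \tfrac{y^2}{2} \Bigr| \;\leq\; \tfrac{|y|^3}{6}
\]
for all real $y$, which follows from the Taylor remainder formula $e^{iy} - 1 - iy + y^2/2 = -\int_0^y \tfrac{(y-t)^2}{2} e^{it}\, dt$ (or equivalently by writing $e^{iy} = \cos y + i \sin y$ and applying the standard real-variable Taylor bound to each part). The constant $1/6$ is stronger than what the claim needs, so even a loose version of this inequality suffices.

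Next, I would apply this bound pointwise with $y = h X(\omega)$, take absolute values inside the expectation, and use $\Eb{X} = 0$ to obtain
\[
\Bigl| \Eb{e^{ihX}} - 1 + \tfrac{h^2}{2}\Eb{X^2} \Bigr|
= \Bigl| \Eb{e^{ihX} - 1 - ihX + \tfrac{(hX)^2}{2}} \Bigr|
\leq \tfrac{|h|^3}{6}\,\Eb{|X|^3}.
\]
The triangle inequality $|\Eb{e^{ihX}}| \leq |1 - \tfrac{h^2}{2}\Eb{X^2}| + |\Eb{e^{ihX}} - 1 + \tfrac{h^2}{2}\Eb{X^2}|$ then yields the claim (with the better constant $1/6$, which a fortiori gives the stated bound with $1/3$).

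There is no real obstacle: the argument is just the standard expansion used in any proof of the central limit theorem via characteristic functions. The only place where the hypotheses enter nontrivially are the centredness (used to kill the first-order term) and the finiteness of the third moment (used to ensure the remainder term makes sense and Fubini/interchange of $\Eb{\cdot}$ and the pointwise bound applies).
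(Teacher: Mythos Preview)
Your proposal is correct and follows essentially the same approach as the paper: a second-order Taylor expansion of $e^{ihX}$ combined with $\Eb{X}=0$ and the triangle inequality. The only cosmetic difference is that the paper splits $e^{ihX}=\cos(hX)+i\sin(hX)$ and applies the real bounds $|\cos x -1+x^2/2|\leq |x|^3/6$ and $|\sin x - x|\leq |x|^3/6$ separately, which is why it lands on the constant $\tfrac13=2\cdot\tfrac16$; your direct use of the complex remainder bound yields the sharper $\tfrac16$.
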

\begin{proof}[Proof of the claim]
Indeed for all $x\in \R$,
	\begin{align*}
	&|\sin(x)-x|\leq \frac{|x|^3}{3!}\\
	& \left |\cos(x)-1+\frac{x^2}{2}\right |\leq \frac{|x|^3}{3!}.
	\end{align*}
This implies that 
\begin{align*}\label{}
|\Eb{e^{i h X}}|& \leq |\Eb{1- h^2\frac{X^2}2 + i X}| 
+ |\Eb{\cos(hX)-(1-\frac{h^2X^2} 2) + i\big( \sin(hX) - hX \big)}| \\
& \leq |1- \frac {h^2} 2 \Eb{X^2}| + 2 \frac{|h|^3} {3!} \Eb{|X|^3}\,.
\end{align*}	
\end{proof}

Given $\theta$, one can now use the claim  on each $e\in E$ with the centred random variable $X=X(e):=m(e)-\Eb{m(e)\md \theta}$. Plugged into~\eqref{e.m(e)theta} this implies that $|\E_{\beta}^{Vil}\left[e^{i\langle m, h\rangle} \right]|$ is smaller than or equal to (recall the definition of $T^{IG}(a,\beta)$ in~\eqref{e.TIG})
	\begin{align*}
	 &  \E_{\beta}^{Vil}\left[ \prod_{e\in E}\left |\E\left[e^{i (m(e)-\E\left[m(e)\mid \theta \right]) h(e)} \mid \theta \right]\right|  \right]\\
	&\leq \E_{\beta}^{Vil}\left[\prod_{e\in E}\left( 1-\frac{1}{2}(h^b(e))^2\var^{IG}\left (\frac{\d\theta(e)}{2\pi},(2\pi)^2\beta\right ) + \frac{|h^b(e)|^3}{3}T^{IG}\left (\frac{\d\theta(e)}{2\pi},(2\pi)^2\beta\right )\right )  \right] 
	\end{align*}
	as long as for any $e\in E$,	
	\begin{equation}\label{e.first_a_small}
	\frac{1}{2}(h^b(e))^2\var^{IG}\left (\frac{\d\theta(e)}{2\pi},(2\pi)^2\beta\right )\leq 1\,.
	\end{equation}
Note that here we have bounded by one any multiple coming from an edge where $h(e)\geq b$. This part is the first place where we need $b$ to be chosen small enough. To see that the dependence is only in $\beta$, note that for each edge $e$
\begin{align*}\label{}
\frac{1}{2}(h^b(e))^2\var^{IG}\left (\frac{\d\theta(e)}{2\pi},(2\pi)^2\beta\right ) \leq \frac 1 2 b^2 \sup_{a \in \R} \sup_{\hat \beta \geq \beta} \var^{IG}(a,(2\pi)^2\hat \beta).
\end{align*}
Note that term $\sup_{a\in \R}\sup_{\tilde \beta \geq \beta} \var^{IG}(a,(2\pi)^2\hat \beta)$ is finite. To prove this, one only needs to work with $a\in (0,1/2)$ and note that $\var^{IG}(a,\beta)$ is upper bounded by $\E^{IG}_{a,\beta}[X^2]$ where $X\sim \mathcal N^{IG}(a,\beta)$.

Let us now obtain a lower bound on the following quantity for each edge $e$
\begin{align}\nonumber
& \frac{1}{2}(h^b(e))^2\var^{IG}\left (\frac{\d\theta(e)}{2\pi},(2\pi)^2\beta\right ) -\frac{|h^b(e)|^3}{3}T^{IG}\left (\frac{\d\theta(e)}{2\pi},(2\pi)^2\beta\right ) \\
&\nonumber\geq \var^{IG}\left (\frac{\d\theta(e)}{2\pi},(2\pi)^2\beta\right ) \left( \frac 1 2 (h^b(e))^2 - b \frac{(h^b(e))^2}{3} \frac 
{T^{IG}\left (\frac{\d\theta(e)}{2\pi},(2\pi)^2\beta\right ) }
{\var^{IG}\left (\frac{\d\theta(e)}{2\pi},(2\pi)^2\beta\right )} \right) \\ 
& \nonumber\geq \var^{IG}\left (\frac{\d\theta(e)}{2\pi},(2\pi)^2\beta\right ) \left( \frac 1 2 (h^b(e))^2 - b \frac{(h^b(e))^2}{2} K_\beta \right) 
\\
& \label{e.lower_bound_Fourier}\geq  \frac 1 2 (h^b(e))^2 \inf_{a\in\R}\var^{IG}(a,(2\pi)^2\beta) \left( 1  - b  K_\beta \right) \,,
\end{align}
as long as $b< (\sup_{\tilde \beta\geq \beta} K_{\tilde \beta})^{-1}$. Note this is the second and last place where we need $b$ to be chosen small enough (only as a function of $\beta$, i.e $b<b(\beta)$). 

	We now use that $\log(1+x)\leq x$ as long as $x>-1$ together with \eqref{e.lower_bound_Fourier} to upper bound $|\E^{Vil}_{\beta}\left[e^{i\langle m, h\rangle} \right]|$ by	
	\begin{align*}
	&\E^{Vil}_\beta\left[\prod_{e\in E}e^{ -\frac{1}{2}(h^b(e))^2\var^{IG}\left (\frac{\d\theta(e)}{2\pi},(2\pi)^2\beta\right )+\frac{|h^b(e)|^3}{3!}T^{IG}(\d\theta(e),(2\pi)^2\beta^{-1})}  \right]\\
	&\hspace{0.4\textwidth}\leq e^{ -\frac{1}{2} (1-b\, K_\beta)\inf_{a\in\R}\var^{IG}(a,(2\pi)^2\beta)\langle h^b,h^b \rangle}.
	\end{align*}
	A careful study of the bounds necessary for $b$ shows that we can choose $\tilde b (\beta)$ to be increasing on $\beta$.
\end{proof}

\begin{proposition}\label{pr.Fourier_Coul}
	Let us fix $\beta, \epsilon, \bar \lambda>0$. Then, there exists a deterministic constant $K:=K(\beta,\epsilon,\bar \lambda)>0$  such that for all $|\lambda|\leq \bar \lambda$ the following is true uniformly on the size of the graph $\Lambda$
	\begin{enumerate}
		\item For all $f_1, f_2\in F$,		\begin{equation}\label{e.Fourier_Coul}
		\E_{\beta}^{Coul}\left[e^{i\lambda (\Delta^{-1}q(f_1)-\Delta^{-1}q(f_2))}\right ] \leq Ke^{-\frac{\lambda^2(1-\epsilon)}{2}G_{\Lambda^*,f_1}(f_2,f_2) \inf_{a\in \R}\var^{IG}(a,(2\pi)^2\beta)}.
		\end{equation}
		\item For all $v_1, v_2\in V$, \begin{equation}\label{e.d*m}
		\E_{\beta}^{Vil}\left[e^{i\lambda (\Delta^{-1}\d^*m(v_1)-\Delta^{-1}\d^*m(v_2) )}  \right]\leq Ke^{-\frac{\lambda^2(1-\epsilon)}{2}G_{\Lambda,v_1}(v_2,v_2)\inf_{a\in \R}\var^{IG}(a,(2\pi)^2\beta)}.
		\end{equation}
	\end{enumerate}
Furthermore, we can ask that $\beta\mapsto K(\beta,\epsilon,\bar \lambda)$ is decreasing in $\beta$.
\end{proposition}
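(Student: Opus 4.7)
Both statements are obtained by rewriting the Fourier transform as one on the integer $1$-form $m$ of a Villain coupling and then applying Lemma~\ref{l.Fourier_m}. For (1), Theorem~\ref{th.coupling} allows us to sample $q$ as $q=\d m$ where $(\theta,m)$ is a Villain coupling at inverse temperature $\beta$; combined with $\langle\d m,g\rangle=-\langle m,\d^*g\rangle$, this gives
\[
\E_{\beta}^{Coul}\bigl[e^{i\lambda(\Delta^{-1}q(f_1)-\Delta^{-1}q(f_2))}\bigr]=\E_{\beta}^{Vil}\bigl[e^{-i\lambda\langle m,h\rangle}\bigr],\qquad h:=\d^*\Delta^{-1}(\1_{f_1}-\1_{f_2}).
\]
Since in $2d$ there are no $3$-forms, $\d\equiv 0$ on $2$-forms and $\Delta=\d\d^*$ there, so $\d\d^*\Delta^{-1}=I$ on $2$-forms and
\[
\langle h,h\rangle=\langle\1_{f_1}-\1_{f_2},(-\Delta)^{-1}(\1_{f_1}-\1_{f_2})\rangle=G_{\Lambda^*,f_1}(f_2,f_2).
\]
For (2), the analogous manipulation with $h':=\d\Delta^{-1}(\1_{v_1}-\1_{v_2})$ yields $\Delta^{-1}\d^*m(v_1)-\Delta^{-1}\d^*m(v_2)=-\langle m,h'\rangle$; using instead that $\d^*\equiv 0$ on $0$-forms so $\Delta=\d^*\d$ there, one obtains $\langle h',h'\rangle=G_{\Lambda,v_1}(v_2,v_2)$.

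I would then apply Lemma~\ref{l.Fourier_m} to the $1$-form $\lambda h$ with a truncation level $b=b(\beta,\epsilon)$ chosen to satisfy both $b<\tilde b(\beta)$ and $bK_\beta\leq\epsilon/2$; such a choice exists because $\tilde b(\beta)>0$ and $K_\beta<\infty$ by Lemma~\ref{l.ratioK}. The lemma yields (in modulus)
\[
\bigl|\E_{\beta}^{Vil}\bigl[e^{i\lambda\langle m,h\rangle}\bigr]\bigr|\leq \exp\Bigl(-\tfrac{1-\epsilon/2}{2}\,\inf_{a}\var^{IG}(a,(2\pi)^2\beta)\,\langle(\lambda h)^b,(\lambda h)^b\rangle\Bigr),
\]
where $(\lambda h)^b=\lambda h\,\1_{|\lambda h|<b}$ coincides with $\lambda h$ on those edges $e$ such that $|h(e)|<b/|\lambda|$.

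The core technical step---and the main obstacle---is to pass from the truncated energy $\langle(\lambda h)^b,(\lambda h)^b\rangle$ to the full energy $\lambda^2\langle h,h\rangle$ up to an $\epsilon$-loss and an additive constant uniform in the graph $\Lambda$. Concretely, one needs to bound
\[
\lambda^2\sum_{e:\,|h(e)|\geq b/|\lambda|}h(e)^2\leq \tilde C(\beta,\bar\lambda,\epsilon).
\]
The key input is the classical pointwise gradient estimate $|h(e)|\leq C/\dist(e,\{f_1,f_2\})$ on the Green function, uniform in $\Lambda$ and already used in the proof of Proposition~\ref{pr.improved_tildeM}. Consequently $\{e:|h(e)|\geq b/|\lambda|\}$ is contained in two discrete balls of radius at most $C\bar\lambda/b$ around $f_1,f_2$, and the $L^2$ mass of $h$ on a ball of fixed radius around either marked point is bounded by a constant depending only on that radius (a direct estimate gives $\sum_{r\leq R} r\cdot r^{-2}=O(\log R)$), hence by $\tilde C(\beta,\bar\lambda,\epsilon)$ independent of $\Lambda$.

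Putting everything together and using $(1-\epsilon/2)\geq(1-\epsilon)$ yields \eqref{e.Fourier_Coul} with prefactor
\[
K(\beta,\epsilon,\bar\lambda):=\exp\Bigl(\tfrac{1}{2}\sup_{a}\var^{IG}(a,(2\pi)^2\beta)\,\tilde C(\beta,\bar\lambda,\epsilon)\Bigr).
\]
Monotonicity $\beta\mapsto K(\beta,\epsilon,\bar\lambda)$ decreasing follows because $\tilde b(\beta)$ is increasing, $K_\beta$ is decreasing and $\sup_a\var^{IG}(a,(2\pi)^2\beta)$ is decreasing: this jointly allows $b$ to be chosen larger as $\beta$ grows, thus shrinking the radius $C\bar\lambda/b$ and hence $\tilde C$. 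Part (2) is the verbatim analogue with $h'$ in place of $h$ and vertices $v_1,v_2$ in place of faces $f_1,f_2$.
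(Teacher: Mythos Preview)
Your proof is correct and follows essentially the same route as the paper's: rewrite the observable as $\E^{Vil}_\beta[e^{i\lambda\langle m,h\rangle}]$ with $h=\d^*\Delta^{-1}(\1_{f_1}-\1_{f_2})$, apply Lemma~\ref{l.Fourier_m}, and use the gradient decay of the Green function to show that the truncated energy $\langle(\lambda h)^b,(\lambda h)^b\rangle$ differs from $\lambda^2\langle h,h\rangle$ by a constant independent of $\Lambda$. The only cosmetic difference is that the paper first re-roots at $f_0:=f_1$ via Proposition~\ref{pr.rootCoul} (so that $h$ reduces to $-\d^*\Delta^{-1}\1_{f_2}$) and then invokes a dedicated claim on the decay and boundedness of this gradient, whereas you work directly with the difference $\1_{f_1}-\1_{f_2}$ and quote the quantitative bound $|h(e)|\leq C/\dist(e,\{f_1,f_2\})$; since $\1_{f_1}-\1_{f_2}$ has zero sum, $\d^*\Delta^{-1}(\1_{f_1}-\1_{f_2})$ is root-independent and the two formulations coincide.
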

\begin{proof}
We first start by simplifying both equations by fixing the marked faced $f_0$ to be $f_0:=f_1$ and  the marked vertex $v_0$ to be $v_0:=v_1$ (recall that $0$ and $2$-forms take value $0$ in the marked vertex and face respectively). We rely here on the re-rooting Proposition \ref{pr.rootCoul}. Thus, we only need to study the Fourier transform of $\Delta^{-1}q(f_2)$ and $\Delta^{-1}\d^*m(e_2)$.
	
Let us start with item $(1)$. Recall that $q=\d m$, thus
		\begin{equation}
		 \Delta^{-1} q(f_2) = \langle m, -\d^*\Delta^{-1}\1_{f_2} \rangle. 
		\end{equation}
		We now study the function $\widehat E:=-\d^*\Delta^{-1}\1_{f_2}$. Thanks to Lemma \ref{l.Fourier_m} and the fact that
		\begin{equation}
		\langle \widehat E, \widehat E\rangle= G_{\Lambda^*,f_1}(f_2,f_2)
		\end{equation}
		to conclude we need to show that for all $b>0$ we can find $K$ such that for all $f_1$ and $f_2$
		\begin{equation}
		\langle \widehat E^b, \widehat E^b\rangle\geq \langle \widehat E, \widehat E\rangle - K.
		\end{equation}
		This result can be proven using the following well known claim.
		\begin{claim} \label{C.gradient_Green}Uniformly on the size of the graph $\Lambda_n$ and $f_1, f_2\neq \infty$
		\begin{enumerate}
			\item $|\widehat E(e)|\to 0$ as $\|e-f_1\|\to \infty$ and $\|e-f_2\|\to \infty$.
			\item $\widehat E$ is bounded.
		\end{enumerate}
	Furthermore, in the case where $\Lambda_n$ is the graph with free-boundary condition and $f_1=\infty$, we have that uniformly on $n$ for all $f_2\in F$
\begin{enumerate}
	\item[(a)'] $|\widehat E(e)|\to 0$ as $\|e-f_2\|\to \infty$.
	\item[(b)']$\widehat E$ is bounded.
\end{enumerate}

		\end{claim}
	\begin{remark}
		Here by $\| e- f\|$ we mean the distance as a set in $\C$ between the edge $e$ and the face $f$. 
	\end{remark}
Let us say a few words on the claim. In the case of free-boundary conditions, it follows from the reflection principle together with the precise estimates on the Green function on $\Z^2$ from Theorem 4.4.4 in \cite{LawlerLimic}. (See also the footnote in Proposition \ref{pr.green}). Another useful reference for estimates on the gradient of the Green function in planar domains is Lemma B.4 from \cite{smirnov2010}.

For item $(2)$, the proof is just analogue to the case before, the only difference is that one needs to work with vertices instead than with faces.
\end{proof}

\subsection{Laplace transform.}
As in the last subsection which focused on Fourier transforms, the key step to obtain bounds on the Laplace transform of a Coulomb gas is the following lemma.
\begin{lemma}\label{l.Laplace_m}
	Take $(\theta,m)$ a Villain coupling at inverse temperature $\beta$ and a $1$-form $h:E\mapsto \R$. For $b>0$ define
	\begin{equation}
	h^{b}= h \1_{|h|<b}
	\end{equation}
	
	We have that for all $(\beta,\bar \beta)$ such that $\beta>\bar \beta>0$, there exists $0<\tilde b(\bar \beta)<1$ such that for all $b<\tilde b(\bar \beta)$
	\begin{equation*}
	\E^{Vil}_{\beta}\left[e^{\langle m,h \rangle} \right]\geq e^{\frac{1}{2}(1-b\hat K_{\bar \beta}) (\inf_{a\in \R}\var^{IG}(a,(2\pi)^2\beta)) \langle h^b, h^b \rangle},
	\end{equation*}
	where $\hat K_{\bar \beta}$ is a function only depending on $\bar \beta$.
\end{lemma}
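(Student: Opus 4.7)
The plan is to mirror the proof of Lemma \ref{l.Fourier_m}, with two modifications tailored to the Laplace (real-exponent) rather than Fourier setting. First, the trivial bound $|e^{ix}|\leq 1$ used there on the non-centred factor has no real analogue, so I will replace it by a Jensen plus parity-symmetry argument. Second, the local Taylor expansion of cosine will be replaced by the pointwise global lower bound
\[
e^x \geq 1 + x + \tfrac{x^2}{2} - \tfrac{|x|^3}{6}, \qquad \forall x \in \R,
\]
which follows from Lagrange's form of the remainder ($\tfrac{e^\xi}{6}x^3$ is non-negative when $x\geq 0$, and at least $\tfrac{x^3}{6}=-\tfrac{|x|^3}{6}$ when $x<0$ since $e^\xi\in(0,1]$). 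Using the conditional independence of $\{m(e)\}_{e\in E}$ given $\theta$ (Proposition \ref{pr.Villain}), I decompose
\[
\E^{Vil}_\beta[e^{\langle m,h\rangle}] = \E^{Vil}_\beta\!\left[e^{\langle \mu(\theta),h\rangle}\prod_{e\in E}\E[e^{h(e)X(e)}\mid\theta]\right],
\]
where $\mu(\theta)(e):=\E[m(e)\mid\theta]$ and $X(e):=m(e)-\mu(\theta)(e)$ is conditionally centred.

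Applying the global bound to each conditional factor gives $\E[e^{h(e)X(e)}\mid\theta] \geq 1 + \tfrac{h(e)^2}{2}v_e - \tfrac{|h(e)|^3}{6}t_e$, where $v_e := \var^{IG}(-\d\theta(e)/(2\pi),(2\pi)^2\beta)$ and $t_e$ is the analogous conditional third absolute moment. On edges where $|h(e)| \leq b$, a variant of Lemma \ref{l.ratioK} (uniform in $\beta\in(0,\bar\beta)$) provides $t_e/v_e \leq \hat K_\beta$, absorbing the cubic term into $1 + \tfrac{h(e)^2}{2}v_e(1-b\hat K_\beta/3)$. If $b=\tilde b(\bar\beta)$ is further small enough that $\tfrac{h(e)^2}{2}v_e \leq \tfrac12$ uniformly in $\theta$ and $\beta$ (which requires $b^2 \sup_{a,\beta\leq \bar\beta}\var^{IG}(a,(2\pi)^2\beta) \leq 1$), then taking logarithms via $\log(1+y) \geq y(1-y)$ yields, after adjusting $\hat K_\beta$,
\[
\log \E[e^{h(e)X(e)}\mid\theta] \geq \tfrac{h(e)^2}{2}v_e (1-b\hat K_\beta).
\]
On the remaining edges with $|h(e)|>b$, conditional Jensen applied to the centred variable $X(e)$ gives $\E[e^{h(e)X(e)}\mid\theta] \geq 1$. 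Multiplying over $e$ and bounding $v_e \geq \inf_a \var^{IG}(a,(2\pi)^2\beta)$ produces the $\theta$-independent lower bound
\[
\prod_{e\in E} \E[e^{h(e)X(e)}\mid\theta] \geq \exp\Bigl(\tfrac12(1-b\hat K_\beta)\inf_a\var^{IG}(a,(2\pi)^2\beta) \langle h^b, h^b\rangle\Bigr),
\]
which factors out of the outer $\theta$-expectation.

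It then remains to verify that $\E^{Vil}_\beta[e^{\langle\mu(\theta),h\rangle}] \geq 1$. I apply Jensen's inequality to reduce this to $\exp(\langle\E^{Vil}_\beta[m],h\rangle)$, and then invoke the parity involution $(\theta,m)\mapsto (-\theta\bmod 2\pi,\, -m)$, which preserves the Villain coupling measure: both the energy $\tfrac{\beta}{2}\langle \d\theta + 2\pi m,\d\theta + 2\pi m\rangle$ (as the form is squared) and the reference measure $d\theta\,\delta_{\Z^E}(dm)$ (as $\theta\mapsto -\theta \bmod 2\pi$ is a measure-preserving reflection of $[0,2\pi)^{V\setminus\{v_0\}}$) are manifestly invariant. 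This forces $\E^{Vil}_\beta[m(e)]=0$ for every $e$, so the Jensen bound equals $1$, completing the argument. The main technical obstacle is the uniformity in $\beta\in (0,\bar\beta)$: since $\var^{IG}(a,(2\pi)^2\beta)$ and the ratio $t_e/v_e$ both grow as $\beta\to 0$, both the cubic-absorption step and the $\log(1+y)$ linearisation force $b$ to be chosen small enough to accommodate the worst $\beta$ in the range, explaining why $\tilde b = \tilde b(\bar\beta)$ depends on the upper endpoint $\bar\beta$ rather than on a single temperature $\beta$.
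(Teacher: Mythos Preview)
Your argument is correct and follows essentially the same route as the paper's: condition on $\theta$, use a cubic Taylor lower bound for $e^x$ on the small edges, control the cubic term via the ratio $T^{IG}/\var^{IG}$, and then handle the remaining drift by Jensen combined with the parity symmetry $(\theta,m)\mapsto(-\theta,-m)$. The only cosmetic difference is in the bookkeeping of the large edges: you bound each conditional factor $\E[e^{h(e)X(e)}\mid\theta]\geq 1$ separately by conditional Jensen and then treat the full drift $e^{\langle\mu(\theta),h\rangle}$ by unconditional Jensen, whereas the paper keeps the large-edge factors together with the drift and applies Jensen plus symmetry to the combined quantity $\E^{Vil}_\beta\bigl[e^{\langle m,h-h^b\rangle+\langle\E[m\mid\theta],h^b\rangle}\bigr]$ at the end. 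Your decomposition is arguably cleaner.
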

\begin{proof}
We start in the same way as in the proof of Lemma \ref{l.Fourier_m}, by taking conditional expectation with respect to $\theta$
\begin{align*}
\E^{Vil}_{\beta}\left[ e^{\langle m,h\rangle }\right]&=\E^{Vil}_{\beta} \left[ e^{\langle \E\left[ m\mid \theta \right],h\rangle }\E\left[e^{\langle m-\E\left[m\mid \theta \right] ,h\rangle } \mid \theta \right] \right]\\
&=\E^{Vil}_{\beta} \left[ e^{\langle \E\left[ m\mid \theta \right],h\rangle }\prod_{e\in E}\E\left[e^{ (m(e)-\E\left[m(e)\mid \theta \right]) h(e)} \mid \theta \right] \right]
\end{align*}
We will now use that for all $x\in \R$ and $|y|<1/2$, we have that
\begin{align*}
&e^{x}\geq 1+x+ \frac{x^2}{2!}+\frac{x^3}{3!},\\
&\log(1+y)\geq y -8 y^2
\end{align*}
Furthermore, define $E^b\subseteq E$ as the set of all edges where $|h(e)|<b$. And let us use (2) of Proposition \ref{pr.Villain} to see that for $b<K_{\bar \beta}^{-1}\wedge 1$ (recall the definition of $K_\beta$ from~\eqref{e.K_beta}). 
\begin{align*}
&\prod_{e\in E^b}\E\left[e^{ (m(e)-\E\left[m(e)\mid \theta \right]) h(e)} \mid \theta \right]\\
&\hspace{0.1\textwidth}\geq \prod_{e\in E^b}\left (1+ \frac{1}{2}\var^{IG}\left (\frac{\d\theta(e)}{2\pi},(2\pi)^2\beta\right )h^2(e) + \frac{1}{3!}T\left (\frac{\d \theta(e)}{2\pi}, (2\pi)^2 \beta\right ) h(e)^3 \right )\\
&\hspace{0.1\textwidth}\geq \prod_{e\in E^b}\left (1+ \frac{h^2(e) }{2}\var^{IG}\left (\frac{\d\theta(e)}{2\pi},(2\pi)^2\beta\right )\left ( 1- bK_{\bar \beta}\right )\right )\\
&\hspace{0.1\textwidth}\geq \exp\left( \frac{1}{2}\inf_{a\in \R}\var^{IG}(a,(2\pi)^2\beta)\sum_{e\in E^b} h^2(e)(1-b\tilde K_{\bar \beta})\right), 
\end{align*}
for
\begin{equation*}
\tilde K_{\bar \beta}:=K_{\bar \beta} + 16 \sup_{a\in \R} \sup_{\hat \beta \geq \bar \beta}\var^{IG}(a,(2\pi)^2 \hat \beta).
\end{equation*}
Then, we have that
\begin{align*}
\E^{Vil}_{\beta}\left[ e^{\langle m,h\rangle }\right]\geq  e^{\frac{1}{2}(1-b\tilde K_{\bar \beta})\inf_{a\in \R} \var^{IG}(a,(2\pi)^2\beta) \langle h^b, h^b \rangle} \E^{Vil}_\beta\left[e^{\langle m,h-h^b\rangle + \langle \E\left[m\mid \theta \right], h^b \rangle  } \right]. 
\end{align*}
We conclude using Jensen's inequality and the fact that by symmetry. 
\begin{align*}
\E^{Vil}_\beta\left[\langle m,h-h^b\rangle + \langle \E\left[m\mid \theta \right], h^b \rangle \right ]=0.
\end{align*}
\end{proof}
Now, the exact same proof of Proposition \ref{pr.Fourier_Coul} gives the following result.
\begin{proposition}\label{pr.Laplace_Coul}
	Let us fix $\beta, \epsilon, \bar \lambda>0$. Then, there exists a constant $K>0$  such that for all $|\lambda|\leq \bar \lambda$ the following is true uniformly on the size of the graph $\Lambda$
	\begin{enumerate}
		\item For all $f_1, f_2\in F$,		\begin{equation}\label{e.Laplace_Coul}
		\E_{\beta}^{Coul}\left[e^{\lambda (\Delta^{-1}q(f_1)-\Delta^{-1}q(f_2))}\right ] \geq Ke^{\frac{\lambda^2(1-\eps)}{2}G_{\Lambda^*,f_1}(f_2,f_2) (\inf_{a\in \R}\var^{IG}(a,(2\pi)^2\beta))}.
		\end{equation}
		\item For all $v_1, v_2\in V$, \begin{equation}\label{e.Laplace_d*m}
		\E_{\beta}^{Vil}\left[e^{\lambda (\Delta^{-1}\d^*m(v_1)-\Delta^{-1}\d^*m(v_2) )}  \right]\geq Ke^{\frac{\lambda^2(1-\eps)}{2}G_{\Lambda,v_1}(v_2,v_2)(\inf_{a\in \R}\var^{IG}(a,(2\pi)^2\beta))}.
		\end{equation}
	\end{enumerate}
\end{proposition}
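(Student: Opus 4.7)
The plan is to mirror the proof of Proposition \ref{pr.Fourier_Coul} step by step, replacing the upper bound on the characteristic function of $m$ provided by Lemma \ref{l.Fourier_m} with the lower bound on the Laplace transform of $m$ provided by Lemma \ref{l.Laplace_m}. Since we only need the results for a fixed range $|\lambda|\le \bar \lambda$ and a fixed $\beta$, all thresholds $b=b(\beta,\bar\lambda,\eps)$ can be chosen depending only on these parameters.

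First I would reduce both items to the case where the marked face $f_0$ (resp. the marked vertex $v_0$) is placed at $f_1$ (resp. $v_1$) by invoking the re-rooting Proposition \ref{pr.rootCoul}. For item $(1)$, using Theorem \ref{th.coupling} I write $\Delta^{-1}q(f_2)=\<{m,\widehat E}$ with $\widehat E:=-\d^*\Delta^{-1}\1_{f_2}$, so that
\begin{equation*}
\E_{\beta}^{Coul}\bigl[e^{\lambda \Delta^{-1} q(f_2)}\bigr]=\E_\beta^{Vil}\bigl[e^{\<{m,\lambda \widehat E}}\bigr]\,\cdot\,\frac{1}{\E_\beta^{GFF}[e^{i\<{\d\phi,\lambda \widehat E}}]},
\end{equation*}
where the denominator is a deterministic Gaussian factor of the form $\exp(\tfrac{\lambda^2}{2\beta}\<{\widehat E,\widehat E})$ coming from the decoupling in Theorem \ref{th.coupling}. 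Applying Lemma \ref{l.Laplace_m} to the $1$-form $h:=\lambda \widehat E$ with truncation level $b=\tilde b(\beta)$ yields the lower bound
\begin{equation*}
\E_\beta^{Vil}\bigl[e^{\<{m,\lambda \widehat E}}\bigr]\ge e^{\frac{\lambda^2}{2}(1-b \hat K_\beta)(\inf_a \var^{IG}(a,(2\pi)^2\beta))\<{\widehat E^b,\widehat E^b}}.
\end{equation*}
By choosing $b$ small enough so that $b\hat K_\beta \le \eps/2$, the prefactor already absorbs part of the $(1-\eps)$ factor. The same manipulation handles item $(2)$: one writes $\Delta^{-1}\d^* m(v_2)=\<{m,\d\Delta^{-1}\1_{v_2}}$ and proceeds identically on the dual graph.

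The one thing which is not automatic is the replacement of $\<{\widehat E^b,\widehat E^b}$ by $\<{\widehat E,\widehat E}=G_{\Lambda^*,f_1}(f_2,f_2)$ up to a bounded additive error, so that the resulting sub-Gaussian lower bound can be absorbed into the multiplicative prefactor $K$. Here I would reuse Claim \ref{C.gradient_Green}, which asserts that $\widehat E(e)$ is bounded and decays to $0$ as $\|e-f_1\|,\|e-f_2\|\to \infty$ (with the corresponding statement at $f_1=\infty$ under free boundary conditions). Since only finitely many edges in any lattice exceed a given threshold $b$, and the number of such edges is bounded uniformly in the size of $\Lambda$, we get $\<{\widehat E-\widehat E^b,\widehat E-\widehat E^b\} \le C(\beta,b)$ uniformly in the graph and in the position of $f_2$. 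Absorbing the corresponding exponential factor into the multiplicative constant $K=K(\beta,\eps,\bar\lambda)$ finishes the proof.

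The main (mild) obstacle is keeping careful track of the dependencies: $\tilde b$ only depends on $\bar\beta$ via Lemma \ref{l.Laplace_m}, and we must verify that the deterministic Gaussian correction coming from splitting the Villain coupling into a spin-wave plus Coulomb part is exactly what is needed to pass from a lower bound on $\E_\beta^{Vil}[e^{\<{m,\lambda \widehat E}}]$ to the stated Laplace lower bound for the Coulomb gas (respectively for $\Delta^{-1}\d^* m$). The key identity is $\<{\widehat E,\widehat E}=G(f_2,f_2)$ (or $G(v_2,v_2)$ in item $(2)$), which follows from $\<{\d^*\Delta^{-1}\1_{f_2},\d^*\Delta^{-1}\1_{f_2}}=\<{\1_{f_2},(-\Delta)^{-1}\1_{f_2}}$; once this is noted the rest is bookkeeping parallel to the Fourier case.
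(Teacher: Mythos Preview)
Your approach is essentially identical to the paper's, which simply says ``the exact same proof of Proposition \ref{pr.Fourier_Coul} gives the following result'': re-root via Proposition \ref{pr.rootCoul}, write the quantity as $\langle m,\lambda\widehat E\rangle$, apply Lemma \ref{l.Laplace_m} in place of Lemma \ref{l.Fourier_m}, and use Claim \ref{C.gradient_Green} to control $\langle \widehat E,\widehat E\rangle-\langle \widehat E^b,\widehat E^b\rangle$ by a constant.

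There is one confusion worth flagging. The displayed identity
\[
\E_{\beta}^{Coul}\bigl[e^{\lambda \Delta^{-1} q(f_2)}\bigr]=\E_\beta^{Vil}\bigl[e^{\langle m,\lambda \widehat E\rangle}\bigr]\cdot\frac{1}{\E_\beta^{GFF}[e^{i\langle \d\phi,\lambda \widehat E\rangle}]}
\]
is not correct and the GFF factor should not be there. Under Theorem \ref{th.coupling} the law of $q=\d m$ \emph{is} the Coulomb gas, so one has directly
\[
\E_{\beta}^{Coul}\bigl[e^{\lambda \Delta^{-1} q(f_2)}\bigr]=\E_\beta^{Vil}\bigl[e^{\langle m,\lambda \widehat E\rangle}\bigr],
\]
with no spin-wave correction (this is exactly how the paper proceeds in the proof of Proposition \ref{pr.Fourier_Coul}). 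The same applies to item (2), where $\Delta^{-1}\d^* m(v_2)$ is already a function of $m$. Once you drop that spurious factor, the rest of your outline is fine and yields the claimed bound.
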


\section{Proof of the main Theorems}\label{s.proof} 
\subsection{Proof of the improved-spin wave estimate.}

We now prove Theorem \ref{th.ISW1}, by rewriting it in the following way.
\begin{proposition}Let us consider $\theta$ to be a Villain model in $\Lambda$ with inverse temperature $\beta$. Then for any $M^-(\beta)<M(\beta)$ there exists a constant $K:=K(\frac{M^-}{M(\beta)})> 0$, such that
	\begin{align*}
	\E^{Vil}_{\beta}\left[\cos(\theta(v_1)-\theta(v_2)) \right]\leq K e^{-\frac{1}{2\beta}(1+M^-(\beta))G_{\Lambda,v_1} (v_2,v_2)}  
	\end{align*}
	for all $v_1,v_2\in \Lambda$.
\end{proposition}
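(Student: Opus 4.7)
The plan is to combine the decoupling identity of Corollary \ref{c.2-point_Vil} with the quantitative Fourier bound from Proposition \ref{pr.Fourier_Coul}. By the corollary, we can write
\begin{align*}
\E^{Vil}_\beta[\cos(\theta(v_1)-\theta(v_2))] = \E^{GFF}_\beta\!\left[e^{i(\phi(v_1)-\phi(v_2))}\right]\cdot \E^{Vil}_\beta\!\left[e^{i2\pi(\d^*\Delta^{-1}m(v_1)-\d^*\Delta^{-1}m(v_2))}\right].
\end{align*}
The GFF factor is explicit: since $\phi(v_1)-\phi(v_2)$ is centred Gaussian with variance $\tfrac{1}{\beta}G_{\Lambda,v_1}(v_2,v_2)$, it equals $\exp\!\left(-\tfrac{1}{2\beta}G_{\Lambda,v_1}(v_2,v_2)\right)$, giving the ``$1$'' part of the exponent $1+M^-(\beta)$.

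For the vortex factor, apply Proposition \ref{pr.Fourier_Coul}(2) with $\lambda=2\pi$. This yields, for any $\epsilon>0$, a constant $K=K(\beta,\epsilon,2\pi)$ such that
\begin{align*}
\E^{Vil}_\beta\!\left[e^{i2\pi(\d^*\Delta^{-1}m(v_1)-\d^*\Delta^{-1}m(v_2))}\right]\leq K\,\exp\!\left(-\tfrac{(2\pi)^2(1-\epsilon)}{2}G_{\Lambda,v_1}(v_2,v_2)\inf_{a\in\R}\var^{IG}(a,(2\pi)^2\beta)\right).
\end{align*}
Here I need the identification $\inf_{a\in\R}\var^{IG}(a,(2\pi)^2\beta)=\inf_{a\in[0,1/2]}\var^{IG}(a,(2\pi)^2\beta)$, which follows from the $1$-periodicity and reflection symmetry of $a\mapsto\var^{IG}(a,\cdot)$ (translating the integer grid by an integer leaves the law unchanged, and sending $a\mapsto -a$ negates the variable). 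Consequently $(2\pi)^2\inf_{a\in\R}\var^{IG}(a,(2\pi)^2\beta)=M(\beta)/\beta$ by the very definition \eqref{e.M}.

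Multiplying the two bounds gives
\begin{align*}
\E^{Vil}_\beta[\cos(\theta(v_1)-\theta(v_2))]\leq K\,\exp\!\left(-\tfrac{1}{2\beta}\bigl(1+(1-\epsilon)M(\beta)\bigr)G_{\Lambda,v_1}(v_2,v_2)\right).
\end{align*}
Given any $M^-(\beta)<M(\beta)$, set $\epsilon:=1-M^-(\beta)/M(\beta)\in(0,1)$; this makes $(1-\epsilon)M(\beta)=M^-(\beta)$, and the corresponding constant $K$ depends only on the ratio $M^-/M(\beta)$ as required. Since the substantive analytic work (the Taylor/Girsanov-style estimate controlling the characteristic function of $\langle m,h\rangle$ conditionally on $\theta$, and the uniform control on $\widehat E=-\d^*\Delta^{-1}\mathbf{1}_{v_2}$ via Claim \ref{C.gradient_Green}) has already been carried out in Lemma \ref{l.Fourier_m} and Proposition \ref{pr.Fourier_Coul}, there is no remaining obstacle; the proof is essentially a bookkeeping of constants.
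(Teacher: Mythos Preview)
Your proof is correct and follows essentially the same route as the paper: invoke Corollary \ref{c.2-point_Vil} to factor the two-point function into the GFF spin-wave piece and the vortex piece, evaluate the GFF factor explicitly, and then apply estimate~\eqref{e.d*m} of Proposition \ref{pr.Fourier_Coul} with $\lambda=2\pi$ to bound the vortex piece. Your remark that $\inf_{a\in\R}\var^{IG}(a,(2\pi)^2\beta)=\inf_{a\in[0,1/2]}\var^{IG}(a,(2\pi)^2\beta)$ and the explicit choice $\epsilon=1-M^-(\beta)/M(\beta)$ are useful clarifications; one small point is that to get $K$ depending only on the ratio $M^-/M(\beta)$ (and not also on $\beta$) you should invoke the final clause of Proposition \ref{pr.Fourier_Coul}, namely that $\beta\mapsto K(\beta,\epsilon,\bar\lambda)$ can be taken decreasing, so that for $\beta\geq 1/3$ one may replace $K(\beta,\epsilon,2\pi)$ by $K(1/3,\epsilon,2\pi)$.
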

\begin{proof}
	We first note that by Corollary \ref{c.2-point_Vil}
	\begin{align*}
	\E^{Vil}_{\beta}\left[\cos(\theta(v_1)-\theta(v_2)) \right]&\leq \E_{\beta}^{GFF}\left[e^{i (\phi(v_1)-\phi(v_2))} \right]\E_{\beta}^{Coul}\left[e^{i2\pi(\d^*\Delta^{-1}m(v_1)-\d^*\Delta^{-1}m(v_2) )}  \right]\\
	&= e^{-\frac{1}{2\beta}(G_{\Lambda,v_1}(v_2,v_2))}\E_{\beta}^{Coul}\left[e^{i2\pi(\d^*\Delta^{-1}m(v_1)-\d^*\Delta^{-1}m(v_2) )}  \right]\\
	&\leq  K e^{-\frac{1}{2\beta}(1+M(\beta)(1-\epsilon))G_{\Lambda,v_1} (v_2,v_2)},
	\end{align*}
	where in the last line we used the estimate~\eqref{e.d*m} from Proposition \ref{pr.Fourier_Coul}.
\end{proof}

Let us finally remark that Corollary \ref{c.ISW2} follows just from Theorem \ref{th.ISW1} together with Proposition \ref{pr.green}.

%

\subsection{Maximum of the integer-valued GFF.}\label{ss.max}
In this section, our goal is to prove Theorem \ref{th.maxIV} on the maximum of the integer-valued GFF.  The proof will be based on the following proposition on the Laplace transform of the IV-GFF (we also state a control on its Fourier transform which may useful elsewhere). Let us introduce the following shifted inverse temperature:
\begin{align}\label{e.tildeB}
\tilde \beta:= \left (1-M(\beta))\right ) \beta\,.
\end{align}

\begin{proposition}\label{pr.Lap_IV}
	Fix $\beta>0$ and let $\Psi$ be an IV-GFF with inverse-temperature $\beta^{-1}$ on the vertices of the a graph $\Lambda_n^0$ (i.e.  with zero boundary conditions). Then, for any $\bar \lambda>0$ and $\hat \beta >\tilde \beta$ there exist constants $K_1,K_2>0$ such that for all $\lambda\leq \bar \lambda$ and all $v\in \Lambda_n^0$
	\begin{align}
	\label{e.Laplace_IV}&\E^{IV}_{\beta^{-1}}\left[e^{\lambda\Psi(v)}\right]\leq K_1 e^{\frac{\lambda^2\hat \beta}{2} G_{\Lambda_n^0}(v,v)},\\
	\label{e.Fourier_IV}&\E^{IV}_{\beta^{-1}}\left[e^{i\lambda\Psi(v)}\right]\geq K_2 e^{-\frac{\lambda^2\hat \beta}{2} G_{\Lambda_n^0}(v,v)}. 
	\end{align}
\end{proposition}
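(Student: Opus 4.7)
The strategy is to use the transfer identity of Proposition~\ref{pr.Coul_GFF} to reduce \eqref{e.Laplace_IV} and \eqref{e.Fourier_IV} to estimates on the Coulomb gas at a single site, and then to feed these to the Coulomb-gas bounds of Propositions~\ref{pr.Fourier_Coul} and~\ref{pr.Laplace_Coul}.

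First I would apply the vertex-dualized form of Proposition~\ref{pr.Coul_GFF}, with both $\Psi$ and $q$ living on the vertices of $\Lambda_n^0$ rooted at $v_0=\infty$, to the $0$-form $g:=\beta\lambda\1_v$ to obtain \eqref{e.Laplace_IV}, and to $g:=i\beta\lambda\1_v$ to obtain \eqref{e.Fourier_IV}. Using $\langle g,(-\Delta)^{-1}g\rangle=\beta^2\lambda^2\,G_{\Lambda_n^0}(v,v)$ and $\langle\Delta^{-1}q,g\rangle=\beta\lambda\,\Delta^{-1}q(v)$, this yields
\begin{align*}
\E^{IV}_{\beta^{-1}}\!\left[e^{\lambda\Psi(v)}\right] &= e^{\frac{\beta\lambda^2}{2}G_{\Lambda_n^0}(v,v)}\,\E^{Coul}_\beta\!\left[e^{i 2\pi\beta\lambda\,\Delta^{-1}q(v)}\right],\\
\E^{IV}_{\beta^{-1}}\!\left[e^{i\lambda\Psi(v)}\right] &= e^{-\frac{\beta\lambda^2}{2}G_{\Lambda_n^0}(v,v)}\,\E^{Coul}_\beta\!\left[e^{-2\pi\beta\lambda\,\Delta^{-1}q(v)}\right].
\end{align*}
The first factor in each line is exactly the Laplace, resp. characteristic, function at $v$ of a free GFF at inverse temperature $\beta$; the whole \emph{vortex} correction has been isolated into the Coulomb expectation.

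Next, since the Coulomb gas $q$ is also rooted at $v_0$, we have $\Delta^{-1}q(v_0)\equiv 0$, so one can read $\Delta^{-1}q(v)$ as the increment $\Delta^{-1}q(v_0)-\Delta^{-1}q(v)$ and invoke the vertex versions of Propositions~\ref{pr.Fourier_Coul}(1) and~\ref{pr.Laplace_Coul}(1) with parameter $2\pi\beta\lambda$ and $\bar\lambda':=2\pi\beta\bar\lambda$. This yields
\begin{align*}
\E^{Coul}_\beta\!\left[e^{\pm i 2\pi\beta\lambda\,\Delta^{-1}q(v)}\right] &\leq K\, e^{-\frac{(2\pi\beta\lambda)^2(1-\eps)}{2}G_{\Lambda_n^0}(v,v)\,\inf_a\var^{IG}(a,(2\pi)^2\beta)},\\
\E^{Coul}_\beta\!\left[e^{-2\pi\beta\lambda\,\Delta^{-1}q(v)}\right] &\geq K\, e^{\frac{(2\pi\beta\lambda)^2(1-\eps)}{2}G_{\Lambda_n^0}(v,v)\,\inf_a\var^{IG}(a,(2\pi)^2\beta)}.
\end{align*}
Using the identity $M(\beta)=(2\pi)^2\beta\inf_a\var^{IG}(a,(2\pi)^2\beta)$, each exponent above equals $\mp\tfrac{\beta\lambda^2 M(\beta)(1-\eps)}{2}G_{\Lambda_n^0}(v,v)$, which upon multiplication by the Gaussian prefactor leaves a net exponent of $\pm\tfrac{\lambda^2}{2}[\tilde\beta+\beta M(\beta)\eps]G_{\Lambda_n^0}(v,v)$. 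Given any $\hat\beta>\tilde\beta$, choosing $\eps$ with $\beta M(\beta)\eps<\hat\beta-\tilde\beta$ yields both bounds with the desired effective inverse-temperature $\hat\beta$.

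I do not anticipate a serious obstacle: once Proposition~\ref{pr.Coul_GFF} and the Coulomb Fourier/Laplace estimates are in hand, the whole argument reduces to bookkeeping. The only care needed is to verify that, with $\Psi$ and $q$ both placed on the vertices of $\Lambda_n^0$ and rooted at $v_0=\infty$, the vertex-dualized statements of Propositions~\ref{pr.Fourier_Coul} and~\ref{pr.Laplace_Coul} apply verbatim and the Green function $G_{\Lambda^*,f_1}$ appearing there becomes the usual $G_{\Lambda_n^0}$ upon choosing the root $f_1=v_0$. The edge case $v=v_0$ is trivial since then $\Psi(v)\equiv 0$ and $G_{\Lambda_n^0}(v,v)=0$.
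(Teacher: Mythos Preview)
Your proposal is correct and follows essentially the same route as the paper: apply the transfer identity~\eqref{e.Coul_GFF_2} with $g=\lambda\beta\1_v$ (resp.\ $g=i\lambda\beta\1_v$), then plug in the Coulomb Fourier bound~\eqref{e.Fourier_Coul} (resp.\ Laplace bound~\eqref{e.Laplace_Coul}) with the root placed at $\infty$. The only cosmetic difference is that the paper keeps $\Psi$ and $q$ on the \emph{faces} of $\Lambda_n^{free}$ and then identifies these faces with the vertices of $\Lambda_n^0$, whereas you phrase everything directly on vertices and speak of a ``vertex-dualized'' form of Propositions~\ref{pr.Fourier_Coul} and~\ref{pr.Laplace_Coul}; via the duality $\Lambda_n^0=(\Lambda_n^{free})^*$ these are the same statement, and your caveat in the last paragraph correctly identifies this as the only point requiring care.
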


\begin{remark}\label{r.imposible_FS}Let us note that Proposition \ref{pr.Lap_IV} cannot be generalised to any $0$-form $w$. In particular, there is no $\dot \beta <\beta, K>0$ such that for any $0$-form $w$ (or equivalently  $2$-form $w$)
	\begin{align}\label{e.false}
	\E_{\beta^{-1}}^{IV}\left[e^{\lambda\langle \Psi, w \rangle} \right] \leq K e^{\frac{\lambda^2\dot \beta}{2}\langle w ,\Delta^{-1} w \rangle}.
	\end{align}
	
	The main reason for this lies in \eqref{e.Vil_IV_int}. Let us take $\theta$ a Villain model on the vertices of $\Lambda_n$ and $\Psi$ and IV-GFF on the faces of $\Lambda_n$. Furthermore, fix a vertex $x \in \Lambda_n$ that is in the real line and take $\gamma$ the straight line from $0$ to $x$. Thanks to \eqref{e.Vil_IV_int}, we have that
	\begin{align*}
	\E^{Vil}_\beta\left[e^{i\langle \d \theta, \1_\gamma\rangle} \right]= \E^{IV}_{\beta^{-1}}\left[ e^{\frac{1}{\beta}\langle\d^*\Psi, \1_\gamma\rangle} \right] e^{-\frac{1}{2\beta}\langle \1_\gamma,\1_\gamma\rangle}=\E^{IV}_{\beta^{-1}}\left[ e^{\frac{1}{\beta}\langle\Psi, \d \1_\gamma\rangle } \right] e^{-\frac{1}{2\beta}\langle \1_\gamma,\1_\gamma\rangle}\,.
	\end{align*}
	Now, we note that
	\begin{align*}
	\langle \d \1_{\gamma}, \d \Delta^{-1} \1_{\gamma} \rangle &= \langle \1_{\gamma}, \1_{\Gamma} \rangle - \langle \d^* \1_{\gamma}, \Delta^{-1} \d^* \1_{\gamma} \rangle\\
	&= \| x\| - G_{\Lambda,v_0=0}(x,x) = \|x\| - \frac{1}{2\pi}\log(\|x\|) + o( \log(\|x\|).
	\end{align*}
	Here, we used that $\d^* \1_{\gamma}=\1_x-\1_0$.
	
	Assume now that \eqref{e.false} is true for $w=\d^* \1_{\gamma}$, 
	this would then lead us to 
	\begin{align*}
	\E_\beta^{Vil}\left[e^{i(\theta(x)-\theta(0))} \right]\leq K e^{\frac{\dot \beta}{2\beta ^2} ( \|x\| - \log(\|x\|))} e^{\frac{-\|x\|}{2\beta}}\leq K e^{-\frac{\epsilon}{2\beta} \|x\|}
	\end{align*}
	for any $\epsilon<1-\dot\beta/\beta$. This result would contradict the phase transition for the Villain model.
	
	This remark is the reason why we believe that it is not possible to obtain an upper bound of the Laplace transform of IV-GFF with the techniques of \cite{FS}. Their estimates  do not differentiate which test function $w$ we are working with. 
\end{remark}

\begin{proof}
	We start by using \eqref{e.Coul_GFF_2}. To agree with the notation there, we work with a GFF in the faces of the graph with free-boundary conditions, and we will identify the faces with the vertices of $\Lambda_n^0$. To prove \eqref{e.Laplace_IV}, we take $f$ a face and we use $g=\lambda\beta 1_{f}$ to obtain
	\begin{align*}
	\E^{IV}_{\beta^{-1}}\left[ e^{\lambda\langle\Psi, \1_{f}\rangle } \right]&= e^{\frac{\lambda^2\beta}{2}G_{\Lambda^*}(f,f)}\E^{Coul}_{\beta}\left[ e^{i2\pi \lambda \beta \langle \Delta^{-1}q,\1_{f} \rangle}  \right]\\
	&\leq Ke^{\frac{\lambda^2\beta}{2}G_{\Lambda^*}(f,f) (1- (1-\epsilon) M(\beta))}\,,
	\end{align*}
	where in the last inequality, we used \eqref{e.Fourier_Coul} with $f_1=\infty$ and $f_2= v$.
	
	The proof of \eqref{e.Fourier_IV} is analogue, we only need to use $g=i\lambda\beta 1_{v}$ and \eqref{e.Laplace_Coul} instead of \eqref{e.Fourier_Coul}. 
\end{proof}

Let us now rewrite Theorem \ref{th.maxIV} in the following explicit form using the shifted temperature $\tilde \beta$ defined above in~\eqref{e.tildeB}.
\begin{theorem}\label{th.maxIV2}
	Let $\Psi$ be an integer-valued GFF with inverse temperature $\beta^{-1}$ on the vertices of the graph $\Lambda_n^0$ (i.e.  with zero boundary conditions). We have that for any $\hat \beta> \tilde \beta$ 
	\begin{align*}
	\P^{IV}_{\beta^{-1}}\left[\sup_{v\in \Lambda_n^0}\Psi(v)\geq \frac{2}{\sqrt{2\pi \hat \beta ^{-1}}} \log(n) \right] \to 0, \ \ \text{as $n\to \infty$}.
	\end{align*}
\end{theorem}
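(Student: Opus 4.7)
The strategy is a standard Chernoff + union-bound argument, driven by the sub-Gaussian control on the one-point Laplace transform provided by Proposition \ref{pr.Lap_IV}. Since $\hat\beta > \tilde\beta$, I would first fix an intermediate value $\hat\beta' \in (\tilde\beta, \hat\beta)$ and, for some $\bar\lambda > 0$ chosen large enough below, invoke Proposition \ref{pr.Lap_IV} at parameter $\hat\beta'$ to obtain a constant $K_1 = K_1(\beta,\hat\beta',\bar\lambda) > 0$ such that for all $v \in \Lambda_n^0$ and all $|\lambda| \leq \bar\lambda$,
\[
\E^{IV}_{\beta^{-1}}[e^{\lambda \Psi(v)}] \leq K_1 \exp\!\Bigl(\tfrac{\lambda^2 \hat\beta'}{2} G_{\Lambda_n^0}(v,v)\Bigr).
\]

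Set $t_n := \sqrt{2\hat\beta/\pi}\,\log n$, which is the threshold appearing in the theorem. Markov's inequality applied to $e^{\lambda\Psi(v)}$ gives, for every $\lambda \in [0,\bar\lambda]$,
\[
\P^{IV}_{\beta^{-1}}[\Psi(v) \geq t_n] \leq K_1 \exp\!\Bigl(-\lambda t_n + \tfrac{\lambda^2 \hat\beta'}{2} G_{\Lambda_n^0}(v,v)\Bigr).
\]
I would optimize by choosing $\lambda := \min\bigl(\bar\lambda,\ t_n/(\hat\beta' G_{\Lambda_n^0}(v,v))\bigr)$, which naturally splits the analysis in two regimes. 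When the unconstrained optimum is attained (bulk regime), the exponent collapses to $-t_n^2/(2\hat\beta' G_{\Lambda_n^0}(v,v))$, and Proposition \ref{pr.green} gives the uniform upper bound $G_{\Lambda_n^0}(v,v) \leq \tfrac{1}{2\pi}\log n + O(1)$, yielding
\[
\P^{IV}_{\beta^{-1}}[\Psi(v) \geq t_n] \leq K_1\, n^{-2\hat\beta/\hat\beta' + o(1)}.
\]
The key point is that $\hat\beta/\hat\beta' > 1$ strictly, so the exponent exceeds $2$ by a fixed amount, which is exactly the slack needed to absorb the $|\Lambda_n^0| = O(n^2)$ factor in the union bound. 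In the complementary (boundary) regime where $G_{\Lambda_n^0}(v,v) < t_n/(\hat\beta' \bar\lambda)$, substituting $\lambda = \bar\lambda$ and using the constraint on $G_{\Lambda_n^0}(v,v)$ collapses the exponent to $-\bar\lambda t_n/2$, giving $K_1 e^{-\bar\lambda t_n/2} = K_1\, n^{-\bar\lambda\sqrt{2\hat\beta/\pi}/2}$; by choosing $\bar\lambda$ sufficiently large at the outset (depending only on $\hat\beta$), this is also $o(n^{-2})$. Summing both regimes over the $O(n^2)$ vertices of $\Lambda_n^0$ yields $\P^{IV}_{\beta^{-1}}[\max_{v}\Psi(v) \geq t_n] \to 0$.

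The only mildly delicate point is the restriction $|\lambda| \leq \bar\lambda$ inside Proposition \ref{pr.Lap_IV}, which forces the bulk/boundary split above; the genuinely substantive work has already been done in establishing Proposition \ref{pr.Lap_IV} itself, where the reduced effective inverse temperature $\hat\beta' < \beta$ is extracted from the Coulomb-gas Fourier bounds of Section \ref{s.Fourier_Laplace} through the transfer formula of Proposition \ref{pr.Coul_GFF}. Once those inputs are in hand, the proof of Theorem \ref{th.maxIV2} is essentially the classical sub-Gaussian maximum estimate for the GFF, transplanted to $\Psi$ with $\beta$ replaced by the improved $\hat\beta' < \beta$.
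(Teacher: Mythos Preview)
Your proof is correct and follows essentially the same Chernoff--union-bound strategy as the paper, with Proposition~\ref{pr.Lap_IV} as the sole substantive input.  The paper packages the tail estimate as a separate Lemma~\ref{l.tail_IV}, but the real difference is in the choice of $\lambda$: rather than optimizing $\lambda$ per vertex and splitting into a bulk regime (unconstrained optimum) and a boundary regime (constraint $\lambda=\bar\lambda$ active), the paper simply takes a fixed $\lambda=\alpha\sqrt{2\pi\hat\beta'^{-1}}$ independent of both $n$ and $v$, calibrated to the \emph{uniform} bound $G_{\Lambda_n^0}(v,v)\le(2\pi)^{-1}\log n+K'$.  This single choice already gives $\P[\Psi(v)\ge t_n]\le K''\,n^{-2\hat\beta/\hat\beta'}$ for every $v$, so no case analysis is needed and the constraint $\lambda\le\bar\lambda$ is satisfied trivially by fixing $\bar\lambda$ once.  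Your approach of optimizing $\lambda$ is the standard one for sub-Gaussian maxima and works perfectly well; the paper's observation is just that for this particular threshold the optimal $\lambda$ (against the worst-case Green's function) is constant, so one may as well fix it from the start and avoid the split.
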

The main step to prove this theorem is the following lemma. 
\begin{lemma}\label{l.tail_IV}
	Let $\Psi$ be a GFF at inverse temperature $\beta^{-1}$ on the vertices of the graph $\Lambda_n^0$ (i.e.  with zero boundary conditions). Then, we have that for all $\hat \beta>\tilde \beta$ and for all $ \alpha>0$, there exists $K>0$ such that
	\begin{equation}
	\P^{IV}_{\beta^{-1}}\left (\Psi(v)\geq \frac{\alpha}{\sqrt{ 2\pi \hat \beta^{-1}} }\log n \right )\leq K n^{-\frac{\alpha^2}{2}}, \ \ \text{ for all $n\in \N$ and $v\in \Lambda_n^0$}.
	\end{equation}
\end{lemma}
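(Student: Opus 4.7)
The plan is to prove Lemma \ref{l.tail_IV} by a standard exponential Markov (Chernoff) argument, plugging in the Laplace transform bound from Proposition \ref{pr.Lap_IV}. Fix $\hat \beta > \tilde \beta$ and $\alpha > 0$, set $t_n := \frac{\alpha}{\sqrt{2\pi \hat\beta^{-1}}} \log n = \alpha\sqrt{\hat\beta/(2\pi)}\,\log n$, and choose an auxiliary truncation level $\bar \lambda$ large enough, say $\bar \lambda \geq 2\alpha\sqrt{2\pi/\hat\beta}$. Proposition \ref{pr.Lap_IV} then provides $K_1 = K_1(\beta,\hat \beta, \bar\lambda)$ such that for every $\lambda \in (0,\bar\lambda]$ and every $v \in \Lambda_n^0$,
\[
\P^{IV}_{\beta^{-1}}\bigl(\Psi(v) \geq t_n\bigr) \leq e^{-\lambda t_n} \E^{IV}_{\beta^{-1}}\bigl[e^{\lambda \Psi(v)}\bigr] \leq K_1 \exp\!\left(-\lambda t_n + \tfrac{\lambda^2 \hat\beta}{2} G_{\Lambda_n^0}(v,v)\right).
\]

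Next I will use the uniform Green's function estimate $G_{\Lambda_n^0}(v,v) \leq \frac{1}{2\pi}\log n + C$ for all $v \in \Lambda_n^0$ and all $n$, which follows from Proposition \ref{pr.green}(1) combined with the monotonicity of Green's functions under domain inclusion (any translate of $\Lambda_n^0$ containing $v$ only enlarges $G$). The unconstrained optimum in $\lambda$ of the exponent above is $\lambda^* = t_n / (\hat\beta\, G_{\Lambda_n^0}(v,v))$. I will split into two cases depending on whether $\lambda^* \leq \bar\lambda$.

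In the bulk case $\lambda^* \leq \bar\lambda$, choose $\lambda = \lambda^*$, obtaining
\[
\P^{IV}_{\beta^{-1}}\bigl(\Psi(v) \geq t_n\bigr) \leq K_1 \exp\!\left(-\frac{t_n^2}{2\hat\beta\, G_{\Lambda_n^0}(v,v)}\right) \leq K_1 \exp\!\left(-\frac{\alpha^2 \frac{\hat\beta}{2\pi}(\log n)^2}{2\hat\beta(\frac{1}{2\pi}\log n + C)}\right),
\]
and a short computation shows the right-hand side is bounded by a constant times $n^{-\alpha^2/2}$. In the boundary case $\lambda^* > \bar\lambda$, i.e.\ $G_{\Lambda_n^0}(v,v) < t_n/(\hat\beta\,\bar\lambda)$, I instead take $\lambda = \bar\lambda$, so that the quadratic term is controlled by half the linear term and
\[
\P^{IV}_{\beta^{-1}}\bigl(\Psi(v) \geq t_n\bigr) \leq K_1 \exp\!\left(-\tfrac{\bar\lambda}{2} t_n\right) = K_1\, n^{-\bar\lambda \alpha\sqrt{\hat\beta/(2\pi)}/2},
\]
and by our choice $\bar\lambda \geq 2\alpha\sqrt{2\pi/\hat\beta}$ the exponent is at least $\alpha^2/2$, finishing the bound. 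Combining the two cases yields the desired $K n^{-\alpha^2/2}$.

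The argument is essentially routine once Proposition \ref{pr.Lap_IV} is granted, so the only delicate point is the case split around the optimal $\lambda$: one must fix $\bar\lambda$ before invoking Proposition \ref{pr.Lap_IV} (since $K_1$ depends on $\bar\lambda$), yet this $\bar\lambda$ must be large enough to dominate the boundary regime where the Green's function degenerates. This is precisely what the choice $\bar\lambda \geq 2\alpha\sqrt{2\pi/\hat\beta}$ accomplishes, and the uniform bound on $G_{\Lambda_n^0}(v,v)$ ensures the bulk estimate is uniform in $v$. From the lemma, Theorem \ref{th.maxIV2} follows by a union bound over $v \in \Lambda_n^0$ (there are $O(n^2)$ such vertices) taken with $\alpha$ any value strictly greater than $2$.
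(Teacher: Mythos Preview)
Your proof is correct and follows the same Chernoff/exponential-Markov strategy as the paper, feeding in Proposition~\ref{pr.Lap_IV} and the uniform bound $G_{\Lambda_n^0}(v,v)\le \tfrac1{2\pi}\log n + C$. The paper's execution is simpler, though: it fixes once and for all $\lambda = \alpha\sqrt{2\pi\hat\beta^{-1}}$ (a constant independent of $n$ and $v$), so that $\lambda t_n = \alpha^2\log n$ and $\tfrac{\lambda^2\hat\beta}{2}G_{\Lambda_n^0}(v,v)\le \tfrac{\alpha^2}{2}\log n + O(1)$ directly, with no optimisation over $\lambda$ and no case split needed; your $\lambda^*$ in the bulk case in fact converges to exactly this value.
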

\begin{proof}
	Let us recall that there exists $K'>0$ such that uniformly on $n$, $G_{\Lambda_n^0}(v,v)$ is smaller than or equal to $(2\pi)^{-1} \log n+K'$. This implies that we can use the exponential Markov property to see that
	\begin{align*}
	\P^{IV}_{\beta^{-1}}\left (\Psi(v)\geq \frac{\alpha}{\sqrt {2\pi \hat \beta^{-1}}}\log n\right )&\leq \E_{\beta^{-1}}^{IV}\left[e^{\alpha \sqrt{2\pi\hat \beta^{-1}}\Psi(v)} \right]e^{-\alpha^2  \log n} \\
	&\leq Ke^{\frac{\alpha^22\pi \hat \beta^{-1}}{2\hat\beta^{-1}}G_{\Lambda_n^0}(v,v)}e^{-\alpha^2\log n}\\
	&\leq Ke^{\frac{\alpha^2}{2}2\pi K'} e^{-\frac{\alpha^2}{2}\log n}\\
	&\leq K''n^{\frac{-\alpha^2}{2}}.
	\end{align*}
	Note that in the second inequality we used Proposition \ref{pr.Lap_IV} with $\lambda= \alpha \sqrt{2\pi \hat \beta^{-1}}.$

\end{proof}

We can now prove Theorem \ref{th.maxIV2}.
\begin{proof}
	We use the union bound to see that for any $\beta'> \hat \beta >\beta$ we have that 
	\begin{align*}
	\P^{IV}_{\beta^{-1}}\left[\max_{v\in \Lambda_n^0}\Psi(v)\geq \frac{2\sqrt{\beta '}}{\sqrt{2\pi}}\log(n)  \right] &\leq \sum_{v\in \Lambda_n^0}\P^{IV}_{\beta^{-1}}\left[\Psi(v)\geq \frac{2\sqrt{\beta '}}{\sqrt{2\pi}}\log(n)  \right]\\
	&= \sum_{v\in \Lambda_n^0}\P^{IV}_{\beta^{-1}}\left[\Psi(v)\geq \frac{2\sqrt{\frac{\beta'}{\hat \beta}}}{\sqrt{2\pi \hat \beta^{-1}}}\log n \right]\\
	&\leq \sum_{v\in \Lambda_n^0} n^{-2\frac{\beta'}{\hat \beta}} \preceq n^{2(1-\frac{\beta'}{\hat \beta})} \to 0 \ \  \  \ \text{ as } n\to \infty, 
	\end{align*}
	where in the second inequality we used Lemma \ref{l.tail_IV} with $\alpha= 2\sqrt{\beta'/\hat \beta}$.\end{proof}

\appendix

\section{ReRooting GFF, Villain and Coulomb gas with free boundary conditions}\label{a.root}

When free boundary conditions are imposed around a domain $\Lambda$, there is no canonical choice for the rooting of 0-forms. In this work, we have chosen to fix a vertex $v_0$ on which 0-forms are set to be zero. The goal of this appendix is to analyse what is the effect of re-rooting to a different vertex $v_0'$ all the models that we encountered in this paper. These re-rooting are particularly convenient when we deal with two-point functions. For example Proposition \ref{pr.rootCoul} below on the re-rooting of a Coulomb gas was used in the proof of Propositions \ref{pr.Fourier_Coul} and \ref{pr.Laplace_Coul} on the Fourier and Laplace transforms of the Coulomb gas.
 
It may well be that the content of this appendix for the Coulomb gas will be considered {\em folklore} by the specialists, yet we did not find it written in the literature and we believe that despite its simplicity, it is likely to shed light on the rather peculiar properties of the Coulomb gas with {\em free boundary conditions} which has been studied extensively for example in \cite{FScoulomb,federbush1985}.

\subsection{Rerooting the GFF and $IV-$GFF.}

We start with the easiest case of  changing the marked point for the GFF.
\begin{proposition}
	Let $\phi$ be a GFF pinned at a point $v_0$, take $v'_0\in V$ and define
	\begin{align*}
	\phi'(\cdot)= \phi(\cdot)-\phi(v'_0).
	\end{align*}
	Then, $\phi'$ is a GFF pinned at the point $v_0'$.
\end{proposition}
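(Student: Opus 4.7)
The plan is to identify the transformation $T\colon \phi \mapsto \phi' := \phi - \phi(v'_0)\, \mathbf{1}$ as a measure-preserving linear bijection between the affine hyperplanes $H_{v_0} := \{\phi \in \R^V \colon \phi(v_0)=0\}$ and $H_{v'_0} := \{\phi \in \R^V \colon \phi(v'_0)=0\}$, and then to verify that it pulls back the GFF density based at $v'_0$ to the one based at $v_0$. The essential point is that the Gaussian weight $\exp(-\tfrac{\beta}{2} \<{\d \phi, \d \phi})$ depends on $\phi$ only through its discrete exterior derivative $\d \phi$, which is insensitive to global constant shifts.

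First I would observe that $\d \phi' = \d \phi$: for any oriented edge $\overrightarrow{xy}$,
\[
\d \phi'(\overrightarrow{xy}) = \phi'(y) - \phi'(x) = (\phi(y) - \phi(v'_0)) - (\phi(x) - \phi(v'_0)) = \d \phi(\overrightarrow{xy}).
\]
Consequently $\<{\d \phi', \d \phi'} = \<{\d \phi, \d \phi}$, so the Gibbs weight is unchanged by $T$.

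Next, I would check that $T$ restricts to a linear bijection $H_{v_0} \to H_{v'_0}$ with $|\det T| = 1$. Using the standard coordinates $(\phi_v)_{v \neq v_0}$ on $H_{v_0}$ and $(\phi'_v)_{v \neq v'_0}$ on $H_{v'_0}$, the map sends $\phi_v \mapsto \phi_v - \phi_{v'_0}$ for $v \notin \{v_0, v'_0\}$, and $\phi_{v'_0} \mapsto -\phi_{v'_0}$ (which becomes the new coordinate $\phi'_{v_0}$). Ordering $v'_0$ last in the domain and $v_0$ last in the codomain, the matrix of $T$ has the block-triangular form
\[
\begin{pmatrix} I_{|V|-2} & -\mathbf{1} \\ 0 & -1 \end{pmatrix},
\]
whose determinant equals $-1$. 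Hence $T$ sends Lebesgue measure on $H_{v_0}$ to Lebesgue measure on $H_{v'_0}$ up to sign, and is in particular measure-preserving.

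Combining the two observations, the pushforward $T_* \P^{GFF}_\beta$ is supported on $H_{v'_0}$ with density proportional to $\exp(-\tfrac{\beta}{2} \<{\d \phi', \d \phi'})$, and the two partition functions $Z^{GFF}_\beta$ coincide by the same change of variables. This is precisely the law of a GFF pinned at $v'_0$ at inverse temperature $\beta$. I do not foresee any real obstacle: the entire content is the fact that $\d$ annihilates constants, together with a one-line Jacobian computation.
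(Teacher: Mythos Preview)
Your proof is correct and follows the same approach as the paper: the key observation is that $\d\phi' = \d\phi$, which you state first. The paper's own proof consists of that single sentence and leaves the change-of-variables/Jacobian argument implicit, whereas you spell it out carefully; your additional detail is accurate and not misplaced.
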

\begin{proof}
	This follows directly from the fact that $\d \phi' = \d \phi$.
\end{proof}
Let us then discuss how to change the marked point for the integer-valued GFF.
\begin{proposition}
	Let $\Psi$ be an IV-GFF pinned at a point $v_0$, take $v'_0\in V$ and define
	\begin{align*}
	\Psi'(\cdot)= \Psi(\cdot)-\Psi(v'_0).
	\end{align*}
	Then, $\Psi'$ is a IV-GFF pinned at the point $v_0'$.
\end{proposition}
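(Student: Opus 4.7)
The plan is to mimic the proof given for the (continuum) GFF just above, using the key fact that the exterior derivative $\d$ kills constants. The only subtlety (which is vacuous for the continuous GFF but requires a sentence here) is to check that the map $\Psi \mapsto \Psi'$ really is a bijection between the appropriate spaces of integer-valued configurations.

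First, I would observe that $\d \Psi' = \d \Psi$, since subtracting the constant $\Psi(v_0')$ from every vertex does not affect differences. Therefore
\[
\langle \Psi', (-\Delta)\Psi'\rangle = \langle \d\Psi',\d\Psi'\rangle = \langle \d\Psi,\d\Psi\rangle = \langle \Psi,(-\Delta)\Psi\rangle,
\]
so that the IV-GFF Boltzmann weight is preserved under the transformation. Next, since $\Psi$ takes integer values and $\Psi(v_0') \in \Z$, the shifted field $\Psi'$ also takes integer values, and by construction $\Psi'(v_0') = 0$, so $\Psi'$ is a legitimate $0$-form pinned at $v_0'$ in the sense of Section \ref{sss.calculus}.

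Second, I would verify bijectivity of the reference measures. The map
\[
T : \{\Psi \in \Z^V : \Psi(v_0)=0\} \longrightarrow \{\Psi' \in \Z^V : \Psi'(v_0')=0\}, \qquad T(\Psi)=\Psi-\Psi(v_0'),
\]
is a bijection with inverse $T^{-1}(\Psi')=\Psi'-\Psi'(v_0)$. Being a bijection between two copies of the counting measure on $\Z^{V\setminus\{*\}}$, it preserves the reference measure. Combined with the invariance of the energy observed above, this shows that the law of $\Psi'$ is proportional to $\exp(-\tfrac{1}{2\beta}\langle \Psi',(-\Delta)\Psi'\rangle)$ on $0$-forms pinned at $v_0'$. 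In particular the normalising constants (partition functions) must agree, and $\Psi'$ is an IV-GFF pinned at $v_0'$, as claimed.

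There is no hard step here: the proof is a direct transcription of the GFF case, with the additional trivial remark that the integrality constraint is preserved by integer shifts. The same argument, applied to the $2$-form setting with $\d^*$ in place of $\d$, handles the IV-GFF on faces analogously.
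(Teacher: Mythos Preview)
Your proof is correct and follows exactly the same approach as the paper's: the paper's proof consists of the single sentence that ``this follows directly from the fact that $\d \Psi' = \d \Psi$, and from the fact that the function $\Psi\mapsto \Psi'$ is a bijection between integer-valued $0$-forms pinned in $v_0$ to integer-valued $0$-forms pinned in $v'_0$.'' You have simply spelled out both of these points in more detail, which is fine.
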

\begin{proof}
	This follows directly from the fact that $\d \Psi' = \d \Psi$, and from the fact that the function $\Psi\mapsto \Psi'$ is a bijection between integer-valued $0$-forms pinned in $v_0$ to integer-valued $0$-forms pinned in $v'_0$.
\end{proof}

\subsection{Rerooting the Coulomb gas.}

Let us now discuss how the law of a Coulomb gas changes when we change the marked point $v_0$.  (In this paper so far,  the Coulomb gas was considered on 2-forms, but in this appendix, we are not concerned with duality transformations, so we find it simpler to consider all three models to be defined on the vertices with free boundary conditions around $\Lambda$, this is why the rerooting of our Coulomb gas will be from a vertex $v_0$ to another $v_0'$. This translates directly when needed to a rerooting from a face $f_0$ to another $f_0'$).

\begin{proposition}\label{pr.rootCoul}
	Let $q$ be a Coulomb gas on the vertices pinned at a point $v_0$. Take $v_0'\in V$ and define
	\begin{align*}
	q'(v):= \left\{ \begin{array}{l l}
	q(v) & \text{ if }v\neq v_0 \text{ and }v\neq v_0',\\
	-\sum_{u\neq v_0} q(u)& \text{ if }v= v_0,\\
	0 & \text{ if }v=v_0'.
	\end{array}\right. 
	\end{align*}
	Then $q'(v)$ is a Coulomb gas on the vertices pinned at the point $v_0'$.
	
	The analogue of the result is also true for a Coulomb gas on the faces. Here a special orientation should be given for the faces (the same as in Remark \ref{r.no pinned}).
\end{proposition}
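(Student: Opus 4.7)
My approach would be to reduce the problem to the observation that a pinned $0$-form really encodes a neutral charge on $V$. Concretely, for any $0$-form $q$ with $q(v_0) = 0$, I would introduce its neutralization $\tilde q$ on $V$ (without any pinning) defined by $\tilde q(v) = q(v)$ for $v \neq v_0$ and $\tilde q(v_0) = -\sum_{u \neq v_0} q(u)$, so that $\sum_v \tilde q(v) = 0$. A direct computation from the formula defining $q'$ shows that the rerooted charge has the very same neutralization: $\widetilde{q'} = \tilde q$. This is really the whole point of the construction in the proposition, and it is what will allow both probability densities to be compared.

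The main step is then the energy invariance $\langle q, (-\Delta_{v_0})^{-1} q\rangle = \langle q', (-\Delta_{v_0'})^{-1} q'\rangle$. Setting $\phi := (-\Delta_{v_0})^{-1} q$ and $\phi' := (-\Delta_{v_0'})^{-1} q'$, I would unfold the definition of $-\Delta_{v_0}$ on pinned $0$-forms to see that $\phi$ satisfies the \emph{unpinned} Laplacian equation $(-\Delta^{\mathrm{std}}) \phi = \tilde q$ on all of $V$ (together with $\phi(v_0) = 0$), and similarly $(-\Delta^{\mathrm{std}}) \phi' = \tilde q$ with $\phi'(v_0') = 0$. Since the kernel of the standard Laplacian on a connected graph is spanned by constants, $\phi' = \phi + c$ for some $c \in \R$, and hence
\begin{align*}
\langle q', \phi'\rangle = \sum_v \tilde q(v)\, \phi'(v) = \sum_v \tilde q(v)\, \phi(v) + c\sum_v \tilde q(v) = \langle q, \phi\rangle,
\end{align*}
where the last sum vanishes by neutrality of $\tilde q$.

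To conclude, I would note that the map $q \mapsto q'$ is an explicit bijection between integer-valued $0$-forms pinned at $v_0$ and those pinned at $v_0'$, with inverse obtained by swapping the roles of $v_0$ and $v_0'$ in the defining formula (a short algebraic verification). Combined with the energy invariance, this proves that the pushforward of $\P^{Coul}_{\beta, v_0}$ by $q \mapsto q'$ coincides with $\P^{Coul}_{\beta, v_0'}$, since the two probability measures have proportional densities on matching supports and both are normalised to mass one. The analogue on faces follows by repeating the argument on $2$-forms, using the outward-normal orientation from Remark \ref{r.no pinned} to make sense of the neutrality constraint $\sum_f \tilde q(f) = 0$.

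The only conceptual obstacle is recognising that the pinning $q(v_0) = 0$ should be thought of as a \emph{gauge choice}: the Coulomb gas is intrinsically a measure on neutral charges on $V$, and pinning simply selects a representative in each equivalence class. Once this is made precise via the neutralization $\tilde q$, the invariance becomes the one-line statement that the Coulomb potential $\phi$ is defined modulo additive constants, which integrate to zero against any neutral charge.
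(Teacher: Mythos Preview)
Your proof is correct and follows essentially the same route as the paper's. Both arguments introduce the neutralization $\tilde q$ (called $\hat q$ in the paper), observe that $q$ and $q'$ share the same neutralization, and then show that the potentials $(-\Delta_{v_0})^{-1}q$ and $(-\Delta_{v_0'})^{-1}q'$ differ only by an additive constant---which pairs to zero against the neutral charge $\tilde q$. Your phrasing via ``both solve the same unpinned equation, hence differ by a constant'' is slightly more conceptual than the paper's explicit verification that $(-\Delta_{v_0'})^{-1}q' = (-\Delta_{v_0})^{-1}q - (-\Delta_{v_0})^{-1}q(v_0')$, but the underlying computation (in particular the use of $\sum_v \Delta w(v)=0$ to recover the value at the pinned vertex) is the same.
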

\begin{proof}
 Given that $q\mapsto q'$ is a bijection between integer-valued $0$-forms pinned in $v_0$ to integer-valued $0$-forms pinned in $v'_0$, we only need to prove that
 \begin{equation}
 \langle q , (-\Delta_{v_0})^{-1} q \rangle = \langle q' , (-\Delta_{v_0'})^{-1} q' \rangle.
 \end{equation}
 To do this, define 
 \begin{align*}
 \hat q(v):= \left\{ \begin{array}{l l}
 q(v) & \text{ if }v\neq v_0,\\
 -\sum_{u\neq v_0} q(u)& \text{ if }v= v_0.
 \end{array}\right. 
 \end{align*}
 Note that because $(-\Delta_{v_0})^{-1} q$ has value $0$ in the point $v_0$, we have that
 \begin{align}\label{e.coulomb_changing_v_0}
 \langle q, (-\Delta_{v_0})^{-1} q \rangle = \langle \hat q, (-\Delta_{v_0})^{-1} q \rangle= \left \langle \hat q, (-\Delta_{v_0})^{-1} q - (-\Delta_{v_0})^{-1} q (v_0') \right \rangle,
 \end{align}
 where the last equality holds because $\hat q$ has $0$-mean.
 
  Now, note that
 \begin{align*}
 (-\Delta_{v_0'})^{-1} \hat q(\cdot) =(-\Delta_{v_0'})^{-1}  q'(\cdot)=(-\Delta_{v_0})^{-1} q(\cdot) - (-\Delta_{v_0})^{-1} q (v_0').
 \end{align*}
 The first equality follows just because $\hat q= q'$ everywhere but in $v_0'$. The second equality follows by taking the Laplacian $\Delta$ on both sides and noting than the only complicated point may be $v_0$ (or $v_0'$ but the argument there will be the same), where we need that 
 \begin{align*}
 -\Delta(-\Delta_{v_0})^{-1}q (v_0)= q'(v_0) = -\sum_{u\neq v_0} q(u).
 \end{align*}
 This follows because for any $w$ a non-pinned $0$-form 
 \begin{align*}
 \sum_{v\in \Lambda} \Delta w(v)= \langle \d^* \d w, 1 \rangle =0.
 \end{align*}
 
 To finish, we continue \eqref{e.coulomb_changing_v_0}
 \begin{align*}
 \langle q, (-\Delta_{v_0})^{-1} q \rangle = \left \langle \hat q, (-\Delta_{v_0'})^{-1} q'\right \rangle=\left \langle  q', (-\Delta_{v_0'})^{-1} q'\right \rangle,
 \end{align*}
 where in the last equality we used that $(-\Delta_{v_0'})^{-1} q'$ takes value $0$ in the point $v_0'$.
 
\end{proof}

\subsection{Rerooting the Villain coupling $(\theta,m)$.}
To finish, we explain how to change the marked point $v_0$ for the Villain coupling.
\begin{proposition}
	Let $(\theta,m)$ be a Villain coupling pinned at a point $v_0$, take $v'_0\in V$ and define
	\begin{align*}
	&\theta'(\cdot):= \theta(\cdot)-\theta(v'_0) \mod 2\pi.\\
	&m'(e)=\left\{ \begin{array}{l l}
	m(e) & \text{if } \d \theta (e) = \d \theta' (e),\\
	m(e) - 1&\text{if } \d \theta (e) = \d \theta' (e) + 2\pi,\\
	m(e) + 1&\text{if } \d \theta (e) = \d \theta' (e) - 2\pi.
	\end{array}\right.
	\end{align*}
	Then, $(\theta',m')$ is a Villain coupling pinned at the point $v_0'$.
\end{proposition}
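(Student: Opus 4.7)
The plan is to show that the map $T: (\theta, m) \mapsto (\theta', m')$ is a measure-preserving bijection from $\{\theta \in [0,2\pi)^V : \theta(v_0)=0\} \times \Z^E$ onto $\{\theta' \in [0,2\pi)^V : \theta'(v_0')=0\} \times \Z^E$ which preserves the Hamiltonian $\langle \d\theta + 2\pi m,\, \d\theta + 2\pi m\rangle$. Once this is established, Definition \ref{d.Villain} immediately forces the pushforward of the Villain coupling pinned at $v_0$ to be the Villain coupling pinned at $v_0'$.

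First I would verify that $T$ is well-defined and takes values in the right space. Since $\theta(v), \theta(v_0') \in [0, 2\pi)$, the quantity $\theta(v) - \theta(v_0')$ lies in $(-2\pi, 2\pi)$, so there is a unique $\epsilon(v) \in \{0,1\}$ such that $\theta'(v) = \theta(v) - \theta(v_0') + 2\pi\epsilon(v) \in [0, 2\pi)$, and manifestly $\theta'(v_0')=0$. For an edge $e=(x,y)$ this gives
\begin{equation*}
\d\theta'(e) = \d\theta(e) + 2\pi\bigl(\epsilon(y)-\epsilon(x)\bigr),
\end{equation*}
so $\d\theta(e) - \d\theta'(e) \in \{-2\pi, 0, 2\pi\}$. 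This simultaneously shows that the three cases in the definition of $m'$ exhaust all possibilities and that $m'(e) \in \Z$. A direct case-by-case check then yields the crucial identity
\begin{equation*}
\d\theta'(e) + 2\pi m'(e) = \d\theta(e) + 2\pi m(e),
\end{equation*}
so the Hamiltonian is preserved edge by edge.

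Next I would show that $T$ is a bijection. An inverse is obtained by the symmetric construction (swapping $v_0$ and $v_0'$); one must simply check that applying the symmetric transformation to $(\theta', m')$ returns $(\theta, m)$, which follows from the uniqueness of the decomposition $\theta(v) = \theta'(v) + \theta(v_0') + 2\pi\cdot\mathrm{integer}$ inside $[0,2\pi)$ together with the identity above. For fixed $\theta$, the map $m\mapsto m'$ is just a translation of $\Z^E$, hence preserves the counting measure $\delta_{\Z^E}$.

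The genuinely nontrivial point is that the first component $\theta \mapsto \theta'$ preserves the Lebesgue reference measure $d\theta$ on $[0,2\pi)^{V\setminus\{v_0\}}$. The cleanest way I see to do this is to lift everything to the torus $\T^V = (\R/2\pi\Z)^V$: on $\T^V$ the map is nothing but the coordinate-wise translation $\theta \mapsto \theta - \theta(v_0')$, which trivially preserves the product Haar measure. The slice $\{\theta(v_0)=0\}$ carries Lebesgue measure on $[0,2\pi)^{V\setminus\{v_0\}}$ (identified with the conditional Haar measure), and a disintegration along the $\theta(v_0')$-direction shows that this translation maps it measurably onto the slice $\{\theta'(v_0')=0\}$ with the conditional Haar measure transported correctly. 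Combining this with the preservation of the Hamiltonian concludes the proof. I expect the main technical obstacle to be exactly this last step, namely making rigorous the bookkeeping that the piecewise-linear map $\theta\mapsto\theta'$, defined modulo $2\pi$ on each coordinate but subject to the pinning constraint, is globally Lebesgue-preserving between the two affine slices.
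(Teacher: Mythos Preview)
Your proposal is correct and follows essentially the same route as the paper: the paper's proof is the two-line observation that $(\theta,m)\mapsto(\theta',m')$ is a bijection and that $\d\theta(e)+2\pi m(e)=\d\theta'(e)+2\pi m'(e)$ for every edge, which is exactly the edge-by-edge identity you derive. Your extra care about the Lebesgue-preservation of $\theta\mapsto\theta'$ between the two pinned slices is a point the paper simply leaves implicit.
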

\begin{proof}
	This follows directly from the fact that $(\theta,m)\mapsto (\theta',m')$ is a bijection, and the fact that for all $(\theta,m)$ and all $e\in E$ one has that $\d \theta(e) + 2\pi m(e)=\d \theta '(e) + 2\pi m'(e)$.
\end{proof}

%

\section{Properties of the error function $M$.}\label{a.M}

The purpose of this (technical) appendix is to prove Proposition \ref{pr.Monotonicity_IG} and its consequences on the behaviour of the error function $M(\beta)$.

\begin{proof}[Proof of Proposition \ref{pr.Monotonicity_IG}]
Recall
\[
M(\beta) = (2\pi)^2\beta \inf_{a\in[0,1/2]}\var^{IG}(a,(2\pi)^2 \beta)\,.
\]
Our goal is to prove that for $\beta$ large enough ($\beta\geq \frac 1 3$), 
\begin{align*}\label{}
M(\beta) \geq 2\beta \exp(-\frac{(2\pi)^2} 2 \beta )\,.
\end{align*}

\ni
{\em To avoid carrying all way through $(2\pi)^2 \beta$-terms, we work with $\inf_{a\in[0,1/2]}\var^{IV}(a, \beta)$  and will get back to  $\beta\mapsto (2\pi)^2\beta$ at the very end.} 

\smallskip
\ni
\textbf{Step 1.}
First it can be checked that for any $a\in[0,\tfrac12]$, $\mu^{IG}(a,\beta)\in[0,\tfrac12]$. This is because $\mu^{IG}(a,\beta)$ is monotonous in $a$ and $\mu^{IG}(0,\beta)=0$ and $\mu^{IG}(\tfrac12,\beta)=\tfrac{1}{2}$. The monotony follow because if $0\leq a'<a \leq 1/2$
\begin{align*}
\frac{d \P^{IG}_{\beta,a}}{d\P^{IG}_{\beta,a'}}(x)\propto e^{\beta x (a-a')}.
\end{align*}
Thus,
\begin{align*}
\E^{IG}_{\beta,a}[X]= \frac{\E^{IG}_{\beta,a'}\left[ X e^{\beta X (a-a')} \right] }{\E^{IG}_{\beta,a'}\left[ e^{\beta X (a-a')} \right]}\geq \E^{IG}_{\beta,a'}[X],
\end{align*}
where in the last line we noted that both the identity function and $x\mapsto e^{\beta x(a-a')}$ are increasing function and we used Harris inequality.

\smallskip
\ni
\textbf{Step 2.} Notice first that for any $\beta$  large enough ($\beta\geq 10$ is enough),  
\begin{align}\label{e.mu-}
\var^{IG}(a,\beta)&= \frac{\sum_{n\in \Z} (n-\mu^{IG}(a,\beta))^2 e^{-\frac{\beta}{2}(n-a)^2}}{\sum_{n\in \Z} e^{-\frac{\beta}{2}(n-a)^2}} \nn \\
& \geq \frac{\mu^{IG}(a,\beta)^2 e^{-\beta \frac {a^2}{2}} + [\mu^{IG}(a,\beta)-1]^2  e^{-\beta \frac {(a-1)^2}{2}} }{2(e^{-\beta \frac {a^2}{2}}+e^{-\beta \frac {(a-1)^2}{2}})} \nn \\
& \geq \frac{[\mu^-(a)]^2 e^{-\beta \frac {a^2}{2}} + [\mu^+(a)-1]^2  e^{-\beta \frac {(a-1)^2}{2}} }{4 e^{-\beta \frac {a^2}{2}}}\,,
\end{align}
where for the numerator of the first inequality, we took the first two term of the sum and the denominator is bounded by the geometric sum. Now, for any choice of 
\begin{align*}\label{}
0\leq \mu^-(a) \leq \mu^{IG}(a,\beta)\leq \mu^+(a)\leq 1\,.
\end{align*}

\smallskip
\ni
\textbf{Step 3.}  Thus, we just need  to find suitable functions $a\mapsto \mu^-(a), \mu^+(a)$ which approximate well enough the mean $\mu^{IG}(a,\beta)$. Recall the latter one is given by 
\begin{align*}\label{}
\mu^{IG}(a,\beta)=\frac{\sum_{n\in \Z} n e^{-\frac{\beta}{2}(n-a)^2}}{\sum_{n\in \Z} e^{-\frac{\beta}{2}(n-a)^2}}\,.
\end{align*}
From this expression, we see that for any $\beta \geq 10$, say, one may choose:
\[
\begin{cases}
\mu^-(a)&= \frac
{e^{-\frac \beta 2 (1-a)^2} - e^{-\frac{\beta}2(1+a)^2}}
{4e^{-\frac\beta 2 a^2}} \\
\mu^+(a)&=\tfrac12
\end{cases}
\]
(N.B. there is no $O(e^{-\frac{\beta}2 (3/2)^2})$ correction term in $\mu^-(a)$ by using the fact $t\mapsto \exp(-\tfrac\beta 2 t^2)$ is non-increasing on $\R_+$, which implies by pairing integers in $\Z\setminus \{-1,0,1\}$ that their total contribution must be positive).
Plugging into~\eqref{e.mu-}, this gives us
\begin{align*}\label{}
\var^{IV}(a,\beta)& \geq \frac 1 {32} e^{-\beta } (\sinh(\beta a))^2 + \frac 1 {16} e^{-\frac \beta 2 (1-2a)} \geq \frac 1 {16} e^{-\frac \beta 2(1-2a)}\,.
\end{align*}
\end{proof}
\begin{remark}\label{}
Note that for this lower bound, it would have been sufficient to take $\mu^-(a):=0$. We included this more refined lower bound in case one would want wish to improve the Lemma into an equivalent of $M(\beta)$. Indeed, the contribution coming from our choice of $\mu^-(a)$ highlights what happens when $a\to 0$.  
\end{remark}

\begin{proof}[Proof of Corollary \ref{c.M}]
By changing $\beta \mapsto (2\pi)^2\beta$, this gives us for any $\beta \geq \frac{10}{(2\pi)^2}$ 
\begin{align*}\label{}
M(\beta)& = (2\pi)^2\beta \inf_{a\in[0,1/2]}\var^{IV}(a,(2\pi)^2 \beta) \\
& \geq  \frac{(2\pi)^2\beta}{16} e^{-\frac {(2\pi)^2} 2 \beta }
\geq 2 \beta   e^{-\frac {(2\pi)^2} 2 \beta }
\end{align*}
The fact that $M(\beta)\asymp \beta   e^{-\frac {(2\pi)^2} 2 \beta }$ now follows from: 
\begin{align*}\label{}
M(\beta)& \leq M_0(\beta) \sim 2(2\pi)^2 \beta e^{-\frac {(2\pi)^2} 2 \beta }
\end{align*}
\end{proof}
We end this appendix with the proof of Lemma \ref{l.ratioK}

\ni
{\em Proof of Lemma \ref{l.ratioK}.}

Recall that our goal is to show that

	\begin{align*}
	K_\beta=
	\sup_{\hat \beta \geq \beta} \sup_{a\in \R} \frac{T^{IG}(a,\hat \beta)}{\var^{IG}(a,\hat \beta)}\in(0,\infty)
	\end{align*}
Let us note that 
	\begin{align*}
	K_\beta(a):= \frac{T^{IG}(a,\beta)}{\var^{IG}(a,\beta)}
	\end{align*}
	is a continuous function for $a\in \R$ and $\beta \in (0,\infty)$, furthermore $K_\beta(a)=K_\beta(a+k)$ for all $k\in \Z$ and $K_\beta(a)=K_\beta(1-a)$. Thus, we just need to show that as $\beta \to \infty$
	\begin{align*}\label{e.lim_sup}
	\limsup_{\beta \nearrow \infty} \sup_{a\in [0,1/2]} \frac{T^{IG}(a,\beta)}{\var^{IG}(a,\beta)}  \leq 1.
	\end{align*}
	
	We start by noting that for all $(a,\beta)\in (0,1/2)\times \R^+$ 
	\begin{align*}
	0\leq \mu(a,\beta) \leq \frac{1}{2}.
	\end{align*}
	Then, let us take $\beta$ big enough so that for all $a\in [0,1/2]$
	\begin{align*}
	\sum_{n\in \Z\backslash\{-1,0,1\}}|n-a|^3e^{-\frac{\beta}{2} (n-a)^2}< e^{-\beta}.
	\end{align*}
	This can be done thanks to the fact that $(3/2)^2>2$. We can, now, use that $|k-\mu(a,\beta)|<2$ to compute
	\begin{align*}
	\frac{T^{IG}(a,\beta)}{\var^{IG}(a,\beta)}&\leq  \frac{\sum_{k=-1}^1 |k-\mu(a,\beta)|^3 e^{-\frac{\beta}{2}(n-a)^2 } + e^{-\beta}}{\sum_{k=-1}^1 (k-\mu(a,\beta))^2 e^{-\frac{\beta}{2}(n-a)^2 }}\\
	&\leq  2+ \frac{e^{-\beta}}{(1-\mu(a,\beta))^2 e^{-\beta(1-a)^2 }}\\
	&\leq 2 + 4 e^{-\frac{3\beta}{4} }.
	\end{align*}
	This proves what we needed. \qed

\section{Small gradients for the Villain model at low temperature via Reflection Positivity}\label{a.RP}

The purpose of this appendix is to prove the very intuitive statement that at low temperature, spins in any given finite window tend to align in the same direction. This looks like a very simple and legitimate property, yet we did not find a short and direct proof of this ``small gradient'' property and our proof below relies on reflection positivity and more precisely on the co-called {\em chessboard} estimate.  
We refer the reader to the following useful references on the notion and use of reflection positivity \cite{FSS1976,FILS1978,FrohlichLieb1978,biskupRP,velenikBook}. 
See also the references \cite{bricmont1981correlation,cohen2020rarity}.

\begin{proposition}\label{pr.SmallGr}
For any $\eps>0$ and any $R>0$, there exists $\beta_0=\beta_0(\eps,R)<\infty$ such that the following holds: for any $\beta\geq \beta_0$,  there exists $L_0\in\N$ such that  for any $n\geq L_0$, if one considers the Villain model on $\Lambda_n$ with either free or Dirichlet boundary conditions, then for all $x\in \Lambda_n$ at distance at least $L_0$ from $\p \Lambda_n$ one has
\begin{align*}
\FK{\beta}{Vil}{\d \theta(e)\in(-\eps,\eps) \text{ for all }e\in \Lambda_R+x} \geq 1-\eps\,.
\end{align*}
\end{proposition}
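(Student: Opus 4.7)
The plan is to apply the chessboard estimate for reflection-positive measures, in the form developed in \cite{FSS1976, FILS1978} for continuous spin systems. The Villain model is reflection-positive on the torus $\T_N^2$ under reflections through axis-parallel hyperplanes containing lattice vertices, because its Boltzmann weight is a product over edges of a function of the nearest-neighbor gradient $\d \theta$ alone, and the summation over the integer field $m$ preserves this structure. I will first prove the small-gradient estimate on a large torus and then transfer it to $\Lambda_n$ with free or Dirichlet boundary conditions by a finite-range decoupling argument.

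\textbf{Chessboard bound on the torus.} Let $N \gg R$, work on $\T_N^2$, and for a fixed edge $e_0$ consider the bad event $B_{e_0} := \{ \d\theta(e_0) \bmod 2\pi \notin (-\eps, \eps) \}$. By the chessboard estimate, writing $\mathrm{orb}(e_0)$ for the translation-orbit of $e_0$ along the reflection axes,
\[
\P_{\beta, \T_N}^{Vil}[B_{e_0}] \;\leq\; \Big( \P_{\beta, \T_N}^{Vil}\Big[ \bigcap_{e \in \mathrm{orb}(e_0)} B_e \Big] \Big)^{1 / |\mathrm{orb}(e_0)|}.
\]
To bound the right-hand side, I express the Villain weight as $\prod_e V_\beta(\d\theta(e))$ with $V_\beta(u) := \sum_m \exp(-\beta (u + 2\pi m)^2/2)$, and use the elementary estimate $V_\beta(u)/V_\beta(0) \leq 2 \exp(-\beta u^2/2)$ for $|u| \leq \pi$ and large $\beta$. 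Forcing $|\d\theta(e)| \geq \eps \bmod 2\pi$ on every edge of the orbit therefore multiplies the ratio of partition functions by at most $\exp(-c(\eps)\, \beta\, |\mathrm{orb}(e_0)|)$ for an explicit $c(\eps) > 0$, yielding $\P_{\beta, \T_N}^{Vil}[B_{e_0}] \leq C\, e^{-c(\eps) \beta}$. A union bound over the $O(R^2)$ edges in $\Lambda_R + x$ then controls the joint event once $\beta \geq \beta_0(\eps, R)$ is large enough.

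\textbf{Transfer to $\Lambda_n$ and main obstacle.} To pass from $\T_N$ to $\Lambda_n$ with free or Dirichlet boundary conditions, I embed $\Lambda_n$ inside a torus $\T_N$ with $N \gg n$. By the nearest-neighbor Markov property of the Villain model, conditioning the torus measure on the configuration outside an annulus surrounding $\Lambda_n$ produces the Villain model on $\Lambda_n$ with prescribed boundary data. For $x$ at distance at least $L_0$ from $\partial \Lambda_n$, the small-gradient estimate on the torus descends uniformly over this boundary data, and in particular to $\Lambda_n$ with either free or Dirichlet boundary conditions. Alternatively, one may let $N \to \infty$ and invoke uniqueness of the infinite-volume translation-invariant Gibbs state \cite{messager1978} to identify the local laws. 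The \emph{main obstacle} is precisely this transfer step, since reflection positivity breaks down for free or Dirichlet boundary conditions; the decoupling itself is standard but somewhat delicate because the Villain measure is sensitive to boundary data through the integer field $m$ (which has potentially long-range effects even though the spin interaction is nearest-neighbor). A secondary subtlety arises in Step 2, as the chessboard estimate applied to a single horizontal edge must be combined with the corresponding estimate on a vertical edge to cover all edges in $\Lambda_R + x$.
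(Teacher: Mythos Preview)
Your overall strategy (chessboard estimate on the torus, then transfer to $\Lambda_n$) matches the paper's proof, and your torus argument is essentially the same as the paper's Proposition~C.2. The difference, and the genuine gap, lies in the transfer step.

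Your primary transfer argument does not work as stated. Conditioning the torus measure on the spins outside an annulus yields the Villain model in $\Lambda_n$ with the \emph{random} boundary data inherited from the torus; it does not yield free or Dirichlet boundary conditions. You then claim the small-gradient bound ``descends uniformly over this boundary data'', but this is exactly what needs to be proved, and the chessboard estimate gives you nothing of the sort: it only controls the \emph{torus-averaged} probability, not a pointwise supremum over boundary configurations. Without some monotonicity or comparison input, there is no reason a bad boundary configuration could not force large gradients deep inside.

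The paper fills this gap differently for the two boundary conditions. For Dirichlet, it invokes Ginibre's correlation inequality (valid for the Villain interaction, see \cite{FScoulomb}) to obtain the direct comparison
\[
\EFK{\beta,\Lambda_n^0}{Vil}{\cos(\d\theta(e))} \;\geq\; \EFK{\beta,\T_n}{Vil}{\cos(\d\theta(e))}\,,
\]
which immediately converts the torus bound into a bound on $\Lambda_n^0$ for \emph{every} $n$ and every edge. For free boundary conditions, the paper uses exactly your ``alternative'': convergence of $\P_{\beta,\Lambda_n^{free}}^{Vil}$, $\P_{\beta,\Lambda_n^{0}}^{Vil}$, and $\P_{\beta,\T_n}^{Vil}$ to the unique translation-invariant infinite-volume Gibbs state \cite{messager1978}, which forces the local small-gradient statistics to agree once one is at distance $L_0(\beta)$ from the boundary. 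So your alternative route is the correct one; your primary conditioning argument should be dropped, and for the Dirichlet case you are missing the Ginibre comparison, which gives a cleaner result (no $L_0$ needed there).
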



The proof will be divided in the following three steps which will be the content of the next three subsections:
\bnum
\item As we did not find a clean proof that Villain's model is indeed reflection positive, we provide a short sketch of proof. 
\item We then prove that gradients are small on the two-dimensional torus using the chessboard estimate.
\item Finally we explain how to recover Proposition \ref{pr.SmallGr} from the case of the torus by going through infinite volume limits. 
\enum

\subsection{Reflection positivity of the Villain interaction.}
In the rest of this appendix $\T_n$ will denote the two-dimensional torus $\Z^2/n\Z^2$. 
\begin{lemma}\label{}
For any $n\geq 1$, and any inverse temperature $\beta$, the Villain model on $\T_n$ is reflection positive. 
\end{lemma}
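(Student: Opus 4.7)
The plan is to verify the standard Osterwalder–Schrader type positivity $\Eb{F \cdot \Theta F} \geq 0$ for any function $F$ depending only on the spins in a half-torus, where $\Theta$ denotes either an edge reflection or a site reflection of $\T_n$. The key structural input will be a Fourier representation of the Villain bond weight with manifestly positive coefficients, obtained via Poisson summation; everything else is then the usual factorization argument.

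First I would apply Poisson summation to each edge weight. For any $t\in \R$,
\begin{align*}
W_\beta(t):= \sum_{k\in \Z} e^{-\tfrac \beta 2 (t+2\pi k)^2}
= \frac{1}{\sqrt{2\pi\beta}}\sum_{n\in \Z} e^{-\tfrac{n^2}{2\beta}}\, e^{i n t}\,,
\end{align*}
so that, writing $c_n := \tfrac{1}{\sqrt{2\pi\beta}} e^{-n^2/(2\beta)}>0$, each crossing bond factor admits the expansion
\begin{align*}
W_\beta(\theta_y-\theta_x) = \sum_{n\in \Z} c_n\, e^{-in\theta_x}\, e^{in\theta_y}\,,
\end{align*}
which is the reflection-positive form one wants: a sum with nonnegative coefficients of a ``left'' character $e^{-in\theta_x}$ times a ``right'' character $e^{in\theta_y}$.

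Next I would fix a reflection $\Theta$ of $\T_n$ (either through a hyperplane of edges, in which case $\Lambda_+ \cap \Lambda_- = \emptyset$, or through a hyperplane of sites, in which case the shared sites are dealt with in the standard way by symmetrizing the boundary terms). The Hamiltonian splits as $H = H_+ + H_- + H_{\mathrm{cross}}$ with $\Theta H_+ = H_-$, and the partition measure factorizes accordingly. Expanding every crossing bond via the formula above gives, for any real-valued $F$ supported in $\Lambda_+$,
\begin{align*}
\Eb{F\, \Theta F} = \frac{1}{Z}\sum_{\underline n}\,\Big(\prod_{e\in\mathrm{cross}} c_{n_e}\Big)\, I_+(\underline n)\, I_-(\underline n)\,,
\end{align*}
where
\begin{align*}
I_+(\underline n) = \int_{\Lambda_+} F\, \prod_{e} e^{-in_e \theta_{x_e}}\, e^{-\beta H_+}\, d\theta_+\,,\qquad
I_-(\underline n) = \int_{\Lambda_-} (\Theta F)\, \prod_{e} e^{in_e \theta_{y_e}}\, e^{-\beta H_-}\, d\theta_-\,.
\end{align*}
Performing the change of variables $\theta \mapsto \theta\circ R$ in the second integral and using $\Theta H_+ = H_-$ together with $y_e = R(x_e)$ identifies $I_-(\underline n) = \overline{I_+(\underline n)}$ (since $F$ is real and the characters are complex conjugates of one another).

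Therefore each summand equals $\big(\prod_e c_{n_e}\big)\,|I_+(\underline n)|^2 \geq 0$, which yields $\Eb{F\, \Theta F}\geq 0$ and establishes reflection positivity. The only mildly delicate point is the site-reflection case, where the spins on the reflection plane are shared by both halves; the standard trick is to duplicate those spins with a delta constraint (or to integrate them out last), after which the same factorization applies verbatim. No additional ingredient beyond the positivity of the Fourier coefficients $c_n$ — which is the whole reason one uses the Villain interaction rather than the bare XY interaction here — is needed.
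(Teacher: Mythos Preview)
Your argument is correct and is in fact the standard textbook route to reflection positivity for the Villain model: Poisson summation turns the bond weight into a sum over characters with positive coefficients $c_n>0$, and the usual Osterwalder--Schrader factorization over crossing bonds does the rest. Nothing is missing (the interchange of the Fourier sum with integration is harmless because the $c_n$ are Gaussian).

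The paper, however, argues differently. Instead of expanding the bond \emph{weight} as a Fourier series, it works at the level of the \emph{Hamiltonian}: it rewrites the Villain weight as $\sqrt{2\pi t}\,\mathbf p_t^{\S^1}(\theta_x,\theta_y)$ with $t=\beta^{-1}$, uses the Markov property of the heat kernel at time $t/2$ to split $\mathbf p_t^{\S^1}(\theta_x,\theta_y)=\int_{\S^1}\mathbf p_{t/2}^{\S^1}(\theta_x,u)\,\mathbf p_{t/2}^{\S^1}(\theta_y,u)\,du$, and then expands the \emph{logarithm} of this integral as a convergent power series so as to cast the Hamiltonian in the form $-H_n=A+\Theta A+\int C_\alpha\,\Theta C_\alpha$. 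This matches the specific criterion for reflection positivity quoted in the paper from \cite{biskupRP,velenikBook}. Your Fourier approach is shorter and more transparent (positivity is read off directly from $c_n>0$), while the paper's heat-kernel/log-expansion approach is tailored to fit the ``$C_\alpha\,\Theta C_\alpha$'' template and has the mild conceptual advantage that it generalizes immediately to any compact Lie group target, since the Chapman--Kolmogorov splitting of the heat kernel needs no character theory.
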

\ni
{\em Proof (sketch).}
If for any plane of reflection $P$ in $\T_n$ with corresponding reflection operator $\Theta$, one may write the torus Hamiltonian as 
\[
-H_n= A + \Theta A + \sum_\alpha C_\alpha \Theta C_\alpha\,,
\]  
where $A$ and $\{C_\alpha\}_\alpha$ are functions which are all defined on the same side of the torus w.r.t. $P$, then the Gibbs measure $e^{-\beta H_n(\{\sigma_x\})}\prod_{x\in \T_n} d\mu(\sigma_x)$ is a reflection positive measure (where $\mu$ denotes here the uniform measure on $\mathbb{S}^1$). 
See for example the very useful references \cite{biskupRP,velenikBook} for this way of obtaining reflection positive measure. 

This family of reflection positive measure easily extends to Gibbs measures whose Hamiltonians are of the form 
\[
-H_n= A + \Theta A + \int_{\alpha \in K} C_\alpha \Theta C_\alpha d\alpha\,,
\]  
for any continuous function from a compact set $K$ to the set of functions on one side the torus. 

Let us then rewrite the Villain measure $\P_\beta^{Vil}$ as follows,
\begin{align*}\label{}
\P_\beta^{Vil}(\{\theta_x\})
& = \frac 1 {Z_\beta^{Vil}} \prod_{e=\{x,y\} \in \T_n} \sum_{m} \exp(-\tfrac \beta 2(\theta_x-\theta_y + 2\pi m)^2)\\
&= \frac 1 {Z_\beta^{Vil}} \exp\left( \sum_{e=\{x,y\} \in \T_n}  \log \sum_{m} \exp(-\tfrac \beta 2(\theta_x-\theta_y + 2\pi m)^2) \right)
\end{align*}
Following the notations of \cite{biskupRP,velenikBook}, denote by $\T_n^+,\T_n^-$ the two sides of $\T_n$ w.r.t the plane $P$. Let us deal with the case where $P$ does not intersect any vertices (the other case can be analyzed similarly). 
Let then 
\[
A:= \sum_{e=\{x,y\} \subset \T_n^+}  \log \sum_{m} \exp(-\tfrac \beta 2(\theta_x-\theta_y + 2\pi m)^2)\,,
\]
so that $A +\Theta A$ takes into account all nearest-neighbor interactions besides the $1D$ line of edges $e=\{x,y\}$ with $x\in T_n^-$ and $y\in T_n^+$. For any such transverse edge, notice that if $t:=\beta^{-1}$, 
\[
\sum_{m\in \Z}\exp(-\tfrac \beta 2(\theta_x-\theta_y + 2\pi m)^2) = \sqrt{2\pi t} \mathbf p_{t}^{\S^1}(\theta_x,\theta_y)\,,
\]
where $\mathbf p_{t}^{\S^1}(x,y)$ denotes the heat kernel on the unit circle $\S^1 \simeq [0,2\pi)$. Using the Markov property at time $s=\tfrac t2=\tfrac 1 {2\beta}$, we may thus write 
\[
\sum_{m\in \Z} \exp(-\tfrac \beta 2(\theta_x-\theta_y + 2\pi m)^2) = \sqrt{2\pi t} \int_{u\in \S^1}  \mathbf p_{t/2}^{\S^1}(\theta_y, u) \mathbf p_{t/2}^{\S^1}(\theta_2,u) du\,.
\]
Notice furthermore that for any fixed $t>0$, $\inf_{\theta}{\mathbf p_{t}^{\S^1}(\theta,0)}>0$ which allows us 
to express $\log \sum_{m\in \Z}\exp(-\tfrac \beta 2(\theta_x-\theta_y + 2\pi m)^2)$ as the convergent series 
\begin{align*}\label{}
& \sum_{k\geq 1} \tfrac{(-1)^k}k  \left(\int_{u\in \S^1} \sqrt{2\pi t} \mathbf p_{t/2}^{\S^1}(\theta_x, u) \mathbf p_{t/2}^{\S^1}(\theta_y,u) du  - 1 \right)^k \\
& = \sum_{k\geq 1} \tfrac{1 }k \sum_{m=0}^k \binom{k}{m}(- \sqrt{2\pi t})^{m} \int_{u_1\in S^1,\ldots, u_m\in \S^1}
\prod_{i=1}^m p_{t/2}^{\S^1}(\theta_x, u_i)p_{t/2}^{\S^1}(\theta_y, u_i) d u_i\,.
\end{align*}
We have thus rewritten the Villain model with an Hamiltonian written in the form 
\[
-H_n= A + \Theta A + \sum_{\ell \geq 1} \int_{\alpha=(\theta_1,\ldots, \ell)\in K_\ell:=(\S^1)^\ell} C_\alpha \Theta C_\alpha \; \mu_{K_\ell}(d\alpha)\, 
\]
where the sum is convergent. This proves that the Villain measure is reflection positive. 
\qed

\subsection{Small gradients on the torus via the Chessboard estimate.}

\begin{proposition}\label{pr.SmallGrTorus}
For any $\delta>0$, there exists $\beta_0$ s.t. for any $n\geq 1$ any $\beta\geq \beta_0$ and any edge $e=\{x,y\}$,
\begin{align*}
\FK{\beta,\T_n}{Vil}{\d \theta(e)\in(-\delta,\delta)} \geq 1- \delta\,,
\end{align*}
where $\d \theta$ is understood here modulo $2\pi$.
(N.B. The proof below shows that $\beta_0$ can be chosen to be $12 \log(8/\delta) \delta^{-2}$).
\end{proposition}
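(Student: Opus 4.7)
The plan is to apply the \emph{chessboard estimate}, which follows from the reflection positivity of the torus Villain measure established in the previous subsection, in order to reduce the single-edge small-gradient statement to a global event (all horizontal gradients bad simultaneously) whose probability can be bounded by a direct energy-vs-entropy comparison on the Villain partition function.

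First I would write the Villain measure in terms of the single-bond weight $g(u) := \sum_{m\in\Z} \exp(-\beta(u+2\pi m)^2/2)$, and set $B_e := \{\d\theta(e) \notin (-\delta,\delta) \mod 2\pi\}$. Viewing $B_{e_0}$ as a function of the two endpoints of $e_0$ and translating it along the horizontal edges of $\T_n$ (the event being invariant under the sign-flip induced by reflection, since $g$ is even), the chessboard estimate yields
\[
\FK{\beta,\T_n}{Vil}{B_{e_0}} \leq \FK{\beta,\T_n}{Vil}{\bigcap_{e \text{ horizontal}} B_e}^{1/n^2}.
\]

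Next I would bound the ratio $Z_{\text{bad}}/Z$ defining the right-hand side. For the numerator: since $g$ is even, $2\pi$-periodic and non-increasing on $[0,\pi]$, on the event $\bigcap_e B_e$ each of the $n^2$ horizontal edges contributes at most $g(\delta) \leq 2e^{-\beta\delta^2/2}$, while each vertical edge is crudely bounded by $g(0) \leq 2$; integrating over $[0,2\pi)^{\T_n}$ gives $Z_{\text{bad}} \leq (8\pi)^{n^2} e^{-\beta\delta^2 n^2/2}$. For the denominator: restricting to the box $A = \{\theta : |\theta_x - \pi| < \delta/4 \text{ for all } x\}$ of Lebesgue volume $(\delta/2)^{n^2}$, every gradient satisfies $|\d\theta(e)| < \delta/2$ so $g(\d\theta(e)) \geq e^{-\beta\delta^2/8}$ (from the $m=0$ term alone), giving over the $2n^2$ edges of $\T_n$ the lower bound $Z \geq (\delta/2)^{n^2} e^{-\beta\delta^2 n^2/4}$. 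Taking the $1/n^2$-th root of the resulting ratio yields
\[
\FK{\beta,\T_n}{Vil}{B_{e_0}} \leq \frac{16\pi}{\delta}\, e^{-\beta\delta^2/4},
\]
and the desired bound by $\delta$ follows for $\beta \geq C\,\delta^{-2}\log(1/\delta)$ with an explicit constant $C$ of the order of the one in the parenthetical remark after the proposition.

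The main obstacle is the chessboard step itself: although reflection positivity is in hand, $B_e$ is a two-spin event rather than a single-site one, so some care is needed to check that $B_e$ and its reflections fit the support structure required by the chessboard inequality, and that the orbit of $B_{e_0}$ under the reflection group really tiles the horizontal edges of $\T_n$ with exactly $n^2$ copies. This verification is standard (cf.\ \cite{FILS1978,biskupRP}), but it is the only non-elementary input; the remainder of the proof is an entirely elementary energy--entropy estimate on the single-bond weight $g$.
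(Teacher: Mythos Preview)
Your proposal is correct and follows essentially the same route as the paper: chessboard estimate applied to the two-site event $B_e$, an upper bound on the disseminated partition function via the single-bond energy on horizontal edges, and a lower bound on $Z^{Vil}_\beta$ by restricting all spins to a window of width $O(\delta)$. The paper carries out the same computation with the parameter $\alpha=\delta/4$ in place of your $\delta/4$ window and arrives at the same bound $\exp(-c\beta\delta^2+O(\log(1/\delta)))$. The only cosmetic difference is the exponent in the chessboard inequality ($2/n^2$ in the paper versus your $1/n^2$), reflecting slightly different bookkeeping of the $1\times 2$ block tiling; this affects only constants and your caveat about checking the support structure of $B_e$ is exactly the point to be careful about there.
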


\ni
{\em Proof.}
We apply the chessboard estimate (\cite{FrohlichLieb1978,biskupRP,velenikBook} to the $1\times 2$ (or $2\times 1$ depending on the orientation of $e$) block $B:=\{x,y\}$ and to the local $B$-block function $f(\{\theta\}):=1_{|\theta_x-\theta_y|\geq\delta}$ (where $\theta_x-\theta_y$ is also understood modulo $2\pi$). As one can cover $\T_n$ with successive reflections of the block $B$, the chessboard estimate gives us if $\beta$ is large enough,
\begin{align*}
\FK{\beta,\T_n}{Vil}{\d \theta(e)\notin(-\delta,\delta)} & \leq \left(\FK{\beta,\T_n}{Vil}{\text{all, say, horizontal edges $f$ are s.t. } \d \theta(f) \geq \delta} \right)^{2/n^2} \\
& \leq \left( \frac 1 {Z^{Vil}_{\beta}} \int_{(\S^1)^{\T_n}} \exp(\sum_{x\in \T_n} \log \exp(-\tfrac \beta 3 \delta^2) + \log 2)\right)^{2/n^2} \\
& \leq \left( \frac 1 {Z^{Vil}_{\beta}} \int_{(\S^1)^{\T_n}} \exp( - \tfrac \beta 4 \delta^2 n^2) \right)^{2/n^2}\,.
\end{align*}

To conclude, for any $\alpha$, note that 
\begin{align*}\label{}
Z_\beta^{Vil}&:= \int_{(\S^1)^{\T_n}} \prod_{e} \sum_{m\in \Z}\exp(-\tfrac \beta 2 (d\theta(e) + 2\pi m)^2 ) \prod_{x\in\T_n} d\theta_x \\
& \geq 
\exp(-\tfrac \beta 2 \alpha^2 *(2 n^2)) \int_{(\S^1)^{\T_n}} 1_{\{\theta_x\in[-\alpha,\alpha], \forall x \in \T_n\}} \prod_{x\in\T_n} d\theta_x \\
& = \exp(-\tfrac \beta 2 \alpha^2 *(2 n^2))  \alpha^{n^2}\,.
\end{align*}
This gives us the following bound
\begin{align*}
\FK{\beta,\T_n}{Vil}{\d \theta(e)\notin(-\delta,\delta)} 
& \leq \left(  \exp( - \beta (\tfrac 1 4\delta^2 - 2\alpha^2) n^2  +\log(1/\alpha) n^2)  \right)^{2/n^2} \\
& \leq \exp\left(- \beta ( \tfrac 1 2 \delta^2 - 4\alpha^2) + 2 \log(1/\alpha)\right)\,.
\end{align*}
Choosing for example $\alpha:=\tfrac \delta 4$, we obtain the quantitative bound,
\begin{align*}
\FK{\beta,\T_n}{Vil}{\d \theta(e)\notin(-\delta,\delta)} \leq 
\exp\left(- \beta \tfrac{\delta^2}4   + 2 \log(4/\delta)\right)\,,
\end{align*}
which proves the lemma for any $\beta\geq \beta_0=12 \log(8/\delta) \delta^{-2}$. 
\qed

\subsection{Back to finite domains.}

\begin{lemma}
$ $
\bi
\item[i)](Villain model with Dirichlet boundary conditions on $\Lambda_n$). For any $\delta>0$, $n\in \N$ any $\beta\geq \hat\beta_0=\hat \beta_0(\beta)$ and any edge $e\in \Lambda_n$, 
\begin{align*}\label{}
\FK{\beta,\Lambda_n^0}{Vil}{\d \theta(e)\in(-\delta,\delta)} \geq 1- \delta\,.
\end{align*}
\item[ii)](Villain model with free boundary conditions on $\Lambda_n$). For any $\delta>0$ and any $\beta\geq \hat \beta_0 = \hat \beta_0(\beta)$ (again the same as in Proposition \ref{pr.SmallGrTorus}) there exists $M_0=M_0(\beta)\in \N$ such that for any $n\geq M_0$ and any edge $e\in \Lambda_n$ at distance at least $M_0$ from $\p \Lambda_n$, 
\begin{align*}\label{}
\FK{\beta,\Lambda_n^{free}}{Vil}{\d \theta(e)\in(-2\delta,2\delta)} \geq 1- 2\delta\,.
\end{align*}
\ei
\end{lemma}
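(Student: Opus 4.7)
The plan is to transfer the torus bound of Proposition~\ref{pr.SmallGrTorus} to the two finite-volume settings: for free b.c.\ by passing through the unique translation-invariant infinite-volume Gibbs measure on $\Z^2$, and for Dirichlet b.c.\ by combining a Gaussian estimate with the decoupling of Theorem~\ref{th.coupling}. Throughout, fix $\delta>0$ and take $\beta\geq \hat\beta_0=\hat\beta_0(\delta)$ where $\hat\beta_0$ is the threshold of Proposition~\ref{pr.SmallGrTorus}.

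\textbf{Step 1 (Torus $\to \Z^2$).} The product space $(\S^1)^{\Z^2}$ is compact, so $\{\P^{Vil}_{\beta,\T_n}\}_n$ is tight. Every weak sub-limit is a translation-invariant DLR-Gibbs measure for the Villain interaction at inverse temperature $\beta$, and by uniqueness \cite{messager1978} coincides with $\mu_\infty:=\P^{Vil}_{\beta,\Z^2}$. Since the indicator of $\{\d\theta(e)\in(-\delta,\delta)\}$ is continuous off a $\mu_\infty$-negligible set, the Portmanteau theorem combined with Proposition~\ref{pr.SmallGrTorus} gives
\[
\mu_\infty\bigl(\d\theta(e)\notin(-\delta,\delta)\bigr)\leq \delta
\]
for every edge $e$ of $\Z^2$. \textbf{Step 2 (Part (ii), free b.c.).} Applying the same tightness/DLR/uniqueness scheme to $\{\P^{Vil}_{\beta,\Lambda_n^{free}}\}_n$ shows that its local marginals converge to those of $\mu_\infty$. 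Hence there exists $M_0=M_0(\beta,\delta)$ such that for $n\geq M_0$ and every edge $e$ at distance at least $M_0$ from $\p\Lambda_n$, the marginal of $\d\theta(e)$ is within $\delta$ in total variation of its $\mu_\infty$-marginal; combined with Step~1 this yields Part~(ii) (with $2\delta$).

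\textbf{Step 3 (Part (i), Dirichlet b.c.).} Here the infinite-volume route is not directly available, since edges close to $\p\Lambda_n^0$ are constrained by the pinning $\theta\equiv 0$ and the uniformity in $e$ is lost. I would use instead the decoupling of Theorem~\ref{th.coupling}: with $\phi$ an independent Dirichlet GFF at inverse temperature $\beta$ and $q$ an independent Coulomb gas on the dual faces, one has $\theta\equiv \phi-2\pi\,\d^*\Delta^{-1}n_q\pmod{2\pi}$. Combining $\Delta=\d\d^*+\d^*\d$ with $\d n_q=q$ and $n_q\in\Z^E$, this simplifies, modulo $2\pi$, to
\[
\d\theta\equiv\d\phi + 2\pi\,\d^*\Delta^{-1}q\pmod{2\pi}.
\]
The spin-wave piece $\d\phi(e)$ is a centred Gaussian of variance at most $\beta^{-1}$ (by the effective-resistance bound for a Dirichlet GFF on a subgraph of $\Z^2$), so $\P(|\d\phi(e)|\geq \delta/3)\leq 2e^{-c\beta\delta^2}$, uniformly in $n$ and $e$. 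For the Coulomb piece, at large $\beta$ the probability that any charge is nonzero in a fixed window around $e$ is exponentially small (analogous to the Peierls-type estimates underlying Section~\ref{s.Kadanoff}), and contributions of far-away charges are controlled through the decay of $\d^*\Delta^{-1}\1_f$ in $\dist(e,f)$ (Claim~\ref{C.gradient_Green}) together with a second-moment/Fourier estimate as in Proposition~\ref{pr.Fourier_Coul}. Taking $\beta$ large enough makes $|2\pi\,\d^*\Delta^{-1}q(e)\bmod 2\pi|<\delta/3$ with probability at least $1-\delta/2$, and summing the two failure probabilities closes Part~(i).

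\textbf{Main obstacle.} The technical heart is the Coulomb estimate in Step~3. Because $\d^*\Delta^{-1}q(e)$ is non-local in $q$, it is not enough to argue that $q$ is zero near $e$: the cumulative effect of distant charges must also be controlled, and uniformly in $n$. The cleanest route I see is to split $q=q_{\mathrm{near}}+q_{\mathrm{far}}$ for a large but $\beta$-independent window around $e$, handle the near part via the exponential suppression of local vortices at low temperature, and deal with the far part by a second-moment bound exploiting $\|\d^*\Delta^{-1}\1_f\|_{\ell^\infty(\{e\})}\to 0$ as $f\to\infty$. This is exactly the type of Green-function gradient decay established in the proof of Proposition~\ref{pr.Fourier_Coul}, and it is what secures the uniformity in $n$ (all bounds depend only on the $\Z^2$ Green-function gradient, not on the size of the box).
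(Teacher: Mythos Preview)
Your treatment of part~(ii) (free boundary conditions) is essentially the paper's: pass from the torus to the unique translation-invariant infinite-volume Gibbs measure of \cite{messager1978}, then back down to $\Lambda_n^{free}$ for $n$ large and $e$ far from the boundary.

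For part~(i) (Dirichlet), however, the paper's argument is far shorter than yours. It does not use the spin-wave/Coulomb decoupling at all; instead it invokes Ginibre's correlation inequality (valid for the Villain interaction, cf.\ \cite{Ginibre,FScoulomb,messager1978}) to compare Dirichlet directly with the torus:
\[
\EFK{\beta,\Lambda_n^0}{Vil}{\cos(\d\theta(e))}\ \geq\ \EFK{\beta,\T_n}{Vil}{\cos(\d\theta(e))}\,.
\]
Combined with Proposition~\ref{pr.SmallGrTorus} this gives $\EFK{\beta,\Lambda_n^0}{Vil}{\cos(\d\theta(e))}\geq 1-\tfrac13\hat\delta^2$ for a suitably small $\hat\delta$, hence the claim. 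The uniformity over \emph{all} edges, including those touching the boundary, and over all $n$, comes entirely for free from the monotonicity.

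Your decoupling route can be salvaged, but more cheaply than you sketch. The near/far split and the ``Peierls-type estimates'' you invoke are unnecessary (and Section~\ref{s.Kadanoff} does not contain such estimates; it is a Girsanov-type computation). It suffices to bound the Coulomb piece in one stroke via the variance identity~\eqref{e.variance_equality} and Proposition~\ref{pr.less_fluctuation}:
\[
(2\pi)^2\,\var^{Coul}_{\beta}\!\bigl[\langle \Delta^{-1}q,\d\1_e\rangle\bigr]
=\beta^{-2}\Bigl(\var^{GFF}_{\beta^{-1}}-\var^{IV}_{\beta^{-1}}\Bigr)\bigl[\langle\Psi,\d\1_e\rangle\bigr]
\le \beta^{-1}\langle \d\1_e,(-\Delta)^{-1}\d\1_e\rangle\le C\beta^{-1},
\]
uniformly in $n$ and $e$, and then apply Chebyshev. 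So your ``main obstacle'' is in fact a one-line consequence of identities already available in the paper, whereas the paper itself avoids even this by using Ginibre.
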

Using $\delta:=\eps/(10R^2)$, $L_0:=M_0+R$, it is straightforward to deduce Proposition \ref{pr.SmallGr} from the above Lemma and using that
	\begin{align*}
&\FK{\beta}{Vil}{\d \theta(e)\in(-\eps,\eps) \text{ for all }e\in \Lambda_R+x} \\
&\geq\P_\beta^{Vil}\left[ \d \theta(e)\in(-\frac{\eps}{10R^2},\frac{\eps}{10R^2}) \text{ for all }e\in \Lambda_R+x\right] \\
&\geq 1-\sum_{e\in \Lambda_R+x} \frac{\eps}{10R^2}\\
&\geq 1-\epsilon.
	\end{align*}
We are thus left with the proof of the Lemma.

\smallskip
\ni
{\em Proof.}

\ni
For the case $i)$ of Dirichlet boundary conditions, by using Ginibre correlation inequality (\cite{Ginibre}) as explained for example in \cite{messager1978}, 
it follows readily that gradients are smaller under Dirichlet conditions than on the Torus. I.e. for any edge $e\in \Lambda_n$,
\begin{align*}\label{}
\EFK{\beta,\Lambda_n^0}{Vil}{\cos(\d \theta(e))} \geq 
\EFK{\beta,\T_n}{Vil}{\cos(\d \theta(e))}\,.
\end{align*}
Note that we need here that Ginibre's inequality applies also to the case of the Villain's interaction. This is indeed the case, see Section 3. in \cite{FScoulomb}.  
By choosing $\hat \delta$ sufficiently small and $\beta \geq \hat \beta_0:=\beta_0(\hat \delta)$ in Proposition \ref{pr.SmallGrTorus}, this implies
\begin{align*}\label{}
\EFK{\beta,\Lambda_n^0}{Vil}{\cos(\d \theta(e))} \geq 
\EFK{\beta,\T_n}{Vil}{\cos(\d \theta(e))} \geq 1 - \tfrac13 \hat \delta^2\,, 
\end{align*}
which in turn implies (if $\hat \delta$ is chosen small enough) $\FK{\beta,\Lambda_n^0}{Vil}{\d \theta(e)\in(-\delta,\delta)}\geq 1-\delta$.  
\smallskip
\ni
For the case $ii)$ with {\em free} boundary conditions, we use the following result from \cite{messager1978} (where again, as explained in \cite{FScoulomb}, we use the fact that such results for the XY model transfer to the Villain interaction). 
\begin{theorem}[\cite{messager1978}]\label{}
For any $\beta>0$, there is a unique translation invariant Gibbs measure on $\P_{\beta,\Z^2}^{Vil}$ on $(\S^1)^{\Z^2}$. Furthermore, $\P_{\beta,\Lambda_n^{free}}^{Vil}$, $\P_{\beta,\Lambda_n^{0}}^{Vil}$ and $\P_{\beta,\T_n}^{Vil}$ all converge to $\P_{\beta,\Z^2}^{Vil}$ as $n\to\infty$. 
\end{theorem}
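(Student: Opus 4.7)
The plan is to follow the classical strategy of Messager--Miracle-Sol\'e--Pfister, which combines compactness, correlation inequalities of Ginibre type, and the continuous rotational symmetry of the Villain Hamiltonian. First, I would establish existence of infinite-volume limits. Since spins take values in the compact group $\S^1$, the configuration space $(\S^1)^{\Z^2}$ is compact in the product topology, so any sequence of finite-volume Gibbs measures admits convergent subsequences, and a standard argument (limits of measures specified by a given local Hamiltonian satisfy the DLR equations) shows that every subsequential limit of $\P_{\beta,\Lambda_n^{free}}^{Vil}$, $\P_{\beta,\Lambda_n^{0}}^{Vil}$ or $\P_{\beta,\T_n}^{Vil}$ is a Gibbs measure for the Villain specification on $\Z^2$. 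In the torus case, translation-invariance is automatic from the construction; in the other cases it will follow from the uniqueness step below.

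Second, I would install the Ginibre-type correlation inequalities for Villain (as used, e.g., in Section~3 of \cite{FScoulomb} and already invoked in the $i)$ case of the previous lemma): the Villain weights $\sum_{m} e^{-\tfrac{\beta}{2}(\d\theta(e)+2\pi m)^2}$ can be expanded in a positive Fourier series in $\cos(k\,\d\theta(e))$ for $k\geq 0$, which places the model in the class for which Ginibre's inequalities hold. From these one obtains monotonicity in the boundary conditions and in the volume: correlations of the form $\EFK{\beta,\Lambda}{Vil}{\prod_i \cos(k_i(\theta_{x_i}-\theta_{y_i}))}$ are monotone increasing as one moves from Dirichlet towards free boundary conditions and as $\Lambda$ grows. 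In particular, the Dirichlet-boundary sequence is monotone and converges to a well-defined translation-invariant limit $\mu^D_\beta$; similarly the free-boundary sequence decreases in the partial order to a limit $\mu^F_\beta$, and one has $\mu^D_\beta \le \mu^F_\beta$ at the level of these correlations.

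Third, and this is the real obstacle, I would prove uniqueness among \emph{translation-invariant} Gibbs measures. Let $\mu$ be such a measure. The global rotation $\theta_x \mapsto \theta_x + c$ leaves the Villain Hamiltonian invariant, so averaging $\mu$ over $c\in\S^1$ yields a fully rotation-invariant, translation-invariant Gibbs measure under which $\EFK{}{}{e^{i\theta_x}}=0$. For such measures, Ginibre inequalities sandwich the two-point function $\EFK{\mu}{}{\cos(\theta_x-\theta_y)}$ between the corresponding expectations under $\mu^D_\beta$ and $\mu^F_\beta$; matching these two extremes is the heart of the argument and is achieved by observing that, under translation invariance, the difference $\mu^F_\beta-\mu^D_\beta$ would produce a non-trivial ``boundary contribution'' in the DLR equation for a large box, which averaging by translations combined with the Ginibre bounds forces to vanish. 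Higher correlations are then determined recursively by the DLR equation restricted to finite boxes. Thus $\mu=\mu^D_\beta=\mu^F_\beta$, call this common measure $\P_{\beta,\Z^2}^{Vil}$. Finally, convergence of each of the three families is immediate: the Dirichlet and free sequences converge to $\mu^D_\beta=\mu^F_\beta$ by construction, while the torus measures are tight and every subsequential limit is translation-invariant and Gibbs, hence equals $\P_{\beta,\Z^2}^{Vil}$ by the uniqueness just established.
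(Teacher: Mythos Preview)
The paper does not prove this statement: it is quoted from \cite{messager1978} (with the remark, made just before the theorem, that the results of \cite{messager1978} for the plane rotator carry over to the Villain interaction as in \cite{FScoulomb}) and is used as a black box to conclude case $ii)$ of the preceding lemma. There is therefore no proof in the paper to compare your proposal against.

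Judged on its own, your outline is sound in its first two steps (compactness of $(\S^1)^{\Z^2}$ giving subsequential DLR limits; Ginibre inequalities for the Villain weights giving monotonicity in boundary conditions and in the volume). The gap is in step three. Your sentence ``the difference $\mu^F_\beta-\mu^D_\beta$ would produce a non-trivial boundary contribution in the DLR equation for a large box, which averaging by translations combined with the Ginibre bounds forces to vanish'' does not describe a mechanism that actually closes the argument. What is genuinely needed, and what is specifically two-dimensional, is the Mermin--Wagner/Dobrushin--Shlosman theorem (or Pfister's relative-entropy version): in $d=2$ the continuous $\S^1$ symmetry cannot be broken, so \emph{every} translation-invariant Gibbs measure --- in particular the Dirichlet limit $\mu^D_\beta$ --- is automatically rotation-invariant, without any averaging over $c$. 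It is this input, not a translation-averaging trick, that allows one to match the extremes in the Ginibre sandwich and conclude $\mu^D_\beta=\mu^F_\beta$. Without it there is no reason for the two limits to coincide: in $d\geq 3$ at low temperature the Dirichlet limit breaks the rotation symmetry and one obtains a full circle of distinct extremal translation-invariant Gibbs measures, so the heuristic you propose cannot substitute for the $2d$ input.
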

This implies that (if $e_0$ is any edge around the origin), 
\begin{align*}\label{}
\liminf_{M\to \infty} \FK{\beta,\Lambda_M^{free}}{Vil}{\d \theta(e_0)\in (-2\delta,2\delta)} \geq \limsup_{n\to \infty} \FK{\beta,\T_n}{Vil}{\d \theta(e)\in (-\delta,\delta)} \geq 1-\delta\,. 
\end{align*}
\qed

\bibliographystyle{alpha}
\bibliography{biblio}

\newcommand{\etalchar}[1]{$^{#1}$}
\begin{thebibliography}{MMSP{\etalchar{+}}78}

\bibitem[AHM{\etalchar{+}}11]{makarov2011}
Yacin Ameur, H{\aa}kan Hedenmalm, Nikolai Makarov, et~al.
\newblock Fluctuations of eigenvalues of random normal matrices.
\newblock {\em Duke mathematical journal}, 159(1):31--81, 2011.

\bibitem[AHPS21]{aizenman2021depinning}
Michael Aizenman, Matan Harel, Ron Peled, and Jacob Shapiro.
\newblock Depinning in the integer-valued gaussian field and the bkt phase of
  the 2d villain model.
\newblock {\em arXiv preprint arXiv:2110.09498}, 2021.

\bibitem[AKM19]{JC}
Scott Armstrong, Tuomo Kuusi, and Jean-Christophe Mourrat.
\newblock Quantitative stochastic homogenization and large-scale regularity.
\newblock 2019.

\bibitem[Aru15]{Aru}
Juhan Aru.
\newblock {\em The geometry of the Gaussian free field combined with {SLE
  processes and the KPZ} relation}.
\newblock PhD thesis, Ecole {N}ormale {S}up{\'e}rieure de {L}yon, 2015.

\bibitem[Bau16]{Roland}
Roland Bauerschmidt.
\newblock Ferromagnetic spin systems.
\newblock {\em Lecture notes available at http://www.
  statslab.cam.ac.uk/$\sim$rb812/doc/spin.pdf}, 2016.

\bibitem[BBNY16]{RolandPaul2016}
Roland Bauerschmidt, Paul Bourgade, Miika Nikula, and Horng-Tzer Yau.
\newblock The two-dimensional coulomb plasma: quasi-free approximation and
  central limit theorem.
\newblock {\em arXiv preprint arXiv:1609.08582}, 2016.

\bibitem[BDZ16]{bramson2016convergence}
Maury Bramson, Jian Ding, and Ofer Zeitouni.
\newblock Convergence in law of the maximum of the two-dimensional discrete
  gaussian free field.
\newblock {\em Communications on Pure and Applied Mathematics}, 69(1):62--123,
  2016.

\bibitem[Ber71]{berezinskii}
VL~Berezinskii.
\newblock Destruction of long-range order in one-dimensional and
  two-dimensional systems having a continuous symmetry group i. classical
  systems.
\newblock {\em Sov. Phys. JETP}, 32(3):493--500, 1971.

\bibitem[BF81]{bricmont1981correlation}
Jean Bricmont and Jean-Raymond Fontaine.
\newblock Correlation inequalities and contour estimates.
\newblock {\em Journal of Statistical Physics}, 26(4):745--753, 1981.

\bibitem[Bis09]{biskupRP}
Marek Biskup.
\newblock Reflection positivity and phase transitions in lattice spin models.
\newblock In {\em Methods of contemporary mathematical statistical physics},
  pages 1--86. Springer, 2009.

\bibitem[BL16]{BiskupLouidor2016}
Marek Biskup and Oren Louidor.
\newblock Extreme local extrema of two-dimensional discrete gaussian free
  field.
\newblock {\em Communications in Mathematical Physics}, 345(1):271--304, 2016.

\bibitem[BM99]{brydges1999}
David~C Brydges and Ph~A Martin.
\newblock Coulomb systems at low density: A review.
\newblock {\em Journal of Statistical Physics}, 96(5-6):1163--1330, 1999.

\bibitem[BPR22a]{bauerschmidt2022discreteNo1}
Roland Bauerschmidt, Jiwoon Park, and Pierre-Fran{\c{c}}ois Rodriguez.
\newblock The discrete gaussian model, i. renormalisation group flow at high
  temperature.
\newblock {\em arXiv preprint arXiv:2202.02286}, 2022.

\bibitem[BPR22b]{bauerschmidt2022discreteNo2}
Roland Bauerschmidt, Jiwoon Park, and Pierre-Fran{\c{c}}ois Rodriguez.
\newblock The discrete gaussian model, ii. infinite-volume scaling limit at
  high temperature.
\newblock {\em arXiv preprint arXiv:2202.02287}, 2022.

\bibitem[BST66]{brush1966monte}
SG~Brush, HL~Sahlin, and E~Teller.
\newblock Monte carlo study of a one-component plasma. i.
\newblock {\em The Journal of Chemical Physics}, 45(6):2102--2118, 1966.

\bibitem[BW16]{BeliusWu2016}
David Belius and Wei Wu.
\newblock Maximum of the ginzburg-landau fields.
\newblock {\em arXiv preprint arXiv:1610.04195}, 2016.

\bibitem[CAP20]{cohen2020rarity}
Omri Cohen-Alloro and Ron Peled.
\newblock Rarity of extremal edges in random surfaces and other theoretical
  applications of cluster algorithms.
\newblock {\em The Annals of Applied Probability}, 30(5):2439--2464, 2020.

\bibitem[Cha18]{Sourav}
Sourav Chatterjee.
\newblock Wilson loops in ising lattice gauge theory.
\newblock {\em arXiv preprint arXiv:1811.09770}, 2018.

\bibitem[Cha19]{chatterjeeLowerBounds}
Sourav Chatterjee.
\newblock A general method for lower bounds on fluctuations of random
  variables.
\newblock {\em The Annals of Probability}, 47(4):2140--2171, 2019.

\bibitem[CS14]{ConlonSpencer2014}
Joseph~G Conlon and Thomas Spencer.
\newblock A strong central limit theorem for a class of random surfaces.
\newblock {\em Communications in Mathematical Physics}, 325(1):1--15, 2014.

\bibitem[DH00]{DimockHurd}
Jonathan Dimock and TR~Hurd.
\newblock Sine-gordon revisited.
\newblock In {\em Annales Henri Poincar{\'e}}, volume~1, pages 499--541.
  Springer, 2000.

\bibitem[DW20]{DarioWu}
Paul Dario and Wei Wu.
\newblock Massless phases for the villain model in $d \geq 3$.
\newblock {\em arXiv preprint arXiv:2002.02946}, 2020.

\bibitem[Fal13]{Falco}
Pierluigi Falco.
\newblock Critical exponents of the two dimensional coulomb gas at the
  berezinskii-kosterlitz-thouless transition.
\newblock {\em arXiv preprint arXiv:1311.2237}, 2013.

\bibitem[FILS78]{FILS1978}
J{\"u}rg Fr{\"o}hlich, Robert Israel, Elliot~H Lieb, and Barry Simon.
\newblock Phase transitions and reflection positivity. i. general theory and
  long range lattice models.
\newblock In {\em Statistical Mechanics}, pages 213--246. Springer, 1978.

\bibitem[FK85]{federbush1985}
Paul Federbush and Tom Kennedy.
\newblock Surface effects in {Debye} screening.
\newblock {\em Communications in mathematical physics}, 102(3):361--423, 1985.

\bibitem[FL78]{FrohlichLieb1978}
J{\"u}rg Fr{\"o}hlich and Elliott~H Lieb.
\newblock Phase transitions in anisotropic lattice spin systems.
\newblock In {\em Statistical Mechanics}, pages 127--161. Springer, 1978.

\bibitem[FS81a]{FS}
J{\"u}rg Fr{\"o}hlich and Thomas Spencer.
\newblock The {Kosterlitz-Thouless transition in two-dimensional abelian spin
  systems and the Coulomb gas}.
\newblock {\em Communications in Mathematical Physics}, 81(4):527--602, 1981.

\bibitem[FS81b]{FScoulomb}
J{\"u}rg Fr{\"o}hlich and Thomas Spencer.
\newblock On the statistical mechanics of classical coulomb and dipole gases.
\newblock {\em Journal of Statistical Physics}, 24(4):617--701, 1981.

\bibitem[FS83]{FS1983}
J{\"u}rg Fr{\"o}hlich and Thomas Spencer.
\newblock The {B}erezinskii-{K}osterlitz-{T}houless transition (energy-entropy
  arguments and renormalization in defect gases).
\newblock In {\em Scaling and Self-Similarity in Physics}, pages 29--138.
  Springer, 1983.

\bibitem[FSS76]{FSS1976}
J{\"u}rg Fr{\"o}hlich, Barry Simon, and Thomas Spencer.
\newblock Infrared bounds, phase transitions and continuous symmetry breaking.
\newblock {\em Communications in Mathematical Physics}, 50(1):79--95, 1976.

\bibitem[FV17]{velenikBook}
Sacha Friedli and Yvan Velenik.
\newblock {\em Statistical mechanics of lattice systems: a concrete
  mathematical introduction}.
\newblock Cambridge University Press, 2017.

\bibitem[Gin70]{Ginibre}
Jean Ginibre.
\newblock General formulation of griffiths' inequalities.
\newblock {\em Communications in mathematical physics}, 16(4):310--328, 1970.

\bibitem[GS20]{GS1}
Christophe Garban and Avelio Sep{\'u}lveda.
\newblock Statistical reconstruction of the {G}aussian free field and {KT}
  transition.
\newblock {\em arXiv preprint arXiv:2002.12284}, 2020.

\bibitem[GS21]{GS3}
Christophe Garban and Avelio Sep{\'u}lveda.
\newblock Improved spin-wave estimate for {W}ilson loops in {$U(1)$} lattice
  gauge theory.
\newblock {\em arXiv preprint arXiv:2107.04021}, 2021.

\bibitem[JKKN77]{Kadanoff}
Jorge~V Jos{\'e}, Leo~P Kadanoff, Scott Kirkpatrick, and David~R Nelson.
\newblock Renormalization, vortices, and symmetry-breaking perturbations in the
  two-dimensional planar model.
\newblock {\em Physical Review B}, 16(3):1217, 1977.

\bibitem[Kos74]{kosterlitz1974}
JM~Kosterlitz.
\newblock The critical properties of the two-dimensional xy model.
\newblock {\em Journal of Physics C: Solid State Physics}, 7(6):1046, 1974.

\bibitem[KP17]{RonFS}
Vital Kharash and Ron Peled.
\newblock The {F}röhlich-{S}pencer proof of the
  {B}erezinskii-{K}osterlitz-{T}houless transition.
\newblock {\em arXiv preprint arXiv:1711.04720}, 2017.

\bibitem[KW20]{KW}
Richard~W Kenyon and David~B Wilson.
\newblock The {Green's function on the double cover of the grid and application
  to the uniform spanning tree trunk}.
\newblock 56(3):1841--1868, 2020.

\bibitem[Lam22]{lammers2022height}
Piet Lammers.
\newblock Height function delocalisation on cubic planar graphs.
\newblock {\em Probability Theory and Related Fields}, 182(1-2):531--550, 2022.

\bibitem[LL72]{lieb1972}
Elliott~H Lieb and Joel~L Lebowitz.
\newblock The constitution of matter: Existence of thermodynamics for systems
  composed of electrons and nuclei.
\newblock {\em Advances in Mathematics}, 9(3):316--398, 1972.

\bibitem[LL10]{LawlerLimic}
Gregory~F Lawler and Vlada Limic.
\newblock {\em Random walk: a modern introduction}, volume 123.
\newblock Cambridge University Press, 2010.

\bibitem[LMS16]{lubetzky2016}
Eyal Lubetzky, Fabio Martinelli, and Allan Sly.
\newblock Harmonic pinnacles in the discrete gaussian model.
\newblock {\em Communications in Mathematical Physics}, 344(3):673--717, 2016.

\bibitem[LS12]{lubetzky2012critical}
Eyal Lubetzky and Allan Sly.
\newblock Critical ising on the square lattice mixes in polynomial time.
\newblock {\em Communications in Mathematical Physics}, 313(3):815--836, 2012.

\bibitem[LS18]{LebleSerfaty2018}
Thomas Lebl{\'e} and Sylvia Serfaty.
\newblock Fluctuations of two dimensional coulomb gases.
\newblock {\em Geometric and Functional Analysis}, 28(2):443--508, 2018.

\bibitem[LSZ17]{LebleSerfatyZeitouni2017}
Thomas Lebl{\'e}, Sylvia Serfaty, and Ofer Zeitouni.
\newblock Large deviations for the two-dimensional two-component plasma.
\newblock {\em Communications in Mathematical Physics}, 350(1):301--360, 2017.

\bibitem[LZ20]{leble2020local}
Thomas Lebl{\'e} and Ofer Zeitouni.
\newblock A local {CLT} for linear statistics of {2D} coulomb gases.
\newblock {\em arXiv preprint arXiv:2005.12163}, 2020.

\bibitem[MMK15]{michel2015event}
Manon Michel, Johannes Mayer, and Werner Krauth.
\newblock Event-chain monte carlo for classical continuous spin models.
\newblock {\em EPL (Europhysics Letters)}, 112(2):20003, 2015.

\bibitem[MMSP{\etalchar{+}}78]{messager1978}
A~Messager, S~Miracle-Sole, Ch~Pfister, et~al.
\newblock Correlation inequalities and uniqueness of the equilibrium state for
  the plane rotator ferromagnetic model.
\newblock {\em Communications in Mathematical Physics}, 58(1):19--29, 1978.

\bibitem[MS77]{McBryanSpencer}
Oliver~A McBryan and Thomas Spencer.
\newblock On the decay of correlations in $so(n)$-symmetric ferromagnets.
\newblock {\em Communications in Mathematical Physics}, 53(3):299--302, 1977.

\bibitem[Nie84]{nienhuis1984}
Bernard Nienhuis.
\newblock Critical behavior of two-dimensional spin models and charge asymmetry
  in the coulomb gas.
\newblock {\em Journal of Statistical Physics}, 34(5-6):731--761, 1984.

\bibitem[NS97]{NaddafSpencer}
Ali Naddaf and Thomas Spencer.
\newblock On homogenization and scaling limit of some gradient perturbations of
  a massless free field.
\newblock {\em Communications in mathematical physics}, 183(1):55--84, 1997.

\bibitem[RV07]{virag2007}
Brian Rider and B{\'a}lint Vir{\'a}g.
\newblock The noise in the circular law and the gaussian free field.
\newblock {\em International Mathematics Research Notices}, 2007, 2007.

\bibitem[Ser20]{serfaty2020}
Sylvia Serfaty.
\newblock Gaussian fluctuations and free energy expansion for 2d and 3d coulomb
  gases at any temperature.
\newblock {\em arXiv preprint arXiv:2003.11704}, 2020.

\bibitem[Smi10]{smirnov2010}
Stanislav Smirnov.
\newblock Conformal invariance in random cluster models. i. holmorphic fermions
  in the ising model.
\newblock {\em Annals of mathematics}, pages 1435--1467, 2010.

\bibitem[Spi13]{Spi}
Frank Spitzer.
\newblock {\em Principles of random walk}, volume~34.
\newblock Springer Science \& Business Media, 2013.

\bibitem[vEL22]{van2022elementary}
Diederik van Engelenburg and Marcin Lis.
\newblock An elementary proof of phase transition in the planar xy model.
\newblock {\em Communications in Mathematical Physics}, pages 1--20, 2022.

\bibitem[Vil75]{Vil}
Jacques Villain.
\newblock Theory of one-and two-dimensional magnets with an easy magnetization
  plane{. II. T}he planar, classical, two-dimensional magnet.
\newblock {\em Journal de Physique}, 36(6):581--590, 1975.

\bibitem[Wir19]{Wirth}
Mateo Wirth.
\newblock Maximum of the integer-valued {G}aussian free field.
\newblock {\em arXiv preprint arXiv:1907.08868}, 2019.

\end{thebibliography}

\end{document}